%% file: ppaper-master.tex
\documentclass[11pt]{amsart}
\input{ppaper-macro.tex}

\author[Beaudry]{Agn\`es Beaudry}
\address{Department of Mathematics, University of Colorado, Boulder, CO 80309, USA}
\email{agnes.beaudry@colorado.edu}
\author[Lewis]{Chloe Lewis}
\address{Department of Mathematics, University of Wisconsin-Eau Claire, Eau Claire, WI 54701, USA}
\email{lewischl@uwec.edu}
\author[May]{Clover May}
\address[May]{School of Mathematics and The Heilbronn Institute for Mathematical Research,
University of Bristol, 
Bristol, BS8 1UG, UK}
\email{clover.may@bristol.ac.uk}
\author[Pauli]{Sabrina Pauli}
\address{Fachbereich Mathematik\\TU Darmstadt\\64289 Darmstadt, Germany}
\email{pauli@mathematik.tu-darmstadt.de}
\author[Tatum]{Elizabeth Tatum}
\address{Department of Mathemtatics\\University of Rochester\\Rochester, NY 14627}
\email{etatum@ur.rochester.edu}

\title[Parametrized $\underline{\mathbb{F}}_2$-Cohomology of $B_{C_2}O(1)$]{Parametrized $\underline{\mathbb{F}}_2$-Cohomology of $B_{C_2}O(1)$}

\begin{document}

\begin{abstract}
We compute the equivariant parametrized ordinary cohomology of the classifying space for real $C_2$-line bundles, $B_{C_2}O(1)$ with coefficients in the constant Mackey functor $\underline{\mathbb{F}}_2$. 
Equivariant parametrized cohomology is graded by representations of the equivariant fundamental groupoid, $RO(\Pi B)$, extending $RO(G)$-graded cohomology by assembling equivariant cohomology for all local coefficients into a single graded ring. It was developed by Costenoble and Waner to incorporate Thom isomorphisms for all equivariant vector bundles, and these play a critical role in our computation. As an important component, we compute the $RO(C_2)$-graded cohomology of all Thom spaces of real $C_2$-vector bundles over $B_{C_2}O(1)$.

To facilitate the computation, we prove many general results that can be applied to equivariant parametrized cohomology. We introduce a collection of characteristic classes, including orientation classes generalizing the $RO(G)$-graded classes of Hill--Hopkins--Ravenel. We investigate some consequences of base change on parametrized cohomology, and for $G=C_2$ import equivariant Steenrod squares and other tools to the parametrized context.
\end{abstract}

\maketitle

\setcounter{tocdepth}{1}  
\tableofcontents

\input{ppaper-intro}
\input{ppaper-eulerorientation}

\input{ppaper-ropip}

\input{ppaper-cohthom}

\input{ppaper-pcomputation}

\input{ppaper-fullrocomputation}

\input{ppaper-restriction}

\bibliographystyle{amsalpha}
\bibliography{ppaper-bib}
\end{document}

%% file: ppaper-macro.tex
\usepackage{amsmath}
\usepackage{amsfonts}
\usepackage{amssymb}
\usepackage{amsthm}
\usepackage{mathrsfs}
\usepackage{mathtools}
\usepackage{dsfont}
\usepackage{tikz-cd}
\usepackage[all]{xy}
\usepackage{pdfpages}
\usepackage{enumerate}
\usepackage{float}
\usepackage{todonotes}
\usepackage{graphicx}
\usepackage{xcolor}
\usepackage[colorlinks,linkcolor=blue,urlcolor=blue,citecolor=blue,destlabel=true]{hyperref}

\usepackage[capitalise]{cleveref}
\numberwithin{equation}{subsection}

\newtheorem{thm}{Theorem}[subsection]
\newtheorem*{thm*}{Theorem}
\newtheorem{theorem}[thm]{Theorem}
\newtheorem{cor}[thm]{Corollary}

\newtheorem{prop}[thm]{Proposition}
\newtheorem{proposition}[thm]{Proposition}
\newtheorem{lem}[thm]{Lemma}
\newtheorem{lemma}[thm]{Lemma}

\theoremstyle{definition}
\newtheorem{defn}[thm]{Definition}
\newtheorem*{defn*}{Definition}
\newtheorem{definition}[thm]{Definition}

\newtheorem{notation}[thm]{Notation}

\newtheorem{example}[thm]{Example}

\newtheorem{rem}[thm]{Remark}
\newtheorem{remark}[thm]{Remark}
\newtheorem*{rem*}{Remark}

\newtheorem*{conventions*}{Conventions}
\newtheorem*{ex*}{Example}
\newtheorem*{exs*}{Examples}

\newcommand{\Ab}{\mathrm{Ab}}

\newcommand{\N}{\mathbb{N}}

\newcommand{\id}{\mathrm{id}}

\DeclareMathOperator{\coker}{coker}

\DeclareMathOperator{\Rep}{Rep}

\DeclareMathOperator*{\colim}{colim}

\DeclareMathOperator{\Hom}{Hom}

\DeclareMathOperator{\Map}{Map}

\newcommand{\op}{\mathrm{op}}

\newcommand{\mF}{\underline{\mathbb{F}}_2}

\newcommand{\res}{\mathrm{res}}

\newcommand{\Sq}{\mathrm{Sq}}

\newcommand{\Z}{\mathbb{Z}}
\newcommand{\R}{\mathbb{R}}

\newcommand{\C}{\mathbb{C}}

\newcommand{\M}{\mathbb{M}}
\newcommand{\F}{\mathbb{F}}
\newcommand{\pt}{\mathrm{pt}}

\newcommand{\uF}{\underline{\mathbb{F}}}
\newcommand{\mM}{\underline{M}}
\newcommand{\mR}{\underline{R}}

\newcommand{\cO}{\mathcal{O}}

\newcommand{\RO}{RO}

\newcommand{\im}{\mathrm{im}}

\newcommand{\vV}{v\mathcal{V}}
\newcommand{\Th}{\mathrm{Th}}

\newcommand{\RP}{\mathbb{R}\mathrm{P}}

\newcommand{\wH}{\widetilde{H}}

\newcommand{\uR}{\underline{R}}

\newcommand{\hPTopb}[2]{\mathrm{hTop}_{#2}^{#1}}

\newcommand{\PSH}[2]{\mathcal{SH}_{#2}^{#1}}

\newcommand{\tr}{\mathrm{tr}}

\newcommand{\uZ}{\underline{\mathbb{Z}}}

\newcommand{\triv}{\varepsilon_{1,0}}
\newcommand{\sign}{\varepsilon_{1,1}}
\newcommand{\taut}{\gamma_{1,0}}
\newcommand{\staut}{\gamma_{1,1}}
\newcommand{\bun}{\gamma}

\newcommand{\pars}[1]{\left( #1 \right)}

\newcommand{\Span}{\mathrm{Span}}

\definecolor{darkspringgreen}{rgb}{0.09, 0.45, 0.27}

\def\Saone (#1,#2,#3){
\filldraw [#3] (#1+.6,#2+.5) circle (2pt);
\draw [#3, thick] (#1+.6,-3) -- (#1+.6,6);
\draw [#3, thick] (#1+1.4,-3) -- (#1+1.4,6);
\draw [#3, thick] (#1+.6,-2.5) -- (#1+1.4,-1.5);
\draw [#3, thick] (#1+.6,-1.5) -- (#1+1.4,-0.5);
\draw [#3, thick] (#1+.6,-0.5) -- (#1+1.4,0.5);
\draw [#3, thick] (#1+.6,0.5) -- (#1+1.4,1.5);
\draw [#3, thick] (#1+.6,1.5) -- (#1+1.4,2.5);
\draw [#3, thick] (#1+.6,2.5) -- (#1+1.4,3.5);
\draw [#3, thick] (#1+.6,3.5) -- (#1+1.4,4.5);
\draw [#3, thick] (#1+.6,4.5) -- (#1+1.4,5.5);
}

\def\Sqoneladdereven (#1,#2,#3){
\draw [#3, thick] (#1+.6,#2-2.5) -- (#1+1.4,#2-2.5);
\draw [#3, thick] (#1+.6,#2-0.5) -- (#1+1.4,#2-0.5);
\draw [#3, thick] (#1+.6,#2+1.5) -- (#1+1.4,#2+1.5);
\draw [#3, thick] (#1+.6,#2+3.5) -- (#1+1.4,#2+3.5);
\draw [#3, thick] (#1+.6,#2+5.5) -- (#1+1.4,#2+5.5);
}

\def\Sqoneladderodd (#1,#2,#3){
\draw [#3, thick] (#1+.6,#2-2.5) -- (#1+1.4,#2-2.5);
\draw [#3, thick] (#1+.6,#2-0.5) -- (#1+1.4,#2-0.5);
\draw [#3, thick] (#1+.6,#2+1.5) -- (#1+1.4,#2+1.5);
\draw [#3, thick] (#1+.6,#2+3.5) -- (#1+1.4,#2+3.5);
}

\def\mtwoonearrow (#1,#2,#3){
\draw [thick, -stealth, #3] (#1,#2) -- (#1,#2+1);
\draw [thick, -stealth, #3] (#1,#2) -- (#1+1,#2+1);
     \fill[#3] (#1,#2) circle (3pt);
     \draw [thick, -stealth, #3] (#1,#2-2) -- (#1,#2-3);
      \draw [thick, -stealth, #3] (#1,#2-2) -- (#1-1,#2-3);
       \fill[#3] (#1,#2-2) circle (3pt);
}

\def\mtwoone (#1,#2,#3){
\draw [thick, #3] (#1,#2) -- (#1,#2+1);
\draw [thick, #3] (#1,#2) -- (#1+1,#2+1);
     \fill[#3] (#1,#2) circle (3pt);
     \draw [thick, #3] (#1,#2-2) -- (#1,#2-3);
      \draw [thick, #3] (#1,#2-2) -- (#1-1,#2-3);
       \fill[#3] (#1,#2-2) circle (3pt);
}

\def\mtwoonehalf (#1,#2,#3){
\draw [thick, #3] (#1,#2) -- (#1,#2+1);
\draw [thick, #3] (#1,#2) -- (#1+1,#2+1);
     \fill[#3] (#1,#2) circle (3pt);
}

\def \dropell (#1,#2,#3){
    \draw[#3, thick, dashed, stealth-] (#1-.4,#2+.1) .. controls (#1+0.2,#2+1) .. (#1,#2+2);
}

\newcommand{\autP}{\chi}

\newcommand{\crush}{\rho}
\newcommand{\gen}{\tau}

\newcommand{\uu}{u}
\renewcommand{\aa}{a}
\newcommand{\eee}{e}
\newcommand{\kk}{g}
\newcommand{\ee}{e}
\newcommand{\vv}{\nu}

\newcommand{\VV}{\rho_G}

\newcommand{\bock}{\Sq^{1,0}}

\newcommand{\tautB}{\gamma}
\newcommand{\tautBchi}{\chi\gamma}

%% file: ppaper-intro.tex

\section{Introduction}

Our main goal of this work is to study $P = B_{C_2}O(1)$, the classifying space of $C_2$-equivariant line bundles. 
A strong tool in the equivariant toolkit is $RO(G)$-graded cohomology, but it fails in this instance to capture the full equivariant structure of $P$. 
In particular, the Thom isomorphism fails to hold, even for the tautological bundle.  
The theory of equivariant parametrized cohomology, also known as $RO(\Pi B)$-graded cohomology, was developed by Costenoble and Waner in \cite{CW_book}, building on \cite{CW_Duality}, \cite{CW_Thom}, and \cite{CMW}. This theory was created to incorporate Thom isomorphisms for all equivariant vector bundles and thus better capture equivariant characteristic classes. 

In this paper, we compute the equivariant parametrized cohomology of $P$ with $\mF$-coefficients. 
In order to carry out this technical computation, we recall some of the main constructions of \cite{CW_book} (some of which are also described in \cite{witpaper}), investigate some of the nuances, explore the theory of characteristic classes in this setting, and compute the $RO(C_2)$-graded cohomology of stunted projective spaces. In addition to further developing the theory of characteristic classes and ironing out a number of details to make computations in equivariant parametrized cohomology more accessible, our computation uses the theory in a novel way. 

Given a $G$-space $B$, parametrized cohomology is graded by $RO(\Pi B)$, representations of the equivariant fundamental groupoid $\Pi B$, and takes coefficients in a parametrized Mackey functor. 
For $\gamma$ any virtual $G$-vector bundle over $B$ and $\alpha \in RO(\Pi B)$, the parametrized Thom isomorphism takes the form: 
\[ {H}^{\alpha}_B(B) \xrightarrow[\cong]{t_\gamma} \widetilde{H}^{\alpha+\gamma}_B(\Th_B(\gamma)).\]
Part of the strength of this theory is that this Thom isomorphism holds for any choice of coefficients, effectively incorporating local coefficients into the grading. Our computation treats this Thom isomorphism as fundamental.

In the first part of the paper, we review and elaborate on the theory of equivariant parametrized cohomology for $G$ a finite group following \cite{CW_book}. We investigate the parametrized Thom isomorphism for several different kinds of bundles. We then go on to develop a theory of Euler classes and orientation classes, generalizing these classes in $RO(G)$-graded cohomology from \cite{HHR}. We identify a number of other classes in cohomology, such as homogeneity units and triviality classes, extend equivariant Steenrod squares to parametrized cohomology (for $G=C_2$ and $\mF$-coefficients), and lay the foundations for our computation.

While the Euler classes are defined in the usual way, the orientation classes are trickier because orientation theory for $G$-bundles is subtle. For homogeneous bundles, also called $V$-bundles, there are many treatments (see  \cite{BhattZou} for a modern approach). 
A goal of Costenoble--Waner--May \cite{CMW} was to develop a theory of orientations for more general equivariant bundles and their Definition 2.8 gives a notion of orientability. 
The notion of orientability we need differs from both of these approaches, but naturally generalizes the notion of orientability in the nonequivariant setting. We develop this theory and prove several nice properties of orientation classes in \cref{sec:orientation}. 

In the second part, we turn to $G = C_2$ and $P = B_{C_2}O(1)$. To compute the $C_2$-equivariant parametrized cohomology of the classifying space $P$, we first compute the ring structure of the grading $RO(\Pi P)$.  Let $KO(\Pi P)$ denote the image of virtual vector bundles in $RO(\Pi P)$.
 
\begin{thm*}[c.f.\ \cref{thm:ROPiP}, \cref{thm:KOPiP}]
   There is an isomorphism 
   \[RO(\Pi P)\cong \Z^3 \times \Z/2\times \Z/2.\]
   If we denote an element by a coordinate $(p,q,n,\epsilon,\mu)$,
     the degrees $(p,q,0,0,0)$ correspond to the inclusion $\crush^* \colon RO(C_2) \to RO(\Pi P)$. The degrees in which the last coordinate $\mu$ vanishes corresponds to the subgroup 
   \[KO(\Pi  P)\cong \Z^3 \times \Z/2.\]
\end{thm*}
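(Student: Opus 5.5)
We compute $RO(\Pi P)$ from the structure of the equivariant fundamental groupoid of $P = B_{C_2}O(1)$ and then identify which representations come from virtual bundles. Recall that an element of $RO(\Pi B)$ is a (virtual) representation of $\Pi B$. Such a representation assigns to each $G$-map $x\colon G/H\to B$ an $H$-representation $V_x$, and to each morphism of $\Pi B$ a stable equivalence class of isomorphisms, compatibly.

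\emph{Step 1: the fixed-point components.} The fixed set $P^{C_2}$ has two components, $\RP^\infty_+$ and $\RP^\infty_-$, classifying line bundles with trivial and sign fiber action respectively. The underlying space is $\RP^\infty$. So $\Pi P$ has, up to equivalence, one object over $G/e$ with automorphism group $\pi_1\RP^\infty=\Z/2$, and two objects over $G/C_2$, one for each fixed component. Each fixed object has automorphism group $\Z/2\times C_2$-type data, and restriction maps connect the fixed objects to the free object.

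\emph{Step 2: representations as tuples.} A representation assigns:
\begin{itemize}
\item to each fixed object $x_\pm$ a virtual $C_2$-representation $a_\pm + b_\pm\sigma$;
\item to the free object a virtual dimension $n$.
\end{itemize}
These are subject to the restriction condition $a_\pm+b_\pm=n$, which leaves $\Z^3$ worth of dimension data. Next, for each loop in $\Pi P$ we must choose a stable identification of the fiber with itself. Two such choices differ by $\pi_0$ of the stable automorphisms, which gives $\pm 1$ determinant data. Modulo equivalence of representations, this leaves two $\Z/2$ invariants: the orientation characters along the generating loops of $\RP^\infty_\pm$ and the underlying $\RP^\infty$. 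These must be compatible along the restriction maps. I expect the bookkeeping to be as in the nonequivariant computation $RO(\Pi\RP^\infty)\cong\Z\times\Z/2$, but doubled and then cut down by compatibility. The main obstacle is exactly this step: correctly computing the automorphisms in $\Pi P$ lying over $G/C_2$ (including the non-identity elements of $W_GH$ and the paths in $P$ relating the two fixed components via the free part) and verifying that exactly two independent $\Z/2$'s survive. I would first write $\Pi P$ explicitly using $P\simeq S(\C^\infty)/\pm$-style models, or $E_{C_2}O(1)$, then check the cocycle conditions by hand.

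\emph{Step 3: the subgroup $KO(\Pi P)$.} The map $\crush^*\colon RO(C_2)\to RO(\Pi P)$ assigns constant $V$ with trivial characters, giving the coordinates $(p,q,0,0,0)$. The tautological bundle $\taut$ has fiber $1$ over $\RP^\infty_+$ and $\sigma$ over $\RP^\infty_-$, with nontrivial underlying orientation character. This contributes the third $\Z$ coordinate $n$, and $\taut\otimes\sign$ or $\staut$ produces the first $\Z/2$ coordinate $\epsilon$. I would show that the map $KO_{C_2}(P)\to RO(\Pi P)$ lands in $\{\mu=0\}$ by computing the characters of bundles, namely $w_1$ restricted to the fixed components. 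Every bundle over $P$ splits stably into sums of $\taut,\staut,\triv,\sign$, so the characters on the two fixed components are determined by the underlying $w_1$ together with the dimension data. This rules out the $\mu$ class, which I define as the representation with independent sign on one fixed component. Conversely, the four generating bundles hit all of $\Z^3\times\Z/2$, which gives the isomorphism.
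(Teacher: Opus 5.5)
Your outline has the right overall shape: dimension data give $\Z^3$, determinant data give the torsion, and line bundles span $\{\mu=0\}$. But the torsion count, which is the actual content of the first claim, is never carried out. Worse, the model of $\Pi P$ in Step 1 omits the morphism that makes the count come out right.

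The free object $b$ over $C_2/e$ has automorphism group $\Z/2\{t\}\times\Z/2\{g\}$, not just $\pi_1\RP^\infty$. Besides the loop $g$ there is a morphism $t$ lying over the Weyl element $\gen\colon C_2/e\to C_2/e$. Over $C_2/C_2$ the Weyl group is trivial, so the fixed objects only have $\pi_1(P_i)=\Z/2$. The decisive relations are $p_0\circ t\simeq p_0\circ g$ and $p_1\circ t\simeq p_1$. Moreover, $\gamma(g_i)$ lies in $\pi_0$ of the stable automorphisms of $\R^{p,q_i}$, which is $O(1)\times O(1)$ (one determinant on the trivial summand, one on the sign summand), not a single sign.

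With these ingredients the count goes as follows. Equivariance of $\gamma(p_i)$ and the two $t$-relations force $\det\gamma_e(t)=(-1)^{q_1}$ and $\det\gamma_e(g)=(-1)^{q_0+q_1}$, so the underlying character is \emph{not} a free $\Z/2$. The relation $g_i\circ p_i\simeq p_i\circ g$ then ties the product of the two signs of $\gamma(g_i)$ to $\det\gamma_e(g)$, leaving one free bit $\epsilon_i$ per fixed component. Restricting a natural isomorphism to the sign summand shows $\epsilon_0,\epsilon_1$ are invariants. In the model you describe (no $t$), the underlying character would be free alongside $\epsilon_0,\epsilon_1$, giving three bits. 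So ``exactly two survive'' depends on exactly the ingredients you left out. You also need the explicit coordinate change, $q=q_0$, $n=q_1-q_0$, $\epsilon=\epsilon_0$, $\mu\equiv\epsilon_0+\epsilon_1+q_0+q_1$, before statements like $\crush^*(p,q)=(p,q,0,0,0)$ or ``the $\mu$ class'' have a meaning.

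In Step 3, the sentence claiming the characters on the fixed components are determined by the underlying $w_1$ and the dimension data is false. $\taut\oplus\staut$ and $\triv\oplus\sign$ have fibers $\R^{2,1}$ over both fixed components and the same underlying $w_1$. Yet the first has $\epsilon_0=\epsilon_1=1$ and the second has $\epsilon_0=\epsilon_1=0$; this is why $\dim(\taut+\staut)\neq\R^{2,1}$. What holds for bundles is only the linking condition $\epsilon_0+\epsilon_1\equiv q_0+q_1$, i.e.\ $\mu=0$.

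Your only justification for that condition is that every $C_2$-bundle over $P$ is stably a sum of $\triv,\sign,\taut,\staut$. The splitting is true, but it is a substantial theorem that the paper defers to work in preparation and deliberately avoids. As written, your argument assumes the hardest part.

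The paper instead proves $KO(\Pi P)\subseteq\{\mu=0\}$ by completion. As $RO(C_2)$-modules, $RO(\Pi P)\cong RO(C_2)\oplus M\oplus N$, with $N=\Z/2$ the $\mu$-summand on which $\R^{1,1}-1$ acts by zero. Completing summandwise shows $RO(\Pi P)\to RO(\Pi P)^\wedge_{I(C_2)}$ is injective. Also $KO_{C_2}(P\times EC_2)\cong RO(C_2\times O(1))^\wedge_{I}$, and its image under $\dim$ lies in the completed span of line-bundle dimensions, because $RO(C_2\times O(1))$ is generated by one-dimensional representations. The reverse inclusion then follows, as you say, from the dimensions of the four line bundles.
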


\begin{rem}\label{rem:KOC2intro}
There are four $C_2$-line bundles on $P$ up to isomorphism, namely, the trivial bundle $\triv$, the constant bundle $\sign$ with fibers the one-dimensional sign representation of $C_2$, the tautological bundle $\taut$ and the tensor product $\staut=\sign\otimes \taut$. See \cref{rem:linebundles} for more details. The $C_2$-equivariant real $K$-theory of $P$ is free abelian. In fact,
\[KO_{C_2}(P) \cong  \Z\{\triv, \sign, \taut, \staut\},\]
see \cite{BZ}.  However, in $KO(\Pi P)$ the classes corresponding to these line bundles satisfy  
\[2\dim(\triv+\sign-(\taut+\staut))=0. \] 
This is related to the fact that the bundle $\taut+\staut$ is homogeneous, and so admits a Thom isomorphism in $RO(G)$-graded cohomology. See \cref{def:homogeneous} and the surrounding discussion in \cref{sec:KOPiB}. This relation explains the appearance of the 2-torsion in $KO(\Pi P)$.
\end{rem}

Our main computational result is the parametrized cohomology of $P$ as an algebra over $\M_2$, the $RO(C_2)$-graded cohomology of a point. With $\mF$-coefficients, we have
\[\M_2 = H^{*,*}(\pt,\mF) = H^{*,*}(C_2/C_2 ,\mF) \cong \F_2[\uu,\aa]\oplus \F_2\{\theta/(\uu^i\aa^j) : i,j\geq 0\}.\]
  We use the motivic grading, where $(p,q)$ denotes representation $\R^{p,q}$ which is a direct sum of $q$ one-dimensional sign representations and $p-q$ trivial representations. Let $\sigma=\R^{1,1}$ denote the one-dimensional sign representation. The class $\uu = \uu_\sigma$ in degree $(0,1)$ and the class $\aa=\aa_\sigma$ in degree  $(1,1)$ (often called $\tau$ and $\rho$) are the orientation class and Euler class of the bundle $\sigma$ over a point. 
\begin{thm*}[c.f.\ \cref{thm:ROcohomologyofP}]
    The $RO(\Pi P)$-graded cohomology of $P$ is isomorphic to the $\M_2$-algebra
    \[H^{*,*,*,*,*}_{P}(P,\uF_2)\cong \M_2[\uu_{10},\uu_{11},\aa_{10},\aa_{11},\eee,\vv]/\sim\]
with relations given by
\[ \uu_{10}\uu_{11}-\uu \eee,\aa_{10}\uu_{11}+\aa_{11}\uu_{10}-\aa \eee, \quad \eee^2-1, \quad \vv^2-1.\]
The classes $\aa_{10}$ and $\uu_{10}$ are the Euler and orientation classes of $\taut$ and $\aa_{11}$ and $\uu_{11}$ those of $\staut$. 
The class $\ee$ is the homogeneity unit  of the bundle $\taut+ \staut$. 
The degrees of the elements are
\begin{align*}
|\eee| &= (0,0,0,1,0) & |\vv| &= (0,0,0,0,1)   \\ 
|\uu_{10}| &= (0,0,1,0,0)  & |\uu_{11}| &= (0,1,-1,1,0)  \\ 
|\aa_{10}|&=(1,0,1,0,0) & |\aa_{11}| &= (1,1,-1,1,0).
\end{align*}
In particular, the cohomology is free as an $\M_2$-module.
\end{thm*}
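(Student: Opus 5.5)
Since the grading group is $RO(\Pi P)\cong\Z^3\times\Z/2\times\Z/2$ by \cref{thm:ROPiP}, I would break the computation up along the two $\Z/2$ coordinates and the $\Z$ coordinate $n$. First, I would use the parametrized Thom isomorphism to reduce everything to $RO(C_2)$-graded computations. For any virtual bundle $\gamma$ over $P$, multiplication by its Thom class identifies $H^{\alpha}_P(P)$ with $\widetilde H^{\alpha+\gamma}_P(\Th_P(\gamma))$. When $\alpha+\gamma$ lies in the image of $\crush^*\colon RO(C_2)\to RO(\Pi P)$, this group is the ordinary $RO(C_2)$-graded cohomology $\widetilde H^{*,*}(\Th(\gamma);\mF)$. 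The degrees $(p,q,n,\epsilon,0)$ in $KO(\Pi P)$ are exactly those of the form $\crush^*(p,q)-\gamma$ with $\gamma$ a combination of $\taut$ and $\staut$. So in these degrees the cohomology is $\widetilde H^{*,*}$ of a stunted projective space Thom spectrum $\Th(a\taut+b\staut)$. The difference $a-b$, counted modulo the relation $2(\triv+\sign-\taut-\staut)=0$, accounts for $n$ and $\epsilon$. These Thom space computations are the content of \cref{sec:cohthom}, which I may assume. Each one is a free $\M_2$-module, with generators determined by a cell filtration of $\Th(m\taut+k\staut)$ as a stunted $\RP^\infty_{tw}$.

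Second, I would handle the coordinate $\mu$, which lies outside $KO(\Pi P)$. I expect a unit $\vv$ in degree $(0,0,0,0,1)$ with $\vv^2=1$. I would construct it from the fact that the corresponding element of $RO(\Pi P)$ is a representation of $\Pi P$ whose twist acts trivially on $\mF$-coefficients; this is where we use that the coefficients are $\F_2$. The class should be a unit of the kind called a homogeneity or triviality class earlier in the paper. The class $\eee$ is treated the same way: $\taut+\staut$ is homogeneous, so it has both a parametrized and an $RO(C_2)$ Thom class, and their ratio gives an invertible class in degree $(0,0,0,1,0)$ with $\eee^2=1$. Multiplication by $\eee$ and $\vv$ then gives isomorphisms between the four cosets of the $\Z/2\times\Z/2$ factor, so it suffices to compute the $\epsilon=\mu=0$ part.

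Third, I would assemble the ring. The Euler classes $\aa_{10},\aa_{11}$ and orientation classes $\uu_{10},\uu_{11}$ come from \cref{sec:orientation} in the stated degrees, and these degrees can be checked against the dimension and fixed-point data of $\taut$ and $\staut$. The relation $\uu_{10}\uu_{11}=\uu\eee$ follows from multiplicativity of orientation classes under Whitney sum. Applied to $\taut\oplus\staut$, the product $\uu_{10}\uu_{11}$ is the orientation class of the homogeneous bundle $\taut+\staut$, which is $\uu$ (for the $\sigma$ summand) twisted by $\eee$. The relation $\aa_{10}\uu_{11}+\aa_{11}\uu_{10}=\aa\eee$ comes the same way from the Euler or Whitney-sum formula for $\taut\oplus\staut\cong\triv\oplus\sign$ on a suitable piece; failing that, from restriction to fixed points and underlying spaces, where both sides can be compared (\cref{sec:restriction}). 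Finally, I would define the map from the presented algebra to the cohomology and prove it is an isomorphism degree by degree. To do this, I would match a monomial $\M_2$-basis of the quotient algebra, for example $\aa_{10}^i\uu_{10}^j\aa_{11}^k\uu_{11}^l\eee^\delta\vv^\mu$ reduced using the relations, against the $\M_2$-bases of the Thom space cohomologies from step one.

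The main obstacle is this last matching. One has to show that the Thom-isomorphism images of the classes in \cref{sec:cohthom} are exactly these monomials, with no hidden extensions, which requires tracking $\M_2$-module generators across the relation $2(\triv+\sign-\taut-\staut)=0$. My approach would be to use equivariant Steenrod squares and restriction to underlying and fixed-point cohomology (both are known polynomial rings over $\F_2$) to detect the products, combined with a rank count in each degree. Freeness over $\M_2$ would then follow from freeness of each Thom space cohomology.
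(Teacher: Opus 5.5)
Your architecture is the paper's: Thom isomorphisms reduce the $KO(\Pi P)$-graded part to stunted projective spaces, $\eee$ is the homogeneity unit of $\taut+\staut$, $\vv$ is a triviality class, $\uu_{10}\uu_{11}=\uu\eee$ comes from \cref{lem:euclassesrel}, and the isomorphism is finished by matching $\M_2$-bases. The step that fails as written is your primary justification of $\aa_{10}\uu_{11}+\aa_{11}\uu_{10}=\aa\eee$. The bundles $\taut\oplus\staut$ and $\triv\oplus\sign$ are not isomorphic, since their dimensions are $(2,1,0,1,0)$ and $(2,1,0,0,0)$. That discrepancy is exactly why $\eee$ is needed at all. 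If they were isomorphic, Euler multiplicativity would force $\aa_{10}\aa_{11}=a_{\R^{1,0}}\aa=0$, but $\eee\aa_{10}\aa_{11}$ generates $H^{2,1,0,0,0}_P(P,\mF)$ (\cref{lem:relations}). In any case, a Whitney-sum formula only produces products of Euler classes, never a mixed sum of Euler-times-orientation terms. That Leibniz shape is the fingerprint of a derivation, and this is the missing idea. The paper applies $\Sq^{1,0}$ to $\uu\eee=\uu_{10}\uu_{11}$, using the parametrized Cartan formula (\cref{lem:cartan}), $\Sq^{1,0}\uu=\aa$, and $\Sq^{1,0}\uu_{1i}=\aa_{1i}$. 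The last identity is read off on the underlying space from $\Sq^1u_1=a_1$, since $H^{1,0,1,0,0}_P(P,\mF)$ and $H^{1,1,-1,1,0}_P(P,\mF)$ are one-dimensional and detected by $i_e^*$.

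Your fallback via restriction can be made to work, but not as you describe it. You cannot cite the paper's formulas for $\iota_i^*$, because those are deduced from this very relation. The underlying restriction alone is also useless here: both sides restrict to $a_1u_1+a_1u_1=0$. That only tells you the left side lies in $\ker i_e^*=\aa\cdot H^{0,0,0,1,0}_P(P,\mF)=\{0,\aa\eee\}$ (\cref{lem:ParamForgetfulLES}). What decides it is the fixed point $b_0$. The ring map $b_{0!}b_0^*$ to $\M_2$ sends $\aa_{10}\mapsto a_{\R^{1,0}}=0$, $\aa_{11}\mapsto\aa$, $\uu_{10}\mapsto 1$ and $\uu_{11}\mapsto\uu$ (\cref{lem:eulerprop}, \cref{lem:orientprop}). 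So the left side maps to $\aa\neq 0$ and must equal $\aa\eee$. Spelled out this way, it is a legitimate alternative to the Bockstein argument that avoids the Cartan formula.

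Separately, you assert $\vv^2=1$, but \cref{thm:gammanomatter} only gives an abstract isomorphism $H^{\star+(0,0,0,0,1)}_P(P,\mF)\cong H^{\star}_P(P,\mF)$; it does not say $\vv$ is invertible. You need $i_e^*\vv=1$. The paper gets this from the forgetful sequence together with $H^{-1,-1,0,0,1}_P(P,\mF)\cong H^{-1,-1}(P,\mF)=0$. Only then is $\vv^2\neq 0$ in $H^{0,0,0,0,0}_P(P,\mF)=\F_2$, and multiplication by $\vv$ an isomorphism.
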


The $KO(\Pi P)$-graded cohomology of $P$ in \cref{thm:KOcohomologyofP} is the subring corresponding to the vanishing of the last coordinate, and can be read off by setting the unit $\vv = 1$.

We first compute the $KO(\Pi P)$-graded portion. One of the key results we use is the following version of the Thom isomorphism in constant coefficients. It allows us to express the $KO(\Pi B)$-graded cohomology of any base $B$ as the $RO(G)$-graded cohomology of a Thom space. For $\gamma \in KO(\Pi B)$ and $\star \in RO(G)$, there is an isomorphism 
\[ H^{-\gamma+\star}_B(B,\mR) \xrightarrow{\cong} \widetilde{H}^{\star}(\Th(\gamma),\mR).\]
See \cref{prop:cohBThom} and \cref{thm:KOPIB}. 

Returning to $P$, our technique is then to compute the $RO(C_2)$-graded cohomology of stunted projective spaces. Let $O(1)$ act on the $C_2$-representation $\R^{p,q}$ by multiplication by $-1$, 
and let $\bun_{p,q}$ be the $C_2$-bundle
\[ E_{C_2}O(1)\times_{O(1)}\R^{p,q} \to B_{C_2}O(1)=P. \]
For $p \geq q \geq 0$, the Thom space $\Th(\bun_{p,q})$ is the equivariant stunted projective space $P/P(\R^{p,q})$.   By the Thom isomorphism above, the cohomology of these Thom spaces make up various $RO(C_2)$-graded pages of the $KO(\Pi P)$-graded cohomology. We include the $RO(C_2)$-graded results here, which may be of independent interest. 

\begin{thm*}[c.f.\ \cref{cor:pqduality}, \cref{thm:coh_pq}]
    The $RO(C_2)$-graded cohomology of $\Th(\bun_{p,q})$ is free as an $\M_2$-module. 
There are isomorphisms
\[H^{*,*}(\Th(\bun_{p,q}), \uF_2) \cong H^{*,*}(\Th(\bun_{p,p-q}), \uF_2). \]
\begin{enumerate}[(1)]
\item When $p-q\geq q\geq 0$, the cohomology is 
generated as an $\M_2$-module by the unit and classes $x_r$ for $r\geq p$ of degree
\begin{align*}
|x_{r}| =\begin{cases} (r, p-q) & p\leq r \leq 2(p-q)\\
(r, \lceil r/2 \rceil) & r>2(p-q).
\end{cases}
\end{align*}
    \item  When $0\leq p-q \leq q$, the cohomology is generated as an $\M_2$-module by the unit and classes $x_r$ for $r\geq p$ of degree
\begin{align*}
|x_{r}| =\begin{cases} (r, q) & p\leq r \leq 2q\\
(r, \lceil r/2 \rceil) & r>2q.
\end{cases}
\end{align*}
    \end{enumerate}
\end{thm*}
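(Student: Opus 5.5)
The strategy is to treat $\Th(\bun_{p,q})$ as the stunted projective space $P/P(\R^{p,q})$ and compute its cohomology cell by cell, using the equivariant cell structure of $P = P(\R^{\infty,\infty})$. The cofiber sequence $P(\R^{p,q})_+ \to P_+ \to P/P(\R^{p,q})$ exhibits the Thom space as built from the cells of $P$ of dimension $\geq p$.

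\textbf{Cell structure.} First I would fix a filtration of $\mathcal{U}=\R^{\infty,\infty}$ by subrepresentations $V_r$, $r\geq p$, with $V_p=\R^{p,q}$. Each step adds one line, trivial or sign. The successive quotients $P(V_{r+1})/P(V_r)$ are Thom spaces of bundles over $P(V_{r+1}) \setminus P(V_r)\simeq \pt$, so each is a representation sphere $S^{r,s}$. Here $s$ is the number of coordinates of $V_r$ whose $C_2$-weight differs from that of the new line.

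The duality isomorphism $\Th(\bun_{p,q})\simeq \Th(\bun_{p,p-q})$ comes from tensoring with $\sign$. This swaps the trivial and sign lines and induces a $C_2$-homeomorphism $P\cong P$ carrying $\bun_{p,q}$ to $\bun_{p,p-q}$. So it suffices to treat case (1), $p-q\geq q$.

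Next I would choose the order of added lines so that the weights are as given in the statement:
\begin{itemize}
\item add sign lines until they match the $p-q$ trivial ones, giving cells $S^{r,p-q}$ for $p\le r\le 2(p-q)$;
\item then alternate trivial and sign lines, giving weights $\lceil r/2\rceil$.
\end{itemize}
This yields a filtration with one cell $S^{r,w_r}$ for each $r\geq p$, with $w_r$ the claimed degree of $x_r$, plus the basepoint.

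\textbf{Freeness.} The cellular filtration gives an Atiyah--Hirzebruch type spectral sequence, or an iterated sequence of long exact sequences, with $E_1=\bigoplus_r \Sigma^{r,w_r}\M_2$. To show the cohomology is free on these generators I would use Kronholm's freeness theorem: for finite $\Rep(C_2)$-complexes the $\mF$-cohomology is free, though differentials may shift generator degrees.

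I would apply it to the finite skeleta $P(V_N)/P(\R^{p,q})$ and pass to the limit. The $\lim^1$ term vanishes because the groups are finite in each bidegree, or by a Mittag-Leffler argument.

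\textbf{Pinning down the degrees.} This is the main obstacle: Kronholm shifts could move generators. To rule them out I would compare with two pieces of data.
\begin{itemize}
\item Forgetting to underlying spaces gives $\RP^\infty/\RP^{p-1}$, which has one nonequivariant class per dimension $r\geq p$. A free $\M_2$-generator in degree $(r,s)$ contributes exactly one class in underlying dimension $r$, so the topological degrees are forced to be $r$, one each.
\item The fixed points $(P/P(\R^{p,q}))^{C_2}$ are a wedge/quotient of $\RP^\infty\sqcup\RP^\infty$ with stunted pieces. The $a$-localized cohomology $a^{-1}H^{*,*}\cong H^*(X^{C_2})[a^{\pm1}]$ counts generators by $r-s$, and the fixed-point count matches exactly the predicted weights.
\end{itemize}
Since a Kronholm shift changes the multiset of values $r-s$ in a controlled way while preserving the total, I would check that the fixed-point Poincaré series forces the weights $w_r$. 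If ambiguity remains, I would instead argue directly that every differential in the cellular spectral sequence vanishes by degree reasons. For the cell order chosen, the weights are nondecreasing with slope at most one in $r$, and nonzero attaching maps in $\M_2$ between such cells must land in degrees where $\M_2$ vanishes. This is the step I expect to require the most care, particularly in the initial range $p\le r\le 2(p-q)$, where the weights are constant.
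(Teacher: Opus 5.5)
Your cell structure and your duality argument are the same as the paper's. The paper orders the universe as $p-q$ trivial lines, then $p-q$ sign lines, then alternating lines, so that $P(\R^{p,q})$ is the $(p-1)$-skeleton and the quotient has exactly your cells $S^{r,w_r}$.

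The gap is in the step that pins down the degrees. The fixed-point comparison cannot do this job. A Kronholm shift replaces generators in degrees $(r,s)$ and $(r+1,s')$, with $s'\geq s+2$, by generators in degrees $(r,s'-1)$ and $(r+1,s+1)$. This preserves both the multiset of topological degrees and the multiset of fixed-set degrees $r-s$. For instance, when $q<p-q=m$, generators in degrees $(p,m-1),(p+1,m+1)$ have the same underlying and fixed-point counts as generators in degrees $(p,m),(p+1,m)$. So no Poincar\'e series check forces the weights.

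Your fallback claim is also false as stated. The $d_1$ on the generator of $\widetilde{H}^{*,*}(S^{r,w_r})$ lands in $\widetilde{H}^{r+1,w_r}(S^{r+1,w_{r+1}})\cong \M_2^{0,\,w_r-w_{r+1}}$. This group is $\M_2^{0,0}=\F_2$ whenever $w_{r+1}=w_r$. That happens at every step in the range $p\leq r\leq 2(p-q)$, and at every step $2k+1\to 2k+2$ beyond it. So degree reasons alone do not kill these differentials.

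The repair uses ingredients you already have. Your slope argument correctly disposes of everything else:
\begin{itemize}
\item for $k\geq 2$, $d_k$ lands in $\M_2^{1-k,\,w_r-w_{r+k}}$, which is nonzero only if $w_{r+k}-w_r\geq k+1$;
\item $d_1$ across a weight jump lands in $\M_2^{0,-1}=0$.
\end{itemize}
These are the only differentials that could cause Kronholm shifts. The equal-weight $d_1$'s could only be multiplication by $1$, cancelling two copies of $\M_2$. This is ruled out by restricting to the underlying space. The map $i^*_e\colon \M_2^{0,0}\to\F_2$ is an isomorphism, and the mod $2$ cellular differentials of $\RP^\infty/\RP^{p-1}$ vanish. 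Equivalently, a cancellation would contradict your own count of one underlying class in each degree $r\geq p$. This is exactly the paper's argument.

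Once every differential is zero, the spectral sequence collapses to the unit together with $\bigoplus_{r\geq p}\Sigma^{r,w_r}\M_2$. That gives freeness and the generator degrees at once. Kronholm's theorem, the passage to the limit over finite skeleta, and the fixed-point count are then unnecessary.
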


Collecting the cohomology of these Thom spaces yields the $KO(\Pi P)$-graded cohomology of $P$ as an $\M_2$-module. To determine the relations, and hence establish the claimed $\M_2$-algebra structure, we use our developed theory of Euler and orientation classes, together with the equivariant Steenrod squares.

Finally, the full $RO(\Pi P)$-graded cohomology of $P$ is obtained from the $KO(\Pi P)$-graded cohomology by adjoining a unit $\vv$ in degree $(0,0,0,0,1)$. That is,
\[
H^{RO(\Pi P)}_{P}(P,\mF) \cong H^{KO(\Pi P)}_{P}(P,\mF) \otimes_{\M_2} \M_2[\vv]/(\vv^2-1).
\]
In order to show this last generator is a unit, we identify $\vv$ as a triviality class and prove that forgetfully it restricts to $1$.

\medskip

The results in this paper contribute to the growing body of recent computations in $C_2$-equivariant parametrized cohomology. In \cite{CostenobleB}, Costenoble used isotropy separation to compute the parametrized cohomology of $B_{C_2}U(1)$, the classifying space for $C_2$-equivariant complex line bundles with coefficients in the Burnside Mackey functor $\underline{A}$. There is also work computing the cohomology with Burnside coefficients of finite complex $C_2$-projective spaces \cite{CHT_C2}, of $B_{C_2}U(2)$ \cite{CH_planes},  of $B_{C_2}(U(1)^2)$ \cite{CH_BT2}, and of complex quadrics \cite{CostenobleHusdonI,CostenobleHusdonII,CostenobleHusdonIII}.

In all of the known computations, $RO(\Pi B)$ is free abelian and coincides with $KO(\Pi B)$. The cohomology computation presented here is the first complete $RO(\Pi B)$-graded computation incorporating degrees beyond $KO(\Pi B)$, and the first computation with non-free grading. It is this lack of freeness that leads us to $\mF$-coefficients, in order to avoid some technicalities and signs. Moreover, this is the first computation to directly harness the power of the Thom isomorphisms developed in \cite{CW_book}. Our approach recovers the cohomology of $B_{C_2}U(1)$ from \cite{CostenobleB} with $\mF$-coefficients.

\begin{rem}
There are various perspectives on parametrized homotopy theory, from the classical work of May--Sigurdsson \cite{MaySig} and Costenoble--Waner \cite{CW_book} to the more contemporary approaches using $\infty$-categories such as Barwick et al. \cite{BDGN} and  Ando et al. \cite{ABGHR, ABGHR2}. 
This paper is computational in nature and, even if we work with point-set models, the main results should be indifferent to the choice of framework.
\end{rem}

\subsection{Organization of the paper}

We have divided this paper into two parts. The main goal of our project was to compute the parametrized cohomology of $B_{C_2}O(1)$. \cref{part:II} of the paper is dedicated to this. In doing this computation, we also proved some general results that will apply to other bases.
\cref{sec:classes} of the paper collects these results, and so is focused on general constructions in parametrized cohomology.

We begin \cref{sec:classes} by reviewing the definition of the extended grading $RO(\Pi B)$ in \cref{sec:ROPiB}. In \cref{sec:parametrized cohomology}, we recall Costenoble--Waner's definition of parametrized cohomology. In \cref{sec:thomiso}, we review the Thom isomorphism. In  \cref{sec:char}, we discuss several kinds of characteristic classes in parametrized cohomology associated to vector bundles over the bases. The first two, Euler and orientation classes, are more familiar, but we also introduce classes called homogeneity units that exist only for homogeneous bundles and triviality classes. In order to define orientation classes, we introduce a definition of orientation for $G$-bundles.
In \cref{sec:units}, we discuss the effect of base-change when working with $\mF$-coefficients. In \cref{sec:M2module}, we specialize to the case $G=C_2$. We import the equivariant Steenrod operations to the parametrized context and examine the forgetful exact sequence for parametrized cohomology.

In \cref{part:II}, we turn to our main computation, that of the cohomology of $P=B_{C_2}O(1)$. We first compute the grading in \cref{sec:ROPiP}. Our method for computing the parametrized cohomology is to compute first the cohomology in degrees isomorphic to the dimensions of vector bundles, i.e., the cohomology of Thom spaces. We do this in \cref{section:cohomologyofthomspaces}. This section is of independent importance for those who are interested in $RO(C_2)$-graded cohomology in the non-parametrized context. \cref{sec:cohPall} is dedicated to the computation of $H^{RO(\Pi P)}_P(P,\mF)$. Finally, in \cref{sec:B}, we apply our techniques to recover a result of Costenoble \cite{CostenobleB} and recompute $H^{RO(\Pi B)}_B(B,\mF)$ for $B=B_{C_2}U(1)$.

\subsection{Conventions}
Throughout, we always assume that $G$ is a finite group. We will suppose that $B$, the base $G$-space, is a $G$-CW complex of finite type. We will always let $R$ denote a (classical) unital commutative ring, and let $H\mR$ be the Eilenberg--MacLane spectrum associated to the constant Mackey functor $\mR$. We let $\crush \colon  B \to G/G$ be the unique map. One can pull back $H\mR$ along $\crush$ to get a parametrized spectrum over $B$ which we also call $H\mR$. This is the constant parametrized spectrum which for each $b$ in $B$ has fiber (or value) $H\mR$. We let $\Pi B$ denote the equivariant fundamental groupoid of $B$.

%% file: ppaper-eulerorientation.tex

\part{Constructions in parametrized cohomology}\label{sec:classes}

In the first part of this paper, we review some important definitions and constructions in $RO(\Pi B)$-graded equivariant cohomology as in \cite{CW_book}. We relate parametrized cohomology  to twisted cohomology. 
We refer the reader to \cite{CW_book} or our survey \cite{witpaper} for a more detailed overview of the background.

\section{The extended grading}
Parametrized cohomology is graded on representations of the fundamental groupoid. In this section, we review what that means.

\subsection{The fundamental groupoid $\Pi B$}\label{sec:ROPiB}
Recall that in this paper, $G$ always denotes a finite group.
Let $\cO_G$ be the orbit category of $G$. That is, $\cO_G$ is the category whose objects are orbits $G/H$ for $H$ a subgroup of $G$, considered as left $G$-sets. The morphisms are  maps of $G$-sets $\alpha \colon G/H \to G/K$. 

Given a $G$-space $B$, the equivariant fundamental groupoid 
$\Pi B$ (also denoted $\Pi_G B$ in the literature) is a category fibered in groupoids over $\cO_G$.
The objects of $\Pi B$ are equivariant maps
\[x \colon G/H \to B\]
for $H$ a subgroup of $G$.
Given two objects $x\colon G/H \to B$ and $y \colon G/K \to B$, a morphism $x \to y$ is a pair $(\alpha, \omega)$ where 
$\alpha \colon G/H \to G/K$ is a morphism in the orbit category $\cO_G$ and $\omega$ is a $G$-homotopy 
\[ \omega \colon G/H \times [0,1] \to B \]
from $x$ to $y \circ \alpha$. Furthermore, morphisms $(\alpha,\omega)$ and $(\alpha,\omega')$ are identified if $\omega$ and $\omega'$ are homotopic relative to their endpoints. The category $\Pi B$ is a category fibered in groupoids over $\cO_{G}$ as follows: the functor
\[\phi \colon \Pi B \to \cO_G\]
sends $x \colon G/H \to B$ to $G/H$ and $\phi(\alpha,\omega)=\alpha$. The fiber over $G/H$ can be identified with the (nonequivariant) fundamental groupoid of the fixed points $B^H$.

Given a map of $G$-spaces $f\colon A \to B$, there is a functor on fundamental groupoids $f_* = \Pi f \colon \Pi A \to \Pi B$  induced by postcomposition with $f$. Given a subgroup $H\subset G$, there is an induction functor
\[G\times_H(-)\colon \Pi i^*_H B \to \Pi B \]
which applies induction $G\times_H(-)$ to objects and morphisms.

\subsection{The group $RO(\Pi B)$}
Equivariant parametrized cohomology is graded on the group $RO(\Pi B)$, which generalizes $G$-bundles. We recall the particulars below. Throughout, by a $G$-bundle we mean an actual orthogonal $G$-vector bundle over a $G$-space $B$. We also use virtual $G$-bundles in many circumstances, and we may emphasize that a bundle is not virtual by calling it an \emph{actual} $G$-bundle. 

Let $\vV_G(n)$ be the category whose objects are pairs of $G$-bundles over orbits $G/H$ of the form $(G\times_HV_1,G\times_HV_2)$ for orthogonal $H$-representations $V_i$ chosen from a fixed set of representatives such that the virtual dimension $\dim V_1-\dim V_2$ is equal to $n$. A
 morphism in $\vV_G(n)$ is a triple $(\alpha, f_1,f_2)$, where each $f_i$ is a bundle map
\[ \xymatrix{
G\times_HV_i \ar[r]^-{f_i} \ar[d] & G\times_KW_i \ar[d] \\
G/H \ar[r]^-\alpha & G/K
}\]
that is an isomorphism on fibers. 
Two morphisms are equal if they are stably equivalent or homotopic via a $G$-homotopy covering $\alpha$. Details of the equivalence relation can be found in \cite[Def. 2.2 \& \S IV.19]{CMW}. The category $\vV_G(n)$ is also a category fibered in groupoids over $\cO_G$ via the functor $\phi \colon \vV_G(n) \to \cO_G$ sending a virtual bundle to its base space and a morphism $(\alpha, f_1,f_2)$ to $\alpha$. 

For $B$ a $G$-space, a representation of $\Pi B$ of dimension $n$ is a functor $\gamma \colon \Pi B \to \vV_G(n)$ commuting with the functors $\phi$ to $\cO_G$. Using the direct sum of virtual bundles and taking representations up to natural isomorphism, representations form an abelian group, denoted $RO(\Pi B)$. This generalizes the abelian group $RO(G)$ of (possibly virtual) real orthogonal $G$-representations in the sense that for $B = G/G$ we have 
\[
RO(\Pi G/G) \cong RO(G).
\]

Given a map of $G$-spaces $f\colon A \to B$, there is a pullback
\begin{align}\label{eq:pbf}
f^* \colon RO(\Pi B) \to RO(\Pi A),\end{align}
a homomorphism induced by precomposing a representation $\gamma$ with $\Pi f$.  On the other hand, given a subgroup $H\subset G$, there is a restriction functor \[i^*_H \colon  RO(\Pi B) \to RO(\Pi i^*_HB)\] obtained by taking $\gamma$ to $i^*_H\gamma\circ (G\times_H-)$, where $i^*_H \colon \vV_G(n) \to \vV_H(n)$ takes a virtual $G$-bundle over $G/K$ to its pullback along the inclusion $H/H\cap K \to G/K$.

The tensor product of bundles  gives $RO(\Pi B)$ a ring structure and the pullbacks \eqref{eq:pbf} are ring homomorphisms. So, for $\crush \colon B \to G/G$, the ring homomorphism  
\begin{align}\label{eq:modstructureROG}
\crush^* \colon RO(G) \cong RO(\Pi G/G) \to RO(\Pi B)
\end{align}
gives $RO(\Pi B)$ the structure of an $RO(G)$-module.

\subsection{The subgroup $KO(\Pi B)$}\label{sec:KOPiB}
For any virtual $G$-vector bundle $\xi$ over $B$, there is a corresponding representation
$\dim \xi \in RO(\Pi B)$
obtained by pulling back $\xi$ along the objects $x\colon G/H \to B$ of the fundamental groupoid. We abuse notation and write $\xi=\dim(\xi)$.
This gives rise to a homomorphism
\[\dim \colon KO_G(B) \to  RO(\Pi B) \]
where $KO_G(B)$ denotes the equivariant real $K$-theory of $B$.
\begin{defn}
\label{defn:KOPiB}
    Let $KO(\Pi B)$ denote the image of $\dim$ in $RO(\Pi B)$ so
\[ KO_G(B) \xrightarrow{\dim} KO(\Pi B) \hookrightarrow  RO(\Pi B). \]
\end{defn}

\begin{rem}
    As in \cite[Remark 2.27]{witpaper}, for $\crush \colon B \to G/G$,  the image of 
    \[\crush^* \colon RO(G) \cong RO(\Pi G/G) \to RO(\Pi B)\]
    is given by trivial bundles and is always contained in $KO(\Pi B)$. 
    For $V$ a (possibly virtual) orthogonal $G$-representation, $\crush^* V = \dim(\xi_V)$, where $\xi_V = B \times V$ as described in \cite[Example 2.23]{witpaper}. By abuse, we write $\dim(\xi_V) = V$. 
\end{rem}

\begin{rem}
The map 
    \[\crush^* \colon RO(G) \cong RO(\Pi G/G) \to RO(\Pi B)\]
    is not necessarily injective. 
    For example, if $G=C_2$ and $B = S^1$ with the antipodal action, the map $\crush^*$ cannot be injective. 
    \end{rem}
    
The map $\crush^*$ is better behaved when $B$ has a fixed point.

\begin{prop}\label{prop:ROGinjects}
    Suppose $B$ has a fixed point $x \colon G/G \to B$. Then the map 
    \[\crush^* \colon RO(G) \cong RO(\Pi G/G) \to RO(\Pi B)\]
    is injective.
\end{prop}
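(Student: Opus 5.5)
The plan is to exhibit a left inverse to $\crush^*$, built from the fixed point $x$. Since $x\colon G/G \to B$ is a $G$-map, it induces a functor $\Pi x\colon \Pi(G/G) \to \Pi B$. Pulling back along it gives the ring homomorphism
\[x^*\colon RO(\Pi B) \to RO(\Pi G/G) \cong RO(G)\]
from \eqref{eq:pbf}. The composite $\crush\circ x\colon G/G \to G/G$ is the identity, so functoriality of pullback gives
\[x^*\circ \crush^* = (\crush\circ x)^* = \id_{RO(\Pi G/G)}.\]
Hence $\crush^*$ has a left inverse and is injective.

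The steps, in order, are as follows.
\begin{enumerate}[(1)]
\item Check that pullback is contravariantly functorial: $(f\circ g)^* = g^*\circ f^*$. This holds because $\Pi(f\circ g) = \Pi f\circ \Pi g$, since $\Pi$ acts by postcomposition with the map on objects and on homotopies, and pullback is precomposition with $\Pi f$.
\item Check that $\id^* = \id$ on $RO(\Pi G/G)$, up to natural isomorphism.
\item Confirm that the identification $RO(\Pi G/G)\cong RO(G)$ is used consistently on both sides. It is the same identification in the source of $\crush^*$ and the target of $x^*$, so no compatibility issue arises.
\end{enumerate}

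The main point needing care is that $RO(\Pi B)$ is defined as functors up to natural isomorphism. So I must check that precomposition is well defined on isomorphism classes. It is: whiskering a natural isomorphism by a functor yields a natural isomorphism, and precomposition preserves direct sums. This makes $x^*$ a well-defined group homomorphism and gives equality of composites, not merely isomorphism at the level of functors.

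Concretely, for a virtual representation $V$, the element $\crush^*V$ is the representation sending every object $y\colon G/H\to B$ to the trivial bundle $G\times_H i_H^*V$. Evaluating at the object $x$ returns $V$ itself. This matches the remark that $\crush^*V = \dim(\xi_V)$ with $\xi_V = B\times V$, since pulling back $\xi_V$ along $x$ gives $V$ over $G/G$.
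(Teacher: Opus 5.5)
Your proposal is correct and is essentially the paper's proof: the paper simply applies $RO(\Pi -)$ to the identity $\crush \circ x = \id_{G/G}$, so $x^* \circ \crush^* = \id$ and $\crush^*$ is injective. Your extra checks (functoriality of pullback, well-definedness on isomorphism classes) are exactly the routine details that one-line argument leaves implicit.
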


\begin{proof}
    Consider $RO(\Pi -)$ applied to the composition $\crush x = \id$.
\end{proof}

\begin{rem}\label{rem:non-trivial-but-dim-V}
It is possible for non-trivial bundles $\gamma$ to nonetheless satisfy $\dim(\gamma)=V$, see for example $2(\gamma_{1,0} + \gamma_{1,1})$ from \cref{rem:KOC2intro} and \cref{rem:linebundles}, which has $\dim(2(\gamma_{1,0} + \gamma_{1,1})) = \R^{4,2}$ in $KO(\Pi P)$.
\end{rem}

More generally, there are homogeneous $G$-bundles that have the same representation over every fiber, but whose dimension does not land in $RO(G)$. That is, there are bundles whose representations in $RO(\Pi B)$ agree with trivial bundles on objects but not morphisms. See for example, $\gamma_{1,0} + \gamma_{1,1}$ as in \cref{rem:KOC2intro} and \cref{rem:linebundles}, which has fibers over fixed points $\R^{2,1}$ but has $\dim(\gamma_{1,0} + \gamma_{1,1}) \neq \R^{2,1}$ in $KO(\Pi P)$.

\begin{defn}\label{def:homogeneous}
Let $B$ be a $G$-CW complex. A $G$-vector bundle $\xi$ over $B$ is \emph{homogeneous} if for every point $b \in B$, the fiber $\xi_b$ over $b$ is isomorphic to the restriction of a fixed $G$-representation $\xi_0$ as a representation of the stabilizer $G_b$. We say a virtual bundle $\xi = \xi' - \xi''$ is \emph{homogeneous} if $\xi'$ and $\xi''$ are homogeneous. 
\end{defn}

\begin{rem}
Homogeneous bundles are also called $V$-bundles in the literature, where $V=\xi_0$ above. See for example \cite[Example 1.1.5 (5)]{CW_book}.
\end{rem}

\begin{defn}
    Let $HO(\Pi B)$ denote the image of $\dim$ in $RO(\Pi B)$ restricted to homogeneous bundles.
\end{defn}

It follows from \cref{prop:ROGinjects} that we have the following inclusions (which are generally not surjections, see \cref{rem:non-trivial-but-dim-V}).

\begin{prop}
Let $B$ be a $G$-space whose underlying space is path connected. Assume there is a fixed point $x \colon G/G \to B$. 
There are subgroup inclusions
\[
RO(G)\hookrightarrow HO(\Pi B)  \hookrightarrow KO(\Pi B)  \hookrightarrow RO(\Pi B).
\]
\end{prop}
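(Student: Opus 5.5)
The plan is to check the three inclusions separately, from right to left, and let \cref{prop:ROGinjects} supply injectivity for the leftmost one.

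The inclusions $HO(\Pi B)\subseteq KO(\Pi B)\subseteq RO(\Pi B)$ hold by definition. $KO(\Pi B)$ is the image of $\dim\colon KO_G(B)\to RO(\Pi B)$, so it is a subgroup of $RO(\Pi B)$. $HO(\Pi B)$ is the image under the same homomorphism of the classes of virtual bundles $\xi'-\xi''$ with $\xi',\xi''$ homogeneous. Direct sums of homogeneous bundles are homogeneous (with fiber representation $\xi'_0\oplus\eta'_0$), and differences of such classes are again of this form. So these classes form a subgroup of $KO_G(B)$, and its image $HO(\Pi B)$ is a subgroup of $KO(\Pi B)$.

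For $RO(G)\hookrightarrow HO(\Pi B)$, take the map to be $\crush^*$. By the remark preceding \cref{prop:ROGinjects}, for a virtual representation $V=V'-V''$ we have $\crush^*V=\dim(\xi_{V'})-\dim(\xi_{V''})$, where $\xi_V=B\times V$. Each product bundle $B\times V$ is homogeneous: its fiber over $b$ is $V$ restricted to $G_b$, so $\xi_0=V$ works. Hence the image of $\crush^*$ lies in $HO(\Pi B)$. Since $B$ has a fixed point $x\colon G/G\to B$, \cref{prop:ROGinjects} shows $\crush^*$ is injective, so it is an injection of $RO(G)$ into $HO(\Pi B)$.

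The only point needing care is the path-connectedness hypothesis. It is not strictly needed for the inclusions themselves, but it guarantees that homogeneity with a single fixed $\xi_0$ is a well-behaved notion, and that the chosen fixed point gives a retraction $x^*$ of $\crush^*$ on all of $RO(\Pi B)$. That retraction is exactly what \cref{prop:ROGinjects} uses, since $x^*\crush^*=(\crush x)^*=\id$. I expect no real obstacle beyond this bookkeeping.
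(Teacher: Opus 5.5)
Your proposal is correct and matches the paper, which simply deduces the chain from \cref{prop:ROGinjects} (injectivity of $\crush^*$ using the fixed point), with the other two inclusions holding by definition because $HO(\Pi B)$ and $KO(\Pi B)$ are images of $\dim$ and trivial bundles $B\times V$ are homogeneous. One small correction to your last paragraph: the retraction $x^*\crush^* = (\crush x)^* = \id$ needs only the fixed point, not path-connectedness, so that hypothesis plays no role in this statement at all.
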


When $B$ has a trivial action, there is no difference between $RO(\Pi B)$ and $KO(\Pi B)$. We showed this in \cite[Example 2.29]{witpaper} for $G=C_2$ and $B=B^{G}$. This generalizes to all finite groups as follows.
\begin{prop}\label{prop:trivKOROequal}
If $B$ is a path connected $G$-space with trivial action, then
\[KO(\Pi B) = RO(\Pi B).\] 
\end{prop}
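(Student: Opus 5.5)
We will show that every representation $\gamma \colon \Pi B \to \vV_G(n)$ is isomorphic to $\dim(\xi)$ for some virtual $G$-bundle $\xi$ over $B$. In fact we will take $\xi$ to be a trivial bundle $B\times V$ for a virtual $G$-representation $V$. This gives the stronger statement that $\crush^*\colon RO(G)\to RO(\Pi B)$ is surjective. Combined with the inclusion $\crush^* RO(G) \subseteq KO(\Pi B)$ from the remark above, this yields $KO(\Pi B)=RO(\Pi B)$.

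\textbf{Step 1: fixed points and the structure of $\Pi B$.} Since the action is trivial, $B^H=B$ for every subgroup $H$. Fix a point $b\in B$, which is then a fixed point $x\colon G/G\to B$. Every object $y\colon G/H\to B$ factors as $x'\circ \pi$, where $\pi\colon G/H\to G/G$ is the projection and $x'\colon G/G\to B$ is the corresponding point of $B$. Path connectivity of $B=B^G$ then gives a morphism $x'\to x$ in the fiber of $\Pi B$ over $G/G$. Composing, every object $y$ admits a morphism $(\pi,\omega)\colon y\to x$.

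\textbf{Step 2: the representation is determined at $x$.} Set $V=\gamma(x)$, a virtual $G$-representation, that is, a virtual bundle over $G/G$. Consider the trivial representation $\crush^*V=\crush^*\gamma(x)$, which sends $y\colon G/H\to B$ to $G\times_H i^*_H V$. We want a natural isomorphism $\gamma\cong \crush^* V$. Using the chosen morphism $(\pi,\omega_y)\colon y\to x$, the functor $\gamma$ gives a bundle map $\gamma(y)\to \gamma(x)=V$ covering $\pi$ that is a fiberwise isomorphism. This identifies $\gamma(y)$ with $G\times_H i^*_HV=\pi^*V$, which defines the component of the transformation at $y$.

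\textbf{Step 3: naturality (the main obstacle).} We must check that these components commute with all morphisms. Given $(\alpha,\omega)\colon y\to z$, the two composites $y\to x$, namely $(\pi,\omega_y)$ and $(\pi,\omega_z)\circ(\alpha,\omega)$, both cover $\pi$. They may differ, however, by a loop at $x$, that is, by an automorphism of $x$ in $\Pi B$, an element of $\pi_1(B,b)$. Naturality therefore reduces to showing that $\gamma$ sends every element of $\pi_1(B,b)$, viewed as an automorphism $(\id,\ell)$ of $x$, to the identity in $\vV_G(n)$.

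Here the triviality of the action is essential. Because the action is trivial, the loop $\ell$ also defines a loop at every object $x\circ\pi_H\colon G/H\to B$. Such loops commute with all orbit-category maps, since $\ell\times\id$ works uniformly. So $\gamma(\ell)$ is a natural automorphism of $V$ that is compatible with restriction to all $H$.

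I expect this to be the hard point, because an automorphism of $V$ in $\vV_G(n)$ over $\id_{G/G}$ could a priori be a nontrivial component of $\pi_0$ of the stable automorphism group. To handle it, I would use the equivalence relation of \cite{CMW}, under which morphisms are identified up to $G$-homotopy and stabilization.

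I would first try to show that loops in $B$ act through homotopies: $\ell$ is freely homotopic (via $B$ itself) to maps in the image of the projection from the point, factoring through $\crush$ after passing to morphisms. Concretely, $\gamma(\id,\ell)$ is determined by $\gamma$ restricted to the fiber groupoid over $G/G$, which is $\pi_1(B,b)$ acting on $V$.

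If that action can be nontrivial, the fallback is instead to construct an actual bundle. Take $\xi=\widetilde B\times_{\pi_1(B,b)} V$, where $\widetilde B$ is the universal cover with trivial $G$-action and $\pi_1$ acts on $V$ through $\gamma$, by $G$-equivariant stable isomorphisms, first realizing them as actual isometries after stabilizing. Then $\dim(\xi)$ recovers $\gamma$ on both objects and morphisms by construction. Either route shows that $\gamma$ lies in $KO(\Pi B)$.
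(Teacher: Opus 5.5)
Your main route fails. The ``stronger statement'' that $\crush^*\colon RO(G)\to RO(\Pi B)$ is surjective for trivial actions is false. So the reduction in Step~3, to showing that $\gamma(\id,\ell)$ is the identity for every loop $\ell$, asks for something untrue. Already for $G=e$ and $B=\RP^\infty$ one has $RO(\Pi B)\cong\Z\times\Hom(\pi_1(B),O(1))\cong\Z\times\Z/2$ (see the proof of \cref{lem:uclassesforGtrivial} and \cref{rem:whythegrading}). The tautological line bundle has degree $(1,1)$, which is not in the image $\Z\times 0$ of $\crush^*$: its monodromy around the generating loop reverses orientation, and this survives stabilization. Equivariantly, \cref{thm:ROPIPi} gives $RO(\Pi P_0)\cong\Z^2\times(\Z/2)^2$ for the trivial $C_2$-space $P_0\simeq\RP^\infty$, which is strictly larger than $RO(C_2)\cong\Z^2$. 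The cokernel of $\crush^*$ is exactly the group $\kappa_x$ appearing in \cref{cor:ROgradedcohtrivialaction}. Your idea that loops ``act through homotopies'' because they are freely homotopic to maps factoring through a point is wrong for essential loops. The $\pi_1$-monodromy is genuine data of the representation and cannot be killed.

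Steps~1--2 do correctly show that $\gamma$ is determined by $V=\gamma(x)$ together with the homomorphism $\pi_1(B,b)\to\mathrm{Aut}_{\vV_G(n)}(V)$. Your ``fallback'', realizing this by the flat bundle $\widetilde B\times_{\pi_1(B,b)}V$, is the correct proof and is the paper's argument. The paper forms $E\pi\times_\pi V\to B\pi$ and pulls it back along the first Postnikov stage $B\to B\pi$, which gives the same bundle. To complete the proof, lead with this construction and add two points. First, reduce to actual representations as the paper does: $\Pi B$ has finitely many isomorphism classes of objects, so $RO(\Pi B)$ is the group completion of actual representations. Second, as you anticipate, lift the homomorphism from $\pi_1$ to the stable automorphism group (for example $O(1)\times O(1)$ when $G=C_2$) to an honest action by $G$-isometries. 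To do this, stabilize $V$ by a representation $U$ containing the relevant real-type irreducibles. Let $\pi_1$ act through commuting involutions that are $-1$ on single copies of those irreducibles, and subtract the trivial bundle $B\times U$. Then $\dim\xi=\gamma$ follows from Steps~1--2.
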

\begin{proof}
Since $\Pi B$ has finitely many isomorphism classes of objects, $RO(\Pi B)$ is the group completion of the semi-group of isomorphism classes of actual representations. So, it's enough to prove that any actual representation is in $KO(\Pi B)$.
Let $x\in B$ and $\pi = \pi_1(B,x)$. An actual representation $\alpha$ determines, and is  determined by, a pair consisting of a representation $V$ and a homomorphism 
\[\varphi_\alpha \colon \pi \to \mathrm{Aut}_{G}(V),\]
where $\mathrm{Aut}_{G}(V)$ is the automorphism group of the $G$-representation $V$. We then emulate the construction of \cite[Example 2.29]{witpaper}. 
Let $E\pi \to B\pi$ be the universal principal $\pi$-bundle, and consider the $G$-vector bundle
\[ E\pi \times_{\pi}^\varphi V \to B\pi \]
where $\pi$ acts on $V$ via $\varphi$. Here $ E\pi$ and $B\pi$ are given the trivial $\pi$ action. 
Take the pullback of this bundle along the first Postnikov stage $B \to B\pi$ and let $\xi(\alpha)$ denote the resulting $G$-bundle over $B$.  By construction, $\dim \xi(\alpha) =\alpha$.
\end{proof}

More generally, we give a strategy for identifying the subgroup $KO(\Pi B)$.
For this, we need some classical facts about equivariant $K$-theory.

Let $I(G)$ be the kernel of the augmentation $\varepsilon \colon RO(G) \to \Z$, where $\varepsilon (V) = |V|$, the underlying dimension of $V$. 
In this section, we will use the Atiyah--Segal  map
\[ c\colon KO_G(B) \to KO_G(B \times EG) \]
 induced by $B \times EG \to B\times \pt$. By the Atiyah--Segal completion theorem \cite{AtiyahSegal}, when $B$ is compact the map $c$ is the completion at $I(G)$. Note that
 \[ KO_G(B \times EG) \cong KO(B\times_G EG), \]
 as shown in \cite[Prop. 2.1]{SegalEqKThy}. 
 
Recall from \eqref{eq:modstructureROG} that $RO(\Pi B)$ is an $RO(G)$-module, so we can also consider the completion of $RO(\Pi B)$ at $I(G)$. Furthermore, the homomorphism  $\dim \colon KO_G(B) \to RO(\Pi B) $ is a map of $RO(G)$-modules.
 \begin{lem}\label{lem:completiondiagram} 
There is a
map 
\begin{align}\label{eq:dimcomp}
\dim\colon KO_G(B\times EG) \to RO(\Pi B)^{\wedge}_{I(G)}
\end{align}
that makes the diagram
\begin{align*}
\xymatrix{
KO_G(B) \ar[d]_-\dim \ar[r]^-{c} & KO_G(B\times EG) \ar@{.>}[d]^-{\dim} \\
RO(\Pi B) \ar[r]^-{c} & RO(\Pi B)^{\wedge}_{I(G)}
}
\end{align*}
commute, where the horizontal arrows are the completion maps. In particular, the image of $KO(\Pi B)$ under the completion map is contained in the image of the map \eqref{eq:dimcomp}.
 \end{lem}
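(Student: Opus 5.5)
We construct the dotted map by making $\dim$ compatible with the $I(G)$-adic filtrations and then passing to limits. The first step is to note that $\dim \colon KO_G(B) \to RO(\Pi B)$ is a homomorphism of $RO(G)$-modules. Here $RO(G)$ acts on $KO_G(B)$ through $V \mapsto \xi_V = B\times V$, and on $RO(\Pi B)$ through $\crush^*$. Compatibility holds because pulling back $\xi \otimes \xi_V$ along any $x\colon G/H\to B$ gives the pullback of $\xi$ tensored with $G\times_H i_H^*V$. Consequently $\dim$ sends $I(G)^n KO_G(B)$ into $I(G)^n RO(\Pi B)$ for every $n$, and therefore induces a map on completions
\[
\widehat{\dim}\colon KO_G(B)^{\wedge}_{I(G)} \to RO(\Pi B)^{\wedge}_{I(G)} .
\]
By construction, $\widehat{\dim}\circ c = c\circ \dim$, where $c$ denotes the completion maps.

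The second step identifies $KO_G(B\times EG)$ with the completion $KO_G(B)^{\wedge}_{I(G)}$ via the Atiyah--Segal completion theorem \cite{AtiyahSegal}. For $B$ compact this is immediate, and we then set the dotted arrow to be $\widehat{\dim}$ composed with this identification. For $B$ of finite type but not compact, we write $B=\colim B_k$ over finite skeleta. On the $K$-theory side, $KO_G(B\times EG)$ maps to $\lim_k KO_G(B_k\times EG) \cong \lim_k KO_G(B_k)^\wedge_{I(G)}$. For the grading, note that $\Pi B$ only sees the $2$-skeleton (plus orbit data), so $RO(\Pi B)\cong RO(\Pi B_k)$ for $k\ge 2$. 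Hence the maps $\widehat{\dim}_k$ assemble into a map
\[
KO_G(B\times EG)\to \lim_k RO(\Pi B_k)^\wedge_{I(G)} = RO(\Pi B)^\wedge_{I(G)}.
\]
We must also check that this is compatible with $c$, which follows from naturality of $\dim$ under the restrictions $B_k\to B$.

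For the last claim, let $\alpha=\dim\xi\in KO(\Pi B)$. Commutativity of the square gives $c(\alpha)=\dim(c(\xi))$, which lies in the image of \eqref{eq:dimcomp}.

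The main obstacle is the second step, namely the identification of $KO_G(B\times EG)$ with a completion when $B$ is not compact, together with the $\lim^1$ issues for infinite complexes. One fallback is to define the dotted map only on the image of $c$. A better fallback is to define it directly by the skeletal limit, which is exactly what the final assertion needs. A secondary check is that the identification $RO(\Pi B_k) \cong RO(\Pi B)$ holds equivariantly, i.e.\ on all fixed-point fundamental groupoids. This follows because the $H$-fixed points of the $G$-CW skeleta are the skeleta of the fixed points.
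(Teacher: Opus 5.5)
Your proposal is correct and takes essentially the same route as the paper. The paper also uses $\dim$ being an $RO(G)$-module map to pass to completions, applies Atiyah--Segal on the compact skeleta $B^n$, uses the eventually constant tower $RO(\Pi B)\cong RO(\Pi B^n)$ for $n\geq 2$ to identify $\lim_n RO(\Pi B^n)^\wedge_{I(G)}$ with $RO(\Pi B)^\wedge_{I(G)}$, and defines the dotted map as the composite through $\lim_n KO_G(B^n\times EG)\cong \lim_n KO_G(B^n)^\wedge_{I(G)}$. The ``main obstacle'' you flag is not really one: for non-compact $B$ you never need to identify $KO_G(B\times EG)$ with a completion, only to map it to that limit, so your fallbacks are unnecessary.
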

 \begin{proof}
 Whenever $B$ is compact, this follows from the Atiyah--Segal completion theorem which states that $c$ is the completion at the augmentation ideal. Recall that we always assume that $B$ is homotopy equivalent to a $G$-CW complex of finite type. Let $B^n$ be the $n$-skeleton of $B$. Then $B^n$ is compact. Further, the map 
 \begin{align}\label{eq:ROiso}
 RO(\Pi B) \to RO(\Pi B^n)
 \end{align}
 is an isomorphism for $n\geq 2$ since $\Pi B^n \to \Pi B$ is an equivalence of categories in that range.
We have a commutative diagram
  \begin{align*}
  \xymatrix@C=1.2pc{
& \displaystyle{\varprojlim_n} \ KO_G(B^n)  \ar[rr]^-{c} \ar'[d][dd]^(.45){}
&& \displaystyle{\varprojlim_n}  \ KO_G(B^n)^{\wedge}_{I(G)}  \ar[dd] \\
KO_G(B) \ar[ru] \ar[rr]^(.55){c} \ar[dd]^-\dim && KO_G(B\times EG) \ar[ru] \ar@{..>}[dd]^(.65){\dim} & \\
&  {\displaystyle \varprojlim_n} \  RO(\Pi B^n) \ar'[r]^(.65){}_(.65){}[rr] && {\displaystyle \varprojlim_n} \  RO(\Pi B^n)^{\wedge}_{I(G)}  \\ RO(\Pi B) \ar[ru]^-\cong \ar[rr]^-c && RO(\Pi B)^{\wedge}_{I(G)} \ar[ru]^-\cong &
}
\end{align*}
where the back square exists and commutes since the $B^n$ are compact, and inward pointing maps are just induced by the inclusions $B^n\subset B$ and the contravariance of the functors.
The bottom inward pointing maps are isomorphisms because the towers are eventually constant by \eqref{eq:ROiso} and thus the $\lim{}^1$ terms vanish.
The map \cref{eq:dimcomp} is the composition of the arrows around the right face of the cube.
 \end{proof}

The strategy of our main computation (which we apply in our example in \cref{sec:KOPiP} for the base $B = B_{C_2}O(1)$) is to prove that the map $c \colon RO(\Pi B) \to RO(\Pi B)^\wedge_{I(G)}$ is injective. Then 
\[ c \colon KO(\Pi B) \xrightarrow{\subset} \dim(KO_G(B\times EG))\]
and the idea is that $KO_G(B\times EG)$ is generically more accessible computationally than $KO_G(B)$. To demonstrate this, we give the following identification of the upper right corner of the diagram of \cref{lem:completiondiagram} when $B$ is an equivariant classifying space.
\begin{lem}\label{lem:Gamma}
Let $\Gamma = G \times H$, where $H$ is a compact Lie group. Let 
\[B_G H \simeq \Map(EG, EH)/H\]
be the classifying space for principal $G$-$H$-bundles (as in \cite{Lashof}).
There is an isomorphism
\[KO_{G}(B_G H  \times EG ) \cong KO(B\Gamma)  \cong RO(\Gamma)^\wedge_{I(\Gamma)}.\]
\end{lem}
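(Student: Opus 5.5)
The plan is to identify $B_G H \times EG$ with a free $\Gamma$-space and then apply two standard facts: Segal's identification of equivariant $K$-theory of free spaces with $K$-theory of the orbit space, and the Atiyah--Segal completion theorem for the compact Lie group $\Gamma$.

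\emph{Step 1: reduce to Borel $K$-theory.} Since $B_GH\times EG$ is $G$-free, \cite[Prop. 2.1]{SegalEqKThy} gives
\[KO_G(B_GH\times EG)\cong KO\big((B_GH\times EG)/G\big)\simeq KO(EG\times_G B_GH).\]
So it suffices to show that the homotopy orbit space $EG\times_G B_GH$ is a model for $B\Gamma$.

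\emph{Step 2: identify the homotopy orbits with $B\Gamma$.} Use the model $B_GH = \Map(EG,EH)/H$, which is the base of the universal principal $G$-$H$-bundle $E_GH=\Map(EG,EH)\to B_GH$. Then
\[EG\times_G B_GH \cong (EG\times E_GH)/(G\times H).\]
The $\Gamma$-action on $EG\times E_GH$ is free, because $G$ acts freely on the $EG$ factor and $H$ acts freely on $E_GH$. For contractibility, $EG\times E_GH$ is nonequivariantly equivalent to $E_GH$, and $E_GH$ is contractible since its fixed points for the trivial family (that is, its underlying space) are contractible by the universal property of $E_GH$ \cite{Lashof}. Concretely, $\Map(EG,EH)$ is contractible because $EH$ is. Hence $EG\times E_GH$ is a free contractible $\Gamma$-space, i.e. a model of $E\Gamma$, and $EG\times_G B_GH\simeq B\Gamma$. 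Combined with Step 1 this gives the first isomorphism. The main subtlety is point-set: one must check that the $\Gamma$-CW structure and the numerability or local-triviality hypotheses hold, so that a free contractible $\Gamma$-space really is universal. Lashof's framework supplies this.

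\emph{Step 3: completion.} Again by \cite[Prop. 2.1]{SegalEqKThy}, $KO(B\Gamma)\cong KO_\Gamma(E\Gamma)$. The Atiyah--Segal completion theorem \cite{AtiyahSegal}, in its real form for compact Lie groups, gives
\[KO_\Gamma(E\Gamma)\cong KO_\Gamma(\pt)^\wedge_{I(\Gamma)} = RO(\Gamma)^\wedge_{I(\Gamma)}.\]
Since $E\Gamma$ is not compact, this requires the skeletal inverse-limit form of the theorem. The $\lim^1$ term vanishes because the pro-system is Mittag-Leffler, which is part of the statement of the completion theorem. I expect Step 2, the identification of the homotopy orbits with $B\Gamma$, to be the main point needing care; the remaining steps are citations.
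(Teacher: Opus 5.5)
Your proposal is correct and follows the paper's proof. Both arguments use Segal's Prop.~2.1 to pass to the orbit space $B_GH\times_G EG$, then the equivalence $B_GH\times_G EG\simeq B\Gamma$, then the Atiyah--Segal completion theorem for $\Gamma$. The only difference is that the paper simply cites \cite[p.~242]{May_some} for the middle equivalence, whereas you derive it by exhibiting $EG\times\Map(EG,EH)$ as a free, contractible $\Gamma$-space.
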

\begin{proof}
It follows from \cite[p.242]{May_some}
that
\[    B_{G}H \times_{G} EG \simeq  B\Gamma.\]
This gives the first isomorphism. The second is the Atiyah--Segal completion theorem for the compact Lie group $\Gamma$. 
\end{proof}

\section{Parametrized cohomology}\label{sec:parametrized cohomology}
We now turn to parametrized cohomology, a cohomology theory graded on $RO(\Pi B)$. It is defined for ex-$G$-spaces, which play the role of pointed spaces in the equivariant parametrized setting.

\subsection{Homotopy recollections and notation}
Let $B$ be a $G$-space. An ex-$G$-space (over $B$) is a $G$-space $X$ together with an equivariant map $p\colon X \to B$ and an equivariant section $s \colon B \to X$. 
Let $\hPTopb{G}{B}$ denote the homotopy category of ex-$G$-spaces over $B$ and $\mathcal{SH}^G_B$ denote the stable homotopy category of  $G$-spectra parametrized over $B$. The smash product in both is denoted by $\wedge_B$.
There are the usual suspension and loop space functors, with an adjunction
\[\xymatrix{ \Sigma^{\infty}_B \colon \hPTopb{G}{B} \ar@<1ex>@{->}[r] & \PSH{G}{B}\ar@<1ex>@{->}[l] \colon \Omega_B^\infty } .\]
We often abuse notation and write $X$ for the parametrized suspension spectrum $\Sigma^\infty_B X$.

 If $f\colon A \to B$ is a map of $G$-spaces, there are base-change adjunctions
\begin{align*}
\xymatrix{ f_! \colon \hPTopb{G}{A} \ar@<1ex>@{->}[r] & \hPTopb{G}{B} \ar@<1ex>@{->}[l] \colon f^* }  & &
\xymatrix{ f_! \colon \PSH{G}{A} \ar@<1ex>@{->}[r] & \PSH{G}{B} \ar@<1ex>@{->}[l] \colon f^* }
\end{align*}
with $f_! \circ \Sigma_A^\infty  =  \Sigma_B^\infty\circ f_!$.
For example, if  $p\colon X \to B$ is an ex-$G$-space over $B$ with section $s \colon B\to X$ and $\crush \colon B \to G/G$, then $\crush_!(X)=X/s(B)$.
The functors $f^*$ are strong symmetric monoidal, 
\[f^*(Y\wedge_B Z) \cong f^*(Y)\wedge_A f^*(Z)\]
while the functors $f_!$ satisfy
\begin{align}\label{eq:f!smash}
f_!(f^*Y \wedge_A X) \cong Y\wedge_B f_!X.
\end{align}
See \cite[Prop. 2.2.2 and Thm. 11.4.1]{MaySig}.

If $p\colon X \to B$ is a $G$-space over $B$, then $X_{+_B} = X \sqcup B$ denotes the ex-$G$-space obtained from $X$ by adding a disjoint section. We simply write $X_+  = X_{+_B}$ when the base space $B$ is clear from context. Note that any $G$-space $X$ can be viewed as a $G$-space over itself.  
If $p\colon X \to B$ is a $G$-space over $B$, then 
  \[p_!(X_{+_X}) = X_{+_B}.\]

\subsection{Cohomology} We now recall some facts about parametrized cohomology. Equivariant parametrized cohomology of an ex-$G$-space $X$ over $B$ is graded by the group of representations of the equivariant fundamental groupoid $RO(\Pi B)$ and takes coefficients in a parametrized $G$-Mackey functor. For each $\gamma \in RO(\Pi B)$, given an ex-$CW(\gamma)$-complex structure on $X$ and a parametrized $G$-Mackey functor $\mM$, there is an integer-graded cellular cochain complex $C^{\gamma + *}(X,\mM)$ computing the parametrized cellular cohomology of $X$ with coefficients in $\mM$ as
\[
\widetilde{H}^{\gamma + *}_B(X, \mM) := H^*(C^{\gamma + *}(X,\mM), d^{\gamma + *}).
\]
We described this cellular cochain complex explicitly and computed some examples in previous work, for more details see \cite{witpaper}. 

In this paper, we take a different approach and let $\gamma$ vary to consider $RO(\Pi B)$-graded parametrized cohomology of an ex-$G$-space $X$ over $B$. 
As noted in \cite[\S3.10]{CW_book}, when $\mM$ is a Mackey ring (i.e., a Green functor), letting $\gamma$ vary, the $RO(\Pi B)$-graded parametrized cohomology of an ex-$G$-space $X$ over $B$ assembles into a graded ring. We denote this ring by
\[
\widetilde{H}^{RO(\Pi B)}_B(X,\mM).
\]

The unreduced cohomology of a $G$-space $X$ over $B$ in degree $\gamma \in RO(\Pi B)$ is defined to be
\[H^{\gamma}_B(X , \mM):=\widetilde{H}^{\gamma}_B(X_{+_B} , \mM).
\]

When $B=G/G$, we have $RO(\Pi G/G)=RO(G)$ and parametrized cohomology coincides with ordinary $RO(G)$-graded Bredon cohomology. We do not distinguish between them and simply write
\begin{align}\label{eq:ROPAREQUAL}
 H^{\star}(X , \mM) & = H^{\star}_{G/G}(X , \mM)
\end{align}
for $\star \in RO(G)$.

\subsection{Representability}
Throughout, we will be using coefficients in $\mR$, the constant parametrized Mackey functor for a commutative unital ring $R$ as defined in \cite[Def. 3.41]{witpaper}. Thus, we will study the cohomology $\widetilde{H}^{RO(\Pi B)}_B(X,\mR)$ as a graded ring. 
As usual, this cohomology theory is representable.

For each $\gamma \in RO(\Pi B)$, there is a parametrized $G$-spectrum $H\mR^\gamma$  that represents the parametrized cohomology group in the sense that 
\[   \widetilde{H}^{\gamma}_B(X,\mR) = [  \Sigma_B^\infty X, H\mR^\gamma]^G_B,\] 
for any ex-$G$-space $X$ over $B$ \cite[\S3.7]{CW_book}. 
The right-hand side denotes maps in $\mathcal{SH}_B^G$. The fibers of the spectrum $H\mR^\gamma$ are non-parametrized equivariant Eilenberg--MacLane spectra $H\mR$.

The following result is not new; it is certainly implied by the work of Costenoble--Waner, e.g., \cite[Thm. 3.8.2]{CW_book}. However, we did not find a reference for the precise statement we use and so include a proof. We thank Steve Costenoble for providing some of the details.
 \begin{theorem} \label{thm:shriek-iso}
 Let $f \colon A \to B$ be a map of $G$-spaces. Let $R$ be a commutative unital ring and $\mR$ the constant parametrized Mackey functor. Let $X$ be an ex-$G$-space over $A$. Then there is a natural ring homomorphism
\[f_! \colon  \wH^{RO(\Pi B)}_B(f_!X, \mR) 
\to \wH^{RO(\Pi A)}_A(X, \mR) \]
that restricts to an isomorphism $f_! \colon  \wH^{\gamma}_B(f_!X, \mR) 
\xrightarrow{\cong} \wH^{f^*\gamma}_A(X, \mR)$  for each $\gamma \in RO(\Pi B)$. 
 \end{theorem}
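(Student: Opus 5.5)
The plan is to work at the level of representing spectra and use the base-change adjunction $(f_!, f^*)$ on $\PSH{G}{B}$. The key input is an identification of parametrized Eilenberg--MacLane spectra under pullback: for every $\gamma \in RO(\Pi B)$ there should be an equivalence
\[ f^* H\mR^{\gamma} \simeq H\mR^{f^*\gamma} \]
in $\PSH{G}{A}$. Granting this, for an ex-$G$-space $X$ over $A$ we obtain natural isomorphisms
\[ \wH^{\gamma}_B(f_!X,\mR) = [\Sigma^\infty_B f_!X, H\mR^\gamma]^G_B \cong [f_!\Sigma^\infty_A X, H\mR^\gamma]^G_B \cong [\Sigma^\infty_A X, f^*H\mR^\gamma]^G_A \cong \wH^{f^*\gamma}_A(X,\mR), \]
using $f_!\circ\Sigma^\infty_A = \Sigma^\infty_B\circ f_!$ and the adjunction. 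This defines $f_!$ degreewise and shows it is an isomorphism in each degree $\gamma$. Naturality in $X$ follows because every step is natural.

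To prove $f^*H\mR^\gamma \simeq H\mR^{f^*\gamma}$, I would use the characterization of $H\mR^\gamma$ from \cite[\S3.7]{CW_book}. It is the parametrized spectrum whose fibers are $H\mR$, twisted by $\gamma$: over an object $x\colon G/H\to B$ of $\Pi B$ its fiber is $\Sigma^{\gamma(x)}H\mR$ (suitably shifted), and the transport along morphisms of $\Pi B$ is determined by $\gamma$. Pullback $f^*$ acts fiberwise, so the fiber of $f^*H\mR^\gamma$ over $y\colon G/K\to A$ is the fiber of $H\mR^\gamma$ over $f\circ y$, with transport given by $\gamma\circ\Pi f = f^*\gamma$. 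Hence $f^*H\mR^\gamma$ has the defining homotopy groups and twisting of $H\mR^{f^*\gamma}$. Both spectra are Eilenberg--MacLane in the parametrized sense, meaning their homotopy is concentrated in a single ``$\gamma$-degree''. Uniqueness of such objects, for instance by the classification in \cite[Thm.~3.8.2]{CW_book} or by an obstruction-theory argument over cells of $A$, then gives the equivalence. An alternative route works at the cellular level: a $CW(f^*\gamma)$-structure on $X$ yields a $CW(\gamma)$-structure on $f_!X$ with the same cells, and the cellular cochain complexes of \cite{witpaper} agree. This uses that $\mR$ is constant, so the coefficient systems pulled back along $f$ coincide.

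Multiplicativity comes next. The products on both sides are induced by pairings $H\mR^{\gamma}\wedge_B H\mR^{\delta}\to H\mR^{\gamma+\delta}$, which pull back to the corresponding pairings over $A$ because $f^*$ is strong symmetric monoidal and the identification above is compatible with the constant-coefficient multiplication $\mR\otimes\mR\to\mR$. For classes $a\colon f_!X\to H\mR^\gamma$ and $b\colon f_!X\to H\mR^\delta$, the cup product uses the diagonal of $f_!X$. Its adjoint is computed through the unit $X\to f^*f_!X$, which is compatible with diagonals, so the adjunction isomorphisms carry products to products. The unit class likewise goes to the unit.

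The main obstacle I expect is the equivalence $f^*H\mR^\gamma\simeq H\mR^{f^*\gamma}$, and in particular making it coherent enough in $\gamma$ that the multiplicative pairings match. Here the constancy of $\mR$ is essential, since for non-constant parametrized Mackey functors one would also have to pull back the coefficients. I would first try to deduce it directly from Costenoble--Waner's construction of $H\mR^\gamma$ as a functor of the representation $\gamma\colon\Pi B\to\vV_G(n)$, checking that the construction commutes with precomposition by $\Pi f$.
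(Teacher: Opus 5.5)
Your proposal is correct and follows essentially the same route as the paper: the degreewise isomorphism comes from the $(f_!,f^*)$ adjunction together with $f^*H\mR^\gamma\simeq (Hf^*\mR)^{f^*\gamma}=H\mR^{f^*\gamma}$. The paper simply cites this equivalence as \cite[Prop.~3.8.3]{CW_book} rather than reproving it via uniqueness of Eilenberg--MacLane objects. For multiplicativity the paper likewise uses that $f^*$ is symmetric monoidal, and it dispatches the coherence issue you flag by noting that the pairings $H\mR^\alpha\wedge_B H\mR^\beta\to H\mR^{\alpha+\beta}$ are determined by their effect on parametrized homotopy groups \cite[\S3.10]{CW_book}, so the comparison square commutes.
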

\begin{proof}
 Let $H\underline{R}^\gamma$ be the representing spectrum for the cohomology in degree $\gamma$. From \cite[Prop. 3.8.3]{CW_book} we have
\[ f^*H\underline{R}^\gamma \cong (Hf^*\underline{R})^{f^*\gamma}. \]
The isomorphism $f_!$ comes from adjunction
\begin{align*}
 \wH^{\gamma}_B(f_!X, \mR) & \cong [f_!X,H\underline{R}^\gamma ]_B^G \\
&\cong [X,f^*H\underline{R}^\gamma ]_A^G \\
&\cong [X,(Hf^*\underline{R})^{f^*\gamma} ]_A^G.
\end{align*}
To show multiplicativity, consider the pairing 
\[\mu^{\alpha,\beta}_{\uR} \colon H\uR^\alpha \wedge_B H\uR^\beta \to H\uR^{\alpha+\beta} \]
inducing the cup product.
Since $f^*$ is closed symmetric monoidal \cite[Thm. 11.4.1]{MaySig}, we get a diagram
\[ \xymatrix@C=1pc{
f^*(H\underline{R}^\alpha\wedge_B H\underline{R}^\beta) \ar[d]^-{f^*(\mu^{\alpha,\beta}_{\uR})} &  f^*(H\underline{R}^\alpha)\wedge_A f^*(H\underline{R}^\beta) \ar[l]_-{\simeq} \ar[r]^-{\simeq} & (Hf^*\underline{R})^{f^*\alpha}\wedge_A (Hf^*\underline{R})^{f^*\beta} \ar[d]^-{\mu_{f^*\uR}^{f^*\alpha,f^*\beta}}
\\
f^*(H\underline{R}^{\alpha+\beta}) \ar[rr]^-{\simeq} &  &(H  f^*\underline{R})^{  f^*(\alpha+\beta)} 
}   \]
where the vertical maps are the maps of spectra giving the cup product. 
The cup product is characterized by its effect on parametrized homotopy groups \cite[\S 3.10]{CW_book}. Comparing the effect on homotopy, we see that 
the diagram commutes.
This proves the claim.
\end{proof}

\begin{cor}
    Let $p \colon X \to B$ be a $G$-space over $B$. Whenever the induced map $p^* \colon RO(\Pi B) \to RO(\Pi X)$ is injective, there is a natural isomorphism of rings
 \[p_! \colon  H^{RO(\Pi B)}_B(X, \mR) 
\xrightarrow[]{\cong} H^{p^*(RO(\Pi B))}_X(X, \mR). \]
\end{cor}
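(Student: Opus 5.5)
We will deduce this directly from \cref{thm:shriek-iso}, applied with $f = p \colon X \to B$ and with the ex-$G$-space $X_{+_X}$ over $X$. The identity $p_!(X_{+_X}) = X_{+_B}$ gives
\[
\wH^{\gamma}_B(p_!X_{+_X},\mR) = \wH^{\gamma}_B(X_{+_B},\mR) = H^{\gamma}_B(X,\mR).
\]
Likewise, $\wH^{p^*\gamma}_X(X_{+_X},\mR) = H^{p^*\gamma}_X(X,\mR)$. So \cref{thm:shriek-iso} supplies a natural ring homomorphism
\[
p_! \colon H^{RO(\Pi B)}_B(X,\mR) \to H^{RO(\Pi X)}_X(X,\mR),
\]
and for each $\gamma \in RO(\Pi B)$ it restricts to an isomorphism $H^{\gamma}_B(X,\mR) \cong H^{p^*\gamma}_X(X,\mR)$.

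Next we determine the image. Degree by degree, the image of $p_!$ lies in the summands indexed by $p^*\gamma$, so it is contained in
\[
H^{p^*(RO(\Pi B))}_X(X,\mR) = \bigoplus_{\delta \in p^*(RO(\Pi B))} H^{\delta}_X(X,\mR).
\]
This is a subring, because $p^*$ is a ring homomorphism and so $p^*(RO(\Pi B))$ is closed under addition, which is the grading relevant for cup products. Thus $p_!$ corestricts to a ring homomorphism onto this graded subring. Naturality comes directly from the naturality in \cref{thm:shriek-iso}.

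The remaining step, and the only point needing care, is bijectivity. The source is a direct sum over $\gamma\in RO(\Pi B)$ and the target is a direct sum over $\delta \in p^*(RO(\Pi B))$. The map $p_!$ sends the $\gamma$-summand isomorphically onto the $p^*\gamma$-summand. Injectivity of $p^*$ means that $\gamma \mapsto p^*\gamma$ is a bijection from $RO(\Pi B)$ onto $p^*(RO(\Pi B))$. Hence no two source summands go to the same target summand, and every target summand is hit. A direct sum of isomorphisms over a bijection of index sets is an isomorphism, so $p_!$ is an isomorphism of graded rings. Without injectivity, distinct $\gamma,\gamma'$ with $p^*\gamma = p^*\gamma'$ would both map to the same summand, which is exactly why the hypothesis is needed.
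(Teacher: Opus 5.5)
Your proposal is correct and follows the paper's argument: the paper's proof simply applies \cref{thm:shriek-iso} to $X_{+_X}$ using $p_!(X_{+_X}) = X_{+_B}$. You have additionally made explicit the bookkeeping the paper leaves implicit, namely that injectivity of $p^*$ turns the degreewise isomorphisms into a bijection of graded summands onto the $p^*(RO(\Pi B))$-graded subring.
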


\begin{proof}
    This follows from \cref{thm:shriek-iso} using that $p_! (X_{+X}) = X_{+B}$.
\end{proof}

Applying \cref{thm:shriek-iso} to $\crush \colon B \to G/G$ allows us to identify the $RO(G)$-graded portion of parametrized cohomology. We get the following  isomorphism, which will be used implicitly throughout the rest of the paper. 

\begin{cor}\label{cor:changeofbases}
 Let $X$ be an ex-$G$-space over $B$ and $\alpha\in RO(G)$. There is a natural ring homomorphism
  \[\wH^{RO(G)}(X/s(B), \mR) \to \wH^{\crush^*(RO(G))}_B(X, \mR) \]
  where the left-hand side is the  $RO(G)$-graded 
  cohomology of $X/s(B)$. 
  
  When $\crush^* \colon RO(G) \to RO(\Pi B)$ is injective (for example whenever $B$ has a fixed point as in \cref{prop:ROGinjects}) there is a natural ring isomorphism
  \[\wH^{RO(G)}(X/s(B), \mR) \xrightarrow{\cong} \wH^{\crush^*(RO(G))}_B(X, \mR). \]
 \end{cor}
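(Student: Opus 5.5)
The plan is to apply \cref{thm:shriek-iso} to the map $\crush \colon B \to G/G$. In the parametrized setting over the base $G/G$, an ex-$G$-space is simply a based $G$-space. The grading is $RO(\Pi G/G) \cong RO(G)$, and by \eqref{eq:ROPAREQUAL} parametrized cohomology over $G/G$ is ordinary $RO(G)$-graded Bredon cohomology.

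\textbf{Step 1.} Identify the source. For an ex-$G$-space $X$ over $B$ with section $s$, we have $\crush_!(X) = X/s(B)$, as recalled in the preliminaries. Hence the source of the homomorphism in \cref{thm:shriek-iso}, taken with $A = B$, base $G/G$ and $f=\crush$, is
\[
\wH^{RO(\Pi G/G)}_{G/G}(\crush_! X,\mR) = \wH^{RO(G)}(X/s(B),\mR).
\]

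\textbf{Step 2.} Apply \cref{thm:shriek-iso}. It provides a natural ring homomorphism
\[
\crush_! \colon \wH^{RO(G)}(X/s(B),\mR) \to \wH^{RO(\Pi B)}_B(X,\mR)
\]
which in each degree $\alpha\in RO(G)$ is an isomorphism onto $\wH^{\crush^*\alpha}_B(X,\mR)$. Its image therefore lies in the $\crush^*(RO(G))$-graded part, and that part is closed under products because $\crush^*$ is a ring homomorphism. Corestricting to this subring gives the first claim. Naturality in $X$ comes directly from \cref{thm:shriek-iso}.

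\textbf{Step 3.} Prove bijectivity when $\crush^*$ is injective. The graded pieces of the target are indexed by $\beta \in \crush^*(RO(G))$. Each such $\beta$ equals $\crush^*\alpha$ for a unique $\alpha$, so the target decomposes as $\bigoplus_{\alpha} \wH^{\crush^*\alpha}_B(X,\mR)$. The map is then a degreewise isomorphism summed over a bijection of indexing sets, hence an isomorphism of graded rings. The fixed-point case follows from \cref{prop:ROGinjects}.

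\textbf{Main obstacle.} The only subtle point is what happens when $\crush^*$ fails to be injective. If two degrees $\alpha\neq\alpha'$ share the same image, they land in the same graded piece of the target. The total map is then no longer injective; it is only a degreewise isomorphism. This is exactly why the statement claims a homomorphism in general and an isomorphism only under the injectivity hypothesis. I would check in Step 2 that the homomorphism remains well defined and multiplicative in this case, which holds because multiplicativity is established degreewise in \cref{thm:shriek-iso}.
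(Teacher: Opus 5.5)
Your proposal is correct and matches the paper's argument: apply \cref{thm:shriek-iso} to $\rho\colon B\to G/G$ and use $\rho_!(X)=X/s(B)$, so that the degreewise isomorphisms assemble into a ring isomorphism exactly when $\rho^*$ is injective. Your extra bookkeeping, namely the corestriction to the $\rho^*(RO(G))$-graded subring and the discussion of non-injective $\rho^*$, spells out what the paper's one-line proof leaves implicit.
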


\begin{proof}
    This follows from \cref{thm:shriek-iso} using that $\crush_!(X) = X/s(B)$.
\end{proof}
 
Thus, when $B$ has a fixed point, we have that the $RO(G)$-graded portion of the parametrized cohomology of $X$ with constant coefficients is the usual $RO(G)$-graded cohomology of the quotient $X/s(B)$. In particular, taking $X = B_{+B}$ with a fixed point, there is a natural ring isomorphism
  \[\wH^{RO(G)}(B_+, \mR) \cong \wH^{\crush^*(RO(G))}_B(B_{+_B}, \mR), \]
  where the left-hand side is the $RO(G)$-graded cohomology of the $G$-space $B$ with a disjoint basepoint. Equivalently, there is a natural ring isomorphism of unreduced cohomology
  \[H^{RO(G)}(B, \mR) \cong H^{\crush^*(RO(G))}_B(B, \mR). \]

\section{The Thom isomorphism}\label{sec:thomiso}
One of the most striking properties of equivariant parametrized cohomology is the presence of a Thom isomorphism for vector bundles in any coefficients. We spell out some of the details below since the Thom isomorphism plays a central role in our computations.

\subsection{Relative Thom space} 
If $\xi \colon E \to B$ is an (actual) orthogonal $G$-vector bundle, then the $B$-relative Thom space $\Th_B(\xi)$ is the space obtained from $E$ by fiberwise one-point compactification. We write $\xi_b$ for the fiber over $b\in B$.
The relative Thom space is an ex-$G$-space over $B$, with section the inclusion of $B$ at infinity. We also denote by $\Th_B(\xi)$ its suspension spectrum $\Sigma^\infty_B\Th_B(\xi)$ in $\mathcal{SH}^G_B$. 

The relative Thom space satisfies some nice properties. For example, given bundles $\xi$ and $\eta$, we have
\[\Th_B(\xi)\wedge_B \Th_B(\eta) \simeq \Th_B(\xi\oplus \eta), \]
since $S^{\xi_b}\wedge S^{\eta_b} \simeq S^{\xi_b+\eta_b}$ for each $b\in B$. 

The relative Thom space is closely related to the classical Thom space, which is key to translating between the parametrized and non-parametrized contexts.  In particular, we have 
\[\crush_!( \Th_B(\xi)) = \Th(\xi),\]
where the right hand side is the usual Thom space, obtained from $\Th_B(\xi)$  by collapsing the section. The relative Thom space also respects pullbacks. That is, if $f\colon A\to B$ is a $G$-map, we have
\[f^*(\Th_B(\xi)) = \Th_A(f^*\xi).\]

We can use the relative Thom space to define suspensions and loops with respect to bundles. 

\begin{defn}
For any parametrized spectrum $E$ over $B$, there is a $G$-spectrum over $B$
\[\Sigma_B^{\xi}E := E\wedge_B  \Th_B(\xi),\]
that is the fiberwise 
smash product of $E$ with $\Th_B(\xi)$, with fibers $E_b\wedge S^{\xi_b}$.
There is also a fiberwise function $G$-spectrum over $B$
 \[\Omega_B^{\xi}E := F_B(\Th_B(\xi),E),\]
whose fibers are the function spectra $F(S^{\xi_b}, E_b) \simeq \Sigma^{-\xi_b} E_b$.  
\end{defn}
 The functors $\Omega_B^{\xi}$ and $\Sigma_B^{\xi}$ are adjoint equivalences in the homotopy category of parametrized spectra $\mathcal{SH}^G_B$. For this reason, we let 
 \[\Sigma_B^{-\xi}:=\Omega_B^\xi,\]
 allowing us to make sense of suspension by a virtual bundle. 
 Therefore,
 \[[X,Y]^G_B \simeq [\Sigma^\xi_B X,\Sigma^\xi_B Y]^G_B\]
in $\mathcal{SH}^G_B$ for any orthogonal $G$-bundle $\xi$, including the virtual case.

It will be useful to make the functor $\Omega^\xi_B(-)$ more concrete and exhibit it as a relative smash product. 
For this, we can use the Madsen--Tillmann construction. Let $\VV$ be the regular real representation of $G$ so that $\VV^{\infty} = \colim_n \VV^{n}$ is a complete universe. Then 
\begin{equation}\label{eq:Grd}
Gr_d:=Gr_d(\VV^{\infty}) \simeq B_GO(d)
\end{equation}
classifies $G$-bundles of dimension $d$.
Let $\gamma$ be the universal $d$-plane $G$-bundle over $Gr_d$.

Recall we have assumed that  $B$ is a $G$-CW complex of finite type and so it is a paracompact Hausdorff space. Therefore, isomorphism classes of $G$-bundles of dimension $d$ over $B$ are classified by $G$-homotopy classes of maps to 
$B_GO(d)$.

\begin{defn}\label{defn:GrdANDthom}
Suppose that $d<n|G|$. 
\begin{enumerate}[(a)]
\item Let $Gr_d^n=Gr_d(\VV^{n})$ and  
\[\iota\colon Gr_d^n \xrightarrow{\subset} Gr_d\]
be the inclusion.
Let
$\gamma^n = \iota^*\gamma$ be the pullback of the universal bundle $\gamma$ along $\iota$. Then $\gamma^n$ is a subbundle of the trivial bundle with fibers
$\VV^{n}$ over $Gr_d(\VV^{n})$. 

\item Let $\nu^n$ be the orthogonal complement of $\gamma^n$ and define 
\[\Th_{Gr_d^n}(-\gamma^n):=\Sigma^{-\VV^{n}}_{Gr_d^n}\Th_{Gr_d^n}(\nu^n) \in \mathcal{SH}_{Gr_d^n}^G\]
and
\[\Th_{Gr_d}(-\gamma^n):=\iota_!\Th_{Gr_d^n}(-\gamma^n) \in \mathcal{SH}_{Gr_d}^G.\]

\item The restriction of $\nu^{n+1}$ over $Gr_d^{n+1}$ to $Gr_d^n$ along the inclusion is isomorphic to $\nu^n\oplus \VV$.
So the inclusions induce maps 
\[\xymatrix@R=1pc{\Th_{Gr_d}(-\gamma^n)\ar@{=}[d] \ar[r] & \Th_{Gr_d}(-\gamma^{n+1})
\ar@{=}[d] \\
\Sigma^{-\VV^{ n}}_{Gr_d}\Th_{Gr_d}(\nu^{n}) \ar[r] & \Sigma^{-\VV^{n+1}}_{Gr_d}\Th_{Gr_d}(\nu^{n+1}). } \]
Let
\[\Th_{Gr_d}(-\gamma) = \colim_n\Th_{Gr_d}(-\gamma^n) \in \mathcal{SH}^G_{Gr_d}.\]
\end{enumerate}
\end{defn}

\subsection{Invertibility} 
As we see below, the relative Thom space is invertible in the stable homotopy category.
\begin{defn}
Suppose that $\xi$ is an (actual) orthogonal $G$-bundle over $B$ classified by
$f_{\xi} \colon B \to Gr_d(\VV^{ \infty})$,
so that $f_{\xi}^*\gamma \cong \xi$. Define
\begin{align*}
\Th_{B}(-\xi) &= f_\xi^*\Th_{Gr_d}(-\gamma)  \in  \mathcal{SH}^G_B\\
\Th(-\xi)&=\rho_!(\Th_{B}(-\xi)) \in \mathcal{SH}^G.
\end{align*}
\end{defn}

\begin{lem}
If $\xi$ is a $G$-bundle over $B$, then
\[\Omega^\xi_B(E) = \Sigma^{-\xi}_B(E) \simeq E\wedge_B \Th_B(-\xi)\]
in $\mathcal{SH}^G_B$. In particular,
\[\Th_B(\xi)  \wedge_B \Th_B(-\xi) \simeq B_+ \]
and $\Th_B(\xi)$ is invertible in the symmetric monoidal category $(\mathcal{SH}^G_B, \wedge_B,B_+)$.
\end{lem}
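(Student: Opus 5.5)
The plan is to first prove the displayed equivalence $\Omega^\xi_B(E)\simeq E\wedge_B \Th_B(-\xi)$ in the universal case over $Gr_d$, then pull it back along $f_\xi$, and finally read off invertibility by taking $E=\Th_B(\xi)$.

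\textbf{Universal case.} Work over $Gr_d^n$ with $d<n|G|$. There we have the splitting $\gamma^n\oplus\nu^n\cong Gr_d^n\times \VV^n$, a trivial bundle. Since relative Thom spaces convert direct sums into fiberwise smash products, this gives
\[\Th_{Gr_d^n}(\gamma^n)\wedge_{Gr_d^n}\Th_{Gr_d^n}(\nu^n)\simeq \Th_{Gr_d^n}(\VV^n)=\Sigma^{\VV^n}_{Gr_d^n}(Gr^n_{d})_+ .\]
Desuspending by the trivial representation $\VV^n$ (which is invertible, $\Sigma^{\pm \VV^n}$ being adjoint equivalences) yields
\[\Th_{Gr_d^n}(\gamma^n)\wedge_{Gr_d^n}\Th_{Gr_d^n}(-\gamma^n)\simeq (Gr_d^n)_+ .\]
A symmetric monoidal category in which $X\wedge Y\simeq \mathbb{1}$ has $Y$ as the inverse of $X$, so $-\wedge Y$ is inverse to $-\wedge X$ and hence is its adjoint. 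Consequently
\[F_{Gr^n_d}(\Th(\gamma^n),E)\simeq E\wedge \Th_{Gr_d^n}(-\gamma^n).\]

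\textbf{Passing to $Gr_d$ and to $B$.} We pass from $Gr_d^n$ to $Gr_d$ using the colimit in \cref{defn:GrdANDthom}(c), which is compatible with these equivalences because the structure maps come from $\nu^{n+1}|\cong\nu^n\oplus\VV$. For compact $B$, or on each finite skeleton, $f_\xi$ factors through some $Gr_d^n$ up to homotopy. So one can instead pull back directly: $f^*$ is strong symmetric monoidal and $f^*\Th=\Th(f^*)$, so pulling the above equivalence back along the factorization gives
\[\Th_B(\xi)\wedge_B\Th_B(-\xi)\simeq B_+ .\]
Here one checks that $f_\xi^*\Th_{Gr_d}(-\gamma)$ agrees with the pullback of $\Th_{Gr_d^n}(-\gamma^n)$. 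This uses $\iota^*\iota_!\simeq \id$ for the inclusion $\iota$ and the fact that the colimit is eventually constant on cells of bounded dimension. From invertibility, the identity $\Omega^\xi_B E=F_B(\Th_B\xi,E)\simeq E\wedge_B\Th_B(-\xi)$ follows formally as in the universal case, since the inverse of an invertible object is its dual.

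\textbf{Main obstacle.} The expected difficulty is this colimit/pullback compatibility for a $B$ that is not compact. I would handle it by working skeleton by skeleton and using the finite-type hypothesis, so that in each dimension $f_\xi$ lands in a finite $Gr_d^n$ and the maps in the colimit system are equivalences on that skeleton.
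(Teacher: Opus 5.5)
Your argument is essentially the paper's. Both use the splitting $\gamma^n\oplus\nu^n\cong Gr_d^n\times\VV^n$, pass to $Gr_d$ through the colimit of the $\iota_!$'s, pull back along $f_\xi$, and read off the formula for $\Omega^\xi_B$ from invertibility. In the paper, the passage to $Gr_d$ goes through the projection formula $\iota_!(\iota^*Y\wedge X)\simeq Y\wedge\iota_!X$ together with $\iota^*\iota_!\simeq\id$; that is the precise content of the ``compatibility'' step you only gesture at.

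The non-compactness ``obstacle'' you anticipate does not actually arise. Once the equivalence holds over all of $Gr_d$, pulling back along the strong symmetric monoidal functor $f_\xi^*$ gives it for every $B$. So no factorization through a finite $Gr_d^n$ and no skeleton-by-skeleton argument is needed. A skeleton-wise argument would in any case require a globally defined comparison map, since abstract equivalences on each skeleton do not assemble on their own.
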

\begin{proof}
Using that $f^*_\xi \colon \mathcal{SH}^{G}_{Gr_d} \to \mathcal{SH}^{G}_{B}$ is strong symmetric monoidal  we have
\begin{align*}
\Th_B(\xi) \wedge_B \Th_B(-\xi) 
&\simeq  f_\xi^*( \Th_{Gr_d}(-\gamma)\wedge_{Gr_d} \Th_{Gr_d}(-\gamma) )\\
 &\simeq  f^*_\xi\left(\colim_n  \iota_!\Th_{Gr_d^n}(\gamma^n)\wedge_{Gr_d^n} \iota_!\Th_{Gr_d^n}(-\gamma^n)\right) \\
  &\simeq f^*_\xi\left(\colim_n \iota_!\left( \iota^*\iota_!\Th_{Gr_d^n}(\gamma^n)\wedge_{Gr_d^n} \Th_{Gr_d^n}(-\gamma^n)\right)\right) \\
      &\simeq f^*_\xi\left(\colim_n \iota_!\left( \Th_{Gr_d^n}(\gamma^n)\wedge_{Gr_d^n} \Th_{Gr_d^n}(-\gamma^n)\right)\right) \\
          &\simeq f^*_\xi\left(\colim_n \iota_! \Sigma^{-\VV^n}_{Gr_d^n}\Th_{Gr_d^n}(\gamma^n \oplus \nu^n)\right) \\
          &\simeq f^*_\xi\left(\colim_n \iota_!( {Gr_d^n}_+)\right) \\
    &\simeq f^*_\xi ({Gr_d}_+) \simeq B_+.
\end{align*}
The third equivalence is an instance of \eqref{eq:f!smash} as in \cite[Thm. 11.4.1]{MaySig} and the fourth uses \cite[Rem. 11.4.7]{MaySig} to deduce that
\[\iota^*\iota_! \Sigma^\infty_{Gr_d^n} \Th_{Gr_d^n}(\gamma^n) \simeq \Sigma^\infty_{Gr_d^n} \iota^*\iota_! \Th_{Gr_d^n}(\gamma^n)\simeq  \Sigma^\infty_{Gr_d^n}\Th_{Gr_d^n}(\gamma^n) . \] 
So indeed $(-)\wedge_B \Th_B(-\xi)$ is an inverse for $(-)\wedge_B \Th_B(\xi)$ and the claim follows.
\end{proof}

\begin{rem}
The same proof shows that in  $\mathcal{SH}^G$
\[\Th(\xi)\wedge \Th(-\xi) \simeq B_+.\]
\end{rem}

\begin{cor}\label{cor:HRgammaHThom}
Let $R$ be a ring and $ \mR=\rho^*\mR$ be the constant parametrized Mackey functor on $R$.
For $\gamma\in RO(\Pi B)$, and
$\xi$ an actual bundle,
\begin{align*}
H\mR^{\gamma+\xi} &\simeq \Sigma_B^\xi  H\mR^\gamma \simeq H\mR^\gamma \wedge_B \Th_B(\xi) \\
H\mR^{\gamma-\xi} &\simeq \Sigma_B^{-\xi}  H\mR^\gamma \simeq H\mR^\gamma \wedge_B \Th_B(-\xi) .
\end{align*}
\end{cor}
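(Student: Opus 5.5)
The plan is to prove the first equivalence and deduce the second from it. The essential input is that the representing spectrum $H\mR^{\gamma+\xi}$ is characterized, among parametrized spectra over $B$, by its fibers together with the way $\Pi B$ acts on them. By \cite[\S3.7]{CW_book}, $H\mR^{\alpha}$ for $\alpha\in RO(\Pi B)$ is a parametrized Eilenberg--MacLane spectrum. Its fiber over $x\colon G/H\to B$ is $\Sigma^{\alpha(x)}H\mR$, and morphisms of $\Pi B$ act through the functor $\alpha\colon \Pi B\to \vV_G(n)$. Its parametrized homotopy groups are concentrated in degree $\alpha$ (in the sense of \cite{CW_book}), with value $\mR$. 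The second isomorphism in each line, $\Sigma^{\pm\xi}_B H\mR^\gamma\simeq H\mR^\gamma\wedge_B\Th_B(\pm\xi)$, holds by definition for $+\xi$ and by the preceding lemma for $-\xi$. So only $H\mR^{\gamma+\xi}\simeq\Sigma^\xi_B H\mR^\gamma$ needs proof.

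First I will compute the fibers. Over $x\colon G/H\to B$, the fiber of $\Sigma^\xi_B H\mR^\gamma$ is $(H\mR^\gamma)_x\wedge S^{\xi_x}\simeq \Sigma^{\gamma(x)+\xi_x}H\mR$. The action of a morphism $(\alpha,\omega)$ in $\Pi B$ is the smash product of the action on $H\mR^\gamma$ with the parallel-transport isomorphism $S^{\xi_x}\to S^{\xi_{y}}$ along $\omega$. By definition of $\dim\xi$ and of the sum in $RO(\Pi B)$, this is exactly the functor $\gamma+\dim\xi$.

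Next I will show that $\Sigma^\xi_B H\mR^\gamma$ has the defining homotopy-group property in degree $\gamma+\xi$. The homotopy groups of $\Sigma^\xi_B E$ in degree $\beta+\xi$ are identified with those of $E$ in degree $\beta$. One way to see this is to use adjunction: $[\Sigma^{\beta+\xi}_B(\text{cell}),\Sigma^\xi_B E]\cong[\Sigma^\beta_B(\text{cell}),E]$, because $\Sigma^\xi_B$ is an equivalence of $\mathcal{SH}^G_B$ and the $\beta+\xi$-cells are the $\xi$-suspensions of $\beta$-cells. Consequently $\Sigma^\xi_B H\mR^\gamma$ is concentrated in degree $\gamma+\xi$ with value $\mR$. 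The uniqueness of parametrized Eilenberg--MacLane spectra \cite[\S3.7, Prop.~3.8.3]{CW_book} then gives the equivalence.

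For the negative case I apply the positive case with $\gamma$ replaced by $\gamma-\xi$, which gives $H\mR^{\gamma}\simeq\Sigma^\xi_B H\mR^{\gamma-\xi}$. Applying the inverse functor $\Sigma^{-\xi}_B=\Omega^\xi_B$ to both sides yields the claim.

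The step I expect to be the main obstacle is identifying the cells of type $\beta+\xi$ with the $\xi$-suspensions of cells of type $\beta$. This requires that $\xi$ restricted to an orbit $G/H\to B$ is $G\times_H\xi_x$, and that parallel transport is compatible with the cell structures. If this proves delicate, I will instead use the constancy of $\mR$. After pulling back along each $x\colon G/H\to B$, both spectra become nonparametrized $\Sigma^{V}H\mR$ for the same representation $V$, and a map between them can be built from the Thom class of $\xi$ restricted to fibers.
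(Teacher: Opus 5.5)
Your proposal is correct and follows the paper's route: the paper's whole proof is to invoke the uniqueness of parametrized Eilenberg--MacLane spectra from \cite[\S3.7]{CW_book}. Your check that $\Sigma^\xi_B H\mR^\gamma$ has the fibers and homotopy Mackey functor of type $\gamma+\xi$ spells out what the paper leaves implicit. That check includes the compatibility of the twisting functors with $\Sigma^\xi_B$ via parallel transport in $\xi$, which is exactly how $\dim\xi$ is defined. One small correction: the uniqueness you need is the characterization in \S3.7 of \cite{CW_book}, whereas \cite[Prop.~3.8.3]{CW_book} is the pullback compatibility $f^*H\mR^\gamma\simeq (Hf^*\mR)^{f^*\gamma}$.
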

\begin{proof}
This follows from the characterization of the Eilenberg--MacLane spectrum $H\mR^{\gamma\pm \xi}$ from \cite[\S3.7]{CW_book} as determined by its fibers.
\end{proof}

\subsection{The parametrized Thom isomorphism}
An important selling point of parametrized cohomology is the existence of Thom isomorphisms for every virtual 
bundle and coefficients. This can be thought of as a generalization of the classical fact that, so long as one uses cohomology with local coefficients, there is always a Thom isomorphism. 
\begin{theorem}[Parametrized Thom isomorphism {\cite[Thm. 3.11.3]{CW_book}}]\label{thm:thom}
Let $\gamma$ be a virtual $G$-bundle over $B$. For any $\alpha \in RO(\Pi B)$, there
is an isomorphism 
\[ {H}^{\alpha}_B(B, \mR) \xrightarrow[\cong]{t_\gamma} \widetilde{H}^{\alpha+\gamma}_B(\Th_B(\gamma), \mR).\]
Identifying $H\mR^\gamma$ with $H\mR \wedge_B \Th_B(\gamma)$ as in \cref{cor:HRgammaHThom}, the isomorphism is realized by
multiplication with the class 
\[t_\gamma \colon \Th_B(\gamma) \to   H\mR \wedge_B \Th_B(\gamma),\] 
given by the external smash product of the unit of $S^0 \to H\mR$ with $ \Th_B(\gamma)$.
\end{theorem}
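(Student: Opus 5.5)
The plan is to show that the isomorphism is induced by maps of representing spectra, using \cref{cor:HRgammaHThom} together with the invertibility of $\Th_B(\gamma)$ in $\mathcal{SH}^G_B$. First assume $\gamma=\xi$ is an actual bundle. Unreduced cohomology is represented by $B_+ = \Sigma^\infty_B B_{+_B}$, the unit of $(\mathcal{SH}^G_B,\wedge_B)$. Since smashing with $\Th_B(\xi)$ is an equivalence of $\mathcal{SH}^G_B$ with inverse $-\wedge_B\Th_B(-\xi)$, we get
\[
H^\alpha_B(B,\mR) = [B_+, H\mR^\alpha]^G_B \xrightarrow{\cong} [B_+\wedge_B \Th_B(\xi), H\mR^\alpha\wedge_B\Th_B(\xi)]^G_B \cong [\Th_B(\xi), H\mR^{\alpha+\xi}]^G_B = \wH^{\alpha+\xi}_B(\Th_B(\xi),\mR).
\]
The last identification is \cref{cor:HRgammaHThom}.

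For a virtual bundle $\gamma=\xi-\eta$, I would make the same argument with $\Th_B(\gamma):=\Th_B(\xi)\wedge_B\Th_B(-\eta)$, using the second line of \cref{cor:HRgammaHThom} for the $-\eta$ part. Alternatively, one can compose the isomorphism for $\xi$ with the inverse of the isomorphism for $\eta$. In both cases the map is a composite of equivalences, so it is bijective.

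It remains to identify this map with multiplication by $t_\gamma$. Write $u\colon B_+\to H\mR^0$ for the unit; it is the pullback along $\crush$ of $S^0\to H\mR$. Consider the map
\[
\Th_B(\gamma)\simeq B_+\wedge_B\Th_B(\gamma)\xrightarrow{u\wedge 1} H\mR\wedge_B\Th_B(\gamma)\simeq H\mR^\gamma,
\]
which is $t_\gamma$. Given a class $x\colon B_+\to H\mR^\alpha$, the image of $x$ under the isomorphism above is $x\wedge 1_{\Th_B(\gamma)}$. I would factor this as the composite
\[
\Th_B(\gamma)\simeq B_+\wedge_B B_+\wedge_B \Th_B(\gamma) \xrightarrow{x\wedge u\wedge 1} H\mR^\alpha\wedge_B H\mR\wedge_B\Th_B(\gamma) \to H\mR^{\alpha+\gamma}.
\]
Here the last arrow is the pairing $\mu$ from the proof of \cref{thm:shriek-iso}, composed with \cref{cor:HRgammaHThom}, and the factorization holds because $\mu\circ(1\wedge u)$ is the identity by unitality. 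The composite is exactly the cup product $x\cdot t_\gamma$, where $B_+$ acts on $\Th_B(\gamma)$ through the diagonal.

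The main obstacle is this last compatibility. The identification of $H\mR^{\alpha}\wedge_B\Th_B(\gamma)$ with $H\mR^{\alpha+\gamma}$ in \cref{cor:HRgammaHThom} is only characterized on fibers, so one has to check that it agrees with the cup product pairing $\mu^{\alpha,\gamma}$. I would handle this as in \cref{thm:shriek-iso}: both maps are determined by their effect on parametrized homotopy groups, by \cite[\S3.10]{CW_book}. Fiberwise, both reduce to the nonequivariant-parametrized statement that $H\mR\wedge S^{\gamma_b}$ with the unit class represents the suspension isomorphism. Alternatively, since the statement is cited as \cite[Thm. 3.11.3]{CW_book}, one could simply appeal to that theorem for this identification.
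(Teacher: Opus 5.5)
Your proposal is correct, and its core is exactly the paper's argument: the paper's proof is just the chain $[B_+,H\mR^\alpha]^G_B\cong[\Sigma^\gamma_B B_+,\Sigma^\gamma_B H\mR^\alpha]^G_B\cong[\Th_B(\gamma),H\mR^{\alpha+\gamma}]^G_B$. This uses the invertibility of $\Th_B(\gamma)$ and \cref{cor:HRgammaHThom}, and virtual bundles are handled directly through $\Sigma^{-\xi}_B=\Omega^\xi_B$. Your unitality argument identifying the map with multiplication by $t_\gamma$, together with your flagged compatibility of \cref{cor:HRgammaHThom} with the cup-product pairing, goes beyond what the paper writes; the paper leaves that identification entirely to the citation of \cite[Thm. 3.11.3]{CW_book}.
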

\begin{proof}
We have
\begin{align*}
[B_+,  H\mR^\alpha]^G_B &\cong [\Sigma_B^\gamma B_+,  \Sigma_B^\gamma H\mR^\alpha]^G_B \\
&\cong [\Th_B(\gamma), H\mR^{\alpha+\gamma}]^G_B. \qedhere
\end{align*}
\end{proof}

The Thom isomorphism and the functor $\rho_!$ allow us to relate parametrized cohomology with $RO(G)$-graded cohomology of Thom spaces.
\begin{prop}\label{prop:cohBThom}
Let $\gamma$ be a virtual orthogonal $G$-bundle over $B$.
Let $R$ be a ring and $ \mR=\rho^*\mR$ be the constant parametrized Mackey functor on $R$. For $\star\in RO(G)$,
there are isomorphisms 
\begin{align*}
\phi \colon H^{\star-\gamma}_B(B,\mR) &\xrightarrow{\cong} \widetilde{H}^{\star}(\Th(\gamma),\mR) 
\end{align*}
where the right-hand side is $RO(G)$-graded cohomology.
\end{prop}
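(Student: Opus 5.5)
The isomorphism $\phi$ will be the composite of the parametrized Thom isomorphism (\cref{thm:thom}) with the base-change isomorphism for $\crush\colon B\to G/G$ (\cref{thm:shriek-iso}). The virtual bundle $\gamma$ plays the role of the Thom bundle, and the degree $\star-\gamma$ is where we start.

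\textbf{Step 1 (Thom isomorphism).} Apply \cref{thm:thom} to $\gamma$ with $\alpha=\crush^*\star-\gamma\in RO(\Pi B)$. This gives
\[ H^{\star-\gamma}_B(B,\mR)\xrightarrow[\cong]{t_\gamma}\widetilde{H}^{\crush^*\star}_B(\Th_B(\gamma),\mR). \]
For virtual $\gamma=\xi'-\xi''$ we interpret $\Th_B(\gamma)=\Th_B(\xi')\wedge_B\Th_B(-\xi'')$, which is the setting already covered by \cref{cor:HRgammaHThom}.

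\textbf{Step 2 (base change).} Apply \cref{thm:shriek-iso} to $f=\crush$ and $X=\Th_B(\gamma)$. We have $\crush_!\Th_B(\gamma)=\Th(\gamma)$: for actual bundles this is the definition, and for virtual ones it holds by the definition $\Th(-\xi)=\crush_!\Th_B(-\xi)$. The theorem then gives, for each $\star\in RO(G)\cong RO(\Pi G/G)$, an isomorphism
\[ \widetilde{H}^{\star}(\Th(\gamma),\mR)=\widetilde{H}^{\star}_{G/G}(\crush_!\Th_B(\gamma),\mR)\xrightarrow{\cong}\widetilde{H}^{\crush^*\star}_B(\Th_B(\gamma),\mR). \]
Here we use the identification \eqref{eq:ROPAREQUAL}. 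Inverting this map and composing with $t_\gamma$ produces $\phi$.

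Note that \cref{thm:shriek-iso} is stated degreewise in $\gamma\in RO(\Pi B_{\text{target}})$, so no injectivity of $\crush^*$ is needed. This is the difference from \cref{cor:changeofbases}: we never need to identify a whole $RO(G)$-graded ring, only the individual groups.

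\textbf{Main obstacle.} The delicate point is the virtual case. There $\Th_B(\gamma)$ is a parametrized spectrum rather than an ex-space, so we must check two things.
\begin{itemize}
\item \cref{thm:shriek-iso} applies to spectra. Its proof only uses the adjunction $\crush_!\dashv\crush^*$ on $\mathcal{SH}^G_B$ together with $\crush^*H\mR^\star\simeq H\mR^{\crush^*\star}$, so it holds verbatim for $X\in\mathcal{SH}^G_B$.
\item $\crush_!$ of the smash product $\Th_B(\xi')\wedge_B\Th_B(-\xi'')$ is the spectrum used to define $\Th(\gamma)$. We take this as the definition of $\Th(\gamma)$ for virtual $\gamma$.
\end{itemize}
Alternatively, the whole argument can be phrased directly at the level of represented maps:
\[ [B_+,H\mR^{\star-\gamma}]_B\cong[\Th_B(\gamma),\crush^*H\mR^\star]_B\cong[\crush_!\Th_B(\gamma),H\mR^\star]. \]
The first isomorphism uses \cref{cor:HRgammaHThom} and invertibility of $\Th_B(\gamma)$; the second is the $\crush_!\dashv\crush^*$ adjunction. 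This version avoids both issues at once.
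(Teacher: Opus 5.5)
Your proposal is correct and matches the paper's proof: the paper likewise defines $\phi$ as the composite of the parametrized Thom isomorphism $t_\gamma$ of \cref{thm:thom} with the degreewise base-change isomorphism $\crush_!$ of \cref{thm:shriek-iso}. Your extra remarks on the virtual case make explicit a point the paper leaves implicit, namely that \cref{thm:shriek-iso} is applied to the parametrized spectrum $\Th_B(\gamma)$ rather than an ex-space, and that $\crush_!\Th_B(\gamma)=\Th(\gamma)$.
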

\begin{proof}
If $V\in RO(G)$ and we compose the Thom isomorphism of \cref{thm:thom} with the isomorphism of \cref{thm:shriek-iso}, we recover the isomorphism of \cref{prop:cohBThom}
\[ \phi \colon {H}^{V -\gamma}_B(B, \mR) \xrightarrow[\cong]{t_\gamma} \widetilde{H}^{V}_B(\Th_B(\gamma), \mR)\xrightarrow[\cong]{\rho_!}  \widetilde{H}^{V}(\Th(\gamma), \mR) .\qedhere\]
\end{proof}

For any virtual bundles $\gamma_i$ where $i=1,2$, we have pairings in $RO(G)$-graded cohomology
\begin{align}\label{eq:thompair} \xymatrix@C=2pc{\widetilde{H}^{V_1}(\Th(\gamma_1),\mR)\otimes \widetilde{H}^{V_2}(\Th(\gamma_2),\mR) \ar[r]^-{\smile} \ar[d]_-{\times}& \widetilde{H}^{V_1+V_2}(\Th(\gamma_1+\gamma_2),\mR)  \\ 
 \widetilde{H}^{V_1+V_2}(\Th(\gamma_1)\wedge \Th(\gamma_2),\mR) \ar[ur]_-{\Delta^*}
} 
\end{align}
given by composing the cross-product with the pullback along the diagonal $\Delta \colon  B \to B\times B$, using that
\[ \Th(\gamma_1 + \gamma_2)=\Delta^*(\Th(\gamma_1 \times \gamma_2)) \to \Th(\gamma_1 \times \gamma_2) = \Th(\gamma_1)\wedge \Th(\gamma_2).  \]

The Thom isomorphisms of \cref{prop:cohBThom} are multiplicative. The proof of the following result is a tedious diagram chase using the adjunction $(\rho_!,\rho^*)$ and the fact that $\crush^*$ is strong symmetric monoidal.
\begin{lem}\label{lem:phimultiplication}
Let $x_1,x_2\in H^{RO(\Pi B)}_B(B,\mR)$
 of degrees
\[|x_i| = \gamma_i + V_i\]
for $\gamma_i$ virtual bundles and $V_i\in RO(G)$.
Then for $\phi$ the Thom isomorphism as in \cref{prop:cohBThom},
\[\phi(x_1 x_2) =\phi(x_1)\phi(x_2) \in  \widetilde{H}^{V_1+V_2}(\Th(\gamma_1+\gamma_2),\mR).\]
\end{lem}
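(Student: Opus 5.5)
Write $y_i = t_{-\gamma_i}$-images: recall that for $x_i \in H^{V_i+\gamma_i}_B(B,\mR)$, $\phi(x_i)$ is obtained as follows (note the sign convention in \cref{prop:cohBThom}: degree $\star-\gamma$ maps to $\Th(\gamma)$, so here we apply it with the bundle $-\gamma_i$, or equivalently treat $\gamma_i$ with the appropriate sign; the argument is identical). First, $x_i$ is represented by a map $B_+ \to H\mR^{V_i+\gamma_i}$ in $\mathcal{SH}^G_B$. By \cref{cor:HRgammaHThom}, this target is $H\mR^{V_i}\wedge_B \Th_B(\gamma_i)$. Smashing with $\Th_B(-\gamma_i)$ gives a map $\Th_B(-\gamma_i)\to H\mR^{V_i}$. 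Then, since $H\mR^{V_i}=\crush^*H\mR^{V_i}$, the $(\crush_!,\crush^*)$ adjunction gives a map $\Th(-\gamma_i)\to H\mR^{V_i}$ in $\mathcal{SH}^G$. The plan is to show that each of these three steps is compatible with products, and compose.

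\textbf{Step 1.} The product $x_1x_2$ is the composite
\[
B_+\xrightarrow{\Delta} B_+\wedge_B B_+ \xrightarrow{x_1\wedge_B x_2} H\mR^{V_1+\gamma_1}\wedge_B H\mR^{V_2+\gamma_2}\xrightarrow{\mu} H\mR^{V_1+V_2+\gamma_1+\gamma_2}.
\]
Under the identifications of \cref{cor:HRgammaHThom}, $\mu^{V_1+\gamma_1,V_2+\gamma_2}$ should agree with
\[
\mu^{V_1,V_2}\wedge \mathrm{id}\colon H\mR^{V_1}\wedge_B H\mR^{V_2}\wedge_B\Th_B(\gamma_1)\wedge_B\Th_B(\gamma_2)\to H\mR^{V_1+V_2}\wedge_B\Th_B(\gamma_1+\gamma_2),
\]
up to the symmetry isomorphism. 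I would check this as in the proof of \cref{thm:shriek-iso}: the cup product pairing is characterized by its effect on parametrized homotopy groups, equivalently fiberwise. On fibers both maps are the usual $H\mR\wedge S^{\xi_b}$ pairings, so they agree. With $\mF$ coefficients no sign issues arise from the twist; for general $R$ one must track the symmetry, which is harmless because the Thom spectra are swapped consistently on both sides.

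\textbf{Step 2.} Desuspending by $\Th_B(-\gamma_1-\gamma_2)\simeq \Th_B(-\gamma_1)\wedge_B\Th_B(-\gamma_2)$ then shows that the map $\Th_B(-\gamma_1-\gamma_2)\to H\mR^{V_1+V_2}$ attached to $x_1x_2$ is the external product $\mu\circ(\tilde x_1\wedge_B\tilde x_2)$ of the maps $\tilde x_i$ attached to $x_i$. This is just the naturality of the invertibility equivalences.

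\textbf{Step 3.} Pass through $\crush_!$. For maps $f_i\colon Y_i\to \crush^*E_i$, the adjoint of
\[
\mu\circ(f_1\wedge_B f_2)\colon Y_1\wedge_B Y_2\to \crush^*(E_1\wedge E_2)
\]
is the composite
\[
\crush_!(Y_1\wedge_B Y_2)\to \crush_!Y_1\wedge \crush_!Y_2 \xrightarrow{\bar f_1\wedge \bar f_2} E_1\wedge E_2.
\]
Here the first arrow is the oplax structure map of $\crush_!$ induced by strong monoidality of $\crush^*$. For $Y_i=\Th_B(-\gamma_i)$, I would identify that structure map with the diagonal $\Th(-\gamma_1-\gamma_2)=\Delta^*\Th(-\gamma_1\times-\gamma_2)\to \Th(-\gamma_1)\wedge\Th(-\gamma_2)$ from \eqref{eq:thompair}. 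This is the main obstacle: for actual bundles it is a point-set check, namely that collapsing sections commutes with the fiberwise diagonal. For virtual bundles I would reduce to the actual case via the colimit description of \cref{defn:GrdANDthom} and naturality in $f_\xi^*$. Combining the three steps gives $\phi(x_1x_2)=\Delta^*(\phi(x_1)\times\phi(x_2))=\phi(x_1)\phi(x_2)$.
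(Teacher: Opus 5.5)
Your proposal is correct and follows the route the paper indicates: the paper proves this only by remarking that it is a tedious diagram chase using the $(\crush_!,\crush^*)$ adjunction and the strong symmetric monoidality of $\crush^*$, and your Steps 1--3 carry out that chase, with Step 1 justified as in the proof of \cref{thm:shriek-iso} by comparing effects on parametrized homotopy. You are also right that the statement's sign convention must be read with $-\gamma_i$, as in \cref{thm:KOPIB}, so that the target is $\widetilde{H}^{V_1+V_2}(\Th(-\gamma_1-\gamma_2),\mR)$.
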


We summarize the main result of this section giving a multiplicative Thom isomorphism from parametrized cohomology to $RO(G)$-graded cohomology, the main tool for our computations. 

\begin{thm}\label{thm:KOPIB}
For $\gamma \in KO(\Pi B)$ and $\star \in RO(G)$, there is an isomorphism 
\[\phi \colon H^{\gamma+\star}_B(B,\mR) \xrightarrow{\cong} \widetilde{H}^{\star}(\Th(-\gamma),\mR).\]
The cup products in parametrized cohomology  correspond to the
pairings between Thom spectra   obtained by composing the external cross product with pullback along the diagonal. That is, the following diagram commutes
\[\xymatrix{
H^{\gamma_1+\star}_B(B,\mR) \otimes H^{\gamma_2+\star}_B(B,\mR) \ar[r]^-{\smile} \ar[d]^-\cong_-\phi&   H^{\gamma_1+\gamma_2+\star}_B(B,\mR)\ar[d]^-\cong_-\phi
\\
\widetilde{H}^{\star}(\Th(-\gamma_1),\mR)\otimes \widetilde{H}^{\star}(\Th(-\gamma_2),\mR) \ar[r]^-{ \Delta^* \circ \times} & \widetilde{H}^{\star}(\Th(-\gamma_1-\gamma_2),\mR) .
}\]
\end{thm}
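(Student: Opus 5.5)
Since $\gamma\in KO(\Pi B)$, I first choose a virtual $G$-bundle $\xi$ over $B$ with $\dim(\xi)=\gamma$; this exists by \cref{defn:KOPiB}. The plan is to apply \cref{prop:cohBThom} to the virtual bundle $-\xi$. That result gives an isomorphism
\[\phi\colon H^{\star-(-\xi)}_B(B,\mR)=H^{\gamma+\star}_B(B,\mR)\xrightarrow{\cong}\widetilde{H}^{\star}(\Th(-\xi),\mR),\]
namely the composite $\rho_!\circ t_{-\xi}$ of the parametrized Thom isomorphism of \cref{thm:thom} with the base-change isomorphism of \cref{thm:shriek-iso}. I then write $\Th(-\gamma)$ for $\Th(-\xi)$.

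For this to be meaningful, the group $\widetilde{H}^\star(\Th(-\xi),\mR)$ should depend only on $\gamma$ and not on the choice of $\xi$. It does up to isomorphism, because it is isomorphic to $H^{\gamma+\star}_B(B,\mR)$, which depends only on $\gamma$. I will phrase the statement that way, noting that the Thom spectrum itself may depend on $\xi$; see \cref{rem:non-trivial-but-dim-V}.

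For the product, I take $x_i\in H^{\gamma_i+\star_i}_B(B,\mR)$ with $\gamma_i=\dim\xi_i$ and apply \cref{lem:phimultiplication} with the virtual bundles $-\xi_i$ and $V_i=\star_i$. This gives
\[\phi(x_1x_2)=\phi(x_1)\phi(x_2)\in\widetilde{H}^{\star_1+\star_2}(\Th(-\xi_1-\xi_2),\mR).\]
Here $\dim(\xi_1+\xi_2)=\gamma_1+\gamma_2$ because $\dim$ is additive. The product on the right is by definition \eqref{eq:thompair}, that is, $\Delta^*\circ\times$. This is exactly commutativity of the square, read with the $RO(G)$-gradings added; the diagram in the statement uses one $\star$ as shorthand.

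The main obstacle is consistency. The map $\phi$ for the sum must be built from the chosen bundle $\xi_1+\xi_2$, so that it matches the bundle used in the diagonal pairing. This is ensured by choosing representatives additively, and \cref{lem:phimultiplication} is stated for exactly such representatives.
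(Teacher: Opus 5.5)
Your proposal is correct and matches the paper's argument: the paper gives no separate proof, stating the theorem as a summary of \cref{prop:cohBThom} applied to $-\gamma$ (with $\gamma$ realized as $\dim$ of a virtual bundle) combined with the multiplicativity statement \cref{lem:phimultiplication}. Your two clarifications are accurate and are left implicit in the paper: $\Th(-\gamma)$ means $\Th(-\xi)$ for a chosen representative $\xi$, and the diagram should be read with gradings $\star_1$, $\star_2$ and $\star_1+\star_2$.
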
 

Applying the Thom isomorphism of \cref{thm:KOPIB} to a bundle with $\dim(\gamma) = V$ (see \cref{rem:non-trivial-but-dim-V}) will give an $RO(G)$-graded Thom isomorphism. 
\begin{cor}\label{thm:ROGThomdimVbundles}
Let $\gamma$ be a virtual bundle such that $\dim(\gamma) = V$ for $V$ in $RO(G)$.  Then there is a Thom isomorphism in $RO(G)$-graded cohomology
\[H^{\star}(B,\mR) \xrightarrow{\cong} \widetilde{H}^{\star+V}(\Th(\gamma),\mR).\]
\end{cor}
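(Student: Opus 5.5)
The plan is to obtain the corollary directly from \cref{thm:KOPIB} (equivalently \cref{prop:cohBThom}), read off in two different ways. Apply \cref{prop:cohBThom} to the virtual bundle $\gamma$ with grading $\star+V\in RO(G)$. This gives an isomorphism
\[
\phi\colon H^{\star+V-\gamma}_B(B,\mR)\xrightarrow{\cong}\widetilde{H}^{\star+V}(\Th(\gamma),\mR).
\]
The degree on the left is computed in $RO(\Pi B)$, where the hypothesis $\dim(\gamma)=V$ says precisely that $\gamma$ and $\crush^*V$ are the same element. So $\star+V-\gamma=\crush^*\star$ in $RO(\Pi B)$, and the left side is $H^{\crush^*\star}_B(B,\mR)$.

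Next I identify this group with ordinary $RO(G)$-graded cohomology. By \cref{thm:shriek-iso} applied to $\crush\colon B\to G/G$ and $X=B_{+_B}$ (so that $\crush_!X=B_+$), we get
\[
\widetilde{H}^{\star}(B_+,\mR)\cong\widetilde{H}^{\crush^*\star}_B(B_{+_B},\mR)=H^{\crush^*\star}_B(B,\mR).
\]
This identification holds degreewise for each $\star\in RO(G)$, and it does not need injectivity of $\crush^*$. Injectivity only matters for assembling the groups into a ring, which is what the remark after \cref{cor:changeofbases} uses. Composing the two isomorphisms gives
\[
H^\star(B,\mR)\cong H^{\crush^*\star}_B(B,\mR)\cong\widetilde{H}^{\star+V}(\Th(\gamma),\mR),
\]
which is the claim.

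The one step needing care is the degree bookkeeping. The equality $\star+V-\gamma=\crush^*\star$ holds in $RO(\Pi B)$ and not at the level of bundles, and parametrized cohomology only depends on the grading through its class in $RO(\Pi B)$. Strictly, the group depends on a choice of representative, up to a canonical isomorphism induced by the natural isomorphism of representations. I would note this and observe that any such isomorphism is an isomorphism of groups, which is all the statement asks for.

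If one also wants to describe the map, it is cup product with the image under $\phi$ of the unit-type class in degree $V-\gamma=0$. That is, it is multiplication by a Thom class $t\in\widetilde{H}^{V}(\Th(\gamma),\mR)$, which follows from the multiplicativity in \cref{lem:phimultiplication}. This is optional and not needed for the existence of the isomorphism.
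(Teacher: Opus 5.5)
Your proposal is correct and is essentially the paper's proof. The paper applies \cref{thm:KOPIB} to $-\gamma$, which is equivalent to your use of \cref{prop:cohBThom} for $\gamma$ in degree $\star+V$; it then uses $\dim(\gamma)=V$ to identify the source degree with $\rho^*\star$ and passes to $RO(G)$-graded cohomology. The one difference is in that last step: the paper appeals to $B$ having a fixed point, which is not among the corollary's hypotheses, whereas your degreewise application of \cref{thm:shriek-iso} to $\rho\colon B\to G/G$ correctly needs no injectivity of $\rho^*$ and hence no fixed point.
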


\begin{proof}
    We apply \cref{thm:KOPIB} to the bundle $-\gamma$ whose degree is $-\dim(\gamma)=-V$ to get
  \[H^{\star}_B(B,\mR) \xrightarrow{\cong} \widetilde{H}^{\star+V}(\Th(\gamma),\mR)\]
  and recall that if $B$ has a fixed point then in $RO(G)$ degrees 
  \[H^{\star}_B(B,\mR) \cong H^{\star}(B,\mR). \qedhere \]
  \end{proof}

\begin{rem}
Of course, it is not possible to have an $RO(G)$-graded Thom isomorphism for a bundle $\gamma$ if we cannot make sense of the dimension of $\gamma$ in $RO(G)$. Thus, \cref{thm:KOPIB} does not imply an $RO(G)$-graded Thom isomorphism (on the nose) for bundles with different representations over different fixed points. See for example the $C_2$-Möbius bundle on $S^{1,1}$ as described in \cite[Example 3.4]{Hazel_fund} and \cite[Example 2.34]{witpaper}. 
\end{rem}

\begin{rem}
The condition that $\dim(\gamma)=V$ is very restrictive. It is known there are Thom isomorphisms in $RO(G)$-graded cohomology for \emph{orientable} homogeneous bundles, where here we mean orientable in the sense of \cite[Def. 2.8]{CMW}. See \cite[Thm. 1.15.5]{CW_book} for Burnside coefficients or \cite[Thm. 1.4]{BhattZou} for a general treatment. Neither \cref{thm:KOPIB} nor \cref{thm:ROGThomdimVbundles} require orientability.
\end{rem}

Specializing to $\mF$-coefficients, we get an $RO(G)$-graded Thom isomorphism for homogeneous bundles using the theory of \cite{BhattZou}, again without any requirement of orientability. 

The following result is implicit in \cite{BhattZou} and we simply explain how to put the ingredients together. 
For $G=C_2$, this is also shown in \cite[Thm. 3.16]{Hazel_fund}.  We suspect this result has been known longer, but we were not able find an original reference. 
\begin{theorem}
\label{thm:homothom}
Let $G$ be a finite group. 
Suppose the
underlying space of $B$ is path connected and that $B$ has a fixed point.
Let $\xi$ be a virtual homogeneous bundle over $B$ with virtual fibers $\xi_0$. Then $\xi$ has a Thom class $\bar t_{\xi} \in \widetilde{H}^{\xi_0}(\Th(\xi),\mF)$ in $RO(G)$-graded cohomology and there is a Thom isomorphism
\[ H^{\star}(B,\mF) \xrightarrow[\cong]{\bar t_\xi} \widetilde{H}^{\star+\xi_0}(\Th(\xi),\mF)\]
obtained by multiplication by the Thom class.
\end{theorem}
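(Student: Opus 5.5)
We will deduce the statement from the general machinery of \cite{BhattZou}, using that $\mF$-coefficients remove all orientation issues. First reduce to actual bundles. Write $\xi = \xi' - \xi''$ with $\xi',\xi''$ actual homogeneous bundles with fibers $\xi_0'$ and $\xi_0''$. Choose a homogeneous complement $\eta$ of $\xi''$, so that $\xi''\oplus\eta \cong B\times W$ is trivial for a $G$-representation $W$. Then $\eta$ is homogeneous with fiber $W-\xi_0''$, and $\xi \simeq \xi'\oplus\eta - W$. Since $\Th(\zeta - W)\simeq \Sigma^{-W}\Th(\zeta)$, a Thom class and Thom isomorphism for $\xi'\oplus\eta$ desuspend to one for $\xi$. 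Finite type of $B$ lets us work skeleton by skeleton and pass to the limit via a Milnor sequence, where the $\lim^1$ terms vanish because the isomorphisms are compatible. So it suffices to treat an actual homogeneous bundle $\xi$ with fiber $V=\xi_0$.

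\textbf{Step 1: produce the Thom class.} We use the $\mF$-orientation theory of \cite{BhattZou}. For each subgroup $H$ and each point $b\in B^H$, the class $\bar t$ should restrict along $S^{\xi_b}\to \Th(\xi)$ to a generator of $\widetilde{H}^{V}(S^{V},\mF)\cong \F_2$. By \cite{BhattZou}, a homogeneous bundle with fiber $V$ has a Thom class exactly when it is orientable with respect to $H\mF$. The obstruction is the action of $\pi_1$ of the fixed-point components (and of the Weyl-group morphisms in $\Pi B$) on $\widetilde{H}^V_G(G/H_+\wedge S^V,\mF)$. These groups are $\F_2$, or at worst $\mF(G/H)=\F_2$, whose only automorphism is the identity, so every such action is trivial. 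Hence $\xi$ is $H\mF$-orientable and has a class $\bar t_\xi\in\widetilde{H}^{V}(\Th(\xi),\mF)$. The fixed point $b_0$ and path connectedness of the underlying space let us normalize $\bar t_\xi$ by its restriction at $b_0$, and they ensure that $V$ is well defined in $RO(G)$.

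\textbf{Step 2: the Thom isomorphism.} Multiplication by $\bar t_\xi$ is a map of $RO(G)$-graded cohomology theories in $B$, obtained by applying the Thom diagonal to restrictions of $\xi$ to subcomplexes. It is an isomorphism on $G$-cells $G/H\times D^n$: over such a cell $\xi$ is $G\times_H V|_H$, and the map is the suspension isomorphism because $\bar t$ restricts to a generator there. A Mayer--Vietoris or cellular induction followed by a colimit over skeleta then gives the isomorphism for all of $B$. This is the argument of \cite[Thm. 1.4]{BhattZou}.

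The main obstacle is Step 1: showing that orientability in the sense of \cite{BhattZou} reduces to triviality of an action on an $\F_2$-line, including at the non-fixed isotropy levels. It is not obvious that $\widetilde{H}^V(G/H_+\wedge S^V,\mF)$ is $\F_2$ in degree $V$ for every $H$, so we would check this directly from the cellular structure of $S^V$ restricted to $H$. An alternative worth trying first, when $G=C_2$, is to compare with \cite[Thm. 3.16]{Hazel_fund}.
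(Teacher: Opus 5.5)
Your proposal is correct and takes essentially the paper's route. You apply \cite[Thm.~1.4]{BhattZou} with $R=H\mF$ to an actual homogeneous bundle, observe that $H\mF$-orientability is automatic because $\F_2^\times=\{1\}$, and handle virtual bundles by a standard reduction and colimit argument. The obstacle you flag is settled in the paper by \cite[Thm.~1.15]{BhattZou}, which says orientability is equivalent to $w_1^{H\mF}(\xi)=0$ in $[B_+,\mathrm{bgl}_1(H\mF)]^G$; this group vanishes because $\mathrm{bgl}_1(H\mF)$ is contractible, so no isotropy-by-isotropy check is needed. In any case, $\widetilde{H}^V_G(G/H_+\wedge S^V;\mF)\cong \mF(G/H)=\F_2$ follows directly from the suspension isomorphism.
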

\begin{proof}
We start with the case of $\xi$ an actual bundle.
This proof is not meant to be readable without absorbing the relevant sections of \cite{BhattZou}. 
 In the notation of \cite{BhattZou}, we work with $R=H\mF$. A homogeneous bundle is $R$-homogeneous in the sense of Definition 2.24 of \cite{BhattZou}. Further, the relative dimension of a homogeneous bundle with respect to $R$ is $\mathcal{I} = R \wedge S^{\xi_0}$. So, by \cite[Theorem 1.4]{BhattZou}, $\xi$ has an $R$-Thom class in $[\Th(\xi), \Sigma^{\xi_0} R]^G$ if and only if $\xi$ has an $R$-orientation in the sense of Definition 2.31 of \cite{BhattZou}. Each homogeneous bundle has  a first Stiefel--Whitney class relative to $R$, as in Definition 3.2 of \cite{BhattZou}, which is an element 
\[w_1^R(\xi) \in [B_+, \mathrm{bgl}_1(R)]^G. \]
By Theorem 1.15 of \cite{BhattZou}, $\xi$ is $R$-orientable if and only $w_1^R(\xi)=0$.
Now, specializing to $R=H\mF$, since $\F_2^\times =\{1\}$,  it follows that $\mathrm{bgl}_1(H\mF)$ is contractible. Thus, for any homogeneous bundle $\xi$, the Stiefel--Whitney class $w_1^{H\mF}(\xi)=0$ and so $\xi$ is $H\mF$-orientable. 

The argument for virtual bundles is standard, using the fact that cohomology behaves well under colimits.
\end{proof}

\section{Characteristic classes}\label{sec:char}
In this section, we define certain characteristic classes and units that arise from the relationship of parametrized cohomology with the cohomology of Thom spectra. 

\subsection{Euler classes}\label{sec:Euler}
The Euler classes in the parametrized context are defined in the expected way. We describe their construction below. See also \cite{CW_book} and \cite{CostenobleB}.

Let $\xi$ be an actual bundle over $B$ and consider the map of ex-$G$-spaces over $B$ 
\[\iota \colon B_+ =B\sqcup s(B)\to \Th_B(\xi)  \]
 sending $B$ to the zero section and $s(B)$ to the section at infinity. This gives rise to a stable map
\begin{align}\label{eq:agamma}\xymatrix{  B_+ \ar[r]^-{\iota} &   \Th_B(\xi) \simeq S^0 \wedge  \Th_B(\xi) \ar[r]^-{i \wedge \id} & H\underline{\mathbb{Z}} \wedge  \Th_B(\xi) \simeq   H\underline{\mathbb{Z}}^{\xi} } \end{align}
where the second arrow is the external smash product of the unit of $ H\underline{\mathbb{Z}}$ with the identity of $\Th_B(\xi)$.

\begin{defn}[Euler Class]\label{defn:euler}
The composite  \eqref{eq:agamma} is  an element 
\[a_\xi \in [  B_+ ,  H\underline{\mathbb{Z}}^{\xi}]^{G}_B \cong H^{\xi}_B(B,\underline{\mathbb{Z}}),\]
which we call the \emph{Euler class} of $\xi$. We also denote by $a_\xi$ its image in $H^\xi_B(B , \mR) $ for any ring $R$.
\end{defn}

\begin{rem}
Our notation for the Euler class comes from \cite[Definition 3.11]{HHR}, since $a_\xi$ is a direct analogue of the non-parametrized Euler classes $a_V$. If $B=G/G$ and $\xi \colon V\times G/G \to G/G$ for $V$ an orthogonal representation of $G$, the definitions agree. It also follows from the next lemma that if $\xi = B\times V \to B$ is the trivial bundle, then $a_\xi = \rho^*a_V$ for $\rho \colon B \to G/G$, and so in this case we abuse notation and simply write $a_V$. 
\end{rem}

The Euler classes satisfy a number of nice properties.
\begin{lem}\label{lem:eulerprop}
Let $\xi$ and $\gamma$ be (actual) orthogonal $G$-vector bundles over $B$, and let $H$ be a subgroup of $G$. 
\begin{enumerate}[(a)]
\item  Under the restriction
\[ i^*_H(a_\xi) = a_{i^*_H\xi}.\]
\item If $f\colon A \to B$ is a $G$-map, then
\[f^*(a_\xi) = a_{f^*\xi},\]
where the two sides are compared using the identification
\[ f_! \colon H^{\xi}_B(A,\uZ)\xrightarrow{\cong} H^{f^*\xi}_A(A,\uZ)  .\]
\item The Euler classes are multiplicative, satisfying the relation 
\[a_{\xi+\gamma} = a_{\xi}a_\gamma .\]
\item For $\crush\colon B\to G/G$, $\crush_!(a_\xi)$ is the element of $[B_+, \Th(\xi)\wedge H\uZ]^G$ induced by the zero section.
\end{enumerate}
\end{lem}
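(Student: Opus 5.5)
Each part reduces to a property of the map \eqref{eq:agamma}: everything in that composite is built from the zero section $\iota$ and the unit $S^0\to H\uZ$, and both are compatible with the relevant functors. I would first verify (b) and (d), then deduce (a) from the same argument as (b), and prove (c) last.

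For (b), I apply $f^*$ to the composite $B_+\xrightarrow{\iota}\Th_B(\xi)\to H\uZ\wedge_B\Th_B(\xi)$. Three facts are needed: $f^*(B_+)=A_+$; $f^*\Th_B(\xi)=\Th_A(f^*\xi)$, with $f^*$ of the zero section equal to the zero section of $f^*\xi$ because zero sections pull back to zero sections; and $f^*(H\uZ\wedge_B\Th_B(\xi))\simeq H\uZ\wedge_A\Th_A(f^*\xi)$, since $f^*$ is strong symmetric monoidal and $f^*H\uZ\simeq H\uZ$ for the constant Mackey functor. Together these show $f^*$ sends the composite defining $a_\xi$ to the composite defining $a_{f^*\xi}$. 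The comparison isomorphism in the statement is the adjunction isomorphism from \cref{thm:shriek-iso}, namely $[f_!A_+,-]_B\cong[A_+,f^*(-)]_A$. It remains to check that precomposition with $f_!A_+\to B_+$ corresponds to applying $f^*$, which is the naturality of the $(f_!,f^*)$ adjunction. Part (a) follows the same pattern, with the restriction functor $i^*_H$ in place of $f^*$: restriction is symmetric monoidal and preserves Thom spaces, zero sections, and the unit of $H\uZ$.

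For (d), I use $\crush_!(B_+)=B_+$ (here $B_+$ is based over a point) and $\crush_!\Th_B(\xi)=\Th(\xi)$. The external smash $H\uZ\wedge_B\Th_B(\xi)$, with $H\uZ=\crush^*H\uZ$, is sent by \eqref{eq:f!smash} to $H\uZ\wedge\Th(\xi)$. Under the adjunction, $\crush_!(a_\xi)$ is the image of the composite under $\crush_!$, which is the zero section $B_+\to\Th(\xi)$ followed by the unit.

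For (c), I would use $\Th_B(\xi\oplus\gamma)\simeq\Th_B(\xi)\wedge_B\Th_B(\gamma)$. Under this identification, the zero section $\iota_{\xi\oplus\gamma}$ equals $\iota_\xi\wedge_B\iota_\gamma$ precomposed with the fiberwise diagonal $B_+\simeq B_+\wedge_B B_+$. The cup product is defined by the external smash followed by the multiplication $\mu\colon H\uZ\wedge_B H\uZ\to H\uZ$. So $a_\xi a_\gamma$ is the map $B_+\to\Th_B(\xi)\wedge_B\Th_B(\gamma)$, followed by the unit smashed with the unit, followed by $\mu$. Since $\mu\circ(\text{unit}\wedge\text{unit})$ is the unit, this composite is $a_{\xi+\gamma}$.

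I expect the main obstacle to be identifying the cup product pairing $H\uZ^\xi\wedge_B H\uZ^\gamma\to H\uZ^{\xi+\gamma}$ with the map $\mu\wedge\id$ under the equivalences of \cref{cor:HRgammaHThom}. The paper characterizes the product only through its effect on homotopy groups. To handle this, I would check the identification fiberwise, where the relevant groups are $\Z$ in the degree of the fiber representation, and conclude as in the proof of \cref{thm:shriek-iso}.
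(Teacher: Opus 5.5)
Your proposal is correct and follows essentially the same route as the paper. Parts (b) and (d) go through the $(f_!,f^*)$ and $(\crush_!,\crush^*)$ adjunctions, together with monoidality and $f^*\Th_B(\xi)=\Th_A(f^*\xi)$; part (a) uses the same naturality; and part (c) identifies the zero section of $\xi\oplus\gamma$ with the smash of the zero sections under $\Th_B(\xi)\wedge_B\Th_B(\gamma)\cong\Th_B(\xi\oplus\gamma)$. You also fill in two details the paper leaves implicit: in (b), that precomposition with $f_!A_+\to B_+$ corresponds to applying $f^*$ (a triangle identity), and in (c), that the cup-product pairing agrees with the multiplication on $H\uZ$ under \cref{cor:HRgammaHThom}.
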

\begin{proof}
Part (a) is clear from the definition. Part (b) is shown by examining the composite
\[\xymatrix{[ B_{+} ,  H\underline{\mathbb{Z}}^{\xi}]^{G}_B \ar[r]^-{-\circ f_+}  & [ A_{+} ,  H\underline{\mathbb{Z}}^{\xi}]^{G}_B = [ f_!(A_+) ,  H\underline{\mathbb{Z}}^{\xi}]^{G}_B   \ar[r]^-{f_!}_-\cong & [ A_+ ,  H\underline{\mathbb{Z}}^{f^*\xi}]^{G}_A
} .\]  
In this equation, the first appearance of $A_+$ denotes $\Sigma^\infty_B(A \sqcup B)$ in the category of spectra over $B$. The second and third appearances mean $\Sigma^\infty_A(A \sqcup A)$ in the category of spectra over $A$. The equal sign is due to the fact that $f_!(A\sqcup A) = A\sqcup B$. For the last isomorphism, we use the isomorphism 
\[ [ f_!(A_+) ,  H\underline{\mathbb{Z}}^{\xi}]^{G}_B  =[ f_!(A_+) ,  \Th_B(\xi)\wedge_B H\underline{\mathbb{Z}}]^{G}_B  \cong [ A_+ ,  f^*(\Th_B(\xi)\wedge_B H\underline{\mathbb{Z}})]^{G}_A \]
coming from the $(f_!,f^*)$ adjunction, together with the fact that $f^*$ is symmetric monoidal and that $f^*(\Th_B(\xi)) = \Th_A(\xi)$ as in \cref{thm:shriek-iso}.

For part (c), note that 
\[\Th_B(\xi)\wedge_B \Th_B(\gamma) \cong \Th_B(\xi \oplus \gamma) ,\]
with section $B_+ \xrightarrow{\iota_{\xi\oplus \gamma}}\Th_B(\xi \oplus \gamma)$ identified with the smash product of the sections for $\xi$ and $\gamma$.
For part (d), we are applying the functor $\crush_!$ to the Euler class to get a map
\[\crush_!(a_\xi) \colon \crush_!(B_+) \to \crush_!(\Th_B(\xi)\wedge H\uZ). \]
Using the properties of the adjunction $(\crush_!,\crush^*)$ as in 
\cite[Thm. 11.4.1]{MaySig}, with the fact that, by definition, $H\uZ$ over $B$ means $\crush^*H\uZ$, we have
\[\crush_!(\Th_B(\xi)\wedge_B \crush^*H\uZ)  \simeq \crush_!(\Th_B(\xi) ) \wedge H\uZ \simeq \Th(\xi)\wedge H\uZ .\]
We see that
$\crush_!(a_\xi)$ is 
 the element of $[B_+, \Th(\xi)\wedge H\uZ]^G$ induced by the zero section. 
\end{proof}

\subsection{Orientation classes}\label{sec:orientation}

We next turn to orientation classes. In order to define these classes, we define a notion of $R$-orientability for a ring $R$. We start by discussing the nonequivariant case. 

Let $G=e$ and $\xi$ be an actual bundle over a nonequivariant path connected space $B$.  Denote by $|\xi|$ the underlying dimension of the bundle. Recall that classically
\[\widetilde{H}^{|\xi|}(\Th(\xi), R)\cong H^{0}(B, R_{\xi}) \cong  H^{0}(B, R_{-\xi}) \cong  \widetilde{H}^{-|\xi|}(\Th(-\xi), R),\]
where $R_{\xi}$ is the local coefficient system on $R$ determined by the bundle $\xi$.

We can translate the classical setting to parametrized cohomology. Let $b\colon \pt \to B$ and $\xi_b$ be the fiber over $b$.  By \cref{prop:cohBThom}, we have a commutative diagram 
\begin{align}\label{eq:noneqface}
\xymatrix{ H^{\xi-|\xi|}_B(B, \mR) \ar[r]^-{b_!b^*} \ar[d]_-\cong &   H^{\xi_b-|\xi|}(\pt, \mR) \ar[d]^-\cong \\
\widetilde{H}^{-|\xi|}(\Th(-\xi), R) \ar[r]^-{\Th(b)^*} & \widetilde{H}^{-|\xi|}(\Th(-\xi_b), R) 
}
\end{align}
where the top arrow is the composition of the induced map on cohomology $b^*$ followed by the isomorphism $b_!$ from \cref{thm:shriek-iso}
\[\xymatrix{ H^{\xi-|\xi|}_B(B, \mR) \ar[r]^-{b^*}  & H^{\xi-|\xi|}_B(\pt, \mR)  \ar[r]^{b_!}_-\cong &    H^{\xi_b-|\xi|}(\pt, \mR). }  \]
 The bottom horizontal arrow $\Th(b)^*$ is the map induced by the inclusion $S^{-\xi_b} \simeq \Th(-\xi_b) \subset \Th(-\xi)$. 
 
 Then classically, the bundle $\xi$ is $R$-orientable if and only if one of the following equivalent conditions holds (where in (c) we use path connectedness of $B$):
 \begin{enumerate}[(a)]
 \item $\widetilde{H}^{-|\xi|}(\Th(-\xi), R)\cong R$,
  \item $\Th(b)^*$ is an isomorphism for all $b \in B$,
  \item $\Th(b)^*$ is an isomorphism for some $b\in B$.
 \end{enumerate}
This translates in terms of parametrized cohomology as follows. 
\begin{lem}\label{lem:orientabilitynoneq}
Let $\xi$ be an actual bundle over a nonequivariant, path connected space $B$. The bundle $\xi$ is $R$-orientable if and only if one of the following equivalent conditions hold:
\begin{enumerate}[(a)]
\item $H^{\xi-|\xi|}_B(B,\mR) \cong R$,
\item $b_!b^*$ is an isomorphism for all $b \colon \pt \to B$,
\item $b_!b^*$ is an isomorphism for some $b \colon \pt \to B$.
\end{enumerate}
\end{lem}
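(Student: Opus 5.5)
The plan is to move the classical characterization across the commutative square \eqref{eq:noneqface}, since both vertical arrows there are isomorphisms. Conditions (a), (b), (c) in the lemma then correspond term-by-term to the classical conditions (a), (b), (c) listed just before it. The left vertical arrow is the isomorphism $\phi$ of \cref{prop:cohBThom}, applied with $G=e$, $\star=-|\xi|$ and virtual bundle $-\xi$. The right vertical arrow is the same isomorphism over the point for the bundle $\xi_b$. Naturality of $\phi$ under the base change $b\colon \pt\to B$ gives commutativity of the square. Concretely, this uses \cref{thm:shriek-iso} for $b$ and the compatibility $b^*\Th_B(-\xi)=\Th_{\pt}(-\xi_b)$ of relative Thom spectra with pullback, both recalled earlier.

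\emph{Step 1: identify the groups.} By the left vertical isomorphism, $H^{\xi-|\xi|}_B(B,\mR)\cong \widetilde H^{-|\xi|}(\Th(-\xi),R)$. So the parametrized condition (a) is literally the classical condition (a).

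\emph{Step 2: identify the maps.} The square commutes and its vertical maps are isomorphisms. Hence $b_!b^*$ is an isomorphism exactly when $\Th(b)^*$ is. This matches (b) with (b) and (c) with (c).

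\emph{Step 3: invoke the classical result.} Classically, $R$-orientability is equivalent to each of the three conditions, with path connectedness used for (c). Combining this with Steps 1 and 2 finishes the proof.

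I expect the main obstacle to be checking that \eqref{eq:noneqface} really commutes. The clean route is to write $\phi$ as the Thom isomorphism $t_{-\xi}$ of \cref{thm:thom} followed by $\rho_!$, as in the proof of \cref{prop:cohBThom}. Then:
\begin{itemize}
\item $t_{-\xi}$ is natural in $b$, because the Thom class is the external smash of the unit with $\Th_B(-\xi)$, which is compatible with $b^*$ by strong monoidality;
\item the relation $\rho\circ b=\rho_{\pt}$ gives $\rho_!\circ b_!=(\rho_{\pt})_!$ on representing maps, via the adjunctions.
\end{itemize}
A secondary point is the classical equivalence for the virtual bundle $-\xi$. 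I would reduce it to $H^0(B,R_{-\xi})$ with $R_{-\xi}=R_\xi$: the twisted $H^0$ is the sections of the local system, and these form $R$ exactly when the monodromy acts trivially on $R$.
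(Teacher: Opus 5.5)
Your proposal is correct and follows the paper's route: the paper also obtains the lemma by transporting the classical conditions (a)--(c) across the commutative square \eqref{eq:noneqface}, whose vertical maps are the isomorphisms of \cref{prop:cohBThom}, so that $b_!b^*$ is an isomorphism exactly when $\Th(b)^*$ is. The only difference is that you sketch why the square commutes and why the classical equivalence holds for $-\xi$, whereas the paper takes both as given.
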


We generalize to the $G$-equivariant parametrized context in a way that clearly agrees with the classical definition when $G=e$. Our definition is inspired by the first condition of \cref{lem:orientabilitynoneq}.
\begin{defn}\label{defn:orientation_class}
Let $B$ be a $G$-space whose underlying space is path connected and which has a fixed point. Let $\xi$ be an actual orthogonal $G$-bundle over $B$. Let $R$ be an ordinary commutative ring and $\uR$ the associated constant parametrized Mackey functor.
We say that the $G$-bundle $\xi$ is \emph{$R$-orientable} 
if the underlying bundle $i^*_e\xi$ is $R$-orientable and the restriction
\[i^*_e\colon  H^{\xi-|\xi|}_B(B, \mR) \to H^{i^*_e\xi-|\xi|}_{i^*_eB} (i^*_eB, \uR) \cong R\]
is an isomorphism. 
Any class $u_\xi $ in $H^{\xi-|\xi|}_B(B, \mR)$ which restricts to an $R$-module generator will be called an \emph{orientation class}.
\end{defn}

\begin{rem}
This differs from the notion of orientability for equivariant bundles given in Costenoble--May--Waner \cite[Def. 2.8]{CMW} and $R$-orientability in \cite[Def. 2.31]{BhattZou}, as well as strict orientability (of $V$-bundles) given in \cite[Def. 1.15.4]{CW_book}.  
Our aim with this notion is to define orientation classes like those for $RO(G)$-graded cohomology in \cite{HHR}, not to recover all the classical notions of orientability, say for manifolds.
\end{rem}

\begin{rem}
We do not need parametrized cohomology to phrase our definition of $R$-orientability. Let $B$ and $\xi$ be as in \cref{defn:orientation_class}. Using \cref{thm:KOPIB}, we can equivalently define a bundle $\xi$ over $B$ to be $R$-orientable if $i^*_e\xi$ is orientable over $i^*_eB$ and the restriction
\[i^*_e\colon \widetilde{H}^{-|\xi|}(\Th(-\xi),\mR) \to \widetilde{H}^{-|\xi|}(\Th(-i^*_e\xi),R)\cong R \]
from $RO(G)$-graded cohomology to the underlying singular cohomology is an isomorphism. An orientation class is then a choice of $R$-module generator in $\widetilde{H}^{-|\xi|}(\Th(-\xi),\mR)$.
\end{rem}

\begin{example}\label{ex:uVclasses}
Let $B=G/G$ and let $R$ be a ring. Let $\mR$ be the constant Mackey functor on $R$. An orthogonal $G$-bundle over the point is just an orthogonal $G$-representation $V$. Over the point, \cref{defn:orientation_class} is equivalent to $V$ being $R$-orientable as a $G$-representation and our orientation classes $u_V$ recover those in \cite[Def. 3.12]{HHR}.

Indeed, recall that in this case parametrized and non-parametrized $RO(G)$-graded cohomology coincide, see \eqref{eq:ROPAREQUAL}. An orthogonal $G$-bundle over the point is just an orthogonal $G$-representation $V$. We have a commutative diagram 
\begin{align}\label{eq:faceoverpoint}
\xymatrix{ 
H^{V-|V|}(G/G , \mR)  \ar[r]  \ar[d]_-\cong  & H^{i^*_eV-|V|}(i^*_e(G/G), R) \ar[d]^-\cong \\
\widetilde{H}^{-|V|}(S^{-V}, \mR) \ar[r] & \widetilde{H}^{-|V|}(i^*_eS^{-V}, R) 
}
\end{align}
where, noting that $S^{-V} = \Th(-V)$, the vertical arrows coincide with the isomorphisms of \cref{prop:cohBThom}.
In the diagram, the left-hand side is $RO(G)$-cohomology and the right-hand side is the underlying singular cohomology. The top horizontal map is an isomorphism if and only if  
\[\xymatrix{\widetilde{H}^{-|V|}(S^{-V}, \mR) \cong \widetilde{H}_{|V|}(S^V, \mR)\ar[r] & \widetilde{H}_{|V|}(i^*_eS^{V}, R) \cong \widetilde{H}^{-|V|}(i^*_eS^{-V}, R) }\]
is an isomorphism. It follows from \cite[Ex. 3.8 \& 3.10]{HHR} that this map is always an injection, and is an isomorphism if and only if $V$ is $R$-orientable as a $G$-representation in the sense that the action of $G$ on $H_{|V|}(i^*_eS^{V},R) \cong R$ is trivial.  Indeed, we have a commutative diagram with exact rows
\[\xymatrix{0 \ar[r]  & \widetilde{H}_{|V|}(S^V, \mR)\ar[r] \ar[d] & C^{\mathrm{cell}}_{|V|}(S^V,\mR)^G \ar[r] \ar[d]^-\subset & C^{\mathrm{cell}}_{|V|-1}(S^V,\mR)^G \ar[d]^-\subset\\
0\ar[r] & 
\widetilde{H}_{|V|}(i^*_eS^{V}, R) \ar[r] & C^{\mathrm{cell}}_{|V|}(S^V,\mR) \ar[r]  & C^{\mathrm{cell}}_{|V|-1}(S^V,\mR)
}\]
for $C^\mathrm{cell}_*(S^V,\mR)$ the complex of $R[G]$-modules as in Example 3.8 of \cite{HHR}.
Therefore, $V$ is $R$-orientable in the sense of \cref{defn:orientation_class} if and only if $V$ is an $R$-orientable $G$-representation, and we recover the definition of orientation classes $u_V$  from Definition 3.12 of \cite{HHR}. In particular, if $\uR=\mF$, then to each actual representation $V\in RO(G)$, there corresponds an orientation class
\[u_V \in H^{V-|V|}(G/G,\mF).\]
\end{example}

We have the following generalization of \cref{lem:orientabilitynoneq} to the equivariant setting.
\begin{lem}\label{lem:orientationabc}
Let $B$ be a $G$-space whose underlying space is path connected and which has a fixed point.   Let $\xi$ be an actual $G$-bundle over $B$. Suppose that $i^*_e\xi$ is $R$-orientable over $i^*_eB$. 
The following are equivalent.
\begin{enumerate}[(a)]
\item The bundle $\xi$ is $R$-orientable in the sense of \cref{defn:orientation_class}.
\item  For any fixed point $b \colon G/G \to B$, the fiber $\xi_b$ over $b$ is an $R$-orientable $G$-representation and
\[ b_!b^* \colon H^{\xi-|\xi|}_B(B,\mR) \to H^{\xi_b-|\xi|}(G/G ,\mR)\]
is an isomorphism.
\item There exists a fixed point $b\colon G/G \to B$ such that the fiber $\xi_b$ is an $R$-orientable $G$-representation and
\[ b_!b^* \colon H^{\xi-|\xi|}_B(B,\mR) \to H^{\xi_b-|\xi|}(G/G ,\mR)\] 
is an isomorphism.
\end{enumerate}
\end{lem}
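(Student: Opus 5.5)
The plan is to factor the restriction to the underlying nonequivariant cohomology through a fixed point, and then use the fact that $i^*_e$ commutes with pullback along $b$. Fix a fixed point $b\colon G/G\to B$. Naturality of restriction (together with \cref{thm:shriek-iso} applied to $b$ and to $i^*_e b$) gives a commutative square
\[
\xymatrix{
H^{\xi-|\xi|}_B(B,\mR) \ar[r]^-{b_!b^*} \ar[d]_-{i^*_e} & H^{\xi_b-|\xi|}(G/G,\mR) \ar[d]^-{i^*_e} \\
H^{i^*_e\xi-|\xi|}_{i^*_eB}(i^*_eB,\uR) \ar[r]^-{b_!b^*} & H^{i^*_e\xi_b-|\xi|}(\pt,R).
}
\]
I will check that this commutes by writing both composites as maps of representing spectra. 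Restriction to the trivial subgroup commutes with $b^*$ and with the adjunction isomorphism $b_!$, because $i^*_e$ is compatible with pullback and with base change.

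The key inputs about the three maps other than the top one are as follows.
\begin{enumerate}[(i)]
\item The bottom map is an isomorphism by \cref{lem:orientabilitynoneq}(b). Here we use that $i^*_e\xi$ is $R$-orientable and $i^*_eB$ is path connected.
\item The right vertical map is an isomorphism exactly when $\xi_b$ is an $R$-orientable $G$-representation, by \cref{ex:uVclasses}.
\item The left vertical map is an isomorphism exactly when $\xi$ is $R$-orientable in the sense of \cref{defn:orientation_class}.
\end{enumerate}

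With these, the implications go as follows.
\begin{itemize}
\item[(a)$\Rightarrow$(b):] Take any fixed point $b$. The left and bottom maps are isomorphisms, so the composite through the top-right corner is an isomorphism. Hence $b_!b^*$ is injective and $i^*_e$ on the right is surjective. By \cref{ex:uVclasses}, the right-hand $i^*_e$ is always injective, so it is an isomorphism. Therefore $\xi_b$ is $R$-orientable and $b_!b^*$ is an isomorphism.
\item[(b)$\Rightarrow$(c):] This is trivial once we know a fixed point exists, which is part of the hypotheses.
\item[(c)$\Rightarrow$(a):] For the given $b$, the top and right maps are isomorphisms by assumption and (ii), and the bottom map is an isomorphism by (i). Commutativity then forces the left map $i^*_e$ to be an isomorphism, which is (a).
\end{itemize}

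The main obstacle is the commutativity of the square, specifically identifying how $i^*_e$ interacts with the $b_!$ isomorphism of \cref{thm:shriek-iso} at the level of representing spectra. I would handle it by noting that $b_!$ is adjunction with $b^*H\mR^\gamma\simeq H\mR^{b^*\gamma}$, and that restriction to $e$ is a strong monoidal functor that commutes with $b^*$ and $b_!$. Alternatively, I would pass through the Thom-space description of \cref{prop:cohBThom}. There the square becomes the evident square of restrictions induced by $\Th(-\xi_b)\subset\Th(-\xi)$, which commutes by naturality of $i^*_e$ on $RO(G)$-graded cohomology.
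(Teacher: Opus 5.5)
Your proposal is correct and matches the paper's proof. The paper draws your square as the front face of a commutative cube whose back face is the Thom-space version (via \cref{prop:cohBThom} and $\Th(-\xi_b)\subset\Th(-\xi)$, exactly your alternative route to commutativity), and it runs the same diagram chase for each implication, using orientability of $i^*_e\xi$ for the bottom map and \cref{ex:uVclasses} for injectivity of the right-hand restriction.
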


\begin{proof}
We assume throughout that $i^*_e\xi$ is an $R$-orientable bundle over $i^*_eB$.
We have the following commutative diagram, where all coefficients are $\mR$.
  \begin{align*}
  \xymatrix@C=1pc@R=1pc{
& \widetilde{H}^{-|\xi|}(\Th(-\xi))  \ar[rr]^-{\Th(b)^*} \ar'[d][dd]^(.45){i^*_e}
&& \widetilde{H}^{-|\xi|}(\Th(-\xi_b))\ar[dd]^-{i^*_e}_-\subset \\
H^{\xi-|\xi|}_B(B) \ar[ru]^-\cong \ar[rr]^(.65){b_!b^*} \ar[dd]^-{i^*_e} && H^{\xi_b-|\xi|}(G/G) \ar[ru]^-\cong \ar[dd]^(.68){i^*_e}_(.68){\subset} & \\
&  \widetilde{H}^{-|\xi|}(\Th(-i^*_e\xi)) \ar'[r]^(.65){\cong}_(.65){\Th(i^*_eb)^*}[rr] && \widetilde{H}^{-|\xi|}(\Th(-i^*_e\xi_b))  \\ H^{i^*_e\xi-|\xi|}_{i^*_eB}(i^*_eB) \ar[ru]^-\cong \ar[rr]^-{i^*_eb_!b^*}_-\cong && H^{i^*_e\xi_b-|\xi|}(\mathrm{pt}) \ar[ru]^-\cong &
}
\end{align*}
The isomorphisms on the bottom face of the cube follow from the orientability of $i^*_e\xi$ and correspond to the diagram \eqref{eq:noneqface} for $i^*_e\xi$ over $i^*_eB$.
The isomorphisms pointed inwards are those of \cref{prop:cohBThom}. The vertical maps are the restrictions from equivariant to underlying singular cohomology. The right face is just an example of the commuting square in \eqref{eq:faceoverpoint}, so  falls under the situation described in \cref{ex:uVclasses}. The fact that the restrictions are inclusions is explained there. (The front and back face of the cube contain equivalent information.)

Assume (a) and let $b \colon G/G \to B$ be any fixed point. Since the left-hand restriction $i^*_e \colon H_B^{\xi-|\xi|}(B,\mR) \to H^{i^*_e\xi-|\xi|}_{i^*_eB}(i^*_eB,\mR)$ is assumed to be an isomorphism and the diagram commutes, the restriction $i^*_e \colon H^{\xi_b-|\xi|}(G/G, \mR)  \to H^{i^*_e\xi_b-|\xi|}(\pt,\mR)$ must be surjective. As explained in \cref{ex:uVclasses}, this map is injective, hence it is an isomorphism. It follows that $\xi_b$ is $R$-orientable as a $G$-representation, and that $b_!b^*$ is an isomorphism. This gives (b). 

That (b) implies (c) is clear since, by assumption, there exists at least one fixed point. Assume (c).  Since $\xi_b$ is $R$-orientable, the right-hand restriction $i^*_e \colon H^{\xi_b-|\xi|}(G/G, \mR)  \to H^{i^*_e\xi_b-|\xi|}(\pt,\mR)$ is an isomorphism. By assumption, $b_!b^*$ is an isomorphism, and therefore $i^*_e \colon H_B^{\xi-|\xi|}(B,\mR) \to H^{i^*_e\xi-|\xi|}_{i^*_eB}(i^*_eB,\mR)$ must also be an isomorphism. This gives (a).
\end{proof}

We establish some properties for orientation classes similar to those established for Euler classes. First, we consider the definition in the case when $G=e$ is trivial. 
\begin{lem}\label{lem:uclassesforGtrivial}
Taking $G=e$, let $B$ be a path connected nonequivariant space. For any (actual) $R$-orientable bundle $\xi$ over $B$, there is an orientation class
\[ u_{\xi} \in H^{\xi-|\xi|}_B(B,\mR)\] which is a unit in $R$. That is, for $\doteq$ denoting equality up to multiplication by a unit in $R$:
\begin{enumerate}[(a)]
\item if $\xi$ and $\gamma$ are $R$-orientable, $u_\xi u_\gamma \doteq u_{\xi+\gamma}$ and
\item $u_{\xi}^2 \doteq 1$. 
\end{enumerate}
\end{lem}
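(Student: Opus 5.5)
The plan is to identify the graded pieces in question via the multiplicative Thom isomorphism of \cref{thm:KOPIB}. With $G=e$ we have $RO(e)=\Z$, so for an actual bundle $\xi$ over path connected $B$ the degree $\xi-|\xi|$ satisfies
\[ H^{\xi-|\xi|}_B(B,\mR)\cong \widetilde{H}^{-|\xi|}(\Th(-\xi),R)\cong H^0(B,R_{-\xi})\cong H^0(B,R_\xi). \]
When $\xi$ is $R$-orientable, the local system $R_\xi$ is trivial, so this group is $H^0(B,R)\cong R$, as in \cref{lem:orientabilitynoneq}(a). Any $R$-module generator is then an orientation class $u_\xi$. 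By \cref{lem:orientabilitynoneq}(b), $b_!b^*$ carries $u_\xi$ to a generator of $H^{\xi_b-|\xi|}(\pt,\mR)=H^0(\pt,R)=R$, that is, to a unit. The phrase ``is a unit in $R$'' should be read in this sense.

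For (a), I will use that $\xi+\gamma$ is $R$-orientable when $\xi$ and $\gamma$ are, since $R_{\xi+\gamma}\cong R_\xi\otimes R_\gamma$. The product $u_\xi u_\gamma$ lies in degree $(\xi+\gamma)-|\xi+\gamma|$, which is the degree of $u_{\xi+\gamma}$, and that group is free of rank one on $u_{\xi+\gamma}$. So it is enough to show that $u_\xi u_\gamma$ is also a generator. To do this, apply $b_!b^*$ for a point $b$. This map is a ring homomorphism by \cref{thm:shriek-iso} and naturality of cup products. It sends $u_\xi$ and $u_\gamma$ to units of $H^{0}(\pt,R)=R$, with the degrees $\xi_b-|\xi|$ and $\gamma_b-|\gamma|$ both identified with $0$. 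Hence it sends $u_\xi u_\gamma$ to a unit. Since $b_!b^*$ is an isomorphism on the degree of $\xi+\gamma$, again by \cref{lem:orientabilitynoneq}(b), $u_\xi u_\gamma$ is a generator, and therefore $u_\xi u_\gamma\doteq u_{\xi+\gamma}$.

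For (b), I will apply the same argument with $\gamma=\xi$. The class $u_\xi^2$ lies in degree $2\xi-2|\xi|$. I claim this degree is $0$ in $RO(\Pi B)$. For $G=e$, a representation of $\Pi B$ of virtual dimension $n$ is determined by the orientation character $\pi_1B\to \Z/2$ of the underlying virtual bundle, together with $n$. The element $2\xi$ has trivial orientation character and virtual dimension $2|\xi|$, so $2\xi=2|\xi|$. The character is trivial because the transition automorphisms on $\xi\oplus\xi$ act diagonally, so their determinants are squares. Alternatively, if the paper allows working up to this identification, I will simply note that the $b_!b^*$ argument shows $u_\xi^2$ generates $H^0_B(B,\mR)=R$. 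This is a ring whose unit is $1$, so $u_\xi^2\doteq 1$.

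The main obstacle is making the identification of degrees precise. For (a) it is essentially the definition, because $\dim$ is additive. For (b) one needs $\dim(\xi\oplus\xi)=\crush^*(2|\xi|)$ in $RO(\Pi B)$. I would establish this directly by exhibiting a natural isomorphism of functors $\Pi B\to\vV_e(2|\xi|)$. The objects match, and for morphisms one uses that the stable equivalence relation in $\vV_e$ identifies isomorphisms of $\R^{n}$ lying in the same component of $O(n)$. The automorphism $A\oplus A$ always has positive determinant, so it lies in the identity component.
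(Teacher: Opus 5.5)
Your proof is correct. For part (a) it takes a different route from the paper; for part (b) it is the same argument.

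\textbf{Part (a).} You detect generators with the ring homomorphism $b_!b^*\colon H^{RO(\Pi B)}_B(B,\mR)\to H^{*}(\pt,R)$. It is multiplicative by naturality of cup products and \cref{thm:shriek-iso}. By \cref{lem:orientabilitynoneq}(b) it is an isomorphism onto $H^0(\pt,R)=R$ in every orientation degree. So a product of two generators is sent to a product of units, hence is a generator.

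The paper instead transports everything along $\crush_!\circ t_\xi$ to $\widetilde{H}^{|\xi|}(\Th(\xi),R)$. It then uses the square comparing the cup product with the Thom-space pairing \eqref{eq:thompair}, whose right-hand map is an isomorphism because a product of classical Thom classes is again a Thom class.

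Your route needs only naturality and multiplicativity of $b_!$. It needs neither the compatibility of the Thom isomorphism with products (\cref{lem:phimultiplication}) nor the classical fact about products of Thom classes, which is itself usually proved by restricting to a fiber, i.e.\ by your argument. It is also the mechanism behind \cref{lem:orientationabc}. The paper's route has the advantage of identifying $u_\xi$ with a classical Thom class.

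You also make explicit something the paper uses silently: $\xi+\gamma$ is automatically $R$-orientable. This is needed both for the bottom row of the paper's square and for your isomorphism in degree $(\xi+\gamma)-|\xi+\gamma|$.

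\textbf{Part (b).} Both arguments reduce to $2\xi=2|\xi|$ in $RO(\Pi B)$. The paper quotes this from $RO(\Pi B)\cong\Z\times\Hom(\pi_1(B),O(1))$. You verify it directly from $\det(A\oplus A)=\det(A)^2=1$, which is exactly why that factor is $2$-torsion.
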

\begin{proof}
Since $RO(\Pi B) \cong \Z \times \Hom(\pi_1(B), O(1))$ (see \cite[Ex. 2.28]{witpaper}) 
and the right factor is 2-torsion, for any $\xi$ we have
\[2\xi  = 2|\xi|.\] 
Therefore,
\[ \xi-|\xi| + \xi = 2\xi- |\xi| = |\xi|\] 
for any $\xi \in RO(\Pi B)$. Suppose that $\xi$ is $R$-orientable. Let $t_\xi$ be the Thom class in parametrized cohomology. Consider the composition 
\[\xymatrix{
H^{\xi-|\xi|}_B(B,\mR) \ar[r]^-{t_\xi}_-\cong & \widetilde{H}^{|\xi|}_B(\Th_B(\xi),\mR) \ar[r]^-{\crush_!}_-\cong &   \widetilde{H}^{|\xi|}(\Th(\xi),R) .
}\]
Notice $u_\xi$ is an orientation class if and only if it maps to an $R$-module generator of $ H^{|\xi|}(\Th(\xi),R) \cong R$. For any (actual) $R$-orientable bundle $\gamma$, we have a commutative diagram 
\[\xymatrix{
H^{\xi-|\xi|}_B(B,\mR)  \otimes_R H^{\gamma-|\gamma|}_B(B,\mR) \ar[r]^-\cong \ar[d] &   \widetilde{H}^{|\xi|}(\Th(\xi),R) \otimes_R  \widetilde{H}^{|\gamma|}(\Th(\gamma),R) \ar[d]^-\cong \\
H^{\xi+\gamma-|\xi+\gamma|}_B(B,\mR) \ar[r]^-\cong &   \widetilde{H}^{|\xi+\gamma|}(\Th(\xi + \gamma),R).
}\]
Here the left-hand vertical map is the cup product and the right-hand vertical map is the product described in \eqref{eq:thompair} induced by the composition of the external cross-product with the pullback along the diagonal. 
The left-hand vertical map is thus an isomorphism, and so $u_\xi u_\gamma$ is an orientation class. Any two orientation classes differ by a unit, so $u_{\xi}u_\gamma \doteq u_{\xi+\gamma}$. This proves the first claim. 

Since $\dim(2\gamma) =\dim(2|\gamma|)$ in $RO(\Pi B)$, we have that 
\[u_{\gamma}^2  \doteq  u_{2\gamma} \doteq u_{2|\gamma|}\doteq 1.\]
This proves the second claim. 
\end{proof}

Returning to the equivariant setting, orientation classes satisfy properties similar to those of Euler classes shown in \cref{lem:eulerprop}. 
\begin{lem}\label{lem:orientprop}
Let $\xi$ and $\gamma$ be (actual) orthogonal $G$-vector bundles over $B$, and let $H$ be a subgroup of $G$. Suppose that $\xi$ and $\gamma$ are $R$-orientable.
\begin{enumerate}[(a)]
\item  Under restriction
\[ i^*_H(u_\xi) = u_{i^*_H\xi}.\]
\item If $f\colon A \to B$ is a $G$-map, then
\[f^*u_\xi = u_{f^*\xi},\]
where the two sides are compared using the identification
\[ f_! \colon H^{\xi-|\xi|}_B(A, \mR) \xrightarrow{\cong} H^{f^*\xi-|f^*\xi|}_A(A, \mR)  .\]
\item If  $\xi+\gamma$  is $R$-orientable, the orientation classes satisfy the relation 
\[u_{\xi+\gamma}  \doteq u_{\xi}u_\gamma,\]
where the equality is up to multiplication by a unit in $R$.
\end{enumerate}
\end{lem}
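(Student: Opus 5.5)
The plan is to reduce all three parts to the defining property of orientation classes: a class in $H^{\xi-|\xi|}_B(B,\mR)$ is an orientation class exactly when its restriction to underlying cohomology, which is $\cong R$, is an $R$-module generator. The tools are naturality of the restriction $i^*_H$ and of base change $f_!f^*$, together with multiplicativity of the maps involved.

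For (a), I would first check that $i^*_H\xi$ is $R$-orientable as an $H$-bundle over $i^*_HB$. Note that $i^*_HB$ still has a fixed point and the same underlying space. Restriction to the trivial subgroup factors as $i^*_e=i^*_e\circ i^*_H$ on parametrized cohomology, so the composite
\[H^{\xi-|\xi|}_B(B,\mR)\xrightarrow{i^*_H} H^{i^*_H\xi-|\xi|}_{i^*_HB}(i^*_HB,\mR)\xrightarrow{i^*_e} R\]
is an isomorphism, and hence the second map is surjective. The main point is its injectivity. For this I would use \cref{lem:orientationabc}(c): pick a fixed point $b$; then $b_!b^*$ commutes with $i^*_H$, and $\xi_b$ is an $R$-orientable $G$-representation, so $i^*_H\xi_b$ is an $R$-orientable $H$-representation. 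By \cref{ex:uVclasses}, the restriction from $H$-equivariant to underlying cohomology of a point in degree $V-|V|$ is injective. Chasing the square then shows $i^*_e$ on $i^*_HB$ is injective, giving orientability. Once it holds, $i^*_H(u_\xi)$ restricts to a generator, so it is an orientation class $u_{i^*_H\xi}$. (An alternative route to injectivity is the same cube diagram as in \cref{lem:orientationabc}.)

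For (b), I would use \cref{thm:shriek-iso} and compare the composites $f_!f^*$ with restriction to underlying cohomology. Underlying, this is the nonequivariant pullback along $f$. This requires $A$ to be path connected with a fixed point, so that the orientability hypotheses apply to $f^*\xi$; I will assume this implicitly. The nonequivariant pullback of an orientation class is an orientation class, because it sends the Thom class fiberwise to a generator via the square \eqref{eq:noneqface} at a point $a\in A$ and $f(a)\in B$. Hence $f_!f^*u_\xi$ restricts to a generator. For the isomorphism condition, I would again use \cref{lem:orientationabc} with a fixed point $a$ of $A$ and $b=f(a)$, noting that $a_!a^*\circ f_!f^*=b_!b^*$.

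For (c), the product $u_\xi u_\gamma$ lies in degree $\xi+\gamma-|\xi+\gamma|$. Restriction $i^*_e$ is a ring map, so $i^*_e(u_\xi u_\gamma)=u_{i^*_e\xi}u_{i^*_e\gamma}$. By \cref{lem:uclassesforGtrivial}(a), the right-hand side is a generator up to a unit. Since $\xi+\gamma$ is $R$-orientable, $i^*_e$ is an isomorphism onto $R$ in that degree, so $u_\xi u_\gamma$ is a generator, and any two generators differ by a unit of $R$.

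I expect the injectivity step in (a) to be the only real obstacle.
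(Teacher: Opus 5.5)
\textbf{Where your proposal matches the paper.} Your (c) is the paper's argument: $i^*_e$ is a ring map, combined with \cref{lem:uclassesforGtrivial}. Your (a) and (b) also contain everything the paper uses. The paper says these parts are immediate from the definition, because $i^*_H(u_\xi)$ (resp.\ $f_!f^*u_\xi$) restricts to a generator of underlying cohomology. For (a) this is since $i^*_e\circ i^*_H=i^*_e$; for (b) it is since an underlying pullback of an orientation is an orientation.

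\textbf{The gap.} The extra step you call the main point is the injectivity needed for $i^*_H\xi$ (and $f^*\xi$) to be $R$-orientable, and your chase does not prove it. Consider the square for the $H$-bundle $i^*_H\xi$ with horizontal maps $b_!b^*$ and vertical maps $i^*_e$. The bottom map is an isomorphism and the right-hand map is injective by \cref{ex:uVclasses}. So injectivity of the left-hand $i^*_e$ is \emph{equivalent} to injectivity of the top map $b_!b^*\colon H^{i^*_H\xi-|\xi|}_{i^*_HB}(i^*_HB,\mR)\to H^{i^*_H\xi_b-|\xi|}(H/H,\mR)$, and that is exactly what is unknown. The square comparing the $G$-level and $H$-level maps $b_!b^*$ via $i^*_H$ only shows that the $H$-level $b_!b^*$ is surjective. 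The cube of \cref{lem:orientationabc} only transports an isomorphism you already have. Part (b) has the same problem: $a_!a^*\circ f_!f^*=b_!b^*$ shows that $a_!a^*$ is surjective and $f_!f^*$ is injective, but not that $a_!a^*$ is bijective.

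\textbf{How to close it.} The missing input is that restriction to underlying cohomology is injective in degree $\xi-|\xi|$ for \emph{every} bundle. You can prove this by connectivity.
Each geometric fixed point spectrum $\Phi^K\Th(-\xi)$ is a Thom spectrum over $B^K$ of a virtual bundle of rank $-\dim\xi_x^K\ge-|\xi|$. Hence $\Th(-\xi)$ is $(-|\xi|)$-connective, and $\widetilde H^{-|\xi|}(\Th(-\xi),\mR)$ is the group of Mackey functor maps $\underline{\pi}_{-|\xi|}\Th(-\xi)\to\mR$.
All restriction maps in $\mR$ are identities, so such a map $f$ satisfies $f_{G/K}=f_{G/e}\circ\res^K_e$. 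It is therefore determined by $f_{G/e}$, which is precisely its image under $i^*_e$.
Apply this to the $H$-spectrum $\Th(-i^*_H\xi)$, resp.\ to $\Th(-f^*\xi)$ over $A$. Then $R$-orientability reduces to surjectivity of $i^*_e$, which your arguments do establish.
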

\begin{proof}
Parts (a) and (b) follow immediately from the definition, while
 (c) follows from \cref{lem:uclassesforGtrivial} and the fact that $i^*_e$ is a ring homomorphism. 
\end{proof}

\subsection{Homogeneity units}\label{sec:homounit}
This section treats units in parametrized cohomology that arise from Thom isomorphisms in $RO(G)$-graded cohomology with $\mF$-coefficients for homogeneous bundles. 

We assume our $G$-spaces $B$ are such that the underlying space of $B$ is path connected and that $B$ has
 a fixed point. 
As a corollary to the $RO(G)$-graded Thom isomorphism with $\mF$-coefficients for homogeneous bundles, we have the following isomorphism between parametrized and $RO(G)$-graded cohomology.

\begin{cor}\label{prop:homog}
 Let $B$ be a $G$-space whose underlying space is path connected and which has a fixed point.
Suppose that $\xi$ is a 
homogeneous virtual bundle with generic fiber $\xi_0$ over $B^G$.
Then there is an isomorphism 
\[H^{\star+\xi- \xi_0}_B(B,\mF) \cong H^{\star}(B ,\mF) \]
for $\star \in RO(G)$.
\end{cor}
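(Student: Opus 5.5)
The plan is to compose two isomorphisms already available: the Thom isomorphism of \cref{thm:KOPIB}, which turns parametrized cohomology in $KO(\Pi B)$-degrees into $RO(G)$-graded cohomology of a Thom spectrum, and the homogeneous $RO(G)$-graded Thom isomorphism of \cref{thm:homothom}. Since $\xi$ is a virtual bundle, its dimension $\xi$ lies in $KO(\Pi B)$, and $\xi_0\in RO(G)\subset KO(\Pi B)$ (using the fixed point and \cref{prop:ROGinjects}). Hence $\gamma := \xi - \xi_0$ is an element of $KO(\Pi B)$, realized by the virtual bundle $\xi - (B\times \xi_0)$.

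First I apply \cref{thm:KOPIB} with this $\gamma$ and with $\star\in RO(G)$. This gives
\[
H^{\star+\xi-\xi_0}_B(B,\mF) \cong \widetilde{H}^{\star}\bigl(\Th(-\xi+\xi_0),\mF\bigr).
\]
The Thom spectrum of $-\xi$ plus a trivial bundle with fiber $\xi_0$ is the $\xi_0$-fold suspension of $\Th(-\xi)$. So
\[
\widetilde{H}^{\star}(\Th(-\xi+\xi_0),\mF)\cong \widetilde{H}^{\star-\xi_0}(\Th(-\xi),\mF)
\]
by the $RO(G)$-graded suspension isomorphism.

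Next I note that $-\xi$ is again a homogeneous virtual bundle, with virtual fiber $-\xi_0$; this is immediate from \cref{def:homogeneous}, since $\xi=\xi'-\xi''$ with $\xi',\xi''$ homogeneous. Applying \cref{thm:homothom} to $-\xi$, with the hypotheses on $B$ (path connected underlying space and a fixed point) as assumed, gives
\[
H^{\star}(B,\mF)\cong \widetilde{H}^{\star-\xi_0}(\Th(-\xi),\mF).
\]
Composing the three isomorphisms yields the claim.

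The main point needing care is the identification of $\Th(-\xi+\xi_0)$ with $\Sigma^{\xi_0}\Th(-\xi)$ for virtual bundles, i.e., that $\Th$ takes sums with trivial bundles to suspensions. This follows from the relative identity $\Th_B(\alpha\oplus\beta)\simeq \Th_B(\alpha)\wedge_B\Th_B(\beta)$ with $\Th_B(B\times V)\simeq \crush^*S^V$, then applying $\crush_!$ and \eqref{eq:f!smash}. One should also check that the sign conventions in \cref{thm:KOPIB} (it uses $\Th(-\gamma)$) are matched correctly. Alternatively, one can sidestep the suspension step by applying \cref{thm:KOPIB} to $\xi$ in degree $\star' = \star - \xi_0$ and then invoking \cref{thm:homothom} directly.
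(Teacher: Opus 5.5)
Your proof is correct and is essentially the paper's: the paper applies \cref{prop:cohBThom} to the virtual bundle $-\xi$ in degree $\star-\xi_0$ to get $\widetilde{H}^{\star-\xi_0}(\Th(-\xi),\mF)$, then invokes \cref{thm:homothom} for $-\xi$. That is exactly the shortcut you describe at the end; your main route only adds a harmless detour through $\Th(-\xi+\xi_0)\simeq\Sigma^{\xi_0}\Th(-\xi)$, which the shortcut makes unnecessary.
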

\begin{proof}
By \cref{prop:cohBThom} and \cref{thm:homothom}, 
\begin{align*}
H^{\star+\xi- \xi_0}_B(B,\mF)&\cong \widetilde{H}^{\star- \xi_0}(\Th(-\xi),\mF) 
\\
&\cong H^{\star}(B,\mF). \qedhere
\end{align*}
\end{proof}

\begin{defn}\label{defn:homounit}
Let $B$ be a $G$-space whose underlying space is path connected and which has a fixed point.
Suppose that $\xi$ is a 
homogeneous virtual bundle over $B$ with generic fiber $\xi_0$.
The \emph{homogeneity unit} of $\xi$ is the unique class $\ee_{\xi}\in H^{\xi-\xi_0}_B(B,\mF)$ which maps to $1$ under the isomorphism of \cref{prop:homog}. 
\end{defn}

\begin{rem}\label{rem:homounit}
The homogeneity unit is the class
\[ \ee_{\xi} =t_{-\xi}^{-1}\bar{t}_{-\xi}^{}(1) \]
where $t_{-\xi}$ is as in \cref{thm:thom} and $\bar{t}_{-\xi}$ is as in \cref{thm:homothom}.
\end{rem}

The next result follows from naturality and multiplicative properties of Thom classes.
\begin{lem}\label{lem:eclassprop}
Let $B$ be a $G$-space whose underlying space is path connected and which has a fixed point as in \cref{prop:homog}. Let $\xi$ and $\gamma$ be virtual homogeneous bundles over $B$. Let $H$ be a subgroup of $G$.
\begin{enumerate}[(a)]
\item  Under restriction,
\[ i^*_H(\ee_{\xi}) = \ee_{i^*_H(\xi)}.\]
\item If $f\colon A \to B$ is a $G$-map, then
\[f^*(\ee_{\xi}) = \ee_{f^*\xi},\] 
where the two sides are compared using the identification
\[ f_! \colon H^{\xi-\xi_0}_B(A,\mF) \xrightarrow{\cong} H^{f^*\xi-f^*\xi_0 }_A(A,\mF)  .\]
\item The homogeneity units are multiplicative, satisfying the relation 
\[\ee_{\xi+\gamma} = \ee_{\xi}\ee_{\gamma}.\]
\end{enumerate}
\end{lem}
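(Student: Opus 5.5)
The plan is to work throughout with the description of $\ee_\xi$ in \cref{rem:homounit}, namely $\ee_\xi = t_{-\xi}^{-1}\bar t_{-\xi}(1)$. Each property then reduces to the corresponding naturality or multiplicativity of the two Thom isomorphisms: the parametrized one $t_{-\xi}$ of \cref{thm:thom}, composed with $\crush_!$ as in \cref{prop:cohBThom}, and the $RO(G)$-graded one $\bar t_{-\xi}$ of \cref{thm:homothom}. Equivalently, $\ee_\xi$ is the unique class with $\phi(\ee_\xi)=\bar t_{-\xi}$ in $\widetilde H^{-\xi_0}(\Th(-\xi),\mF)$, where $\phi$ is the isomorphism of \cref{thm:KOPIB}. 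So it suffices to show that $\phi$ and $\bar t$ are compatible with restriction, pullback and products.

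For (b), I would use that $\phi$ is natural in $f$. The parametrized Thom class is the unit smashed with $\Th_B(-\xi)$, and $f^*\Th_B(-\xi)=\Th_A(-f^*\xi)$, so the identification $f_!$ of \cref{thm:shriek-iso} carries $t_{-\xi}$ to $t_{-f^*\xi}$. After applying $\crush_!$, this becomes the map $\Th(-f^*\xi)\to\Th(-\xi)$. On the other side, $\bar t_{-\xi}$ pulls back to a Thom class for $-f^*\xi$, which is homogeneous with the same fiber $\xi_0$. Over $\mF$ the Thom class in $\widetilde H^{-\xi_0}$ is unique, since it restricts to the unique generator on each fixed-point fiber $S^{-\xi_0}$ and $\mathrm{bgl}_1(H\mF)$ is contractible, as in the proof of \cref{thm:homothom}. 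Hence the pullback of $\bar t_{-\xi}$ is $\bar t_{-f^*\xi}$. Applying $\phi^{-1}$ then gives $f^*\ee_\xi=\ee_{f^*\xi}$.

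Part (a) is proved the same way, with the restriction functor $i^*_H$ in place of $f^*$. Both Thom constructions commute with restriction to $H$, and uniqueness of the $\mF$-Thom class again identifies $i^*_H\bar t_{-\xi}$ with $\bar t_{-i^*_H\xi}$. One point to check here is that $i^*_HB$ still has a fixed point and is path connected, so that $\ee_{i^*_H\xi}$ is defined. This holds because a $G$-fixed point is $H$-fixed.

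For (c), I would apply \cref{lem:phimultiplication}/\cref{thm:KOPIB}: $\phi(\ee_\xi\ee_\gamma)=\Delta^*(\phi(\ee_\xi)\times\phi(\ee_\gamma))=\Delta^*(\bar t_{-\xi}\times\bar t_{-\gamma})$. The right-hand side is a class in $\widetilde H^{-\xi_0-\gamma_0}(\Th(-\xi-\gamma),\mF)$ that restricts on each fiber to the product of fiber generators, which is a generator. So it is a Thom class for $-\xi-\gamma$, and by uniqueness it equals $\bar t_{-\xi-\gamma}=\phi(\ee_{\xi+\gamma})$.

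I expect the main obstacle to be justifying the uniqueness and naturality of the $RO(G)$-graded Thom class $\bar t$ for virtual bundles. I would argue this via the characterization that a Thom class is any class whose restriction to the fiber over a fixed point $b$ is the generator $\Sigma^{-\xi_0}1$. Uniqueness then follows because the Thom isomorphism of \cref{thm:homothom} identifies the relevant group with $H^0(B,\mF)\cong\F_2$, using that $B$ is connected with a fixed point, so there is a single nonzero class.
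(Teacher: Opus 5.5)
Your proposal is correct and follows the paper's route. The paper justifies this lemma only by saying it follows from naturality and multiplicativity of Thom classes, and you supply exactly those details via $\ee_\xi = t_{-\xi}^{-1}\bar t_{-\xi}(1)$ together with uniqueness of the $\mF$-Thom class, which holds because $\widetilde{H}^{-\xi_0}(\Th(-\xi),\mF)\cong H^0(B,\mF)\cong\F_2$. The one point to make explicit is that in (b) you need $A$ to be path connected with a fixed point, as the statement tacitly presupposes; this is what makes $\ee_{f^*\xi}$ defined, and what lets you see $f^*\bar t_{-\xi}\neq 0$ by restricting to a fixed-point fiber.
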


Observe that $\ee_0 = 1$ together with the multiplicative behavior implies the class $\ee_{\xi}$ is a unit since $\ee_{\xi}^{-1} = \ee_{-\xi}$, justifying the name homogeneity unit in \cref{defn:homounit}.

For actual bundles, there is a direct correspondence between orientation classes and the homogeneity units of \cref{prop:homog}.

\begin{lem}\label{lem:euclassesrel}
Let $B$ be a $G$-space whose underlying space is path connected and which has a fixed point. Let $\xi$ be an actual homogeneous $G$-bundle with generic fiber $\xi_0$. Then $\xi$ is $\F_2$-orientable in the sense of \cref{defn:orientation_class}  and 
\[ u_{\xi}  = u_{\xi_0}\ee_{\xi}\]
in $H_{B}^{RO(\Pi B)}(B;\mF)$.
\end{lem}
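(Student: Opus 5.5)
We argue via the restriction to the underlying nonequivariant setting and the fixed-point criterion of \cref{lem:orientationabc}. Over $\F_2$ every nonequivariant bundle is orientable and every orthogonal $G$-representation is $\F_2$-orientable, since $G$ acts on $H_{|V|}(i^*_eS^V,\F_2)\cong\F_2$ trivially. In particular $i^*_e\xi$ is $\F_2$-orientable, and $\xi_0$ has an orientation class $u_{\xi_0}\in H^{\xi_0-|\xi_0|}(G/G,\mF)$ by \cref{ex:uVclasses}. We pull $u_{\xi_0}$ back along $\crush$ so that it lives in $H^{\xi_0-|\xi|}_B(B,\mF)$; note $|\xi|=|\xi_0|$.

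The key step is to compute the group $H^{\xi-|\xi|}_B(B,\mF)$. The homogeneity unit $\ee_\xi$ is invertible (with inverse $\ee_{-\xi}$ by \cref{lem:eclassprop}). Hence multiplication by $\ee_\xi$ gives an isomorphism
\[
H^{\xi_0-|\xi|}_B(B,\mF)\xrightarrow{\cdot \ee_\xi} H^{\xi-|\xi|}_B(B,\mF).
\]
Equivalently, this is \cref{prop:homog} with $\star=\xi_0-|\xi|$. The source is $RO(G)$-graded and equals $H^{\xi_0-|\xi_0|}(B,\mF)$ by \cref{cor:changeofbases}, since $B$ has a fixed point. We then show that the restriction $i^*_e$ from this group to $H^0(i^*_eB,\F_2)\cong\F_2$ is an isomorphism. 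For this we use the fixed point $b$: the composite $b^*\crush^*$ is the identity on $H^{\xi_0-|\xi_0|}(G/G,\mF)\cong\F_2$, so $\crush^*u_{\xi_0}$ restricts underlying to $i^*_e u_{\xi_0}$, the generator. Multiplication by $\ee_\xi$ is compatible with $i^*_e$, and $i^*_e\ee_\xi=\ee_{i^*_e\xi}$ is the unit $1$ nonequivariantly by \cref{lem:eclassprop}(a). Consequently $u_{\xi_0}\ee_\xi$ restricts to the generator of $H^{i^*_e\xi-|\xi|}_{i^*_eB}(i^*_eB,\F_2)\cong\F_2$.

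It remains to show that $i^*_e$ is injective on $H^{\xi-|\xi|}_B(B,\mF)$. This gives orientability in the sense of \cref{defn:orientation_class}, and then $u_\xi=u_{\xi_0}\ee_\xi$, because orientation classes are unique over $\F_2$, whose only unit is $1$. Injectivity is the main obstacle. Via $\ee_\xi$ it reduces to showing that $H^{V-|V|}(B,\mF)\to H^0(i^*_eB,\F_2)$ is injective for an actual representation $V=\xi_0$. The plan is to pass through Thom spaces as in \cref{prop:cohBThom}, identifying the group with $\widetilde H^{-|V|}(\Th(-\xi),\mF)$, and then argue as in \cref{ex:uVclasses} by a bottom-cell/cellular chain argument. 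Here $H^{V-|V|}$ is a group of homology type in the top dimension of $S^V$, the fixed-point invariants of a cellular cochain complex inject into the underlying complex, and the lowest-degree group is a kernel of a differential. If that direct argument is awkward, the alternative is to use criterion (c) of \cref{lem:orientationabc}: by \cref{lem:eclassprop}(b), $b^*\ee_\xi=\ee_{\xi_b}=1$ over the point, so $b_!b^*$ corresponds under $\cdot\ee_\xi$ to $b^*$ on $H^{V-|V|}(B,\mF)$, and we would then prove that $b^*$ is an isomorphism onto $H^{V-|V|}(G/G,\mF)\cong\F_2$ using the splitting provided by $\crush^*$ together with a connectivity/cellular argument in this low degree.
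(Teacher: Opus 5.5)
Your argument for the formula is the paper's. You use $|\xi|=|\xi_0|$ to match degrees, use multiplicativity of $i^*_e$ to see that $u_{\xi_0}\ee_\xi$ restricts to the generator, and conclude by uniqueness over $\F_2$. The paper folds orientability into the sentence that these classes are ``uniquely determined by their degrees and the property that they restrict to the identity.'' You are right that \cref{defn:orientation_class} also requires $i^*_e$ to be \emph{injective} on $H^{\xi-|\xi|}_B(B,\mF)$. You are also right that, via the unit $\ee_\xi$, this is equivalent to injectivity of $i^*_e\colon H^{\xi_0-|\xi_0|}(B,\mF)\to H^0(i^*_eB,\F_2)$.

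But your proposal stops there, so as written it does not prove orientability. Neither plan is carried out, and both rest on the same missing fact. The cellular argument of \cref{ex:uVclasses} only treats $B=G/G$. In your second plan, the splitting $b^*\crush^*=\id$ gives only surjectivity of $b^*$. Injectivity of $b^*$ is equivalent to $H^{\xi_0-|\xi_0|}(B,\mF)\cong\F_2$, which is exactly the statement at issue.

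The missing ingredient is a connectivity argument reducing to Bredon $H^0$. Put $V=\xi_0$ and $E=\Sigma^{V-|V|}H\mF$. For $K\le G$ one has $\pi^K_kE\cong\widetilde H^K_{k+|V|}(S^V;\mF)$. This vanishes for $k>0$, since $S^V$ has a $K$-CW structure of dimension $|V|$. For $k=0$ it is the group of \cref{ex:uVclasses}, which restricts injectively to $\widetilde H_{|V|}(i^*_eS^V;\F_2)\cong\F_2$.

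So $E$ is coconnective, while $B_+$ is connective. Hence the cofiber of $\tau_{\geq 0}E\to E$ and its desuspension receive no nonzero maps from $B_+$. This gives $H^{V-|V|}(B,\mF)=[B_+,E]^G\cong H^0_G(B;\underline{\pi}_0E)$.

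That Bredon group is the kernel of the first cellular coboundary. The degree-$0$ cochains $\prod_\alpha\underline{\pi}_0E(G/K_\alpha)$ inject into the underlying cochains $C^0(i^*_eB;\F_2)$, because all restriction maps of $\underline{\pi}_0E$ are injective. Hence $H^{V-|V|}(B,\mF)\to H^0(i^*_eB;\F_2)\cong\F_2$ is injective.

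For $G=C_2$ there is a shorter route via the forgetful long exact sequence. The kernel of $i^*_e$ on $H^{0,q}(B,\mF)$ is $\aa\cdot H^{-1,q-1}(B,\mF)$. That group vanishes for $q\geq 0$, because the cell-by-cell contributions (shifts of $\M_2$ and of $\F_2[\uu^{\pm1}]$ by cells of dimension $n\geq 0$) are all zero in degree $(-1,q-1)$. This is the same pattern the paper uses for $\vv$. With this step supplied, the rest of your argument goes through.
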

\begin{proof}
The classes $u_{\xi}$, $u_{\xi_0}$ and $\ee_{\xi}$ are uniquely determined by their degrees and the property that they restrict to the identity in $H^0(i^*_eB,\F_2)$. 
But on underlying dimensions, $|\xi|=|\xi_0|$, and so
\[|u_{\xi}| = \xi-|\xi| =\xi-\xi_0 + \xi_0 -|\xi_0| = |\ee_{\xi}u_{\xi_0}|.\]
Since the restriction is multiplicative, $u_{\xi_0}\ee_{\xi}$  restricts to $1$, and so is an orientation class $u_\xi$ as in \cref{defn:orientation_class}.
\end{proof}

In the case of the trivial group, the orientation classes agree with the homogeneity units.
\begin{lem}
Let $B$ be a path connected nonequivariant space.
 For an actual bundle $\xi$ over $B$, both the orientation class and the homogeneity unit exist in
 \[H^{\xi-|\xi|}_B(B,\mF) =  H^{\xi-\xi_0}_B(B,\mF)\]
 and $u_\xi = \ee_\xi$.
\end{lem}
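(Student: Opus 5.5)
When $G=e$, both classes are characterized by restriction, and that restriction is the identity. So I will show that each class lives in the same degree, that both are nonzero elements of a group isomorphic to $\F_2$, and hence that they coincide.

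\emph{Existence of both classes.} When $G=e$, every bundle over the path connected space $B$ is homogeneous. Its generic fiber is $\xi_0=\R^{|\xi|}$, which is just the trivial representation of dimension $|\xi|$. Hence $\xi-\xi_0=\xi-|\xi|$ in $RO(\Pi B)$, so the two cohomology groups in the statement are literally the same. Every bundle is $\F_2$-orientable nonequivariantly, since the orientation character takes values in $\F_2^\times=\{1\}$. Moreover, $i^*_e$ is the identity when $G=e$, so \cref{defn:orientation_class} is satisfied and an orientation class $u_\xi$ exists. The homogeneity unit $\ee_\xi$ exists by \cref{defn:homounit}. The hypothesis of a fixed point holds automatically, because every point is fixed by the trivial group.

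\emph{Identifying the group.} By \cref{prop:homog} with $\star=0$, we get $H^{\xi-\xi_0}_B(B,\mF)\cong H^0(B,\F_2)\cong\F_2$, using path connectedness. Alternatively, \cref{lem:orientabilitynoneq}(a) gives $H^{\xi-|\xi|}_B(B,\mF)\cong\F_2$ directly. The group therefore has exactly one nonzero element.

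\emph{Both classes are nonzero.} The class $u_\xi$ is an $\F_2$-module generator by definition, so it is the nonzero element. The class $\ee_\xi$ maps to $1\neq 0$ under the isomorphism of \cref{prop:homog}, so it is also nonzero. Since the group is $\F_2$, it follows that $u_\xi=\ee_\xi$.

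The only step needing care is the identification $\xi_0=|\xi|$ as elements of $RO(\Pi B)$. This holds because the trivial group has a unique representation of each dimension, and the $RO(G)$-classes enter $RO(\Pi B)$ via $\crush^*$. This is where the argument uses $G=e$. One could instead invoke \cref{lem:euclassesrel}, which gives $u_\xi=u_{\xi_0}\ee_\xi$, and then note that $u_{\xi_0}=u_{\R^{|\xi|}}=1$ because its degree is $0$ and it restricts to $1$. I expect no real obstacle.
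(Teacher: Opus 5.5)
Your proof is correct and follows essentially the same route as the paper. Both classes exist because, for $G=e$, every bundle over a path connected space is homogeneous and $\F_2$-orientable. They coincide because each is the unique class restricting to $1$ (equivalently, the unique nonzero element) in the group $H^{\xi-\xi_0}_B(B,\mF)=H^{\xi-|\xi|}_B(B,\mF)\cong\F_2$. Your write-up spells out details the paper leaves implicit: the identification $\xi_0=|\xi|$ via $\crush^*$, the automatic fixed-point hypothesis, and the computation of the group via \cref{prop:homog}.
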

\begin{proof}
All bundles over a path connected nonequivariant space are homogeneous and $\F_2$-orientable. So both the classes $u_\xi$ and $\ee_\xi$ exist.  Both classes are the unique class that restricts to $1$ with degree $\xi-\xi_0 = \xi -|\xi|$.
\end{proof}

\subsection{Triviality classes}
In this section, we explain how for certain representations and parametrized Mackey functors $\mM$, the $RO(\Pi B)$-graded cohomology in degree $\gamma \in RO(\Pi B)$ only depends on the value of $\gamma$ on the objects of $\Pi B$.  These results grew out of conversations with Steve Costenoble. We adopt the notation of \cite[\S 4]{witpaper} and \cite{CW_book}. Refer to these for more details.

For $\gamma \in RO(\Pi B)$ and $x \colon G/H \to B$ in $\Pi B$, we have
\[\gamma(x) = (G\times_H V_1, G\times_H V_2).\] 
Then over the identity coset, we have the $H$-representation 
\begin{align*}
\gamma_0(x) := V_1-V_2\in RO(H)
\end{align*}
and
\[G_+\wedge_H S^{\gamma_0(x),x} = x_!(G\times_H S^{\gamma_0(x)}) \in \mathcal{SH}_B^G .\]
See \cite[Fig. 3]{witpaper} for a cartoon of this construction.
Recall also the $\gamma$-twisted stable fundamental groupoid $\widehat{\Pi}_{\gamma}B$. Its objects are in one-to-one correspondence with those of $\Pi B$ and can be identified with the spectra
$G_+\wedge_H S^{\gamma_0(x),x}$.
Morphisms in $\widehat{\Pi}_{\gamma}B$ are stable maps
\[[G_+\wedge_H S^{\gamma_0(x),x}, G_+\wedge_H S^{\gamma_0(y),y}]^G_B.\]
We also have the stable orbit category $\mathcal{B}^{G}_B$  of $B$, built as a group completion of the category of spans of finite $G$-sets over $B$. There is a \emph{twisting} isomorphism
\[\Gamma_\gamma \colon \mathcal{B}^{G}_B \to \widehat{\Pi}_{\gamma}B \]
such that
\[\Gamma_\gamma(x) = G_+\wedge_H S^{\gamma_0(x),x}, \]
and 
\begin{align*}
\Gamma_\gamma( z   \xleftarrow{=} z \xrightarrow{p} y) &= \res_\gamma(p) \\
\Gamma_\gamma( x   \xleftarrow{q} z \xrightarrow{=} z) &= \tr_\gamma(q)
\end{align*}
for $x\colon X \to B$, $z \colon Z \to B$ and $y\colon Y \to B$ finite $G$-sets over $B$. See \cite[Def. 3.14 \& 3.15]{witpaper}.

Recall that a parametrized Mackey functor is a functor
\[
\mM \colon (\mathcal{B}_B^G)^\op \to \Ab .
\]
Moreover, for any \(G\)-spectrum \(E\) over \(B\), one obtains an associated parametrized Mackey functor
\[
\underline{\pi}_\gamma E
    = [\Gamma_\gamma(-),E]^G_B
    \colon (\mathcal{B}_B^G)^\op \to \Ab .
\]
By \cite[\S 3.7]{CW_book}, for every parametrized Mackey functor \(\mM\) there exists a spectrum \(H\mM^\gamma\), unique up to equivalence, characterized by the property that
\[
\underline{\pi}_{\gamma+n} H\mM^\gamma
=
\begin{cases}
\mM & n=0,\\
0 & n\neq 0.
\end{cases}
\]

Note that by definition,
\begin{align*}
\underline{\pi}_{\gamma+n} H\mM^\gamma(x) = H^{\gamma}(\Gamma_{\gamma+n}(x),\mM) \cong H^{\gamma-n}(\Gamma_{\gamma}(x),\mM)
\end{align*}
since $\Gamma_{\gamma+n}(x) \simeq \Sigma_B^n\Gamma_\gamma(x)$.

\begin{defn}\label{defn:homotrivial}
We call $\gamma \in RO(\Pi B)$ \emph{homogeneously trivial} if $\gamma_0(x) =0$ for all $x \in \Pi B$. 
\end{defn}

\begin{rem}
Every homogeneous virtual bundle $\xi$ over $B$ with fibers isomorphic to $\xi_0$ will have $\xi - \xi_0$ homogeneously trivial. So this generalizes to a representation in $RO(\Pi B)$ the notion of a representation associated to a homogeneous bundle (shifted to virtual dimension zero) in $KO(\Pi B)$.   
\end{rem}

For $\gamma$ homogeneously trivial, the $\gamma$-twisted stable fundamental groupoid $\widehat{\Pi}_{\gamma}B$ is equal to the one with $\gamma=0$ and so the two twisting functors 
\[  \Gamma_{0},\Gamma_{\gamma} \colon  \mathcal{B}^{G}_B \to \widehat{\Pi}_{0}B=\widehat{\Pi}_{\gamma}B \]
have the same source and target, though they need not agree on morphisms.

\begin{lemma}\label{lem:Mconstant}
Let $X$ be an ex-$G$-space over $B$. Let $\gamma\in \RO(\Pi B)$ be a homogeneously trivial representation such that 
$\mM \circ\Gamma_{\gamma}^{-1} = \mM \circ \Gamma_{0}^{-1}$ as functors.
 Then  
\[ H\mM^{\gamma} \simeq  H\mM. \]
\end{lemma}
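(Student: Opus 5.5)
The plan is to use the uniqueness characterization of Eilenberg--MacLane spectra recalled above from \cite[\S 3.7]{CW_book}. The spectrum $H\mM^\gamma$ is determined up to equivalence by having $\underline{\pi}_{\gamma+n}H\mM^\gamma$ equal to $\mM$ for $n=0$ and zero otherwise. It therefore suffices to show that $H\mM$ (that is, $H\mM^0$) has the same property with respect to the $\gamma$-twisted homotopy Mackey functors.

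First we compare values on objects. Since $\gamma$ is homogeneously trivial, $\gamma_0(x)=0$ for all $x$. Hence $\Gamma_\gamma(x)=G_+\wedge_H S^{0,x}=\Gamma_0(x)$, and likewise $\Gamma_{\gamma+n}(x)=\Sigma^n_B\Gamma_0(x)=\Gamma_n(x)$. So for every object $x$,
\[\underline{\pi}_{\gamma+n}H\mM(x)=[\Gamma_{\gamma+n}(x),H\mM]^G_B=[\Gamma_n(x),H\mM]^G_B=\underline{\pi}_nH\mM(x),\]
and this is $\mM(x)$ for $n=0$ and $0$ otherwise. The vanishing for $n\neq 0$ therefore holds with no further work, since it is a statement about objects only.

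Next we compare morphisms, which is the substantive step for $n=0$. We regard $\underline{\pi}_\gamma H\mM$ as the functor $[-,H\mM]^G_B$ restricted to $\widehat{\Pi}_\gamma B=\widehat{\Pi}_0 B$ and then precomposed with $\Gamma_\gamma$. Thus
\[\underline{\pi}_\gamma H\mM = F\circ\Gamma_\gamma, \qquad \underline{\pi}_0 H\mM = F\circ\Gamma_0,\]
where $F=[-,H\mM]^G_B$ is a single functor on the common category $\widehat{\Pi}_0B$. By definition of $H\mM$ we have $F\circ\Gamma_0=\mM$, so $F=\mM\circ\Gamma_0^{-1}$ because $\Gamma_0$ is an isomorphism. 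The hypothesis $\mM\circ\Gamma_\gamma^{-1}=\mM\circ\Gamma_0^{-1}$ then gives
\[\underline{\pi}_\gamma H\mM=F\circ\Gamma_\gamma=\mM\circ\Gamma_\gamma^{-1}\circ\Gamma_\gamma=\mM.\]
This requires some care with variance, since everything is contravariant on $\mathcal{B}^G_B$ and the composites should be read accordingly.

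Combining the two steps, $H\mM$ satisfies the defining property of $H\mM^\gamma$, and uniqueness gives $H\mM^\gamma\simeq H\mM$.

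The main obstacle is the morphism-level bookkeeping in the second step. One must make sure that "$\widehat{\Pi}_\gamma B=\widehat{\Pi}_0B$" is an equality of categories, including morphisms, because morphisms are just stable maps between the same spectra. One must also check that the $\underline{\pi}_{\gamma}$-functor is literally the $F$ above pulled back through $\Gamma_\gamma$, so that only the twisting isomorphisms differ. It also needs checking that the uniqueness statement in \cite{CW_book} is up to an equivalence realizing the identification with $\mM$; if the identification is only abstract, one should note that any isomorphism $\underline{\pi}_\gamma H\mM\cong\mM$ suffices.
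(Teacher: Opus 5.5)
Your argument is correct, and its overall strategy is the paper's: both show that $H\mM$ has the twisted homotopy Mackey functors that characterize $H\mM^{\gamma}$, and then invoke the uniqueness statement of \cite[\S3.7]{CW_book}. The difference lies in how the key step is carried out.

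The paper works through cellular cohomology. It regards $\Gamma_\gamma(x)$ as an ex-$CW(\gamma)$-complex with a single $0$-cell at $x$, and notes that this is the same as the $CW(0)$-structure on $\Gamma_0(x)$. It then uses \cite[Cor.~4.18]{witpaper} together with the hypothesis on $\mM$ to compare $\widetilde{H}^{-n}_B(\Gamma_0(x);\mM)$ with $\widetilde{H}^{\gamma-n}_B(\Gamma_\gamma(x);\mM)$ naturally in $x$.

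You instead observe that $\underline{\pi}_\gamma H\mM$ and $\underline{\pi}_0 H\mM$ are the single functor $[-,H\mM]^G_B$ on $\widehat{\Pi}_0B=\widehat{\Pi}_\gamma B$, precomposed with the two twisting isomorphisms. The hypothesis $\mM\circ\Gamma_\gamma^{-1}=\mM\circ\Gamma_0^{-1}$ then yields $\underline{\pi}_\gamma H\mM\cong\mM$ by a purely formal manipulation. The vanishing for $n\neq 0$ is read off on objects from $\Gamma_{\gamma+n}(x)=\Gamma_n(x)$.

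What your route buys:
\begin{itemize}
\item It needs no cell structures or cochain-level input.
\item It makes explicit a point the paper's terse write-up leaves implicit. The quantity that must be computed is $\underline{\pi}_{\gamma+n}H\mM$. Comparing $\underline{\pi}_nH\mM$ with $\underline{\pi}_{\gamma+n}H\mM^\gamma$ alone is not enough, since both are $\mM$ or $0$ by definition.
\end{itemize}

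What the paper's route buys: it ties the lemma to the explicit cellular cochain description of \cite{witpaper}, and shows concretely, on the single $0$-cell, where the hypothesis on $\mM$ enters.

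Your closing worry is harmless. The characterization in \cite{CW_book} only requires some isomorphism $\underline{\pi}_{\gamma}H\mM\cong\mM$, and that is exactly what you produce.
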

\begin{proof}
Note that for all $x \in \Pi B$, $\Gamma_{\gamma}(x) = G_+\wedge_H S^{0,x}$ can be thought of as an ex-$G$-space over $B$ with a $G$-$CW(\gamma)$-complex with a single $0$-cell given by $x$. This makes sense since $\gamma_0(x)=0$ and so $x$ can indeed be taken as a $0$-cell. But this is the same as a the obvious ex-$CW(0)$-cell structure on $\Gamma_0(x)$.
Using the assumption on $\mM$ and \cite[Cor. 4.18]{witpaper}, we see that
$ \Gamma_{\gamma}\circ \Gamma_{0}^{-1}$ induces a natural isomorphism
\begin{align*}
\xymatrix@R=1pc{\underline{\pi}_{n} H\mM(x)\ar@{=}[d] \ar[r]^-\cong &  \underline{\pi}_{\gamma+n} H\mM^{\gamma}(x)\ar@{=}[d] \\
 \widetilde{H}^{-n}_B(\Gamma_{0}(x);\mM) \ar[r]^-\cong
& \widetilde{H}^{\gamma-n}_B(\Gamma_{\gamma}(x);\mM). 
 }
\end{align*}
So, $H\mM$ and $H\mM^\gamma$ satisfy the same universal property. They are thus equivalent.
\end{proof}

In particular, for $\mM=\mF$, we always have $\mF \circ\Gamma_{\gamma}^{-1} = \mF \circ \Gamma_{0}^{-1}$ for homogeneously trivial $\gamma$. The following result generalizes \cref{prop:homog}, which held for homogeneous bundles on path connected spaces with a fixed point. 

\begin{theorem}\label{thm:gammanomatter}
For any homogeneously trivial representation $\gamma\in RO(\Pi B)$ and ex-$G$-space over $B$, 
\[H^{\gamma+\star}_B(B,\mF) \cong H^{\star}_B(B,\mF). \]
In particular, there is a distinguished element $\nu_\gamma \in H^{\gamma}_B(B,\mF)$ corresponding to $1 \in H^{0}_B(B,\mF)$.
\end{theorem}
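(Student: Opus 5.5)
The plan is to reduce the statement to \cref{lem:Mconstant} applied with $\mM=\mF$, and then to pass from an equivalence of representing spectra to an isomorphism of cohomology groups using the shifting property of Eilenberg--MacLane spectra.

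The first step is to check the hypothesis of \cref{lem:Mconstant}. We need $\mF\circ\Gamma_\gamma^{-1}=\mF\circ\Gamma_0^{-1}$ as functors $\widehat{\Pi}_0B\to\Ab$. Since $\gamma$ is homogeneously trivial, $\Gamma_\gamma$ and $\Gamma_0$ agree on objects. They differ on morphisms only in how the restriction and transfer maps $\res_\gamma(p)$ and $\tr_\gamma(q)$ are twisted by the values of $\gamma$ on morphisms of $\Pi B$. Those values are automorphisms of virtual bundles of virtual dimension zero, so on the relevant fibers they act on the zero-dimensional spheres $S^{0}$ through a sign. With $\F_2$-coefficients every such sign acts trivially. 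The transfers and restrictions of the constant functor $\mF$ are therefore unchanged, because they are determined by the underlying maps of finite $G$-sets and the indices. I expect this comparison, carried out in the notation of \cite[\S 4]{witpaper} using \cite[Cor.~4.18]{witpaper}, to be the only real obstacle. The cleanest route is to compare both composites on the generating spans $z\xleftarrow{=}z\xrightarrow{p}y$ and $x\xleftarrow{q}z\xrightarrow{=}z$ and observe that each equals the corresponding map of $\mF$ up to a unit of $\F_2$, hence exactly.

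Given the hypothesis, \cref{lem:Mconstant} yields $H\mF^\gamma\simeq H\mF$. For $\star\in RO(G)$ I would then use the characterization of Eilenberg--MacLane spectra by their fibers, as in \cref{cor:HRgammaHThom}. Smashing with the trivial representation sphere gives $H\mF^{\gamma+\star}\simeq \Sigma_B^{\star}H\mF^\gamma\simeq\Sigma^\star_B H\mF\simeq H\mF^\star$. Applying $[X,-]^G_B$ with $X=B_{+}$ (or any ex-$G$-space) gives
\[H^{\gamma+\star}_B(B,\mF)\cong[B_+,H\mF^{\gamma+\star}]^G_B\cong[B_+,H\mF^{\star}]^G_B\cong H^\star_B(B,\mF).\]

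Finally, I would define $\nu_\gamma$ as the image of $1\in H^0_B(B,\mF)$ under the case $\star=0$. The equivalence in \cref{lem:Mconstant} is only determined up to an automorphism of $H\mF$ inducing the identity on $\underline{\pi}_0$. To make $\nu_\gamma$ distinguished, I would fix the equivalence as the one inducing the isomorphism $\Gamma_\gamma\circ\Gamma_0^{-1}$ on $\underline{\pi}_0$. Any two such equivalences then agree in the homotopy category, because maps $H\mF\to H\mF$ are detected on $\underline{\pi}_0$. This makes $\nu_\gamma$ well defined.
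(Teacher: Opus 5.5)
Your proposal is correct and is essentially the paper's own proof. Like the paper, you check the hypothesis of \cref{lem:Mconstant} for $\mF$ by noting two things: $\Gamma_\gamma$ and $\Gamma_0$ agree on objects, and on restrictions and transfers they differ only by a degree on $S^0$, which $\mF$ cannot detect. You then conclude $H\mF^\gamma\simeq H\mF$. The extra details you supply, namely the $\star$-shift via \cref{cor:HRgammaHThom} and the normalization that makes $\nu_\gamma$ well defined, are left implicit in the paper and are fine.
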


\begin{proof}
Recall that $\mF=\crush^*\mF = \mF \circ  \Span^+(\rho)$, where 
\[\Span^+(\rho)\colon \mathcal{B}_{B}^{G} \to \mathcal{B}^{G}_{G/G}\]
is the map induced on stable orbit categories by the map of fundamental groupoids $\Pi B \to \Pi G/G =\cO_G$ which  sends $x \colon G/H \to B$ to $G/H$ and $(\alpha,\omega)$ to $\alpha$. In this proof, let $\Gamma = \Gamma_{\gamma}^{-1}\circ\Gamma_{0}$. The functor $\Gamma$ is an automorphism of $\mathcal{B}_B^G$ which is the identity on objects. Studying the constructions around Definitions 3.14 and 3.15 in \cite{witpaper}, we see that since $\gamma_0(x) =0$ for all $x$, 
\begin{align*}
\Gamma(\res(\alpha,\omega)) &= \pm \res(\alpha,\omega) \\
\Gamma(\tr(\alpha,\omega)) &= \pm \tr(\alpha,\omega).
\end{align*}
Indeed, the only difference in the construction of $\res_\gamma$ and $\res_0$ is the degree of the map on a sphere introduced by the bundle map $\gamma$, and similarly for $\tr_\gamma$ and $\tr_0$. 
Therefore,
\begin{align*}
\rho^*\mF(\Gamma(\res(\alpha,\omega)) )&=\mF( \pm \res(\alpha) )= \mF(\res(\alpha)) = \rho^*\mF(\res(\alpha,\omega))
\end{align*}
and similarly for the transfer.
It follows that $\crush^*\mF \circ \Gamma = \crush^*\mF$, or equivalently, that
\begin{align*}
\crush^*\mF \circ \Gamma_{0}^{-1}  = \crush^*\mF \circ \Gamma_{\gamma}^{-1} .
\end{align*}
The claim follows from \cref{lem:Mconstant}.
\end{proof}

We call the class $\nu_\gamma$ associated to a homogeneously trivial representation $\gamma$ a \emph{triviality class}. 

\begin{rem} 
If $B$ is path connected with a fixed point and $\xi$ is a homogeneous virtual bundle, then the homogeneity unit $\ee_\xi = \nu_{\xi-\xi_0}$. Indeed, both classes are the unique nonzero class in
\[H^{\xi-\xi_0}_B(B,\mF) \cong H^{0}(B,\mF) \cong H^0(B/G,\F_2) \cong \F_2\]
where the right-hand side is the underlying singular cohomology of the orbits.
\end{rem}

\section{Change of bases}\label{sec:units}
In this section, we prove a variety of results on the effect in  parametrized cohomology of base change from $B$ to a space over $B$. In particular, we will see that often the effect is simply to adjoin units. These results and their proofs are inspired by work appearing in various proofs and statements of \cite{CostenobleB} and some of the results given in this section already appear in some form in that reference. We will not use the results in the remainder of this section until \cref{sec:forget}, when we compute the effect of base change along the inclusion $P^{C_2}\subset P$ in parametrized cohomology, so this material can be skipped on a first reading. 

\subsection{Base-change units}
We start by making a definition to simplify later notation. We work over $\F_2$ to avoid any discussion of graded commutativity and resulting signs.
\begin{defn}\label{defn:Ekappa}
For an abelian group $\kappa$, let $\F_2[\kappa]$ group algebra, viewed as a $\kappa$-graded ring with an element $\alpha \in \F_2[\kappa]$ in degree $\alpha$.
\end{defn}

\begin{defn}\label{defn:kappa}
Let $p\colon X \to B$ be a map of $G$-spaces and consider
$p^* \colon RO(\Pi B) \to RO(\Pi X)$ the induced homomorphism on representations. Let
\[\kappa_{p}:= \ker (p^*)\subset RO(\Pi B) \]
and 
\[\kappa_{p}^{KO}:= \kappa_{p} \cap KO(\Pi B).\]
\end{defn}

For any $G$-space $B$ with a fixed point $x \colon G/G \to B$, we get a splitting
\[x^* \colon RO(\Pi B) \to RO(\Pi G/G)\cong  RO(G)\]
of the map $\crush^*$ induced by the projection $\crush \colon B \to G/G$. Therefore,
\[RO(\Pi B) \cong \kappa_{x} \times RO(G) \]
where $\kappa_{x}=\ker(x^*)$ is as defined above.

As was observed by Costenoble--Waner in \cite[Remarks 3.8.5]{CW_book}, the parametrized cohomology of a space $X$ over $B$, in some sense, does not contain any information not already contained in the parametrized cohomology of 
 $X$ over itself. 
 We see this concretely in the next result.
\begin{prop}\label{prop:units}
Let $B$ be a $G$-space whose  underlying space is path-connected. Let $p\colon X \to B$ be a $G$-space over $B$. 
\begin{enumerate}[(a)]
\item For each $\alpha \in \kappa_{p}$, there is an isomorphism
 \[ \xymatrix{H^{\alpha}_B(X,\uR) \ar[r]^-{p_!}_-\cong &H^{0}_X(X,\uR)   }.\]
 The unique class which maps to 1 is a unit $\kk_\alpha \in H^{\alpha}_B(X,\uR)$. Ranging over elements of $\kappa_{p}$, these units satisfy $\kk_0=1$, $\kk_{\alpha}\kk_{\beta} = \kk_{\alpha+\beta}$. 

\item A splitting of $p^* \colon RO(\Pi B) \to p^*(RO(\Pi B))$ (if one exists) induces an isomorphism of graded rings 
 \[ 
H^{RO(\Pi B)}_B(X,\mF) \cong
 H^{p^*(RO(\Pi B))}_X(X,\mF)\otimes  \F_2[\kappa_{p}]    \]
 for $\kappa_{p}$ as in \cref{defn:kappa}.
 \end{enumerate}
 The results of (a) and (b) also hold if we replace $RO(\Pi B)$, $RO(\Pi X)$, and $\kappa_{p}$ with $KO(\Pi B)$, $KO(\Pi X)$, and $\kappa_{p}^{KO}$.
\end{prop}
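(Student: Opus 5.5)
The plan is to reduce everything to \cref{thm:shriek-iso} applied to $p\colon X\to B$ and the ex-space $X_{+_X}$, using $p_!(X_{+_X}) = X_{+_B}$.

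\textbf{Part (a).} For $\alpha\in\kappa_p$ we have $p^*\alpha=0$. \cref{thm:shriek-iso} then gives a ring homomorphism
\[p_!\colon H^{RO(\Pi B)}_B(X,\uR)\to H^{RO(\Pi X)}_X(X,\uR),\]
which restricts to isomorphisms $H^\alpha_B(X,\uR)\cong H^{p^*\alpha}_X(X,\uR)=H^0_X(X,\uR)$ in each degree. Define $\kk_\alpha$ as the preimage of $1$.

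Multiplicativity of $p_!$ gives $p_!(\kk_\alpha\kk_\beta)=1\cdot 1=1$. Since $\alpha+\beta\in\kappa_p$ and $p_!$ is injective in degree $\alpha+\beta$, this forces $\kk_\alpha\kk_\beta=\kk_{\alpha+\beta}$. Likewise $\kk_0=1$, because $p_!$ is a unital ring map that is an isomorphism in degree $0$. Hence $\kk_\alpha\kk_{-\alpha}=1$, so each $\kk_\alpha$ is a unit.

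\textbf{Part (b).} A splitting of $p^*\colon RO(\Pi B)\to p^*(RO(\Pi B))$ is a homomorphism $s\colon p^*(RO(\Pi B))\to RO(\Pi B)$ with $p^*s=\id$. It gives a group isomorphism
\[RO(\Pi B)\cong s(p^*RO(\Pi B))\oplus\kappa_p,\qquad \gamma\mapsto \bigl(s p^*\gamma,\ \gamma-sp^*\gamma\bigr).\]
We restrict the ring homomorphism $p_!$ to the degrees in $\operatorname{im}(s)$. It is degreewise bijective, with $p^*$ bijective on $\operatorname{im}(s)$, so this restriction is an isomorphism of graded rings
\[H^{\operatorname{im}(s)}_B(X,\mF)\cong H^{p^*(RO(\Pi B))}_X(X,\mF).\]
Now define
\[H^{p^*(RO(\Pi B))}_X(X,\mF)\otimes\F_2[\kappa_p]\to H^{RO(\Pi B)}_B(X,\mF),\qquad y\otimes\alpha\mapsto (p_!|_{\operatorname{im}s})^{-1}(y)\,\kk_\alpha .\]
This map is multiplicative because cohomology over $\F_2$ is commutative (so no signs arise) and $\kk_\alpha\kk_\beta=\kk_{\alpha+\beta}$. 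It is degree-preserving for the grading $s(\deg y)+\alpha$. Fix a degree $\gamma=s(\delta)+\alpha$. There the map is multiplication by the unit $\kk_\alpha$, sending $H^{s\delta}_B$ to $H^{\gamma}_B$, and it is an isomorphism with inverse multiplication by $\kk_{-\alpha}$. Hence the whole map is an isomorphism.

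\textbf{$KO$ version.} The same argument works with $KO$ in place of $RO$, provided the degrees stay in the right subgroups. We need $p^*(KO(\Pi B))\subseteq KO(\Pi X)$, which holds because pullback of bundles commutes with $\dim$. The splitting should be taken within $KO(\Pi B)$, so that $KO(\Pi B)\cong s(p^*KO(\Pi B))\oplus\kappa_p^{KO}$.

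The main point needing care is that \cref{thm:shriek-iso} gives a ring homomorphism on the whole $RO(\Pi B)$-grading, compatible with the $RO(\Pi X)$-grading via $p^*$. Multiplicativity of the $\kk_\alpha$ and of the assembled map depends entirely on this. The argument is otherwise formal; the path-connectedness hypothesis is not used beyond ensuring that $H^0$ is well behaved.
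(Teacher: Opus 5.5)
Your proposal is correct and follows essentially the same route as the paper. Both define $\kk_\alpha$ through the degreewise isomorphism $p_!$ of \cref{thm:shriek-iso} (using $p_!(X_{+_X})=X_{+_B}$) and deduce $\kk_0=1$ and $\kk_\alpha\kk_\beta=\kk_{\alpha+\beta}$ from uniqueness together with multiplicativity of $p_!$; in (b) both combine $p_!^{-1}$ on the split-off degrees with the units $\kk_\alpha$ into a graded ring map that is an isomorphism in each degree. Your explicit observation that the degree-$\gamma$ component is multiplication by the unit $\kk_\alpha$, and your note that the splitting must be chosen inside $KO(\Pi B)$ for the $KO$ version, only spell out details the paper leaves implicit.
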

\begin{proof}
Recall that $p_!(X_{+_X}) = X_{+_B}$. So applying \cref{thm:shriek-iso},
 for any $\alpha \in RO(\Pi B)$, we have a natural isomorphism
  \[ \xymatrix@R=1pc{H^{\alpha}_B(X,\uR) \ar[r]^-{p_!}_-\cong &H^{p^*\alpha}_X(X,\uR)  } .\]
Let $\alpha \in \kappa_{p}$ so that $p^*\alpha=0$ and define $\kk_\alpha$ to be the unique class so that $p_!(\kk_\alpha) = 1$ for $1\in H^{0}_X(X,\uR)$.  Uniqueness together with the fact that $p_!$ preserves products implies $\kk_0=1$ and $\kk_{\alpha}\kk_\beta = \kk_{\alpha+\beta}$. Note that each $\kk_\alpha$ is a unit since  $\kk_{\alpha}^{-1} = \kk_{-\alpha}$. This proves (a).

For part (b), assume we have a splitting $q \colon p^*(RO(\Pi B)) \to RO(\Pi B)$ of $p^*$ (restricted to its image), so that
\[RO(\Pi B) \cong \kappa_p \times q(p^*(RO(\Pi B))).\]
Using that $p^*q(\beta)=\beta$ for all $\beta \in p^*(RO(\Pi B))$ and specializing to $\mF$-coefficients, we have isomorphisms
 \[p_! \colon H^{q(\beta)}_B(X,\mF)
\xrightarrow{\cong} H^{\beta}_X(X,\mF)  .\]
These assemble into an isomorphism 
\[p_!^{-1} \colon H^{p^*(RO(\Pi B))}_X(X,\mF)
\xrightarrow{\cong}
H^{q(p^*(RO(\Pi B)))}_B(X,\mF).
\]
Now consider the homomorphism 
\[\phi \colon 
 H^{p^*(RO(\Pi B))}_X(X,\mF)\otimes \F_2[\kappa_{p}]
\to H^{RO(\Pi B)}_B(X,\mF) \]
defined by applying $p_!^{-1}$ to $H^{p^*(RO(\Pi B))}_X(X,\mF)$, and sending $\alpha \in \F_2[\kappa_{p}]$ to $\kk_\alpha$. Then $\phi$ is a homomorphism of graded rings, and an isomorphism in each degree.
\end{proof}

\begin{rem}
Note that part (a) of the above proposition is in fact formal and such units arise whenever we have a homomorphism between graded rings. See \cite[Def. 3.7 and Prop. 3.8]{CostenobleB}. 
\end{rem}

\begin{defn}\label{defn:basechangeunit}
For $p\colon X \to B$ and $\alpha \in \kappa_p = \ker(p^*)\subset RO(\Pi B)$, we call the class $\kk_\alpha \in H^{\alpha}_B(X, \uR)$  of \cref{prop:units} the \emph{base-change unit} of $\alpha$.
\end{defn}

\subsection{Applications of base-change units} Applying the above, there are a number of circumstances where parametrized cohomology is determined by something simpler.

For a connected base with a fixed point, the parametrized cohomology of $G/G$ with $\mF$-coefficients contains essentially the same information as $RO(G)$-graded cohomology.
\begin{cor}\label{cor:pointcoh}
Let $B$ be a $G$-space whose underlying space is path connected. Assume there is 
 a fixed point $x\colon G/G \to B$ and let  $\kappa_{x}$ be as in \cref{defn:kappa}. There is an isomorphism of graded rings 
\[H^{RO(\Pi B)}_B(G/G,\mF)\cong H^{RO(G)}(G/G, \mF)\otimes \F_2[\kappa_{x}].  \]
A similar isomorphism holds if we replace $RO(\Pi B)$ with $KO(\Pi B)$ and $\kappa_{x}$ with $\kappa_{x}^{KO}$.
\end{cor}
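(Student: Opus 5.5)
This will be a direct application of \cref{prop:units}(b) to the map $p = x \colon G/G \to B$, viewing $G/G$ as a $G$-space over $B$ via the fixed point. The plan is to identify each ingredient of that proposition in this special case and then assemble them.

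First, compute $p^* = x^* \colon RO(\Pi B) \to RO(\Pi G/G) \cong RO(G)$. Since $\crush x = \id_{G/G}$, functoriality of $RO(\Pi -)$ gives $x^*\crush^* = \id$. Hence $x^*$ is surjective, so its image $x^*(RO(\Pi B))$ is all of $RO(G)$. Moreover, $\crush^*$ is a splitting of $x^*$ onto its image. This is exactly the hypothesis needed in part (b) of \cref{prop:units}, and the kernel is $\kappa_x$ by \cref{defn:kappa}. The path connectedness hypothesis on $B$ is the one required in \cref{prop:units}.

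Applying \cref{prop:units}(b) then yields a graded ring isomorphism
\[H^{RO(\Pi B)}_B(G/G,\mF) \cong H^{RO(\Pi G/G)}_{G/G}(G/G,\mF)\otimes \F_2[\kappa_x].\]
It remains to identify the first tensor factor with the ordinary $RO(G)$-graded cohomology $H^{RO(G)}(G/G,\mF)$, which is the identification \eqref{eq:ROPAREQUAL}. For the $KO$ version, the same argument runs with $KO(\Pi B)$ in place of $RO(\Pi B)$. We need $x^*$ to map $KO(\Pi B)$ onto $KO(\Pi G/G) = RO(G)$; this holds because the image of $\crush^*$ lies in $KO(\Pi B)$, so $\crush^*$ still splits $x^*$ on $KO(\Pi B)$. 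The kernel is then $\kappa_x^{KO} = \kappa_x\cap KO(\Pi B)$, and the final clause of \cref{prop:units} gives the result.

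I expect no serious obstacle. The only point needing care is that the grading is compatible: the isomorphism of \cref{prop:units}(b) is indexed using the decomposition $RO(\Pi B)\cong \kappa_x\times \crush^*RO(G)$, which is precisely the splitting noted just before \cref{prop:units}. One should also check that $KO(\Pi G/G)$ is all of $RO(G)$, which holds since every representation is a bundle over a point.
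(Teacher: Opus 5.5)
Your proposal is correct and is essentially the paper's proof: apply \cref{prop:units}(b) to $x\colon G/G\to B$, using that $x^*$ is split surjective with splitting $\crush^*$ (whose image lies in $KO(\Pi B)$, which gives the $KO$ version). You then identify $H^{RO(\Pi G/G)}_{G/G}(G/G,\mF)$ with $H^{RO(G)}(G/G,\mF)$ via \eqref{eq:ROPAREQUAL}.
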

\begin{proof}
Apply \cref{prop:units} to the space $x \colon G/G \to B$ over $B$. Again note that $x^*$ is split surjective with splitting given by $\crush^*$ for $\crush \colon B \to G/G$ as usual.
\end{proof}
 
We can also identify $HO(\Pi B)$-graded cohomology with something simpler using homogeneity units and the isomorphism of \cref{prop:homog}.
\begin{rem}
Let $B$ be a $G$-space whose underlying space is path connected. Assume there is a fixed point $x \colon G/G \to B$. Recall $HO(\Pi B)$ is the subgroup of $KO(\Pi B)$ consisting of homogeneous virtual bundles. Let 
\[\kappa_{x}^{HO} =\kappa_{x} \cap HO(\Pi B)=\{ \xi-\xi_0  \mid \xi \in  HO(\Pi B) \}.\]
Then
\[HO(\Pi B) \cong RO(G) \times \kappa_{x}^{HO} .\] 
\end{rem}

\begin{prop}\label{homog:DOG}
Let $B$ be a $G$-space whose underlying space is path connected and which has a fixed point $x \colon G/G \to  B$.
There is an isomorphism of graded rings
\[ H^{RO(G)}(B,\mF)\otimes \F_2[\kappa_{x}^{HO}] \xrightarrow{\cong}  H_B^{HO(\Pi B)} (B ,\mF) \]
which maps $\xi-\xi_0 \in \kappa_{x}^{HO}$ to
the homogeneity unit $\ee_\xi$ of \cref{defn:homounit}.
\end{prop}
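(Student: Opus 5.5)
We define the map on simple tensors by sending $y\otimes(\xi-\xi_0)$ to $\crush^*(y)\,\ee_\xi$. Here $y\in H^{\star}(B,\mF)\cong H^{\crush^*\star}_B(B,\mF)$ is identified with a parametrized class via \cref{cor:changeofbases}; this applies because $B$ has a fixed point, so $\crush^*$ is injective by \cref{prop:ROGinjects}. The first step is to check that the map is well defined on the group algebra. Each element of $\kappa_x^{HO}$ is of the form $\xi-\xi_0$ with $\xi$ homogeneous. If two homogeneous virtual bundles $\xi,\xi'$ satisfy $\xi-\xi_0=\xi'-\xi'_0$ in $RO(\Pi B)$, then $\ee_\xi$ and $\ee_{\xi'}$ lie in the same group. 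That group is isomorphic to $H^0(B,\mF)$ via \cref{prop:homog}, or more cleanly via \cref{thm:gammanomatter}, since $\xi-\xi_0$ is homogeneously trivial. Both classes restrict to $1$, so they agree; equivalently, both equal the triviality class $\nu_{\xi-\xi_0}$ by the remark following \cref{thm:gammanomatter}. So the image of a degree in $\kappa_x^{HO}$ depends only on the degree, and we may write $\ee_\alpha$ for $\alpha\in\kappa_x^{HO}$.

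Next we check that the map is a ring homomorphism. We have $\ee_{\alpha+\beta}=\ee_\alpha\ee_\beta$ by \cref{lem:eclassprop}(c), and $\ee_0=1$. The map $H^{RO(G)}(B,\mF)\to H^{\crush^*RO(G)}_B(B,\mF)$ is a ring map by \cref{cor:changeofbases}. Since we work over $\F_2$, no sign issues arise in commuting the $\ee$'s past the $RO(G)$-graded classes, so the tensor product ring structure is respected.

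Finally we prove bijectivity degreewise. By the remark preceding the statement, $HO(\Pi B)\cong RO(G)\times\kappa_x^{HO}$, so every degree of $HO(\Pi B)$ is uniquely $\star+\alpha$ with $\star\in RO(G)$ and $\alpha=\xi-\xi_0\in\kappa_x^{HO}$. In that degree the map is
\[H^{\star}(B,\mF)\to H^{\star+\xi-\xi_0}_B(B,\mF),\qquad y\mapsto y\,\ee_\xi .\]
We claim this multiplication is exactly the isomorphism of \cref{prop:homog}, up to the identification of \cref{cor:changeofbases}. Indeed, multiplication by the unit $\ee_\xi$ is invertible, with inverse given by multiplication by $\ee_{-\xi}$. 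Invertibility alone already gives bijectivity in each degree, so the claim is not needed for the proof.

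The main obstacle is the well-definedness step. The plan handles it by invoking uniqueness of the class in a degree whose group is $\F_2$. This uses that $H^0(B,\mF)\cong H^0(B/G,\F_2)\cong\F_2$ for path-connected $B$, and that $\ee_\xi$ is nonzero because it is a unit.
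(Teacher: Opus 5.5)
Your proposal is correct and follows the paper's proof. It uses the same map $a\otimes(\xi-\xi_0)\mapsto\rho_!(a)\,\ee_\xi$, gets the ring structure from multiplicativity of the homogeneity units, and gets degreewise bijectivity from $HO(\Pi B)\cong RO(G)\times\kappa_x^{HO}$ together with $\ee_\xi$ being a unit; your explicit check that $\ee_\xi$ depends only on the class $\xi-\xi_0$ (because $H^{\xi-\xi_0}_B(B,\mF)\cong\F_2$) is a point the paper leaves implicit, and it is a worthwhile addition.
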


\begin{proof}
By \cref{prop:homog}, for any $\xi$ virtual bundle with fibers $\xi_0$, we have a Thom isomorphism
\[ H^{\star}(B,\mF) \xrightarrow{\cong} H_B^{\star+\xi-\xi_0}(B,\mF) \] 
which maps $1$ to $\ee_\xi$
as in \cref{defn:homounit}.
Consider the map
\[ H^{RO(G)}(B,\mF)\otimes \F_2[\kappa_x^{HO}] \to H_B^{HO(\Pi B)}(B,\mF) \]
which sends $a\otimes (\xi-\xi_0)$ to $\rho_!(a)\ee_\xi$
where as always,
\[\rho_! \colon  H^{RO(G)}(B,\mF)\xrightarrow{ \cong } H^{\rho^*(RO(G))}_B(B,\mF) .\]
This is clearly a ring homomorphism and it is an isomorphism in every degree. Indeed, $HO(\Pi B) \cong RO(G) \times \kappa_x^{HO}$ and for every homogeneous $\xi$, $\ee_\xi$ is a unit so that
\[ H^{\star}_B(B,\mF) \xrightarrow[\cong]{\smile \ee_\xi} H_B^{\star+\xi-\xi_0}(B,\mF) ,\]
 is an isomorphism. 
\end{proof}

For trivial $G$-spaces, parametrized cohomology with $\mF$-coefficients is determined by $RO(G)$-graded cohomology.
\begin{cor}\label{cor:ROgradedcohtrivialaction}
Let $B$ be a path connected space with a trivial $G$-action. Let $x\colon G/G \to B$. 
There are isomorphisms of graded rings
\begin{align*} 
H^{RO(\Pi B)}_B(B,\mF) &\cong H^{RO(G)}(B,\mF)\otimes \F_2[\kappa_{x}] \\
&\cong H^{RO(G)}(G/G,\mF)\otimes H^{*}(i^*_eB ,\F_2) \otimes \F_2[\kappa_{x}]  \\
&\cong H^{RO(\Pi B)}_B(G/G ,\mF) \otimes H^{*}(i^*_eB ,\F_2)  .
\end{align*}
\end{cor}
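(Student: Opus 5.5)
Three isomorphisms need to be established, and they follow one after the other.

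\emph{First isomorphism.} Apply \cref{prop:units}(b) to the identity $p=\id \colon B \to B$? That is useless, since $\kappa_{\id}=0$. Instead, use \cref{prop:trivKOROequal}: because $B$ has trivial action, $RO(\Pi B)=KO(\Pi B)$. Every actual representation $\alpha$ is then $\dim\xi(\alpha)$ for the bundle $\xi(\alpha)$ built there.

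This bundle is homogeneous. Its fibers are all the $G$-representation $V=\alpha(x)$, twisted only by $G$-automorphisms of $V$ along loops. Since $B$ is path connected and every point is fixed, the fiber over any $b$ is isomorphic to $V$ as a $G$-representation. So $HO(\Pi B)=RO(\Pi B)$, and $\kappa_x^{HO}=\kappa_x$.

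Now \cref{homog:DOG} applies with the fixed point $x$ and gives the first ring isomorphism
\[H^{RO(G)}(B,\mF)\otimes\F_2[\kappa_x]\cong H^{RO(\Pi B)}_B(B,\mF).\]
Alternatively, one can use \cref{thm:gammanomatter}: every $\gamma\in\kappa_x$ is homogeneously trivial, since $\gamma_0(y)$ depends only on the path component of $B^H=B$. The triviality classes $\nu_\gamma$ then serve as the units.

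\emph{Second isomorphism.} This is a Künneth statement for $RO(G)$-graded cohomology of a trivial $G$-space: $H^{\star}(B,\mF)\cong \M\otimes H^*(B,\F_2)$, where $\M=H^{RO(G)}(G/G,\mF)$. My plan is as follows.

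Give $B$ a CW structure with trivial action, so every cell is of type $G/G\times D^n$. The $RO(G)$-graded cellular cochains with $\mF$ coefficients are then $C^*(B;\F_2)\otimes \M$. The key point is that the Atiyah–Hirzebruch (cellular) spectral sequence for the trivial space $B$ is the constant-coefficient one. Equivalently, $H\mF$-cohomology of $\Sigma^\infty B_+$ with trivial action is $F(B_+,H\mF)$, and since the action is trivial this equals $F(B_+,H\F_2)\wedge_{H\F_2}$ applied to the $G$-fixed structure.

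More concretely, I will argue via a splitting. Over $\F_2$, $H\F_2\wedge B_+$ (nonequivariant) splits as a wedge of suspensions of $H\F_2$, indexed by a basis of $H_*(B;\F_2)$, with $B$ of finite type. Inflating this to $G$-spectra with trivial action and smashing with $H\mF$ gives $H\mF\wedge B_+\simeq\bigvee \Sigma^{n}H\mF$. Taking $RO(G)$-graded cohomology, that is, maps into $\Sigma^\star H\mF$ of $H\mF$-modules, yields the free $\M$-module on $H^*(B;\F_2)$, which is identified with $H^*(i^*_eB;\F_2)$.

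Multiplicativity comes from naturality of the pullback along $B\to G/G$, which provides $\M$, together with the map $H^*(B;\F_2)\to H^{*}(B,\mF)$ induced by inflation. These combine into a ring map, and it is an isomorphism degreewise by the splitting.

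\emph{Third isomorphism.} Regroup the factors and apply \cref{cor:pointcoh} to identify $\M\otimes\F_2[\kappa_x]\cong H^{RO(\Pi B)}_B(G/G,\mF)$.

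The main obstacle is the second step: making the inflation/splitting argument rigorous and checking that the resulting $\M$-module isomorphism is multiplicative. The finite-type hypothesis is needed to handle $\lim^1$ issues in the cohomology of the wedge.
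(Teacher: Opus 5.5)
Your proposal follows the paper's proof step for step. The first isomorphism comes from $HO(\Pi B)=KO(\Pi B)=RO(\Pi B)$, using homogeneity of every bundle over a path connected trivial $G$-space together with \cref{prop:trivKOROequal}, and then \cref{homog:DOG}; the second is the K\"unneth isomorphism for trivial actions; the third is \cref{cor:pointcoh}. The paper only asserts the K\"unneth step, so your splitting sketch adds detail, but to make it literally correct you should replace ``smashing with $H\mF$'' by the base change $H\mF\wedge_{\epsilon^*H\F_2}(-)$ along the ring map $\epsilon^*H\F_2\to H\mF$ adjoint to $H\F_2\simeq (H\mF)^G$.
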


\begin{proof}
If $B$ is a path connected space with a trivial $G$-action, then $B^G=B$ is path connected. So all bundles over $B$ are homogeneous. Using \cref{prop:trivKOROequal}, we get
\[HO(\Pi B)=KO(\Pi B)=RO(\Pi B). \]
Then the first isomorphism we want to prove follows from \cref{homog:DOG}.
Since $B$ has a trivial action, 
we have an isomorphism in $RO(G)$-graded cohomology
\[H^{RO(G)}(B,\mF) \cong H^{RO(G)}(G/G ,\mF) \otimes H^{*}(i^*_eB ,\F_2) \]
where the right tensor factor is underlying singular cohomology.
By \cref{cor:pointcoh}, 
\[H^{RO(\Pi B)}_B(G/G,\mF)\cong H^{RO(G)}(G/G, \mF)\otimes \F_2[\kappa_{x}] , \]
which completes the proof.
\end{proof}

An example of the previous result occurs when $G$ is trivial. For $B$ a nonequivariant path connected space and $x\in B$, we have
\[ \kappa_{x}\cong \Hom(\pi_1(B,x), O(1)) .\]
See \cite[Example 2.28]{witpaper} for more details.
As a direct consequence of \cref{cor:ROgradedcohtrivialaction}, we  have the following result, which generalizes the non-equivariant results of \cite{Cadek} who considers cohomology graded on local coefficient systems. 

\begin{cor}\label{cor:cohBtrivunits}
Let $B$ be a path connected nonequivariant space. Let $x\in B$. There is an isomorphism of graded rings
\[ H^{RO(\Pi B)}_B(B,\mF) \cong  H^*(B,\F_2) \otimes \F_2[\kappa_{x}].\]
\end{cor}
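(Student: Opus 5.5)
This is a direct specialization of \cref{cor:ROgradedcohtrivialaction} to the trivial group $G=e$, so the plan is to check its hypotheses and then simplify each factor that appears. With $G=e$, every space has trivial action. Since $B$ is path connected, the hypotheses of \cref{cor:ROgradedcohtrivialaction} hold for any point $x\colon G/G=\pt\to B$. That corollary then gives the first displayed isomorphism of graded rings
\[
H^{RO(\Pi B)}_B(B,\mF)\cong H^{RO(G)}(B,\mF)\otimes \F_2[\kappa_x].
\]
Here $RO(e)\cong\Z$, so the factor $H^{RO(G)}(B,\mF)$ is just $\Z$-graded singular cohomology $H^*(B,\F_2)$. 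Likewise $i^*_eB=B$, and the second line of that corollary gives the same answer, because $H^{RO(e)}(\pt,\mF)=\F_2$ is concentrated in degree $0$.

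Two identifications remain, neither of which should be an obstacle. The first is that $H^{\rho^*(RO(e))}_B(B,\mF)\cong H^*(B,\F_2)$ as rings. This is \cref{cor:changeofbases}: $B$ has a point, so $\crush^*$ is injective by \cref{prop:ROGinjects}, and $\rho_!$ is a ring isomorphism onto the $\Z$-graded part.

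The second is that the grading splits as $RO(\Pi B)\cong\Z\times\kappa_x$. This follows because $x^*$ is split by $\crush^*$, as noted before \cref{prop:units}. One can also describe $\kappa_x\cong\Hom(\pi_1(B,x),O(1))$ via \cite[Ex.~2.28]{witpaper}, but the statement does not need this.

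The only care point is multiplicativity: the tensor product should be a graded-ring isomorphism with the $\kappa_x$ factor consisting of units. \cref{prop:units} and \cref{homog:DOG} already deliver this. All bundles over a path connected nonequivariant space are homogeneous, and \cref{prop:trivKOROequal} gives $HO(\Pi B)=KO(\Pi B)=RO(\Pi B)$. Hence every $\alpha\in\kappa_x$ is represented as $\xi-\xi_0$ for a virtual bundle $\xi$, and the corresponding generator is the homogeneity unit $\ee_\xi$, which here equals the orientation class $u_\xi$. Working over $\F_2$ removes all sign issues, so the map $a\otimes\alpha\mapsto\rho_!(a)\ee_\xi$ is a ring isomorphism and the corollary follows.
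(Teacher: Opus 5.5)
Your proposal is correct and matches the paper, which obtains this corollary directly as the case $G=e$ of \cref{cor:ROgradedcohtrivialaction}, using $RO(e)\cong\Z$ so that the $RO(G)$-graded factor becomes ordinary $H^*(B,\F_2)$. Your extra checks (the splitting $RO(\Pi B)\cong\Z\times\kappa_x$ and multiplicativity via \cref{homog:DOG}) just unwind the proof of that corollary. The only loose phrase is identifying $\ee_\xi$ with $u_\xi$ for a \emph{virtual} $\xi$, which is harmless because each $\alpha\in\kappa_x$ can be written as $L-1$ for an actual line bundle $L$, and the aside is not needed anyway.
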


We end with a result about the inclusion of fixed points in a $G$-space $B$, which we will use to study the $RO(\Pi P)$-graded cohomology of $P^{C_2}$. 
 \begin{thm}\label{thm: use cohomology of fixed points}
 Let $B$ be a $G$-space whose underlying space is path connected and which has a fixed point.
 Let $X$ be a nonempty path connected subcomplex of $B^G$ and $x \colon G/G \to X$.
There are isomorphisms of graded rings
\begin{align*} H^{RO(\Pi B)}_B (X,\mF) &\cong H^{RO(G)}(X,\mF) \otimes \F_2[\kappa_{px}] \\
&\cong H^*(i^*_eX,\F_2)\otimes H^{RO(\Pi B)}_B(G/G,\mF).
\end{align*}
 \end{thm}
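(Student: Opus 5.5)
We treat the inclusion $p\colon X \to B$ of the fixed-point subcomplex as a $G$-space over $B$ and apply the change-of-base results of \cref{sec:units}. In three steps, we reduce to the cohomology of $X$ over itself, use that $X$ has trivial action so \cref{cor:ROgradedcohtrivialaction} applies, and then regroup the tensor factors by comparing kernels of the relevant pullback maps on $RO(\Pi -)$.

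First, we apply \cref{prop:units}(b) to $p\colon X\to B$. This needs a splitting of $p^*\colon RO(\Pi B)\to p^*(RO(\Pi B))$. We get one from the fixed point $x\colon G/G\to X$. The composite $px$ is a fixed point of $B$, so $\crush_B^*$ splits $(px)^*$ and gives
\[
RO(\Pi B)\cong \kappa_{px}\times RO(G).
\]
Since $px$ factors through $p$, we have $\kappa_p\subseteq \kappa_{px}$, and $p^*(RO(\Pi B))$ is identified with $\crush_X^*RO(G)\times p^*(\kappa_{px})$. The cleanest route avoids choosing a splitting of $p^*$ in general. We note instead that \cref{prop:units}(a), combined with $p_!$ of \cref{thm:shriek-iso}, gives degreewise isomorphisms $H^{\alpha}_B(X,\mF)\cong H^{p^*\alpha}_X(X,\mF)$. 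We then compute the right-hand side using \cref{cor:ROgradedcohtrivialaction} for the trivial $G$-space $X$:
\[
H^{RO(\Pi X)}_X(X,\mF)\cong H^{RO(G)}(X,\mF)\otimes \F_2[\kappa^X_x],
\]
where $\kappa^X_x=\ker(x^*\colon RO(\Pi X)\to RO(G))$. The units in $H^{p^*\alpha}_X(X,\mF)$ for $p^*\alpha\in\kappa^X_x$ are the triviality/homogeneity units. Pulling them back along $p_!$, we obtain for each $\alpha\in\kappa_{px}$ a unit $g_\alpha\in H^{\alpha}_B(X,\mF)$. These are multiplicative because $p_!$ is a ring map and uniqueness holds in each degree, since $H^{0}(X,\mF)\cong \F_2$.

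Second, we assemble these into a ring homomorphism
\[
H^{RO(G)}(X,\mF)\otimes\F_2[\kappa_{px}]\to H^{RO(\Pi B)}_B(X,\mF),
\]
sending $a\otimes\alpha$ to $\crush_!(a)\,g_\alpha$, where we use \cref{cor:changeofbases} for the $RO(G)$ part. Every degree of $RO(\Pi B)$ is uniquely $\star+\alpha$ with $\star\in RO(G)$ and $\alpha\in\kappa_{px}$. Since $g_\alpha$ is a unit, multiplication by $g_\alpha$ is an isomorphism
\[
H^{\star}_B(X,\mF)\cong H^{\star+\alpha}_B(X,\mF),
\]
so the map is bijective in each degree. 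This gives the first isomorphism.

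Third, the second isomorphism follows in two moves. We use the Künneth-type splitting
\[
H^{RO(G)}(X,\mF)\cong H^{RO(G)}(G/G,\mF)\otimes H^*(i^*_eX,\F_2)
\]
for trivial actions, as in the proof of \cref{cor:ROgradedcohtrivialaction}. We then apply \cref{cor:pointcoh} to the fixed point $px$ of $B$:
\[
H^{RO(\Pi B)}_B(G/G,\mF)\cong H^{RO(G)}(G/G,\mF)\otimes\F_2[\kappa_{px}].
\]

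The main obstacle is the existence of the units $g_\alpha$ for all $\alpha\in\kappa_{px}$, not only for $\alpha\in\kappa_p$. We need to know that $p^*\alpha$ lies in a degree where $X$'s cohomology is one-dimensional in degree $0$, with a class restricting to $1$. I would first argue that $p^*\alpha$ is homogeneously trivial on $\Pi X$, so that \cref{thm:gammanomatter} supplies the triviality class $\nu_{p^*\alpha}$. Homogeneous triviality holds because $X\subseteq B^G$ is path connected and $\alpha$ vanishes at $px$: every object of $\Pi X$ is connected by a morphism to an object induced from $x$, so $(p^*\alpha)_0$ vanishes on all objects. Uniqueness and multiplicativity of $g_\alpha$ then follow because $H^{0}_B(X,\mF)\cong \F_2$.
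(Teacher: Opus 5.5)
Your proposal is correct and follows essentially the paper's route: you split $RO(\Pi B)\cong RO(G)\times\kappa_{px}$ via the fixed point $px$, transport degrees along $p_!$ to $X$ over itself, use \cref{cor:ROgradedcohtrivialaction} to obtain units for $p^*\alpha$ and assemble a ring map that is bijective in each degree, and finish with the trivial-action K\"unneth splitting and \cref{cor:pointcoh}. The ``main obstacle'' you flag is already settled by \cref{cor:ROgradedcohtrivialaction}, since $x^*p^*\alpha=(px)^*\alpha=0$ puts $p^*\alpha$ in $\kappa_x$, so your detour through \cref{thm:gammanomatter} is harmless but unnecessary.
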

\begin{proof}
Let $p\colon X \to B$ be the inclusion.
Let $\kappa_{px}$ and $\kappa_{x}$ be
as in \cref{defn:kappa}. We have a diagram
\[\xymatrix{ 
0\ar[r] &
\kappa_{px} \ar[d]\ar[r] & RO(\Pi B)\ar[d]^-{p^*} \ar[r]^-{(px)^*} & RO(G) \ar[d]^-= \ar[r] & 0 \\
0\ar[r] & \kappa_{x} \ar[r] & RO(\Pi X) \ar[r]^-{x^*} & RO(G) \ar[r] & 0 ,
}\]
where the rows are split by $\crush^*$. That is, 
\begin{align*}
RO(\Pi B) &\cong  RO(G) \times  \kappa_{px} \\
RO(\Pi X) &\cong  RO(G) \times  \kappa_{x} .
\end{align*}

For any $\alpha \in RO(\Pi B)$, write $\alpha = V + \beta$ for $V \in RO(G)$ and  $\beta \in \kappa_{px}$. Then 
\[p^{*}(\alpha) = p^{*}(V+\beta) = V+  p^* (\beta),\] 
with $p^{*}(\beta) \in \kappa_{x}$.
We have
\[\xymatrix{
 H^{\alpha}_B(X,\mF) \ar@{=}[d]\ar[r]^-{p_!}_-\cong & H^{p^{*}(\alpha)}_X(X,\mF) \ar@{=}[d]\\
  H^{V+\beta}_B(X,\mF) \ar[r]^-\cong &  H^{V+p^*(\beta)}_X(X,\mF).
}\]
By \cref{cor:ROgradedcohtrivialaction}, we know that
 \[H^{\star+p^*(\beta)}_X(X,\mF) \cong H^{\star}(X,\mF)\{ \kk_{p^*(\beta)} \}\]
for 
\[\kk_{p^*(\beta)} \in H^{p^*(\beta)}_X(X,\mF)\] the base-change unit of $p^*(\beta)$ along $x \colon G/G \to X$.
Consider the map
\[\phi \colon H^{RO(G)}(X,\mF)\otimes \F_2[\kappa_{px}] \to H^{RO(\Pi B)}_B (X,\mF) \]
given by
\[\phi(a\otimes\kk_{p^*(\beta)}  ) = p_!^{-1}(a\kk_{p^*(\beta)}).\]
This is a morphism of graded rings, and is an isomorphism in each degree by \cref{cor:ROgradedcohtrivialaction} and \cref{thm:shriek-iso}. This proves the first isomorphism in the statement.

For the second isomorphism, we have
\begin{align*} H^{RO(\Pi B)}_B (X,\mF) &\cong H^{RO(G)}(X,\mF) \otimes \F_2[\kappa_{px}] \\
&\cong H^*(i^*_eX,\F_2)\otimes H^{RO(G)}(G/G,\mF) \otimes \F_2[\kappa_{px}] \\
&\cong H^*(i^*_eX,\F_2)\otimes H^{RO(\Pi B)}_B(G/G,\mF)
\end{align*}
where we used that $X$ has a trivial action in the second line and \cref{cor:pointcoh} in the third.
\end{proof}

 \section{Constructions for $G=C_2$}

\subsection{$\mathbb{M}_2$-module structure}\label{sec:M2module}
 We turn to the specific case of $G=C_2$.
With various notations, 
\[\M_2  = H^{RO(C_2)}(C_2/C_2, \mF) = H^{\star}(C_2/C_2, \mF) = H^{*,*}(C_2/C_2, \mF) \] 
all denote the $RO(C_2)$-graded cohomology of the point. In the last term, the degree $(p,q)$ corresponds to $\R^{p,q} \in RO(C_2)$, the direct sum of $p-q$ copies of the one-dimensional trivial representation
and $q$ copies of $\sigma$ the one-dimensional sign representation. 
The ring $\M_2$ is depicted in \cref{fig:M2}.  It consists of two cones with a copy of $\F_2$ at each lattice point inside the cones.  The upper cone is polynomial in the classes $\aa \in H^{1,1}(C_2/C_2,\mF)$ and $\uu \in H^{0,1}(C_2/C_2,\mF)$.  The class $\theta \in H^{0,-2}(C_2/C_2,\mF)$ in the lower cone is infinitely divisible by $\aa$ and by $\uu$, and has $\theta^2 = 0$.  This is written additively as
\[\M_2 = H^{*,*}(C_2/C_2 ,\mF) \cong \F_2[\uu,\aa]\oplus \F_2\{\theta/(\uu^i\aa^j) : i,j\geq 0\}.\]
The $RO(C_2)$-graded cohomology of any $C_2$-space $X$ is an $\M_2$-algebra via the equivariant map $\crush \colon X \to C_2/C_2$.  

\begin{figure}[ht]
\begin{tikzpicture}[scale=0.7]
\draw[step=1cm,gray,very thin] (-4.5,-2.5) grid (4.5,6.5);
\draw[] (-4.5,2) -- (4.5,2) node[below, black] {\small $p$};
\draw[] (0,-2.5) -- (0,6.5) node[left, black] {\small $q$};

\foreach \y in {2,...,5}
  \foreach \x in {\y,...,2}
    \fill (\x-1.5,\y+0.5) circle (3pt);

\foreach \x in {0,...,3}
  \draw[thick,->] (\x+0.5,\x+2.5) -- (\x+0.5,6.25);
\foreach \y in {2,...,5}
  \draw[thick,->] (0.5,\y+0.5) -- (6.25-\y,6.25);

\foreach \x in {-2,...,0}
  \foreach \y in {-2,...,\x}
    \fill (\x+0.5,\y+0.5) circle (3pt);

\foreach \x in {-2,...,0}
  \draw[thick,->] (\x+0.5,\x+0.5) -- (\x+0.5,-2.25);
\foreach \y in {-2,...,0}
  \draw[thick,->] (0.5,\y+0.5) -- (-2.25-\y,-2.25);

\draw[] (0.51,3.5) node[left,black] {\small $\uu$};
\draw[] (0.5,4.5) node[left,black] {\small $\uu^2$};
\draw[] (0.5,2.25) node[left,black] {\small $1$};
\draw[] (1.8,3.4) node[black] {\small $\aa$};
\draw[] (2.95,4.5) node[black] {\small $\aa^2$};
\draw[] (1.1,0.35) node[left,black] {\small $\theta$};
\draw[] (1.1,-0.65) node[left,black] {\small $\frac{\theta}{\uu}$};
\draw[] (-0.6,-0.55) node[left,black] {\small $\frac{\theta}{\aa}$};
\end{tikzpicture}
\caption{The ring $\M_2=H^{*,*}(C_2/C_2, \mF)$.}
\label{fig:M2}
\end{figure}

In $\M_2$, both the Euler class and the orientation class of an (actual) orthogonal representation $\R^{p,q}$ for $p\geq q\geq 0$ exist. For example,
\begin{align*}
 a_{\R^{1,0}} &=  0 &    a_{\R^{1,1}} &= \aa  \\
 u_{\R^{1,0}} &=1 &   u_{\R^{1,1}} &= \uu . 
 \end{align*}
Multiplicativity determines all other Euler and orientation classes. The class $\aa$ is often denoted by $\aa_\sigma$ or $\rho$, and $\uu$ by $\uu_\sigma$ or $\tau$.

\begin{lem}
Let $B$ be a $C_2$-space.
The parametrized cohomology of an ex-$C_2$-space over $B$ with coefficients in $\mF$ is an algebra over $\M_2$.
\end{lem}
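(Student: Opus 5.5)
The plan is to produce the $\M_2$-action from the unique map $\crush \colon B \to C_2/C_2$, together with the ring structure on parametrized cohomology. Let $X$ be an ex-$C_2$-space over $B$ with section $s$. The $RO(\Pi B)$-graded cohomology $\widetilde{H}^{RO(\Pi B)}_B(X,\mF)$ is a graded ring, and it is in fact a module over the unreduced ring $H^{RO(\Pi B)}_B(B,\mF)$. In terms of representing spectra, this module structure is given by the pairings $\mu^{\alpha,\beta} \colon H\mF^\alpha \wedge_B H\mF^\beta \to H\mF^{\alpha+\beta}$ of \cite[\S3.10]{CW_book}, using that $X \cong X\wedge_B B_{+_B}$. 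It therefore suffices to build a graded ring homomorphism
\[ \M_2 = H^{RO(C_2)}(C_2/C_2,\mF) \longrightarrow H^{\crush^*(RO(C_2))}_B(B,\mF) \subset H^{RO(\Pi B)}_B(B,\mF) \]
that is compatible with the grading homomorphism $\crush^* \colon RO(C_2) \to RO(\Pi B)$. We then let $\M_2$ act through this map.

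We construct the map by pulling back along $\crush$. An element of $H^{V}(C_2/C_2,\mF)$ is a map $S^0 \to H\mF^{V}$ in $\mathcal{SH}^{C_2}$. Applying the strong symmetric monoidal functor $\crush^*$ turns it into a map $B_{+_B} = \crush^*S^0 \to \crush^*H\mF^{V}$. By \cite[Prop. 3.8.3]{CW_book}, as used in the proof of \cref{thm:shriek-iso}, we have $\crush^*H\mF^V \simeq (H\crush^*\mF)^{\crush^*V} = H\mF^{\crush^*V}$. The result is a class in $H^{\crush^*V}_B(B,\mF)$. Equivalently, this map is the composite of $\crush^* \colon H^{\star}(C_2/C_2,\mF) \to H^\star(B,\mF)$ with the natural ring homomorphism of \cref{cor:changeofbases}, taking $X = B_{+_B}$. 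Since that corollary already gives a natural ring homomorphism, and ordinary pullback is a ring map, the composite is a ring homomorphism. Multiplicativity is witnessed by the commuting square of pairings in the proof of \cref{thm:shriek-iso}, applied with $f=\crush$.

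It remains to check the algebra axioms. Associativity and unitality of the action follow from associativity and unitality of the cup product pairings $\mu$, and the image of $1$ is $1$. Because we work over $\F_2$, graded commutativity carries no signs, so the image of $\M_2$ is central. That makes $\widetilde{H}^{RO(\Pi B)}_B(X,\mF)$ an $\M_2$-algebra in the ordinary sense, with $\M_2$ acting in degrees shifted by $\crush^*$. If $\crush^*$ fails to be injective, for instance when $B$ has no fixed point, the map is still a well-defined ring homomorphism. In that case distinct $RO(C_2)$-degrees may land in the same $RO(\Pi B)$-degree, which is harmless for the algebra structure.

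The main obstacle is the identification $\crush^*H\mF^V \simeq H\mF^{\crush^*V}$ in a way compatible with the multiplicative pairings. I would not reprove this; I would cite it from the proof of \cref{thm:shriek-iso}, which handles exactly this point via the characterization of cup products on parametrized homotopy groups.
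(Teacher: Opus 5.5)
Your proposal is correct and follows the paper's route. Like the paper, you let $\M_2$ act through $H^{RO(\Pi B)}_B(B,\mF)$, whose $\crush^*RO(C_2)$-graded part receives a ring map from the $\M_2$-algebra $H^{RO(C_2)}(B,\mF)$ via \cref{thm:shriek-iso}. You are also slightly more careful than the paper: it asserts an isomorphism $H^{\crush^*RO(C_2)}_B(B,\mF)\cong H^{RO(C_2)}(B,\mF)$, which holds as graded rings only when $\crush^*$ is injective, whereas you correctly observe that a ring homomorphism is all that is needed.
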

\begin{proof}
 The parametrized cohomology of an ex-$C_2$-space over $B$ with coefficients in $\mF$ is an algebra over the ring $H^{RO(\Pi B)}_B(B, \mF)$, which contains the subalgebra $H^{\crush^*RO(C_2)}_B(B, \mF)$. But by \cref{thm:shriek-iso}
\[H^{\crush^*RO(C_2)}_B(B, \mF) \cong H^{RO(C_2)}(B, \mF),  \]
where the right-hand side is the unreduced $RO(C_2)$-graded cohomology of $B$.
The $RO(C_2)$-graded cohomology of any $C_2$-space with $\mF$-coefficients is an algebra over $\M_2$, and this proves the claim.
\end{proof}

We then have the following relationship between orientation classes and homogeneity units.
\begin{lem}
 Let $B$ be a $C_2$-space whose underlying space is path connected and which has a fixed point $x \colon C_2/C_2 \to  B$. Let $\xi$ be an actual homogeneous $C_2$-bundle with generic fiber $\xi_0\cong \R^{p,q}$.
Then
\[ u_\xi = \uu^q \ee_{\xi}\]
in $H_{B}^{RO(\Pi B)}(B;\mF)$.
\end{lem}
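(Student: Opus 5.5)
The plan is to reduce to \cref{lem:euclassesrel}, which already gives that $\xi$ is $\F_2$-orientable and
\[u_\xi = u_{\xi_0}\ee_\xi\]
in $H_B^{RO(\Pi B)}(B;\mF)$. Here $u_{\xi_0}$ means the image under $\crush^*$ (equivalently $\crush_!$ of \cref{cor:changeofbases}) of the orientation class of the representation $\xi_0$ over the point. It therefore suffices to identify $u_{\xi_0}$ for $\xi_0\cong\R^{p,q}$ as the class $\uu^q$, viewed in parametrized cohomology through the $\M_2$-algebra structure.

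The first step is to compute $u_{\R^{p,q}}\in H^{\R^{p,q}-p}(C_2/C_2,\mF)=\M_2^{0,q}$. Write $\R^{p,q}=(p-q)\R^{1,0}+q\,\sigma$. By \cref{lem:orientprop}(c) applied over $B=C_2/C_2$, we get
\[u_{\R^{p,q}}\doteq u_{\R^{1,0}}^{\,p-q}\,u_{\R^{1,1}}^{\,q}=\uu^q,\]
using the values $u_{\R^{1,0}}=1$ and $u_{\R^{1,1}}=\uu$ recorded above. Over $\F_2$ the only unit is $1$, so $\doteq$ is an honest equality.

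Alternatively, one can argue directly. $\M_2^{0,q}\cong\F_2$ is spanned by $\uu^q$ for $q\ge 0$, and $\uu$ restricts to $1$ under $i_e^*$. So $\uu^q$ is the unique class in that degree restricting to a generator, hence equals $u_{\R^{p,q}}$ by \cref{ex:uVclasses}.

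The second step is to transport this to $B$. By \cref{lem:orientprop}(b) with $f=\crush$, or simply by definition of the $\M_2$-module structure via $\crush^*$, the class $u_{\xi_0}$ in $H_B^{\xi_0-|\xi_0|}(B,\mF)$ is $\crush^*\uu^q=\uu^q$. Substituting this into \cref{lem:euclassesrel} gives $u_\xi=\uu^q\ee_\xi$.

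The only mild subtlety is the bookkeeping that ``$u_{\xi_0}$'' in \cref{lem:euclassesrel} is the pullback of the point class. I would settle it through the uniqueness argument used in that lemma's proof. The degree $\xi-|\xi|$ contains exactly one class restricting to $1$ under $i_e^*$. The class $\uu^q\ee_\xi$ lies in that degree, since
\[|\uu^q\ee_\xi|=(\R^{q,q}-q)+(\xi-\xi_0)=\xi-|\xi|,\]
the trivial summands $\R^{p-q,0}$ cancelling. It also restricts to $1\cdot 1=1$, because $i_e^*$ is a ring map and both $\uu$ and $\ee_\xi$ restrict to $1$. Hence $\uu^q\ee_\xi=u_\xi$.
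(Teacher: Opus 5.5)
Your proposal is correct and follows the paper's proof: apply \cref{lem:euclassesrel} to get $u_\xi = u_{\xi_0}\ee_\xi$, then use multiplicativity with $u_{\R^{1,0}}=1$ and $u_{\R^{1,1}}=\uu$ to obtain $u_{\xi_0}=\uu^q$. Your closing uniqueness check (degree $\xi-|\xi|$, restriction to $1$) only repeats the argument already inside the proof of \cref{lem:euclassesrel}; it is harmless but not needed.
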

\begin{proof}
By \cref{lem:euclassesrel}, 
 $ u_{\xi}  = u_{\xi_0}\ee_{\xi} $.
 But $ u_{\xi_0} = u_{\R^{1,0}}^{p-q}u_{\R^{1,1}}^q = u^q$.
\end{proof}

\subsection{Steenrod operations}\label{sec:bockstein}
Given a map $\varphi \colon E \to F$ in $G$-spectra, applying $\crush^*$ gives a map in $G$-spectra over $B$ as
\[ \xymatrix{
[X, \Th_B(\xi) \wedge_B \crush^*E ]^G_B \ar[r]^-{\crush^*\varphi}&  [X, \Th_B(\xi) \wedge_B\crush^*F ]^G_B 
}.\]
Therefore, we can import cohomology operations from the $RO(G)$-graded world to the parametrized context.  In the $RO(G)$-graded degrees, we recover the usual $G$-equivariant operations since 
\[ \xymatrix{
[X,  \crush^*E ]^G_B \ar[d]_-\cong \ar[r]^-{\crush^*\varphi}&  [X,  \crush^*F ]^G_B \ar[d]_-\cong \\
[\crush_! X,  E ]^G \ar[r]^-{\varphi}&  [\crush_!X,  F ]^G
}\]
commutes.  Furthermore, the identity 
\[\crush^* i^*_H\varphi=  i^*_H\crush^*\varphi\]
implies that this process is well-behaved under restriction to subgroups.

We will apply this construction to Steenrod operations from $RO(C_2)$-graded cohomology. See \cite{Caruso}, 
\cite{MR2031199}, 
\cite{DuggerIsaksen},
\cite{MR1808224}, \cite{2023arXiv230104714Y}
and 
\cite{2019arXiv190500058W} 
for various treatments of Steenrod operations in equivariant and motivic homotopy theory. 

We will consider the $C_2$-equivariant Steenrod squares
\begin{align*} 
\Sq^{1,0} &\colon H\F_2 \to \Sigma^{1,0} H\F_2 &k\geq 0 \\
\Sq^{2k,k} &\colon H\F_2 \to \Sigma^{2k,k} H\F_2  &k\geq 1
\end{align*}
For $k\geq 1$, we also have 
\[\Sq^{2k+1,k}=\Sq^{1,0}\Sq^{2k,k}.\] 
We obtain operations in parametrized cohomology
\begin{align*}
\Sq^{1,0}&:=\rho^*\Sq^{1,0} \colon H^{\xi}_B(B,\mF) \to  H^{\xi+1}_B(B,\mF)   \\
\Sq^{2k,k}&:=\rho^*\Sq^{2k,k} \colon H^{\xi}_B(B,\mF) \to  H^{\xi+k\rho_2}_B(B,\mF)   
\end{align*}
where here $\rho_2 = 1+\sigma = \R^{2,1}$ is the regular representation.
As usual $\Sq^{0,0}=\id$ and
\[\bock\circ \bock =0.\]
The Cartan formula also holds in the parametrized context in the following situation:
\begin{lem}[Cartan Formula]\label{lem:cartan}
The Cartan formula holds in $KO(\Pi B)$-graded cohomology. That is,
for $x_1,x_2 \in H^{KO(\Pi B)}_B(B,\mF)$,
\begin{align*}
\Sq^{1,0}(x_1x_2)&=\Sq^{1,0}(x_1)x_2+x_1\Sq^{1,0}(x_2) \\
\Sq^{2k,k}(x_1x_2)&=\sum_{r=0}^k\Sq^{2r,r}(x_1)\Sq^{2(k-r),k-r}(x_2) \\
 & \ \ \ \ + u\sum_{s=0}^{k-1}\Sq^{2s+1,s}(x_1)\Sq^{2(k-s-1)+1,k-s-1}(x_2) .
\end{align*}
\end{lem}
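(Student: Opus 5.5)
The plan is to reduce the Cartan formula in $KO(\Pi B)$-graded parametrized cohomology to the known $RO(C_2)$-graded Cartan formula for Thom spectra, using the multiplicative Thom isomorphism of \cref{thm:KOPIB}. For $x_i \in H^{\gamma_i+\star_i}_B(B,\mF)$ with $\gamma_i \in KO(\Pi B)$ and $\star_i\in RO(C_2)$, we have $\phi(x_i) \in \widetilde{H}^{\star_i}(\Th(-\gamma_i),\mF)$. By \cref{lem:phimultiplication} and \cref{thm:KOPIB}, $\phi(x_1x_2) = \Delta^*(\phi(x_1)\times\phi(x_2))$, where $\times$ is the external product landing in $\widetilde{H}^{\star_1+\star_2}(\Th(-\gamma_1)\wedge\Th(-\gamma_2),\mF)$ and $\Delta$ is the diagonal.

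\textbf{Step 1: $\phi$ commutes with the operations.} Since $\Sq^{1,0}$ and $\Sq^{2k,k}$ are defined by applying $\crush^*$ to maps of $C_2$-spectra $H\F_2\to\Sigma^{V}H\F_2$, they act on the representing objects by postcomposition with $\Th_B(-\gamma)\wedge_B\crush^*\varphi$. The isomorphism $\phi$ is the composite of $t_{-\gamma}$ with the $(\crush_!,\crush^*)$ adjunction, and postcomposition by $\crush^*\varphi$ is natural in both. The key identification is $\crush_!(\Th_B(-\gamma)\wedge_B\crush^*E)\simeq \Th(-\gamma)\wedge E$ from \eqref{eq:f!smash}, natural in $E$. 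With it, the square relating the parametrized operation on $H^{\gamma+\star}_B(B,\mF)$ and the ordinary $RO(C_2)$-graded operation on $\widetilde{H}^{\star}(\Th(-\gamma),\mF)$ commutes. This is the same argument as the commuting square displayed just before the lemma, now with $X=\Th_B(-\gamma)$ suspended against the coefficients.

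\textbf{Step 2: the classical Cartan formula.} In $RO(C_2)$-graded cohomology with $\mF$ coefficients, the Cartan formula holds for the external product of classes on arbitrary spectra, including the $u$-correction term; I would cite the standard $C_2$-equivariant or motivic references listed above. The operations also commute with $\Delta^*$ by naturality. Combining Steps 1 and 2 gives
\[\phi(\Sq(x_1x_2))=\Sq\,\Delta^*(\phi x_1\times\phi x_2)=\Delta^*\Big(\sum \Sq(\phi x_1)\times\Sq(\phi x_2)+u\sum\cdots\Big).\]
Pulling back through $\phi$ via Step 1 and \cref{lem:phimultiplication} yields the stated formula, since the $u$-multiple corresponds under the $\M_2$-module structure, and $\phi$ is $\M_2$-linear because it is a map of $H^{RO(C_2)}$-modules. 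Injectivity of $\phi$ then finishes the argument.

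\textbf{Main obstacle.} The hard part is Step 1 in the presence of Thom spectra of virtual bundles. One must check that the equivalence $H\mF^{\gamma}\simeq H\mF\wedge_B\Th_B(\gamma)$ of \cref{cor:HRgammaHThom} is compatible with $\crush^*\varphi$, i.e.\ that the operation does not depend on the choice of this identification. I would handle this by showing that any two such identifications differ by a unit of $H^0$, which is trivial for $\F_2$. A secondary issue is that the external Cartan formula must hold on spectra, not merely on spaces; this follows from stability of the operations, with the caveat that the $\Sq^{2k,k}$ are stable in the $RO(C_2)$-graded sense.
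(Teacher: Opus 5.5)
Your proposal is correct and follows the paper's proof. The paper also uses that the multiplicative Thom isomorphism $\phi$ of \cref{thm:KOPIB} is $\M_2$-linear, commutes with the imported Steenrod operations, and satisfies $\phi(x_1x_2)=\phi(x_1)\phi(x_2)$ by \cref{lem:phimultiplication}, and then transports the $RO(C_2)$-graded Cartan formula back along $\phi$. Your Step 1 spells out the compatibility of $\phi$ with $\crush^*\varphi$, which the paper only asserts. Your worry about the choice of identification $H\mF^{\gamma}\simeq H\mF\wedge_B\Th_B(\gamma)$ does not arise, because the parametrized operations are defined through exactly that identification.
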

\begin{proof}
The isomorphism of \cref{prop:cohBThom}
\[ \phi \colon H^{\xi+\star}_B(B,\mF) \xrightarrow{\cong}\widetilde{H}^{\star}(\Th(-\xi),\mF)    \]
is an isomorphism of $\M_2$-modules. It also
respects the Steenrod operations in the sense that 
\[\phi(\Sq(x)) = \Sq(\phi(x)).\] Furthermore, we have $\phi(x_1x_2)= \phi(x_1)\phi(x_2)$ as shown in \cref{lem:phimultiplication}, and 
 therefore
\[\phi(\Sq(x_1x_2)) = \Sq(\phi(x_1) \phi(x_2)). \]
 The parametrized Cartan formula then follows from the equivariant Cartan formula.
\end{proof}

\begin{example}
For example, we have
\begin{align*}
\Sq^{2,1}(x_1x_2) &=  \Sq^{2,0}(x_1)x_2+u\Sq^{1,0}(x_1)\Sq^{1,0}(x_2)+x_1\Sq^{2,0}(x_2).
\end{align*}
\end{example}

Furthermore, the parametrized operations behave well with respect to restriction:
\begin{align*}
i^*_e\bock &= \Sq^1 \\
i^*_e\Sq^{2k,k} &= \Sq^{2k} .
\end{align*}
Finally, we note that in $\M_2$,
\begin{align*}
\bock(\uu)&=\aa  & \Sq^{2,1}(\uu)&=0\\
 \bock(\aa)&=0 & \Sq^{2,1}(\aa)&=0.
\end{align*}
See also Figure 1 in \cite{guillou2020cohomology} for more examples. 
In general, we have the following formulas.
\begin{prop}
On $\M_2$, the Steenrod squares $\Sq^{1,0}$ and $\Sq^{2k,k}$ act as
\begin{align*}
\Sq^{1,0}\pars{a^m u^n} &= n a^{m+1} u^{n-1},\\[0.15cm]
\Sq^{1,0}\pars{\frac{\theta}{a^m u^n}} &= (n+1) \frac{\theta}{a^{m-1} u^{n+1}}
\end{align*}
on the upper cone 
and
\begin{align*}
    \Sq^{2k,k}\pars{a^m u^n} &= {k+n-1 \choose 2k} a^{m+2k} u^{n-k},\\[0.15cm]
    \Sq^{2k,k}\pars{\frac{\theta}{a^m u^n}} &= {k+n+1 \choose 2k} \frac{\theta}{a^{m-2k} u^{n+k}}
    \end{align*}
on the lower cone, 
where all coefficients are taken mod $2$.
\end{prop}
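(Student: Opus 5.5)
The upper-cone formulas follow from the values on the generators $\uu$ and $\aa$ listed just before the statement, by induction using the Cartan formula. The lower-cone formulas are handled by realising $\theta/(\aa^m\uu^n)$ as the image of a connecting homomorphism out of Borel cohomology, where $\uu$ is invertible. There the same Cartan computation applies with negative powers of $\uu$.

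\emph{Upper cone.} Since $\M_2=\widetilde H^{*,*}(S^0,\mF)$, the equivariant Cartan formula holds in $\M_2$. This is the $B=C_2/C_2$ case of \cref{lem:cartan}.
\begin{itemize}
\item[(i)] Powers of $\aa$. We have $\bock(\aa)=0$ and $\Sq^{2,1}(\aa)=0$, and $\Sq^{2k,k}(\aa)=0$ for $k\geq 2$ by instability. So the total square of $\aa$ is $\aa$. Every mixed term in the Cartan formula involves some $\Sq^{2s+1,s}(\aa^m)=\bock\Sq^{2s,s}(\aa^m)$, and these vanish by induction on $m$. Hence $\Sq^{2k,k}(\aa^m x)=\aa^m\Sq^{2k,k}(x)$ and $\bock(\aa^m x)=\aa^m\bock(x)$. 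This reduces everything to $x=\uu^n$.
\item[(ii)] The first Bockstein. $\bock$ is a derivation with $\bock(\uu)=\aa$, so $\bock(\uu^n)=n\aa\uu^{n-1}$. This gives the formula for $\bock(\aa^m\uu^n)$.
\item[(iii)] The even squares. Take $x_1=\uu^n$ and $x_2=\uu$ in the Cartan formula. We have $\Sq^{2,1}(\uu)=0$, and the only nonzero odd square of $\uu$ is $\bock\uu=\aa$. This gives the recursion
\[\Sq^{2k,k}(\uu^{n+1})=\uu\,\Sq^{2k,k}(\uu^n)+\aa\uu\,\bock\Sq^{2k-2,k-1}(\uu^n).\]
Insert the inductive formula and use (ii) for the $\bock$. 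The coefficient of $\aa^{2k}\uu^{n+1-k}$ becomes $\binom{k+n-1}{2k}+(n-k+1)\binom{k+n-2}{2k-2}$ mod $2$. One then checks by Pascal's rule mod $2$ that this equals $\binom{k+n}{2k}$.
\end{itemize}

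\emph{Lower cone.} Direct Cartan arguments do not determine the squares of the lower cone. Multiplication by $\aa$ or $\uu$ cannot be inverted on the relevant targets, for example at $\theta/\aa^m$ when the target would have $\aa$-exponent $0$. So I would instead use the isotropy/Tate cofibre sequence. This relates $\M_2$ to Borel cohomology $H^{*,*}(EC_{2+},\mF)\cong\F_2[\aa,\uu^{\pm1}]$.
\begin{itemize}
\item[(i)] The map $\M_2\to\F_2[\aa,\uu^{\pm1}]$ is injective on the upper cone, with cokernel $\F_2[\aa]\{\uu^{-j}:j\geq1\}$.
\item[(ii)] The lower cone is the image of this cokernel under the connecting homomorphism $\delta$. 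Concretely, I expect $\theta/(\aa^m\uu^n)$ to be $\delta$ of a class $\aa^{m'}\uu^{-(n+1)}$ up to a fixed regrading, which is to be checked against degrees.
\item[(iii)] $\delta$ is induced by a map of $C_2$-spectra. Hence it commutes with the stable operations $\bock$ and $\Sq^{2k,k}$.
\item[(iv)] In Borel cohomology the Cartan formula still holds, and $\uu^{-1}$ is a unit. The upper-cone formulas therefore extend verbatim to negative exponents, giving $\Sq^{2k,k}(\aa^i\uu^{-j})=\binom{k-j-1}{2k}\aa^{i+2k}\uu^{-j-k}$.
\end{itemize}
Transporting along $\delta$ and using the mod-$2$ reflection identity $\binom{-r}{2k}\equiv\binom{r+2k-1}{2k}$, the coefficient should become $\binom{k+n+1}{2k}$. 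The Bockstein similarly gives $(n+1)$. Finally, $\aa$-exponents are shifted by the degree bookkeeping of $\delta$, which produces the stated $\theta/(\aa^{m-1}\uu^{n+1})$ and $\theta/(\aa^{m-2k}\uu^{n+k})$.

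\emph{Main obstacle.} I expect the hardest step to be pinning down $\delta$ exactly. This means identifying precisely which Borel class maps to $\theta/(\aa^m\uu^n)$, including the index shift $n\mapsto -(n+1)$ and the behaviour of the $\aa$-exponent. An off-by-one error there would change the binomial coefficients. I would calibrate it on the lowest cases, namely $\bock(\theta/\uu)$ and $\Sq^{2,1}(\theta/\uu)$, where the answer can be cross-checked against the known tables (e.g.\ Figure 1 of \cite{guillou2020cohomology}). After that, the binomial reflection manipulation is routine.
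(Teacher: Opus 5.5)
The paper states this proposition without proof (it only points to Figure~1 of \cite{guillou2020cohomology}), so your argument has to stand on its own.

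\textbf{Upper cone.} This part is correct. The total square of $\aa$ is $\aa$ for degree reasons, and your recursion from the Cartan formula is right. The coefficient identity follows from
\[
\binom{k+n}{2k}=\binom{k+n-1}{2k}+\binom{k+n-1}{2k-1},\qquad (2k-1)\binom{k+n-1}{2k-1}=(k+n-1)\binom{k+n-2}{2k-2},
\]
together with $k+n-1\equiv n-k+1 \pmod 2$.

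\textbf{Lower cone: the gap.} Step (ii) fails, and not by an off-by-one: no regrading can work. Consider the sequence
\[
\widetilde{H}^{\star}(\widetilde{E}C_2)\longrightarrow \M_2\longrightarrow \F_2[\aa,\uu^{\pm1}]\xrightarrow{\ \delta\ }\widetilde{H}^{\star+1}(\widetilde{E}C_2)\longrightarrow \M_2^{\star+1}.
\]
Here $\delta$ lands in the cohomology of $\widetilde{E}C_2$, and its composite with the map to $\M_2$ is zero by exactness. Concretely:
\begin{enumerate}
\item[(a)] $\widetilde{H}^{\star}(\widetilde{E}C_2)\cong\F_2[\aa^{\pm1}]\{\theta\uu^{-l}: l\geq 0\}$.
\item[(b)] The map to $\M_2$ sends $\theta\aa^{-m}\uu^{-n}\mapsto\theta/(\aa^m\uu^n)$ for $m\geq 0$, and kills the classes with positive $\aa$-exponent.
\item[(c)] Those killed classes are exactly $\delta(\aa^{i}\uu^{-j})=\theta\aa^{i+1}\uu^{-(j-1)}$ for $i\geq 0$, $j\geq 1$.
\end{enumerate}
So the lower cone comes precisely from classes \emph{not} in the image of $\delta$. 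Degrees confirm this: $\delta(\aa^{m'}\uu^{-j})$ has topological degree $m'+1\geq 1$, whereas $\theta/(\aa^m\uu^n)$ has topological degree $-m\leq 0$.

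Your calibration cases $\bock(\theta/\uu)$ and $\Sq^{2,1}(\theta/\uu)$ vanish for degree reasons, so they would not expose this. The first informative cases are $\bock(\theta/\aa)=\theta/\uu$ and $\Sq^{2,1}(\theta/\aa^2)=\theta/\uu$.

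\textbf{The fix: invert $\aa$.} There are two equivalent ways.
\begin{enumerate}
\item[(1)] Use the norm sequence $(H\mF)_{hC_2}\to (H\mF)^{hC_2}\to (H\mF)^{tC_2}$, where $\pi_\star(H\mF)^{tC_2}\cong\F_2[\aa^{\pm1},\uu^{\pm1}]$. Under it, $\theta/(\aa^m\uu^n)$ is the image in $\M_2$ of $\delta(\aa^{-m-1}\uu^{-n-1})$.
\item[(2)] Stay with $\widetilde{E}C_2$. There $\aa$ acts invertibly and commutes with every square, since it is induced by $S^0\to S^{\sigma}$. Hence $\theta\aa^{-m}\uu^{-n}=\aa^{-m-1}\delta(\uu^{-(n+1)})$.
\end{enumerate}
Either way, the class to use is $\aa^{-m-1}\uu^{-n-1}$, with negative $\aa$-exponent.

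Now apply your formula (iv) at $j=n+1$. It holds for negative exponents by running your recursion backwards, since $\uu$ is a unit and the binomial identity is polynomial in $n$. This gives:
\begin{enumerate}
\item[(i)] for $\Sq^{2k,k}$: coefficient $\binom{k-n-2}{2k}=\binom{k+n+1}{2k}$ and output $\aa^{2k-m-1}\uu^{-n-k-1}$, corresponding to $\theta/(\aa^{m-2k}\uu^{n+k})$;
\item[(ii)] for $\bock$: coefficient $n+1$ and output $\aa^{-m}\uu^{-n-2}$, corresponding to $\theta/(\aa^{m-1}\uu^{n+1})$.
\end{enumerate}
These are zero when $m<2k$, respectively $m=0$. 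In the Tate picture the output then lies in $\F_2[\aa,\uu^{\pm1}]=\ker\delta$.
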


\subsection{The forgetful long exact sequence}\label{sec:forget}
In $RO(C_2)$-graded cohomology for a based $C_2$-space $X$ with coefficients in $\mF$, the ring homomorphism  
\[i^*_e \colon \widetilde{H}^{*,*}(X,\mF) \to \widetilde{H}^{*}(i^*_eX,\F_2)\] 
induced by the restriction
has a particularly nice algebraic interpretation. The cofiber sequence
\begin{equation}\label{eq:cofiber}
\xymatrix{{C_2}_+ \ar[r] &  S^{0} \ar[r]^-{\aa}  & S^{1,1}}
\end{equation}
induces a long exact sequence
\[\xymatrix@C=1.5pc{
\cdots \ar[r] & \widetilde{H}^{p,q}(X) \ar[r]^-{ \aa} & \widetilde{H}^{p+1,q+1}(X)  \ar[r]^-{i^*_e} &\widetilde{H}^{p+1}(i^*_eX) \ar[r] & \widetilde{H}^{p+1,q}(X) \ar[r]^-{ \aa} & \cdots}
\]
where $\aa \in \M_2$ is the element in degree $(1,1)$, and so $\ker(i^*_e) = \im ( \aa)$ and $\coker(i^*_e)\cong\ker( \aa)$. In particular, when $X=C_2/C_2$, the ring map $i^*_e$ is the homomorphism 
\[\xymatrix@R=1pc{
\M_2 = H^{*,*}(C_2/C_2,\mF) \ar[r] &  H^{*}(i^*_e(C_2/C_2),\F_2) =\F_2 
}\]
that sends $\aa \mapsto 0$ and $\uu \mapsto 1$.  

We prove a generalization in the parametrized context, which reduces to the above long exact sequence when $B=C_2/C_2$.
\begin{lem}[Forgetful long exact sequence]\label{lem:ParamForgetfulLES}
Let $B$ be a $C_2$-space with a fixed point.
    Let $X$ be an ex-$C_2$-space over $B$. For $\gamma \in RO(\Pi B)$, let
     $\widetilde{H}_B^{\gamma}(X)= \widetilde{H}^{\gamma}_B(X , \mF)$, $\widetilde{H}^{i^*_e\gamma}_{i^*_eB}(i^*_eX) =\widetilde{H}^{i^*_e\gamma}_{i^*_eB}(i^*_eX,\mF)$
     and 
         \[i^*_e \colon \widetilde{H}_B^\gamma(X) \to  \widetilde{H}_{i^*_eB}^{i^*_e\gamma}(i^*_eX)\] 
         be the restriction. 
        There is a long exact sequence 
    \[\xymatrix@C=1.5pc{
\cdots \ar[r] & \widetilde{H}_{B}^{\gamma}(X) \ar[r]^-{ \aa} & \widetilde{H}_B^{\gamma + \sigma}(X)  \ar[r]^-{i^*_e} &\widetilde{H}^{i^*_e\gamma+1}_{i^*_eB}(i^*_eX) \ar[r] & \widetilde{H}_B^{\gamma+1}(X) \ar[r] & \cdots}
\]
for $\aa \in \M_2$ the class in degree $(1,1)$.
Thus, $\ker(i^*_e) = \im ( \aa)$ and $\coker(i^*_e)\cong\ker( \aa) $.
\end{lem}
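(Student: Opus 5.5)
The plan is to parametrize the cofiber sequence \eqref{eq:cofiber} by pulling it back along $\crush\colon B\to C_2/C_2$, and then to map out of $X$ into the Eilenberg--MacLane spectra $H\mF^{\gamma+\star}$.

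First, applying the strong symmetric monoidal functor $\crush^*$ to \eqref{eq:cofiber} gives a cofiber sequence in $\PSH{C_2}{B}$:
\[ \crush^*{C_2}_+ \to B_+ \xrightarrow{\aa} \crush^*S^{1,1}. \]
Here $\crush^*S^{1,1}=\Th_B(\xi_\sigma)$ is the relative Thom space of the trivial bundle $B\times\sigma$, and $\crush^*{C_2}_+=(C_2\times B)_{+_B}$. Smashing this sequence with $X$ over $B$ and mapping into $H\mF^{\gamma+\sigma}$ yields a long exact sequence. The term $[X\wedge_B\Th_B(\xi_\sigma),H\mF^{\gamma+\sigma}]^{C_2}_B$ identifies with $\widetilde H^{\gamma}_B(X)$, using \cref{cor:HRgammaHThom} together with the invertibility of $\Th_B(\xi_\sigma)$. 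Under this identification the map to $\widetilde H^{\gamma+\sigma}_B(X)$ is multiplication by the Euler class $a_\sigma=\crush^*\aa$, by \cref{defn:euler} and \cref{lem:eulerprop}. The connecting maps shift degrees as stated.

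The main step is identifying the remaining term
\[ [X\wedge_B (C_2\times B)_{+_B}, H\mF^{\gamma+\sigma}]^{C_2}_B \]
with $\widetilde H^{i^*_e\gamma+1}_{i^*_eB}(i^*_eX)$ in one degree, and with the corresponding group in the shifted degree elsewhere in the sequence. I would use the Wirthmüller/induction equivalence in parametrized spectra: for $q\colon C_2\times B\to B$, one has $X\wedge_B(C_2\times B)_{+_B}\simeq q_!q^*X$, and $C_2\times B\cong C_2\times_e i^*_eB$. By the $(q_!,q^*)$ adjunction and \cref{thm:shriek-iso}, maps out of this object are computed by $q^*H\mF^{\gamma+\sigma}=H\mF^{q^*(\gamma+\sigma)}$ over $C_2\times_e i^*_eB$. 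Restricting along the induction equivalence then gives the underlying parametrized cohomology in degree $i^*_e(\gamma+\sigma)=i^*_e\gamma+1$, since $i^*_e\sigma=1$. I would justify the induction step with the $\Pi$-level induction functor $C_2\times_e(-)$ from \cref{sec:ROPiB}; the constant Mackey functor $\mF$ restricts to constant $\F_2$.

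The remaining point is to check that the map out of $\widetilde H^{\gamma+\sigma}_B(X)$ in the sequence is the restriction $i^*_e$. It is induced by ${C_2}_+\to S^0$, whose adjoint is the restriction to the underlying group, and naturality of the adjunctions carries this over. The fixed-point hypothesis on $B$ is used only to ensure that $\crush^*$ is injective (\cref{prop:ROGinjects}), so that $\sigma$ and $1$ denote well-defined degrees. I expect the main obstacle to be making the induction identification rigorous in the Costenoble--Waner grading. If it causes trouble, a fallback is to compare cellular cochain complexes as in \cite{witpaper}, where a cell $C_2\times D$ contributes the underlying coefficients. The final statements $\ker(i^*_e)=\im(\aa)$ and $\coker(i^*_e)\cong\ker(\aa)$ follow from exactness.
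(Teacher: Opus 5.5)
Your proposal is correct and follows essentially the same route as the paper: smash the cofiber sequence ${C_2}_+\to S^0\to S^{1,1}$ externally with $X$ (your $\crush^*(-)\wedge_B X$), take the long exact sequence in parametrized cohomology, and identify the ${C_2}_+$-term by the Wirthm\"uller isomorphism. The paper simply cites that isomorphism from Costenoble--Waner, while you derive it from the projection formula, the $(q_!,q^*)$ adjunction with \cref{thm:shriek-iso}, and the induction equivalence over $C_2\times_e i^*_eB$. One small correction: $\sigma$ and $1$ are well-defined degrees in $RO(\Pi B)$ via $\crush^*$ whether or not $\crush^*$ is injective, so the fixed-point hypothesis is not actually used in either argument.
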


\begin{proof}
Take the cofiber sequence \eqref{eq:cofiber}
in $C_2$-spaces. 
Using the external smash product of pointed $C_2$-space with ex-$C_2$-spaces over $B$, we obtain a cofiber sequence
\[
{C_2}_+\wedge X \to X \to S^{1,1} \wedge X.
\]
Applying $[-,H\mF^\gamma]^{C_2}_B$ to this cofiber sequence gives rise to a long exact sequence 
 \[
 \xymatrix@C=1.5pc{
\cdots \ar[r] & \widetilde{H}_B^{\gamma}(X) \ar[r]^-{\cdot \aa} & \widetilde{H}_B^{\gamma + \sigma}(X) \ar[r]^-{i^*_e} &  \widetilde{H}_B^{\gamma+\sigma}({C_2}_+ \wedge X) \ar[r] &  \widetilde{H}_B^{\gamma+1}(X)\ar[r] & \cdots}
\]
Now 
\[\widetilde{H}_B^{\gamma+\sigma}({C_2}_+ \wedge X) \cong \widetilde{H}_{i^*_eB}^{i^*_e(\gamma+\sigma)}(i^*_eX)\] by the Wirthm\"uller isomorphism, which is shown to be monoidal in \cite[3.9]{CW_book}. Since $i^*_e(\gamma+\sigma) = i^*_e(\gamma)+1$, this proves the claim.
\end{proof}

%% file: ppaper-ropip.tex

\part{Computations in parametrized cohomology}\label{part:II}

This part of the paper is dedicated to the computation of the parametrized cohomology of 
\[P := B_{C_2}O(1),\]
the classifying space of $C_2$-equivariant line bundles, with $\mF$-coefficients.

\section{Computation of the grading $RO(\Pi P)$}\label{sec:ROPiP}

In this section we compute the equivariant fundamental groupoid of $P$ and its representations $RO(\Pi P)$. We then identify $KO(\Pi P)$, the subgroup of $RO(\Pi P)$ that corresponds to virtual vector bundles over $P$. For simpler examples done using the same techniques, see the computation of $\Pi P^{2,1}$ and $\Pi P^{3,1}$ in Lemmas 2.31 and 2.35 of \cite{witpaper}.

\subsection{The equivariant fundamental groupoid of $P$}

Recall that  $P$ is the space of one-dimensional real subspaces of the complete universe $\mathcal{U}$, which is a direct sum of infinitely many copies of the trivial representation $\R^{1,0}$ and infinitely many copies of the sign representation $\R^{1,1}$.

\begin{notation}\label{not:vector_space}
    We write $\mathcal{U}$ as the vector space 
\[\R^{2\infty,\infty}= \R^{\infty,\infty} \oplus \R^{\infty, 0} = \colim_{n \in \N} \R^{n,n} \oplus \R^{ n,0}.\] 
We write a point in this vector space as a tuple $(v_1,v_0)$ 
with $v_1 \in \R^{\infty,\infty}$ and $v_0 \in \R^{ \infty,0}$, and with $C_2$-action $\tau \cdot (v_1,v_0) = (-v_1,v_0)$. The corresponding point in the projective space $P$ is denoted $[v_1 : v_0]$. 
If we write $P^{p,q}\subset P$, we mean the space of lines in the subspace $\R^{p,q}\subset \R^{2\infty,\infty}$ for $p \geq q \geq 0$ consisting of the points 
\[((v_{11}, v_{12}, v_{13},  \ldots, v_{1q},0, \ldots ), \ (v_{01}, v_{02}, v_{03}, \ldots, v_{0(p-q)},0, \ldots ) ). \]
\end{notation}

The underlying space $i^*_e P$ is $\R P^\infty$ and the fixed set consists of two connected components,  $P_0$ and $P_1$, with 
\begin{equation}\label{eq:fixed}
P^{C_2} =  P_0\sqcup P_1  ,
\end{equation}
corresponding to the space of lines in $\R^{\infty,0}$ and  $\R^{\infty,\infty}$ respectively.
Each component is homeomorphic to $\R P^\infty$ as well.

Let $b_0\colon C_2/C_2\to P$ be the inclusion of $P^{1,0}$ as $[0:e_1]$ and $b_1\colon C_2/C_2\to P$ the inclusion of $P^{1,1}$ as $[e_1:0]$. 
Let $b\colon C_2/e\to P$ be a free orbit in $P^{2,1}$ determined by $b(e) =[e_1:-e_1]$, as in \cref{fig:pip}.
 The category $\Pi P$ has the following skeleton, depicted here as a category over $\cO_{C_2}$, with the labels  explained below:
\[\xymatrix@R=1.5pc{
&  \Pi  P & & \ar[rrr]^-\phi  & & & & \cO_{C_2}\\ \\
\ar@(ur,ul)@{->}_-{\Z/2\{g_0\}}b_0 & & b_1\ar@(ur,ul)@{->}_-{\Z/2\{g_1\}} & & & & & C_2/C_2  \\
 & b \ar@{->}[ur]_-{\Z/2\{p_1\}} \ar@{->}[ul]^-{\Z/2\{p_0\}} \ar@(dr,dl)@{->}[]^-{\Z/2\{t\} \times \Z/2\{ g\} }  & &  & & &  & C_2/e  \ar[u]_-\crush \ar@(dr,dl)[]^{\gen}
}\]
In the orbit category, $\crush \colon C_2/e \to C_2/C_2$ is the quotient map and $\gen\colon C_2/e \to C_2/e$ is multiplication by the generator $\gen$ of $C_2$.
We abuse notation and denote the morphism $(\alpha,\omega)$ in $\Pi P$ simply by $\omega$.
Specifically, we write the following
\begin{align*}
p_0&=(\crush,p_0) &g&=(e,g) & g_0 &= (e,g_0) \\
p_1&=(\crush,p_1)  & t&=(\gen, t) & g_1&=(e,g_1)
\end{align*} 
for paths and maps that we now describe. 

The identity morphism of any object in $\Pi P$ is given by $\id=(e,c)$ where $c$ is the constant path. Observe first that in $g_0$ and $g_1$, we write $e$ for the identity map from $C_2/C_2$ to itself. 
We have denoted by 
\[g_0,g_1 \colon C_2/C_2 \times [0,1] \to P\] 
chosen generators for $\pi_1(P_0,b_0)\cong\Z/2$ and $\pi_1(P_1,b_1) \cong \Z/2$ respectively.

Since any homotopy $\omega \colon C_2/e\times [0,1] \to P$ is uniquely determined by a path $\omega(e)\colon [0,1] \to i^*_eP$, to describe a morphism in $\Pi P$ with source an object of the from $ C_2/e \to P$, it suffices to describe a path $\omega(e)$. 

We depict these paths in subspaces $P^{3,1}$ and $P^{3,2}$ of $P$ in the upper images of \cref{fig:pip}, where  $C_2$ acts by rotation in both pictures. At the bottom of \cref{fig:pip}, we show their common subspace $P^{2,1}$ with points shown as dashed lines and $C_2$ acting by reflection across the vertical axis. We will return to this in \cref{rem:chi}. Note that the different subspaces $P^{3,1}$ and $P^{3,2}$ both have $\RP^2$ as underlying spaces. 
The path $p_0(e)$ is 
the path from $b(e)$ to $b_0(C_2/C_2)$ as depicted in all three images of \cref{fig:pip}, and similarly for $p_1(e)$. The path $g(e)$ generates $\pi_1 (i^*_eP, b(e)) \cong \Z/2$ and $t(e)$ is the path from $b(e)$ to $b(\gen)$ as in \cref{fig:pip}. The chosen paths $g_0$ and $g_1$ are shown on the upper left and upper right of \cref{fig:pip} respectively.

To compute the generating  relations, it is useful to restrict to the subspaces $P^{3,1}$ and $P^{3,2}$, whose images are shown in \cref{fig:pip}. A careful computation gives the following generating relations among the morphisms:
\begin{align}\label{eq:rel1}
g\circ g &\simeq \id &  t \circ t &\simeq \id  & t\circ  g &\simeq g \circ t
\end{align}
and
\begin{align}\label{eq:rel2}
g_0\circ p_0&\simeq p_0 \circ g & g_1 \circ p_1&\simeq p_1 \circ g  \nonumber\\
p_0 \circ t  &\simeq  p_0\circ g & p_1 \circ t&\simeq p_1  \\
g_0\circ g_0&\simeq \id & g_1\circ g_1&\simeq \id . \nonumber
\end{align}
For more details of such computations in $\Pi P^{2,1}$ and $\Pi P^{3,1}$, see Examples 2.10 and 2.11 in \cite{witpaper}.
\begin{figure}[ht]
\begin{center}
\includegraphics[width=\textwidth]{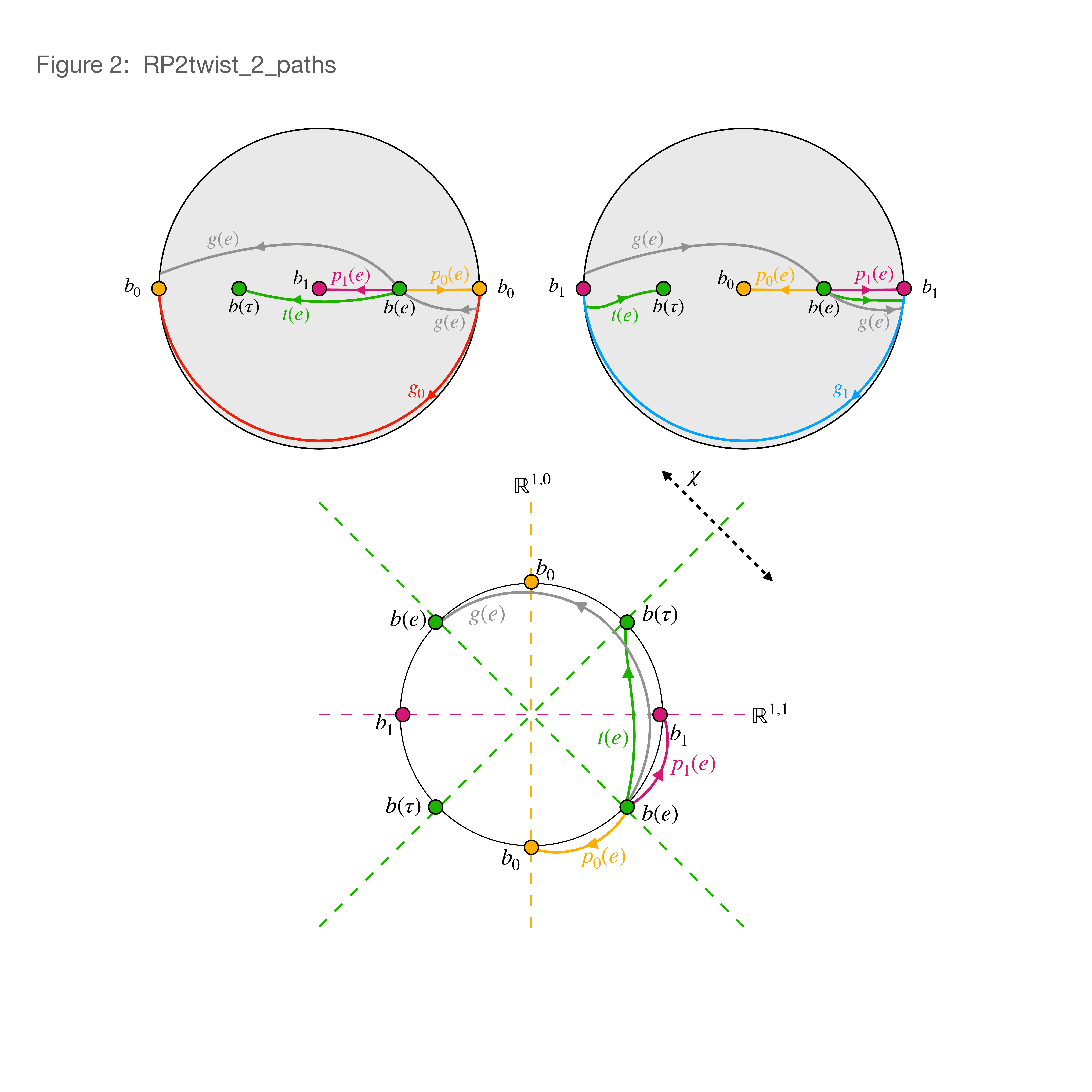}
\caption{$P^{2,1}$ (bottom),
$ P^{3,1}$ (top left) and $P^{3,2}$ (top right). The action of the automorphism $\autP$ of $\Pi P$ (see \cref{rem:chi}) is induced by reflection across the $x=y$ axis. }
\label{fig:pip}
\end{center}
\end{figure}

Next we turn to the fixed points $P^{C_2}$. Because $P^{C_2}$ is disconnected, the $C_2$-equivariant fundamental groupoid $\Pi P^{C_2}$ decomposes as the disjoint union of two categories $\Pi P_0$ and $\Pi P_1$, and we treat each separately. 
For $i=0,1$ the fundamental groupoid $\Pi P_i$ has skeleton
\[\xymatrix@R=2pc{
\Pi P_i  & \ar[r]^-\phi & & \cO_{C_2}\\
b_i \ar@(ur,ul)@{->}_-{\Z/2\{g_i\}} &&& C_2/C_2 \\
 b_i\crush \ar@{->}[u]^-{\Z/2\{c_i\}}  \ar@(dr,dl)@{->}[]^-{\Z/2\{t_i\} \times \Z/2\{h_i\}} 
& &  & C_2/e  \ar[u]_-\crush \ar@(dr,dl)[]^{\gen}
}\]
where $b_i \colon C_2/C_2 \to P$ is as above and $b_i \rho$ is the composition of $b_i$ and the quotient $\crush\colon C_2/e \to C_2/C_2$. The morphisms in this diagram are shorthand for the following
\begin{align*}
 g_i &= (e,g_i)  & c_i&= (\crush, c)\\
t_i &= (\gen,c) &  h_i &= (e,h_i)
\end{align*}
where by abuse $c$ is the constant path at $b_i$, and each $h_i(e)$ is the same path as $g_i(C_2/C_2 \times -)$. We have relations
\begin{align*}
h_i\circ h_i &\simeq \id &  t_i\circ t_i & \simeq \id  & h_i \circ t_i &\simeq t_i \circ h_i 
\end{align*}
and
\begin{align*}
   g_i \circ c_i &\simeq  c_i \circ h_i \\
   c_i \circ t_i &\simeq p_i\\
   g_i\circ g_i & \simeq \id.
\end{align*}

If $\iota_i \colon P_i \to P$ is the inclusion we can compute $(\iota_i)_*:=\Pi \iota_i$. To do this, we fix the isomorphisms $(e,p_i) \colon  b \to b_i$ and $\id=(e,c) \colon b_i \to b_i$.
By pre-composing and post-composing with these isomorphisms as needed, we get a functor between our chosen skeleta which we also denote by $(\iota_i)_*$. We have that $(\iota_i)_*(b_i\crush) = b$ 
while $(\iota_i)_*(b_i)=b_i$.  On morphisms, for $i=0$ we have 
\begin{align*}
(\iota_0)_*(h_0)&= g & (\iota_0)_*(c_0)&= p_0  \\
(\iota_0)_*(t_0)&= t\circ g & (\iota_0)_*(g_0)&= g_0
\end{align*}
and for $i=1$,
\begin{align*}
(\iota_1)_*(h_1)&= g & (\iota_1)_*(c_1)&= p_1  \\
(\iota_1)_*(t_1)&= t & (\iota_1)_*(g_1)&= g_1.
\end{align*}

We note that $\Pi i^*_eP$ is just the fundamental groupoid of the underlying $\R P^\infty$. This has a skeleton with one object, which we can take to be $b(e)$, and then morphisms are $\Z/2$ generated by the loop $g$.  The induction functor 
\[C_2\times_e (-) \colon \Pi i^*_eP \to \Pi P\]
 maps $b(e)$ to $b$ and $g$ to the morphism of the same name in $\Pi P$. 
\bigskip

\begin{remark}\label{rem:chi}
There is an important automorphism of the category $\Pi P$ described as follows. Let $\autP \colon P \to P$ 
 be the $C_2$-homeomorphism that sends a line through the vector $(v_1 , v_0) \in \R^{\infty,\infty}\oplus  \R^{\infty,0}$ to the line through $(v_0,v_1)$. Note that $\chi(P_0)=P_1$ and $\chi(P_1) = P_0$. The map $\chi$ induces a functor
\[ \autP_*=\Pi \autP \colon \Pi P \to \Pi P.\]
We describe it in terms of our skeleton for $\Pi P$.
Since $b_0(C_2/C_2) =[0 :e_1]$, $b_1(C_2/C_2)= [e_1 :0]$ and $b(e) = [e_1 : -e_1]$ for $e_1 \in \R^\infty$ the standard unit vector, the effect of $\autP_*$ on objects is given by
\begin{align*}
\autP_*(b) &= b \\
\autP_*(b_0) &= b_1 \\
\autP_*(b_1) &= b_0.
\end{align*}
On morphisms, $\autP_*(g_0)$ is the non-trivial loop in $P_1$ based at $b_1$, so it must be $g_{1}$. Similarly, $\autP_*(g_1)=g_0$. All the other morphisms in our skeleton are contained in $\Pi P^{2,1}$ so we read off the effect of $\autP_*$ from \cref{fig:pip} (bottom).  
In summary, 
\begin{align*}
\autP_*(g)&=g & \autP_*(g_0)&=g_1 & \autP_*(p_0)&=p_1 \\
\autP_*(t) &= tg  & \autP_*(g_1)&=g_0 & \autP_*(p_1)&=p_0 .
\end{align*}
The map $\autP$ can also be thought of as the map classifying the action of tensoring any line bundle with $\R^{1,1}$. See also \cite{CW_book}, near Proposition 3.12.2.
\end{remark}

\subsection{Representations of the fundamental groupoids}\label{sec:reps of fun gpd}
In this section, we compute $\RO(\Pi P)$, $\RO(\Pi P^{C_2})$ and $\RO(\Pi i^*_e P)$ and describe some homomorphisms between these groups.

First, note that
 \[RO(C_2) \cong RO(\Pi C_2/C_2)\cong \Z^2\] 
 with $(p,q)\in \Z^2$ representing the constant representation $\R^{p,q}$. 
When writing down representatives for representations, we will always choose our bundles to be of the form $C_2 \times \R^p$ over a free orbit, for some $p\in \Z$, and of the form $\R^{p,q}$ for $(p,q) \in \Z^2$ over an orbit of the form $C_2/C_2$. Therefore, for a representation $\gamma$ and a morphism $(\alpha, \gamma)$ in the fundamental groupoid $\gamma(\alpha,\omega)$ will take a value in one of the following morphism sets:
\begin{align*}
\vV_{C_2}(C_2\times \R^p, \R^{p,q} )  &\cong O(1), \\
\vV_{C_2}(C_2\times \R^p, C_2\times \R^p )  &\cong C_2\times O(1), \\
\vV_{C_2}(\R^{p,q},\R^{p,q}) &\cong O(1)\times O(1),
\end{align*}
where in the third isomorphism we write $((-1)^\epsilon,(-1)^\delta) \in O(1)\times O(1)$ for $(-1)^\epsilon$ the determinant of the morphism restricted to the sign representations $\R^{q,q}$ and $(-1)^\delta$ the determinant of the representation restricted to the trivial representation $\R^{p-q,0}$, stably.
See \cite[Example 2.17]{witpaper} for more details.

We start with the computation of $RO(\Pi P)$. Our notation is similar to that of \cite[Lemma 2.31]{witpaper}. See also \cite[Remark 2.32]{witpaper}. 
\begin{theorem}\label{thm:ROPiP}
   There is an isomorphism 
   \[RO(\Pi P)\cong \Z^3\times (\Z/2)^2\]
   that assigns to the tuple $(p,q,n,\epsilon, \mu)$  the representation $\gamma$ described by the following. On objects $\gamma$ is given by  \begin{align*}
   \gamma(b_0)&=\R^{p,q} &
   \gamma(b_1)&=\R^{p,q+n} & \gamma(b)& =C_2\times \R^{p},
   \end{align*} 
   and on morphisms by 
   \begin{align*}
   \gamma(g) &= e\times (-1)^{n} & 
   \gamma(t) &= \gen \times(-1)^{q+n}  \\
   \gamma(p_0) &=1 & \gamma(p_1)  &=1\\
    \gamma(g_0)&=\begin{pmatrix}(-1)^{\epsilon} ,
  (-1)^{n+\epsilon}
  \end{pmatrix}
   &  \gamma(g_1)&= \begin{pmatrix}(-1)^{n+\epsilon+\mu}  
, (-1)^{\epsilon+\mu}
   \end{pmatrix}
   \end{align*}
   as depicted in the following diagram: 
           \[\xymatrix@C=1pc{
&  \Pi  P& & \ar[rr]^-\gamma & & & & \vV_{C_2}\\
b_0 \ar@(ur,ul)@{->}_-{g_0}& & b_1\ar@(ur,ul)@{->}_-{g_1}  & & &&  \R^{p,q} \ar@(ur,ul)@{->}_-{{\tiny \begin{pmatrix}(-1)^{\epsilon},  (-1)^{n+\epsilon}\end{pmatrix}}} & & \R^{p,q+n}  \ar@(ur,ul)@{->}_-{\tiny{ \begin{pmatrix} (-1)^{n+\epsilon+\mu} ,(-1)^{\epsilon+\mu})\end{pmatrix}}} \\
 & b \ar@{->}[ur]_-{p_1} \ar@{->}[ul]^-{p_0} \ar@(dr,d)@{>}[]^-{g}\ar@(d,dl)@{>}[]^-{t} & && & & & C_2 \times \R^p \ar@{->}[ur]_-{1} \ar@{->}[ul]^-{1} \ar@(dr,d)@{>}[]^-{\ \ \  
 (-1)^{n}}\ar@(d,dl)@{>}[]^-{(-1)^{q+n} \ }
}\]
\end{theorem}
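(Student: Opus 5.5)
The plan is to compute $RO(\Pi P)$ directly from the skeleton of $\Pi P$ and its generating relations \eqref{eq:rel1} and \eqref{eq:rel2}. A representation $\gamma$ is a functor $\Pi P\to \vV_{C_2}$ over $\cO_{C_2}$, taken up to natural isomorphism. It is therefore determined by three pieces of data: values on the objects $b_0,b_1,b$; values on the generating morphisms $g,t,g_0,g_1,p_0,p_1$, lying in the morphism groups $O(1)$, $C_2\times O(1)$ and $O(1)\times O(1)$ listed above; and the requirement that all relations hold. Natural isomorphisms act by conjugation at each object. I would first normalize the object values. Fibers over the fixed points $b_0,b_1$ are virtual $C_2$-representations $\R^{p_0,q_0}$ and $\R^{p_1,q_1}$. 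The morphisms $p_i\colon b\to b_i$ cover $\crush$, so restricting these fibers must give $\gamma(b)=C_2\times\R^{p}$. This forces $p_0=p_1=p$, and we write $q_0=q$, $q_1=q+n$. This produces the three integers $(p,q,n)$.

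Next I would use natural isomorphisms to normalize $\gamma(p_0)=\gamma(p_1)=1$. Conjugating at $b_0$ by an element of $O(1)\times O(1)$ changes $\gamma(p_0)$ by the restricted determinant, which can be $\pm1$; likewise at $b_1$. After this normalization the only remaining gauge freedom is simultaneous conjugation at $b,b_0,b_1$ compatible with $p_0,p_1$. Since every morphism group here is abelian, this freedom does not change the values on loops. Write $\gamma(g)=e\times(-1)^{x}$, $\gamma(t)=\gen\times(-1)^{y}$, $\gamma(g_0)=((-1)^{\alpha_0},(-1)^{\beta_0})$ and $\gamma(g_1)=((-1)^{\alpha_1},(-1)^{\beta_1})$. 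Then impose the relations:
\begin{enumerate}[(i)]
\item Relations \eqref{eq:rel1} are automatic.
\item The relation $g_i\circ p_i=p_i\circ g$ equates the image of $\gamma(g_i)$ under restriction to $e$ with $(-1)^x$. That image is the total determinant $(-1)^{\alpha_i+\beta_i}$, so $\alpha_i+\beta_i\equiv x$.
\item The relations $p_0t=p_0g$ and $p_1t=p_1$ are the subtle ones. We need how $\gamma(t)$, which covers $\gen$, composes with the restriction map $p_i$. The composite $p_i\circ t$ covers $\crush\gen=\crush$, so comparing with $p_i$ (respectively $p_0 g$) compares the sign $(-1)^y$ with the action of $\gen$ on the fiber $\R^{p,q_i}$, which is $(-1)^{q_i}$. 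I expect this yields $y\equiv q+n$ from $p_1t=p_1$, and $y\equiv q+x$ from $p_0t=p_0g$. Hence $x\equiv n$.
\end{enumerate}

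The free parameters that remain are $\epsilon:=\alpha_0$ (which fixes $\beta_0\equiv n+\epsilon$) and $\mu:=\alpha_1+n+\epsilon$ (which fixes $\beta_1\equiv\epsilon+\mu$). This gives exactly the stated formulas. We must also check that no further natural isomorphism identifies distinct tuples: the values on loops are conjugation invariant, and $(p,q,n)$ are read off from the objects. Additivity under direct sum is clear, since all invariants are determinants and dimensions, which add. So the group is $\Z^3\times(\Z/2)^2$.

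The main obstacle is step (iii). It requires carefully identifying the morphism sets $\vV_{C_2}(C_2\times\R^p,\R^{p,q})\cong O(1)$ and the composition law with the twist from $\gen$, which is where the sign $(-1)^{q}$ enters. I would settle this by following the conventions of \cite[Example 2.17, Lemma 2.31]{witpaper}, where the analogous computation for $\Pi P^{2,1}$ and $\Pi P^{3,1}$ is done. As a consistency check, I would verify that the tuple for the tautological bundle $\taut$ (with $n=1$, since its fibers over $b_0$ and $b_1$ are $\R^{1,0}$ and $\R^{1,1}$) has $\gamma(g)=-1$, as it must for the nonorientable underlying bundle.
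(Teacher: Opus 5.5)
Your proposal is correct and follows essentially the same route as the paper. You read off $p_0=p_1=p$ from the objects and normalize $\gamma(p_i)=1$ by a natural isomorphism. You then use $p_1t=p_1$ and $p_0t=p_0g$, via the equivariance $\gamma_\gen(p_i)=\gen\circ\gamma_e(p_i)$, to get $\det\gamma_e(t)=(-1)^{q_1}$ and $\det\gamma_e(g)=(-1)^{q_0+q_1}$. The relation $g_ip_i=p_ig$ fixes the total determinant of $\gamma(g_i)$, and $\epsilon_0,\epsilon_1$ are detected by conjugation invariance. The only difference is cosmetic: the paper parametrizes by $[p,q_0,q_1,\epsilon_0,\epsilon_1]$ and changes coordinates, while you go directly to $(p,q,n,\epsilon,\mu)$; your tentative step (iii) is exactly the paper's computation.
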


\begin{rem}
Before we embark on the proof, we explain our choice of coordinates. The group $RO(\Pi P)$ is the grading for parametrized cohomology. In degrees $(*,*,0,0,0)$, we recover the usual $RO(C_2)$-graded cohomology, graded according to the usual ``motivic'' convention where $p$ is the topological degree and $q$ is the ``weight''. That is, $(p,q,0,0,0) = p-q+q\sigma$ for $\sigma$ the one-dimensional sign representation. The grading $n$ measures the difference of the weights between the two fixed points components: it tells us how far the representation is from the $RO(C_2)$-graded page since in the $RO(C_2)$-graded page, these weights must be equal. More specifically, as will be discussed in \cref{rem:linebundles} below, the page $(*,*,1,0,0)$ corresponds to $RO(C_2)$-shifts of the tautological line bundle, and the degrees $(*,*,-1,1,0)$ to shifts of the pullback of the tautological line bundle along $\autP$. Finally, the vanishing of the last coordinate will indicate whether or not the representation is the dimension of a virtual bundle. This vanishing will be proved in \cref{thm:KOPiP}.
\end{rem}

\begin{proof}
We will prove that there is an isomorphism 
  $ RO(\Pi P)\cong \Z^3\times (\Z/2)^2$ via a map $ \Z^3\times (\Z/2)^2 \to RO(\Pi P)$
   that assigns to the tuple $[p,q_0,q_1,\epsilon_0, \epsilon_1]$  the representation $\gamma$ which, on objects, is given by \begin{align*}
   \gamma(b_0)&=\R^{p,q_0} &
   \gamma(b_1)&=\R^{p,q_1} & \gamma(b)& =C_2\times \R^{p},
   \end{align*} 
   and on morphisms by 
   \begin{align*}
   \gamma(g) &= e\times (-1)^{q_0+q_1} & 
   \gamma(t) &= \gen \times(-1)^{q_1} \quad &
   \gamma(p_0) &=\gamma(p_1)  =1 \end{align*}
   and
\begin{align*}
    \gamma(g_0)&=\begin{pmatrix}(-1)^{\epsilon_0} , (-1)^{\epsilon_0+q_0+q_1}
    \end{pmatrix}
   &  \gamma(g_1)&=\begin{pmatrix} (-1)^{\epsilon_1} , (-1)^{\epsilon_1+q_0+q_1}
   \end{pmatrix}.
   \end{align*}
To show this is a bijection, we do a change of coordinates:
\begin{align*}(p,q,n,\epsilon, \mu) &= p[1,0,0,0,0]+q [0,1,1,0,0] \\
& +n[0,0,1,0,1] +  \epsilon [0,0,0,1,1]  +\mu [0,0,0,0,1] \\
&= [p,q,q+n,\epsilon,n+\epsilon+\mu].\end{align*}
 That is,
  \[ (p,q,n,\epsilon,\mu) = (p,q_0, q_1-q_0, \epsilon_0,\epsilon_1+\epsilon_0+q_1-q_0).\]

We need to show the map to $RO(\Pi P)$ is surjective. Let $\gamma \in RO(\Pi P)$ be arbitrary. Then up to isomorphism 
$\gamma(b)=C_2\times \R^p$,
$\gamma(b_0)=\R^{p_0,q_0}$, and $\gamma(b_1)=\R^{p_1,q_1}$  for some integers $p,p_0,p_1, q_0,q_1$. Since $i^*_eP$ is path connected, $p=p_0=p_1$.
Next we study $\gamma$ on morphisms. 
For $i=0,1$, let 
\[\gamma_a(p_i) \colon a \times \R^p \to i^*_e\R^{p,q_i}\]
where $a \in \{e,\gen\}$ denotes the orthogonal linear maps on fibers induced by the bundle map $\gamma(p_i)$. By equivariance for the action of $\gen$, we have
\[\gamma_\gen(p_i) = \gen \circ\gamma_e(p_i)\]
where $\gen$ denotes the action on $\R^{p,q_i}$.
We also let $\gamma_a(t)\colon a \times \R^p \to \gen a \times \R^p $ be the map on fibers, and similarly for $\gamma_a(g) \colon a \times \R^p \to  a \times \R^p $. By equivariance, we have
\[ \gamma_\gen(t) = \gamma_e(t) \quad \quad \text{and} \quad \quad \gamma_\gen(g) = \gamma_e(g).\]

The relations listed in \eqref{eq:rel1} and \eqref{eq:rel2} for the morphisms in $\Pi P$ give relations among the values of $\gamma$.
Because $\gamma(p_0)\circ \gamma(g)=\gamma(p_0)\circ \gamma(t)$, we have that
 \[\gamma_e(p_0)\circ \gamma_e(g)=\gamma_\gen (p_0)\circ \gamma_e(t)=\gen \circ \gamma_e(p_0) \circ \gamma_e(t)\]
and thus we get
\[
\det(\gamma_e(g))=\det(\gen)\det(\gamma_e(t))=(-1)^{q_0}\det(\gamma_e(t)).
\]
Since $\gamma(p_1)=\gamma(p_1)\circ \gamma(t)$, we also have
\[\gamma_e(p_1)=\gamma_\gen(p_1)\circ\gamma_e(t)=\gen\circ\gamma_e(p_1)\circ\gamma_e(t)\]
and thus
\[
(-1)^{q_1}=\det (\gamma_e(t)).
\]
It follows from these relations and the equivariance of $\gamma(g)$ and $\gamma(t)$ that the integers $p$, $q_0$ and $q_1$ completely determine $\gamma(t)$ and $\gamma(g)$, namely by
\[\det (\gamma_e(t))=(-1)^{q_1} \quad \quad \text{and} \quad \quad \det (\gamma_e(g))=(-1)^{q_0+q_1}.\]

Next we look at $\gamma(g_i) =((-1)^{\epsilon_i},(-1)^{\delta_i}) \in O(1)\times O(1)$ for $i=0,1$.
Since $\gamma(g_i)\circ \gamma(p_i)=\gamma(p_i)\circ \gamma(g)$, and thus
\[i_e^*\gamma(g_i)\circ \gamma_e(p_i)=\gamma_e(p_i)\circ \gamma_e(g), \]
we have that
\[
    \det i_e^*\gamma(g_i)=\det \gamma_e(g)=(-1)^{q_0+q_1}.
\]
Since $i^*_e(g_i) = (-1)^{\epsilon_i+\delta_i}$, it follows that we have $\delta_i = \epsilon_i+q_0+q_1$ for $i=0,1$.

It is not hard to check that the choice of $\gamma(p_0)$ and $\gamma(p_1)$ can be made so that $\gamma_e(p_i)=1$ up to natural isomorphism. 
It follows that the data of $\gamma$ is completely determined by the tuple $[p,q_0,q_1,\epsilon_0,\epsilon_1]$, showing surjectivity.

Now for injectivity, if two representations $\gamma$ and $\gamma'$ in the image of the map to $RO(\Pi P)$ are isomorphic, they must agree on objects (up to isomorphism). So we must have the same values for $p$, $q_0$, and $q_1$. By considering morphisms, we can show they have the same values for $\epsilon_0$ and $\epsilon_1$ as well.

If $\eta \colon \gamma \to \gamma'$ is a natural isomorphism, then there is a commutative diagram
\[
\xymatrix{
\R^{p,q_0} \ar[r]^-{\eta(b_0)} \ar[d]_-{\gamma(g_0)} & \R^{p,q_0} \ar[d]^-{\gamma'(g_0)} \\
\R^{p,q_0} \ar[r]^-{\eta(b_0)} & \R^{p,q_0}.
}\]
Restricting to the subspace $\R^{q_0,q_0}$, we see that $\epsilon_0=\epsilon_0'$. Similarly, $\epsilon_1 = \epsilon_1'$. Therefore, the tuple $[p,q_0,q_1,\epsilon_0,\epsilon_1]$ uniquely determines the isomorphism class of 
$\gamma$, completing the proof of injectivity. 
\end{proof}

 \begin{rem}\label{rem:whythegrading}
 The map $\crush  \colon P \to C_2/C_2$
induces a homomorphism 
\[\crush^*\colon RO(C_2) \to RO(\Pi P)\]
given by
\[\crush^*(p,q) = (p,q,0,0,0).\]
This makes it easy to isolate the $RO(C_2)$-graded cohomology inside of the $RO(\Pi P)$-graded cohomology. 
Furthermore, 
\[RO(\Pi i^*_eP) \cong \Z \times \Z/2 \cong \Z \times \Hom(\pi_1 (i^*_eP), O(1)) \]  
where the tautological line bundle $\bun_1$ has degree $(1,1)$ and the trivial line bundle $\varepsilon_1$ has degree $(1,0)$.
The homomorphism
\[i^*_e \colon RO(\Pi P) \to RO(\Pi i^*_eP) \]
is given by
\[i^*_e(p,q,n,\epsilon, \mu)=(p, n).\]
\end{rem}
\begin{rem}
It will be useful to treat the coordinates as functions. For example, we write $\mu(\gamma)\in \Z/2$ for the $\mu$ coordinate of $\gamma\in RO(\Pi P)$.
\end{rem}

For the fundamental groupoid of the fixed points, the computation of each $RO(\Pi P_i)$ proceeds exactly as that of \cref{thm:ROPiP}, so we omit the details. The description of  the restrictions is also a straightforward consequence.
\begin{theorem}\label{thm:ROPIPi}
For $i=0,1$, there are isomorphisms
\[RO( \Pi P_i) \cong \Z^2 \times (\Z/2)^2\]
which assign to the tuple $(p,q,n,\epsilon)$ the representation $\gamma$ given on objects by
\begin{align*}
\gamma(b_i) &= \R^{p,q} & \gamma(b_i\crush) = C_2\times \R^p 
\end{align*}
and on morphisms
   \begin{align*}
   \gamma(h_i) &= e\times (-1)^{n} & 
   \gamma(t) &= \gen \times(-1)^{q} \\
   \gamma(c_i) &=1 &   
   \gamma(g_i)&=\begin{pmatrix}(-1)^{\epsilon}, (-1)^{n+\epsilon})
   \end{pmatrix}
   \end{align*}
Furthermore, the maps
\[\iota_i^* \colon RO(\Pi P) \to RO(\Pi P_i) \]
are given by:
\begin{align*}
\iota_0^*(p,q,n,\epsilon,\mu)&=(p,q,n,\epsilon)\\
\iota_1^*(p,q,n,\epsilon,\mu)&=( p,q+n,n,n+\epsilon+\mu).
\end{align*}
\end{theorem}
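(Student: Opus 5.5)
The plan is to repeat the argument of \cref{thm:ROPiP} on the smaller skeleton of $\Pi P_i$, and then to compute $\iota_i^*$ by precomposing a representation of $\Pi P$ with the explicit functor $(\iota_i)_*$.

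\emph{Step 1: surjectivity.} Let $\gamma\in RO(\Pi P_i)$. The underlying space of $P_i\cong\RP^\infty$ is path connected, so up to isomorphism $\gamma(b_i)=\R^{p,q}$ and $\gamma(b_i\crush)=C_2\times\R^{p}$ with the same $p$. Choose $\gamma(c_i)$ so that $\gamma_e(c_i)=1$, exactly as for $p_i$ in \cref{thm:ROPiP}. The relation $c_i\circ t_i\simeq c_i$ (the constant path is invariant under $\gen$; I read the displayed relation ``$c_i\circ t_i\simeq p_i$'' this way) gives $\gamma_e(c_i)=\gen\circ\gamma_e(c_i)\circ\gamma_e(t_i)$. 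Hence $\det\gamma_e(t_i)=(-1)^q$, so $\gamma(t_i)=\gen\times(-1)^q$.

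Unlike in $\Pi P$, there is no relation tying $h_i$ to $t_i$; the only relations on $h_i$ are $h_i^2\simeq\id$ and that $h_i$ commutes with $t_i$. So $\det\gamma_e(h_i)=(-1)^n$ is a free parameter $n\in\Z/2$. Write $\gamma(g_i)=((-1)^{\epsilon},(-1)^{\delta})$. The relation $g_i\circ c_i\simeq c_i\circ h_i$, restricted to the identity coset, gives $(-1)^{\epsilon+\delta}=\det i^*_e\gamma(g_i)=(-1)^n$, so $\delta=n+\epsilon$. Thus $\gamma$ is determined by $(p,q,n,\epsilon)$.

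\emph{Step 2: injectivity and well-definedness.} Conversely, every tuple defines a functor, since these assignments satisfy all listed relations. Injectivity follows as before. A natural isomorphism preserves $p$ and $q$ on objects, and it preserves $n$ because it conjugates $\gamma(h_i)$ and leaves its determinant unchanged. Restricting the naturality square for $g_i$ to $\R^{q,q}$ shows that $\epsilon$ is preserved.

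\emph{Step 3: the restriction maps.} Here $\iota_i^*\gamma=\gamma\circ(\iota_i)_*$. Because $\gamma(p_i)=1$, conjugating by the chosen isomorphisms $(e,p_i)$ changes nothing, so the formulas for $(\iota_i)_*$ can be substituted directly.

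For $i=0$, take $\gamma=(p,q,n,\epsilon,\mu)$. The objects go to $b_0\mapsto\R^{p,q}$ and $b_0\crush\mapsto C_2\times\R^p$. On morphisms, $h_0\mapsto\gamma(g)=(-1)^n$, and $t_0\mapsto\gamma(tg)=\gen\times(-1)^{q+n}(-1)^n=\gen\times(-1)^q$, which is consistent. Finally $g_0\mapsto((-1)^\epsilon,(-1)^{n+\epsilon})$. This gives $(p,q,n,\epsilon)$.

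For $i=1$, the object $b_1$ goes to $\R^{p,q+n}$, and on morphisms $h_1\mapsto(-1)^n$ and $t_1\mapsto\gen\times(-1)^{q+n}$. Both are consistent with $q'=q+n$. The morphism $g_1$ goes to $((-1)^{n+\epsilon+\mu},(-1)^{\epsilon+\mu})$, so $\epsilon'=n+\epsilon+\mu$. The second entry must then equal $(-1)^{n+\epsilon'}=(-1)^{\epsilon+\mu}$, and it does. This gives $(p,q+n,n,n+\epsilon+\mu)$.

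\emph{Main obstacle.} The only delicate point is the bookkeeping of the skeleton relations in $\Pi P_i$. In particular, one must confirm that $c_i\circ t_i\simeq c_i$ holds and that no hidden relation constrains $h_i$ relative to $t_i$. I would verify this first by writing out the paths explicitly, since the free parameter $n$, and hence the second $\Z/2$ factor, depends on it.
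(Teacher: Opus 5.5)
Your proposal is correct and follows the paper's route. The paper only says that the computation of $RO(\Pi P_i)$ proceeds exactly as in \cref{thm:ROPiP} and that the restriction formulas are a straightforward consequence; you carry this out explicitly, including precomposing with $(\iota_i)_*$ and checking consistency via $\gamma(t\circ g)=\gen\times(-1)^q$ and the second entry of $\gamma(g_1)$, and your reading of the displayed relation as $c_i\circ t_i\simeq c_i$ (correcting the typo $p_i$) is the right one.
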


\subsection{The subgroup $KO(\Pi P)$}\label{sec:KOPiP}
The goal of this section is to identify $KO(\Pi P)$ with the subgroup of $RO(\Pi P)$ where $\mu=0$. Recall that $P =B_{C_2}O(1)$ is an equivariant classifying space. Let $\Gamma = C_2\times O(1)$.
We apply the strategy explained in \cref{sec:KOPiB}. Applying \cref{lem:completiondiagram} and \cref{lem:Gamma}  
we have the commutative diagram
\begin{align*}
\xymatrix{
KO_G(P) \ar[d]_-\dim \ar[r]^-{c} & RO(\Gamma)^{\wedge}_{I(\Gamma)} \ar[d]^-{\dim} \\
RO(\Pi P) \ar[r]^-{c} & RO(\Pi P)^{\wedge}_{I(G)}.
}
\end{align*}
We will prove that $c$ is injective so that we can identify the subgroup $KO(\Pi P) =\dim (KO_G(P))$ by studying $\dim(RO(\Gamma)^{\wedge}_{I(\Gamma)})$.

We first recall the computation of $RO(C_2)^{\wedge}_{I(C_2)}$. As usual,  $\R^{1,1} \in RO(C_2)$ denotes the one-dimensional sign representation. Let $x = \R^{0,1} = \R^{1,1}-1$. Then $I(C_2)$ is the ideal $(x)$ in
\[RO(C_2 ) \cong \Z[x]/(x^2+2x).\]
For $k\geq 2$,
\[x^k = (-2)^{k-1}x.\]
Let $\Z_2$ denote the $2$-adic integers. Then the completion is the subring
\[RO(C_2)^{\wedge}_{I(C_2)}  \subset \Z_2[x]/(x^2+2x), \] 
which additively corresponds to the subgroup
\[\Z \{1\} \oplus \Z_2\{x\}.\]

\begin{lem}
There is an isomorphism $RO(\Pi P)^{\wedge}_{I(C_2)}  \cong \Z \times \Z_2^2 \times (\Z/2)^2$, and the image of $RO(\Pi P)$ is the subgroup $\Z \times \Z^2\times (\Z/2)^2$. 
In particular, the completion map
\[ c\colon RO(\Pi P) \to RO(\Pi P)^{\wedge}_{I(C_2)}\]
is injective.
\end{lem}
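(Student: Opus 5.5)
We first determine the $RO(C_2)$-module structure of $RO(\Pi P)$ in the coordinates of \cref{thm:ROPiP}; completion at $I(C_2)=(x)$ is then a purely algebraic computation. The module structure comes from the tensor product with $\crush^*RO(C_2)$. Since $\crush^*(p,q)=(p,q,0,0,0)$, it suffices to compute the action of $x=\R^{0,1}=\R^{1,1}-1$ on the five basis vectors. Tensoring with $\R^{1,1}$ leaves $\gamma(b)=C_2\times\R^p$ unchanged and sends $\gamma(b_i)=\R^{p,q_i}$ to $\R^{p,p-q_i}$. It also affects the morphism data through $(\epsilon_i,\delta_i)$: tensoring with the sign representation swaps the roles of the sign and trivial summands.

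It is cleanest to work first in the auxiliary coordinates $[p,q_0,q_1,\epsilon_0,\epsilon_1]$ from the proof of \cref{thm:ROPiP}. There, $\R^{1,1}\otimes-$ acts by
\[
[p,q_0,q_1,\epsilon_0,\epsilon_1]\mapsto[p,\,p-q_0,\,p-q_1,\,\epsilon_0',\,\epsilon_1'],
\]
where $\epsilon_i'=\delta_i=\epsilon_i+q_0+q_1$ (the new sign determinant is the old trivial one). Hence multiplication by $x$ sends
\[
[p,q_0,q_1,\epsilon_0,\epsilon_1]\mapsto[0,\,p-2q_0,\,p-2q_1,\,q_0+q_1,\,q_0+q_1].
\]
One checks that $x^2=-2x$, consistent with $RO(C_2)\cong\Z[x]/(x^2+2x)$. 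From this we extract a decomposition
\[
RO(\Pi P)\cong \Z\{v_0\}\oplus M\oplus T,
\]
where $v_0$ is a lift of the rank coordinate $p$ (for instance the trivial line, or $\R^{1,1}$), $M\cong\Z^2$ is the part spanned by the weights $q_0,q_1$, and $T\cong(\Z/2)^2$ is the torsion. The intended form of the module structure is that $x$ acts on $M$ by $-2$ modulo torsion, that $I(C_2)$ kills $T$ after one step, and that $I(C_2)^k\cdot RO(\Pi P)$ is $2^{k-1}$ times a finite-index subgroup of $M$ up to bounded torsion.

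The main obstacle is the interaction between the torsion coordinates and multiplication by $x$: the formula above shows that $x$ moves weight classes into the $\epsilon$-coordinates. I would handle this by choosing adjusted generators $m_0,m_1$ of $M$ (the weight classes corrected by suitable $\epsilon$-terms) satisfying $x\,m_i=-2m_i$ exactly, and then checking that $x\,T=0$ or that $x$ maps $T$ into $2M$. With such generators, the $I(C_2)$-adic filtration on $M$ becomes the $2$-adic filtration, while on $\Z\{v_0\}\oplus T$ the filtration stabilizes. Here $x v_0\in M$ but is not infinitely divisible, so the $\Z$ factor survives uncompleted, exactly as in $RO(C_2)^\wedge_{I(C_2)}=\Z\{1\}\oplus\Z_2\{x\}$.

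Taking the inverse limit then gives $RO(\Pi P)^\wedge_{I(C_2)}\cong\Z\times\Z_2^2\times(\Z/2)^2$. The finite torsion has vanishing $\lim^1$, since the tower is eventually constant there. Finally, $c$ is the map $\Z\oplus\Z^2\oplus(\Z/2)^2\to\Z\oplus\Z_2^2\oplus(\Z/2)^2$ that includes $\Z\to\Z_2$ in the middle factor and is the identity on the other factors, so $c$ is injective with the stated image.
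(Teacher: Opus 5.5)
Your formula for multiplication by $x$ in the coordinates $[p,q_0,q_1,\epsilon_0,\epsilon_1]$ is correct, as are the check $x^2=-2x$ and the final answer. The gap is in the step you identify as the main obstacle: you resolve it with a construction that does not exist.

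The map $x[p,q_0,q_1,\epsilon_0,\epsilon_1]=[0,p-2q_0,p-2q_1,q_0+q_1,q_0+q_1]$ does not depend on $\epsilon_0,\epsilon_1$. So correcting a weight class by $\epsilon$-terms never changes $xm$, while $-2m$ kills those terms. For instance,
\[
x[0,1,0,\ast,\ast]=[0,-2,0,1,1]\neq[0,-2,0,0,0]=-2[0,1,0,\ast,\ast].
\]
In fact $xv=-2v$ holds exactly when $p=0$ and $q_0\equiv q_1 \pmod 2$. This is an index-two subgroup of the $p=0$ part, so no $m_0,m_1$ with $xm_i=-2m_i$ can generate $RO(\Pi P)$ together with $v_0$ and $T$.

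More intrinsically, the torsion subgroup is not an $RO(C_2)$-module summand at all. In the paper's coordinates, a submodule complement would contain some $c=(0,0,1,\epsilon_c,\mu_c)$, and then $xc+2c=(0,0,0,1,0)$ would be a nonzero torsion element inside it. This is why the paper splits
\[
RO(\Pi P)\cong RO(C_2)\oplus M\oplus N
\]
with $M=\{(0,0,n,\epsilon,0)\}\cong\Z\times\Z/2$, keeping the torsion coordinate $\epsilon$ together with $n$. It then computes $x^k[n,\epsilon]=[(-2)^kn,0]$ for $k\geq 2$, which gives $M^\wedge_{I(C_2)}\cong\Z_2\times\Z/2$.

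The repair needs nothing beyond what you already have. Since $I(C_2)^k=(x^k)$ and $x^k=(-2)^{k-1}x$, you get $I(C_2)^k\cdot RO(\Pi P)=2^{k-1}\,x\,RO(\Pi P)$. For $k\geq 2$ the factor $2^{k-1}$ kills the torsion coordinates, so
\[
I(C_2)^k\cdot RO(\Pi P)=\{[0,2^{k-1}a,2^{k-1}b,0,0] : a\equiv b \pmod 2\}.
\]
This is $2^{k-1}L$ sitting in the $(q_0,q_1)$-coordinates, where $L=\{(a,b)\in\Z^2 : a\equiv b \pmod 2\}$. Hence
\[
RO(\Pi P)/I(C_2)^k\cong\Z\times \Z^2/2^{k-1}L\times(\Z/2)^2.
\]
Since $2^k\Z^2\subset 2^{k-1}L\subset 2^{k-1}\Z^2$, the inverse limit is $\Z\times\Z_2^2\times(\Z/2)^2$. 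The map $c$ is the identity on the $p$-coordinate and on the torsion, and the inclusion $\Z^2\subset\Z_2^2$ on the weights. This is exactly your ``intended form'', but it has to come from this direct computation of the filtration, not from eigenvector generators of a split-off $M$.
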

\begin{proof} 
Tensoring with $\R^{1,1}$ gives a $C_2$-action on $RO(\Pi P)$ as in \cref{rem:chi}, whose effect
in the coordinates of \cref{thm:ROPiP} is given by
\[(p,q,n,\epsilon,\mu)\mapsto(p,p-q,-n,\epsilon+n,\mu)\]
The action of $x=\R^{1,1}-1$ is thus given by 
\begin{align*}x(p,q,n,\epsilon, \mu)
 &= (p,p-q,-n,\epsilon+n,\mu) - (p,q,n,\epsilon, \mu)  \\
&= (0,p-2q,-2n,n,0) .
 \end{align*}
 Observe that $x$ acts invariantly on several subspaces: the first two coordinates, the third and fourth coordinates, and the last coordinate. 
 It follows that $RO(\Pi P)$ splits as the direct sum of $RO(C_2)$-modules
\begin{equation}\label{eq:decompositionsubmodules}
 RO(\Pi P) \cong RO(C_2) \oplus M \oplus N
 \end{equation}
 where $RO(C_2)$ corresponds to the image of $\crush^*$, i.e., coordinates with $n=\epsilon=\mu=0$. The module $M= \Z \times \Z/2$ corresponds to $p=q=\mu=0$, with action $x[n,\epsilon] = [-2n,n]$ and $N=\Z/2$ is the module with $p=q=n=\epsilon=0$ with $x$ acting as zero.
 
 The completion commutes with finite direct sums so we can complete each factor separately. We know how to complete $RO(C_2)$ as discussed above. The completion of $N$ is itself. For the completion of $M$, note that for $k\geq 2$,
 \[x^k[n,\epsilon] = (-2)^{k-1}x[n,\epsilon]
 = [(-2)^kn, (-2)^{k-1}n] = [(-2)^kn, 0]=(-2)^k[n,\epsilon] .\]
 Therefore,
 \[M^\wedge_{I(C_2)} = \Z_2\times \Z/2.\]
 Notice the completion map $\Z \to \Z_2$ is injective. In fact, the completion map 
 \[ c\colon RO(\Pi P) \to RO(\Pi P)^{\wedge}_{I(C_2)}\]
 is injective on each component. 
 This finishes the proof. 
 \end{proof}

\begin{lem}
Let $\Gamma = C_2 \times O(1)$.
There is a bijection between $C_2$-line bundles over $P$ and homomorphisms $\Gamma \to O(1)$. 
\end{lem}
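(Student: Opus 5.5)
The plan is to use that $P = B_{C_2}O(1)$ classifies principal $C_2$-$O(1)$-bundles, and that these are the same thing as $C_2$-line bundles. Concretely, $P \simeq \Map(EC_2, EO(1))/O(1)$ as in \cref{lem:Gamma}, with universal principal bundle $E_{C_2}O(1) \to P$. Here $E_{C_2}O(1)$ is a $\Gamma$-space, $\Gamma = C_2\times O(1)$, on which $O(1)$ acts freely. Any homomorphism $\varphi\colon \Gamma \to O(1)$ defines a one-dimensional real $\Gamma$-representation $\R_\varphi$, and hence a $C_2$-line bundle
\[ E_{C_2}O(1)\times_{O(1)} \R_\varphi \to P. \]
This gives a map from $\Hom(\Gamma,O(1))$ to isomorphism classes of $C_2$-line bundles over $P$. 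For the bijection I would interpret the statement as concerning the bundles obtained this way. By the classification in \cref{rem:linebundles} these are exactly $\triv,\sign,\taut,\staut$, the four isomorphism classes of $C_2$-line bundles on $P$ listed in \cref{rem:KOC2intro}.

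The steps are as follows.

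\begin{enumerate}[(1)]
\item Compute $\Hom(\Gamma,O(1)) \cong \Hom(\Z/2\times\Z/2,\Z/2)\cong (\Z/2)^2$. Its four elements are the trivial map, projection to $C_2$, projection to $O(1)$, and the product of the two projections.
\item Identify the associated bundles. The trivial character gives $\triv = P\times \R^{1,0}$, and projection to $C_2$ gives $\sign = P\times\R^{1,1}$. Projection to $O(1)$ gives the tautological bundle $\taut = \bun_{1,0}$, since in the notation of the introduction $\bun_{p,q} = E_{C_2}O(1)\times_{O(1)}\R^{p,q}$ with $O(1)$ acting by $-1$. The product character gives $\bun_{1,1} = \sign\otimes\taut = \staut$.
\item Show the four bundles are pairwise non-isomorphic. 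I would distinguish them by their fibers over the fixed points $b_0,b_1$ together with their underlying bundles over $i^*_eP = \RP^\infty$:
\begin{itemize}
\item $\triv$ has fibers $\R^{1,0}$ at both $b_0$ and $b_1$;
\item $\sign$ has fibers $\R^{1,1}$ at both;
\item $\taut$ has fiber $\R^{1,0}$ at $b_0$ and $\R^{1,1}$ at $b_1$, the fiber over a point of $P_1$ being a line in $\R^{\infty,\infty}$;
\item $\staut$ has the swapped pattern.
\end{itemize}
The fiber at each fixed point is a $C_2$-representation and hence an isomorphism invariant, so this already separates all four. It also separates them in $RO(\Pi P)$ via the $(q,n)$ coordinates of \cref{thm:ROPiP}.
\item Prove surjectivity, namely that every $C_2$-line bundle $L$ over $P$ is one of these four. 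Since $P$ has the homotopy type of a $C_2$-CW complex, $L$ is classified by a $C_2$-map $f\colon P\to P$, and $L\cong f^*\taut$. The map $f$ is determined up to homotopy by which fixed component each $P_i$ maps into, together with the induced map on $\pi_1$ of the underlying space.
\end{enumerate}

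Step (4) is where I expect the main obstacle. The cleanest alternative, which I would try first, is to use the universal property of $B_{C_2}O(1)$ directly. $C_2$-line bundles over $P$ correspond to principal $(C_2,O(1))$-bundles over $P$. By \cref{lem:Gamma} and the equivalence $P\times_{C_2}EC_2\simeq B\Gamma$, these restrict over the Borel construction to homomorphisms $\pi_1(B\Gamma)=\Gamma\to O(1)$. The remaining task is injectivity of this restriction on line bundles, which amounts to the statement that a line bundle is determined by its first Stiefel--Whitney class. For line bundles $O(1)$ is discrete, so $C_2$-line bundles over $P$ are classified by $[P, B_{C_2}O(1)]^{C_2}$. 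I would compute this set by equivariant obstruction theory on the fixed components and the free part, each of which is an $\RP^\infty$, and I would expect it to have exactly four elements.
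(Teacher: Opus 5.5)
\textbf{There is a genuine gap: you never prove that every $C_2$-line bundle over $P$ arises from a homomorphism $\Gamma\to O(1)$.}

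Steps (1)--(3) are fine. The four characters of $\Gamma$ give $\triv,\sign,\taut,\staut$, and their fibers over $b_0$ and $b_1$ show these are pairwise non-isomorphic. The missing part is surjectivity, which is the real content of the lemma.

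\begin{itemize}
\item Your opening paragraph reinterprets the statement and cites \cref{rem:linebundles}. That remark is deduced from this very lemma (``this implies there are four line bundles on $P$ up to isomorphism''), so the citation is circular.
\item In step (4), the claim that a $C_2$-self-map of $P$ is determined up to homotopy by where $P_0,P_1$ go and by its effect on underlying $\pi_1$ is asserted, not proved. It is a uniqueness-of-extension statement over the free cells. Even granting it, you would still need to exclude the patterns that cannot occur, e.g.\ via $\gamma(g)=(-1)^{n}$ from \cref{thm:ROPiP}.
\item The obstruction-theory route stops at ``I would expect it to have exactly four elements''.
\item The Borel route correctly isolates the fact you need: a $C_2$-line bundle over $P$ is determined by the line bundle it induces on $P\times_{C_2}EC_2\simeq B\Gamma$. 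But you justify this by saying it amounts to a line bundle being determined by $w_1$, and that is a non sequitur. $w_1$ classifies \emph{nonequivariant} line bundles on $B\Gamma$, so it only gives $[B\Gamma,BO(1)]\cong\Hom(\Gamma,O(1))$. It says nothing about whether passing from $C_2$-bundles on $P$ to their Borel constructions loses information.
\end{itemize}

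The missing ingredient is May's theorem that, for a discrete structure group, the equivariant classifying space is a mapping space: $B_{C_2}O(1)\simeq\Map(EC_2,BO(1))$. With it, Borel restriction is a bijection by adjunction:
$[P,B_{C_2}O(1)]^{C_2}\cong[P,\Map(EC_2,BO(1))]^{C_2}\cong[P\times_{C_2}EC_2,BO(1)]\cong[B\Gamma,BO(1)]$.
Free homotopy classes of maps $B\Gamma\to BO(1)$ are conjugacy classes of homomorphisms, which is $\Hom(\Gamma,O(1))$ since $O(1)$ is abelian. This is the paper's whole proof.

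This also puts discreteness in the right place. You invoke it for the classification of $C_2$-line bundles by $[P,B_{C_2}O(1)]^{C_2}$, but that holds for any compact Lie structure group over paracompact bases. Discreteness is really needed for the mapping-space model.

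Once the bijection is in place, your step (3) is still worth keeping. It confirms that the explicit associated-bundle construction realizes the abstract bijection; the paper gets the same conclusion from the distinct dimensions listed in \cref{rem:linebundles}.
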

\begin{proof}
By \cite[Theorem 5]{May_some}, since $O(1)$ is discrete, the space $P$ is equivalent to the $C_2$-space 
\[P \simeq B_{C_2}O(1) \simeq \Map(EC_2, BO(1)).\]
Line bundles over $P=B_{C_2}O(1)$ are classified by
\begin{align*}
[B_{C_2}O(1),  B_{C_2}O(1)]^{C_2} &\cong [B_{C_2}O(1),  \Map(EC_2, BO(1))]^{C_2} \\ 
 &\cong [B_{C_2}O(1)\times_{C_2}EC_2,  BO(1)] \\
  &\cong [B\Gamma,  BO(1)] .
\end{align*}
Since $\Gamma$ and $O(1)$ are finite groups, these homotopy classes of maps are in one-to-one correspondence with conjugacy classes of group homomorphisms $\Gamma \to O(1)$.
Finally, since $O(1)$ is abelian, these are the same as homomorphisms.
\end{proof}

\begin{remark}\label{rem:linebundles}
Continuing to let $\Gamma = C_2 \times O(1)$, this implies there are four line bundles on $P$ up to isomorphism. We describe them here and compute their dimensions. Realize $P$ as the quotient of the unit sphere $E_{C_2}O(1):=S(\R^{2\infty,\infty})$ by the antipodal action of $O(1)$. Given any homomorphism $\phi \colon \Gamma \to O(1)$, we can form the $C_2$-line bundle
\[ S(\R^{2\infty,\infty})\times_{O(1)} \R^\phi \to S(\R^{2\infty,\infty})/O(1)= P.\]
Here the balanced product is over the subgroup $O(1)\subset \Gamma$, which acts on $S(\R^{2\infty,\infty})$ via the antipodal action. The $\Gamma$-representation $\R^\phi$ has underlying vector space $\R$ and action determined by the homomorphism $\phi$ via the standard action of $O(1)$ on $\R$. Running through the possibilities for $\phi$, we obtain the following isomorphism classes of $C_2$ line bundles:
\begin{enumerate}[(a)]
\item The trivial line bundle $\triv$ corresponds to the trivial homomorphism and
\[\dim \triv= 
(1,0,0,0,0).\]
\item The constant line bundle $\sign$ at the sign representation corresponds to the projection $\Gamma \to C_2$  and
\[\dim \sign=
(1,1,0,0,0). \]
\item The tautological line bundle $\taut$ corresponds to
the projection $\Gamma \to O(1)$ and has dimension
\[\dim \taut =
(1,0,1,0,0).\]
\item The tensor product $\staut = \sign \otimes \taut$ (or $\chi \taut$) corresponds to the homomorphism $\Gamma \to O(1)$ which is an isomorphism when restricted to both of the inclusions $C_2,O(1)  \subset \Gamma$ and
\[\dim \staut= 
(1,1,-1,1,0).  \] 
\end{enumerate}
\end{remark}

We can now prove the main result of this section.

\begin{theorem}\label{thm:KOPiP}
The subgroup $KO(\Pi P)\subset RO(\Pi P)$ is equal to the subgroup of those representations $\gamma \in RO(\Pi P)$ such that $\mu(\gamma)=0$. Furthermore, this coincides with the subgroup generated by $\dim \gamma$ where $\gamma$ runs over the isomorphism classes of $C_2$-line bundles over $P$.
\end{theorem}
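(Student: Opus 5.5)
Write $L\subset RO(\Pi P)$ for the subgroup generated by $\dim$ of the four line bundles, and $K_0=\{\gamma:\mu(\gamma)=0\}$. We will show $L\subseteq KO(\Pi P)\subseteq K_0\subseteq L$.

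The inclusion $L\subseteq KO(\Pi P)$ holds by definition of $KO(\Pi P)$.

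For $K_0\subseteq L$, use the dimensions listed in \cref{rem:linebundles}:
\[(1,0,0,0,0),\quad (1,1,0,0,0),\quad (1,0,1,0,0),\quad (1,1,-1,1,0).\]
They generate $(1,0,0,0,0)$ and $(0,1,0,0,0)$. Subtracting the first from the third gives $(0,0,1,0,0)$. Subtracting $(1,1,0,0,0)$ from the fourth and adding $(0,0,1,0,0)$ gives $(0,0,0,1,0)$. These four elements generate all of $K_0$.

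For $KO(\Pi P)\subseteq K_0$, I would use the completion strategy of \cref{sec:KOPiB}. By the lemma just proved, the completion map $c$ is injective. Its target splits as $RO(C_2)^\wedge\oplus M^\wedge\oplus N$ with $N=\Z/2$, and the $\mu$-coordinate is exactly the projection to $N$. So it suffices to show that every element of $\dim(KO_G(P\times EG))=\dim(RO(\Gamma)^\wedge_{I(\Gamma)})$ has zero $N$-component. Then \cref{lem:completiondiagram} gives $c(KO(\Pi P))\subseteq c(K_0)$, and injectivity gives $KO(\Pi P)\subseteq K_0$.

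Now $\Gamma=C_2\times O(1)$ is elementary abelian, so $RO(\Gamma)$ is spanned by the four characters, i.e.\ the four line bundles. The map $\dim$ on $RO(\Gamma)^\wedge_{I(\Gamma)}$ should be the completion of the additive map $RO(\Gamma)\to RO(\Pi P)$ sending each character to the dimension of the corresponding line bundle. Every element of the completion is then a limit of finite $\Z$-combinations of characters. Each such combination has $\mu=0$ by the computation above. The $\mu$-coordinate is continuous, being projection to the discrete summand $N$ on which $I$ acts by zero. Hence every limit also has $\mu=0$.

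The main obstacle is justifying that the dotted map $\dim$ of \cref{lem:completiondiagram} on $KO_G(P\times EG)\cong RO(\Gamma)^\wedge$ really is the continuous extension of the additive map on characters. In particular it must be compatible with the $I(C_2)$-adic versus $I(\Gamma)$-adic topologies; one needs $I(\Gamma)$-powers to map into $I(C_2)$-powers of $RO(\Pi P)$ up to the torsion summands. I would try to check this on the augmentation ideal $I(\Gamma)$, generated by $\gamma_i-1$ for the nontrivial characters $\gamma_i$. Their products are again virtual combinations of characters, whose images can be computed explicitly in coordinates. This should show they land in $I(C_2)^k$-multiples in the $RO(C_2)\oplus M$ part and have $\mu=0$. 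If this compatibility proves delicate, a fallback is more direct. The space $P$ has the finite skeleta $P^{n,m}$, and every bundle on a skeleton is classified by maps to $B_{C_2}O(d)$. One could then argue via a splitting principle that any $C_2$-bundle over $P$ stably agrees in $RO(\Pi P)$ with a sum of line bundles, using that $\dim$ only sees the values on the finitely many objects and morphisms of the skeleton of $\Pi P$, which is detected on $P^{3,1}\cup P^{3,2}$.
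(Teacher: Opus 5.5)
Your proposal is correct and is essentially the paper's proof: the four line-bundle dimensions generate the $\mu=0$ subgroup, and the completion argument through $KO_{C_2}(P\times EC_2)\cong RO(\Gamma)^\wedge_{I(\Gamma)}$, with $RO(\Gamma)$ spanned by its characters and $\mu$ read off from the summand $N$ that completion leaves untouched, forces $\mu=0$ on all of $KO(\Pi P)$. The compatibility of the dotted $\dim$ map with the $I(\Gamma)$- and $I(C_2)$-adic topologies, which you flag as the main obstacle, is passed over in the paper with ``it follows that''; your plan to check that powers of $I(\Gamma)$ land in high powers of $I(C_2)\cdot RO(\Pi P)$ does work, since $I(\Gamma)^k=2^{k-2}I(\Gamma)^2$ for $k\geq 2$.
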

\begin{proof}
The fact that $KO(\Pi P)$ contains any $\gamma$ with $\mu(\gamma)=0$ follows from the fact that the dimensions of line bundles listed above generate this subgroup.
It remains to prove that an element the image of
\[ KO_{C_2}(P \times EC_2)  \cong RO(\Gamma)^\wedge_{I(\Gamma)} \to RO(\Pi P)^{\wedge}_{I(C_2)}   \]
has a vanishing $\mu$-coordinate. Notice that $\Gamma=C_2\times O(1)$ has the property that $\RO(\Gamma)$ is generated by its one-dimensional irreducible representations. It follows that the image of $RO(\Gamma)^\wedge_{I(\Gamma)} $ in $RO(\Pi P)^{\wedge}_{I(C_2)} $ is the completion of the subgroup generated by the line bundles. In particular, this image is contained in the subgroup of those $\gamma$ such that $\mu(\gamma)=0$. Since
\[ c(KO(\Pi P))\subset \dim(RO(\Gamma)^\wedge_{I(\Gamma)}) \subset RO(\Pi P)^{\wedge}_{I(C_2)} \]
and $c$ is injective, any element in $ KO(\Pi P)$ has vanishing $\mu$ coordinate.
\end{proof}

\begin{rem}
 In fact, any vector bundle over $P$ splits as a direct sum of line bundles. The proof, as well as a shorter argument for \cref{thm:KOPiP}, is in preparation in \cite{BZ}.  As that approach does not generalize much beyond this example, in the spirit of providing tools for further computations, we have kept this proof.  
 \end{rem}

%% file: ppaper-cohthom.tex

\section{The $RO(C_{2})$-graded cohomology of Thom spaces}
\label{section:cohomologyofthomspaces}
Our computation of the $RO(\Pi P)$-graded cohomology of $P$ will rely on knowing the $RO(C_2)$-graded cohomology of various Thom spaces for bundles over $P$ and applying \cref{thm:KOPIB}. We compute the cohomology of the relevant Thom spaces in this section. 

\subsection{Thom spaces as stunted projective spaces} 
Nonequivariantly, certain Thom spaces over $\R P^\infty$ can be identified with stunted projective spaces \cite[Prop. 4.3]{AtiyahThom}. Namely,
\[\Th(m\bun_1) \simeq \R P^\infty / \RP^{m-1}\]
where $\bun_1$ is the tautological line bundle over $\R P^\infty$ and $m\gamma_1$ is the direct sum of $m\geq 1$ copies.
The proof generalizes to the equivariant context, as explained below. This is likely well-known. See for example \cite[\S7.1]{2025arXiv251109816B}.

Let $G$ be a finite group. We will discuss the construction of various $G$-bundles. To avoid confusion when $G=C_2$, we write 
$O(1)$ for the structure group for real line bundles.
Let $\sigma$ denote the one-dimensional sign representation of $O(1)$ and $V$ be an actual orthogonal $G$-representation. Let $S(V)$ be the unit sphere in $V$ and $P(V)$ its projective space. 
Then $V\otimes \sigma$ is a $G \times O(1)$ representation with the diagonal action, and
\[P(V) \cong S(V\otimes \sigma)/O(1) = S(V\otimes \sigma)\times_{O(1)}\pt.\]
For any actual orthogonal representation $W$ of $G$, we can form a bundle
\[ \xymatrix{
 &  S(V\otimes \sigma)\times_{O(1)} (W\otimes\sigma) \ar[d]^-{\gamma^V_W} \\ 
 & P(V)} \]
For example, if we let $\R$ be the trivial representation of $G$, then  $\gamma^V_\R$
is the tautological line bundle of the projective space $P(V)$. The Thom space of $\gamma^V_W$ can be described as follows:
 \begin{lem}
 \label{lem:thomspace}
 Let $V$ and $W$ be actual orthogonal representations of a finite group $G$.
 There is a homeomorphism 
 \[ \Th(\gamma_W^V) \cong P(V\oplus W)/P(W)\]
 where the quotient is along the canonical inclusion $P(W) \to P(V\oplus W)$. The same formula holds if $V=\colim_{n\in \N} V_n$ for finite dimensional $V_n$.
 \end{lem}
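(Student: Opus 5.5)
We adapt Atiyah's nonequivariant argument, keeping track of the $G$-action. Every map below is $G$-equivariant because $G$ acts linearly on all the vector spaces involved and the construction commutes with that action.

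Write points of $S(V\otimes\sigma)\times_{O(1)}(W\otimes\sigma)$ as classes $[v,w]$ with $|v|=1$, where $(v,w)\sim(-v,-w)$. Because $G$ and $O(1)$ commute in $G\times O(1)$, this description is compatible with the $G$-action.

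The first step is to map the total space of $\gamma^V_W$ into $P(V\oplus W)$ by sending $[v,w]$ to the line through $(v,w)$. This is well defined, since $(-v,-w)$ spans the same line. It is $G$-equivariant. Its image is $P(V\oplus W)\setminus P(W)$, namely the lines whose $V$-component is nonzero.

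The map is injective. Such a line contains exactly two vectors of the form $(v,w)$ with $|v|=1$, and these are identified by the $O(1)$-quotient.

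The inverse is continuous. It sends a line $\ell=[x:y]$ with $x\neq 0$ to $[x/|x|,\,y/|x|]$. This formula is independent of the chosen representative of $\ell$, up to the $O(1)$-identification, so it is a well-defined continuous map. Hence the total space is $G$-homeomorphic to the open set $P(V\oplus W)\setminus P(W)$.

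The next step is to pass to one-point compactifications. Assume first that $V$ and $W$ are finite dimensional. Then $P(V)$ is compact, and $\Th(\gamma^V_W)$ is the one-point compactification of the total space. On the other side, $P(V\oplus W)$ is compact Hausdorff and $P(W)$ is closed in it, so $P(V\oplus W)/P(W)$ is the one-point compactification of the complement $P(V\oplus W)\setminus P(W)$. The equivariant homeomorphism of the first step extends uniquely to these one-point compactifications, with the added point going to the collapsed point $P(W)/P(W)$. Because $G$ fixes both added points, the extension is a $G$-homeomorphism, which gives the claimed identification.

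Alternatively, one can realize the Thom space as $D(\gamma)/S(\gamma)$ and compare it with the closed region where $|y|\le|x|$, whose boundary deformation retracts onto $P(W)$. The compactification argument is cleaner, and we use it.

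Finally, we treat $V=\colim V_n$. The homeomorphisms constructed above for $V_n$ are natural with respect to the inclusions $V_n\subset V_{n+1}$: the bundle $\gamma^{V_n}_W$ is the restriction of $\gamma^{V_{n+1}}_W$, and $P(V_n\oplus W)\subset P(V_{n+1}\oplus W)$. Both sides carry the colimit topology, where the Thom space of the colimit is the colimit of the Thom spaces and a quotient of a colimit is the colimit of the quotients. Passing to the colimit therefore gives the result.

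We expect the main obstacle to be this last point-set issue: checking that the colimit topologies agree, and that the relevant quotients commute with sequential colimits of closed inclusions. This is standard for CW-type colimits of closed inclusions, and we would cite it rather than prove it.
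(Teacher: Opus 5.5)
Your proof is correct. It rests on the same identification as the paper's: a point $w$ in the fiber over $[v]$ goes to the line through $(v,w)$. The difference is the model of the Thom space.

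The paper works with the disk bundle modulo the sphere bundle. It writes down the single map $f(v,w)=(\sqrt{1-\|w\|^2}\,v,\,w)$ from $S(V\otimes\sigma)\times D(W\otimes\sigma)$ onto the unit sphere of $V\otimes\sigma\oplus W\otimes\sigma$. On the open disk this is your map precomposed with the radial homeomorphism of the open disk onto $W$. On the boundary it collapses the sphere bundle onto $P(W)$. After passing to $O(1)$-orbits and collapsing, this gives a continuous bijection from the compact space $D(\gamma^V_W)/S(\gamma^V_W)$ to the Hausdorff space $P(V\oplus W)/P(W)$. So the paper never needs an inverse formula.

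You instead identify the total space with the open complement $P(V\oplus W)\setminus P(W)$ and then invoke uniqueness of one-point compactifications. This requires the inverse $[x:y]\mapsto[x/|x|,\,y/|x|]$ and a check that it is continuous. That check is done by descending along the quotient map restricted to the open saturated set where $x\neq 0$; you assert it rather than spell it out.

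Your route is essentially Atiyah's original argument. It has the by-product that the total space is exhibited as the complement of $P(W)$. The paper's version is shorter.

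Both routes need compactness of $P(V)$. Both therefore handle $V=\colim V_n$ the same way: naturality in $n$, plus commuting Thom spaces and quotients with the colimit. The paper asserts this step without further justification, just as you propose to cite it.
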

 \begin{proof}
The map
 \[f\colon S(V\otimes \sigma)\times D(W\otimes \sigma) \to S(V\otimes \sigma\oplus W\otimes \sigma) \]
determined by
 $f(v,w) = ((\sqrt{1-\|w\|^2})  v, w)$ induces the desired homeomorphism of $G$-spaces. For the infinite dimensional case, we use that the Thom space construction and the quotient commute with the colimit.
 \end{proof}

We return to the example of interest, that is $G=C_2$ and $P=P(\R^{2\infty,\infty})$. We consider bundles over $P$ and its subspaces $P^{p,q}=P(\R^{p,q})$ for $p \geq q \geq 0$.
We define $P_{p,q}$ as the quotient
\[ P^{p,q} \to P \to P_{p,q}.\]
In the notation above, define 
\[\bun_{p,q} = \gamma_{\R^{p,q}}^{\R^{2\infty,\infty}}.\]
This is a $C_2$-bundle over $P$.

\begin{example}\label{ex:equiv}
We saw two of these in \cref{rem:linebundles}, the line bundles $\bun_{1,0}$ and $\bun_{1,1}$. Since $P^{1,0}$ and $P^{1,1}$ are points,
\[P \simeq \Th(\bun_{1,0}) \simeq \Th(\bun_{1,1}). \]
\end{example}

\begin{cor}\label{cor:pqduality}
For $p \geq q \geq 0$, there is a $C_2$-equivariant homeomorphism
\[\Th(\bun_{p,q}) \cong P_{p,q}. \]
Furthermore, there is a $C_2$-equivariant homeomorphism
\[ \Th(\bun_{p,q}) \cong \Th(\bun_{p,p-q}).\]
\end{cor}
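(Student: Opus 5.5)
The first claim follows directly from \cref{lem:thomspace} with $G=C_2$, $V=\R^{2\infty,\infty}$ and $W=\R^{p,q}$. The lemma gives a homeomorphism
\[\Th(\bun_{p,q})=\Th\bigl(\gamma^{\R^{2\infty,\infty}}_{\R^{p,q}}\bigr)\cong P(\R^{2\infty,\infty}\oplus \R^{p,q})/P(\R^{p,q}).\]
Since $\R^{2\infty,\infty}$ is a complete universe, there is a $C_2$-equivariant linear isometric isomorphism $\R^{2\infty,\infty}\oplus\R^{p,q}\cong \R^{2\infty,\infty}$ that restricts to the standard inclusion on the summand $\R^{p,q}$. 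To build it, send $\R^{p,q}$ to the first coordinates as in \cref{not:vector_space}, and shift the trivial and sign coordinates of $\R^{2\infty,\infty}$ up by $p-q$ and $q$ respectively. This is a colimit of finite-dimensional equivariant isometries, so it induces a $C_2$-homeomorphism of projective spaces carrying $P(\R^{p,q})$ onto $P^{p,q}$. Passing to quotients gives $\Th(\bun_{p,q})\cong P/P^{p,q}=P_{p,q}$. The one point needing care is that these colimit identifications are compatible with the quotient topology, which is the same colimit argument used at the end of the proof of \cref{lem:thomspace}.

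For the duality, I would use the involution $\autP$ of \cref{rem:chi}. It is induced by the linear isometry $\R^{2\infty,\infty}\to\R^{2\infty,\infty}$, $(v_1,v_0)\mapsto(v_0,v_1)$. This map is not $C_2$-equivariant; it is equivariant after tensoring with $\sigma$, i.e. it is an isomorphism $\R^{2\infty,\infty}\cong \R^{2\infty,\infty}\otimes\R^{1,1}$. On projective spaces, however, it is $C_2$-equivariant, because $P(U)=P(U\otimes\R^{1,1})$: multiplying by the sign character does not change lines.

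Apply this to $U=\R^{2\infty,\infty}\oplus\R^{p,q}$. Choose an equivariant isometry
\[\R^{2\infty,\infty}\oplus\R^{p,q}\;\cong\;\bigl(\R^{2\infty,\infty}\oplus\R^{p,p-q}\bigr)\otimes\R^{1,1}.\]
This exists because $\R^{p,q}\otimes\R^{1,1}\cong\R^{p,p-q}$ and $\R^{2\infty,\infty}\otimes\R^{1,1}\cong\R^{2\infty,\infty}$. Choose it so that it carries the summand $\R^{p,q}$ onto $\R^{p,p-q}\otimes\R^{1,1}$. It then induces a $C_2$-homeomorphism
\[P(\R^{2\infty,\infty}\oplus\R^{p,q})\;\cong\;P(\R^{2\infty,\infty}\oplus\R^{p,p-q})\]
which restricts to $P(\R^{p,q})\cong P(\R^{p,p-q})$. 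Passing to quotients and using the first part twice gives $\Th(\bun_{p,q})\cong\Th(\bun_{p,p-q})$.

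An equivalent route is to observe that the map on sphere bundles
\[S(V\otimes\sigma)\times_{O(1)}(W\otimes\sigma)\;\longrightarrow\;S(V\otimes\sigma\otimes\R^{1,1})\times_{O(1)}(W\otimes\sigma\otimes\R^{1,1})\]
identifies $\gamma^V_W$ with $\autP^*\gamma^{V}_{W\otimes \R^{1,1}}$ over the homeomorphism $\autP$ of $P$. Since $\autP$ is a $C_2$-homeomorphism of the base, it induces a homeomorphism of Thom spaces.

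I expect the main obstacle to be bookkeeping rather than anything conceptual. One must check that the chosen isometries really are equivariant once one passes to projective spaces. One must also check that they restrict correctly to the subspaces being collapsed. The key fact making everything work is that twisting by the sign character is invisible on lines.
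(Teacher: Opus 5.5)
Your proposal is correct and takes essentially the same route as the paper. The first part is \cref{lem:thomspace}, together with the identification $P(\R^{2\infty,\infty}\oplus\R^{p,q})\cong P$ that the paper leaves implicit. The second part uses the coordinate swap $(v_1,v_0)\mapsto(v_0,v_1)$, which is essentially $\autP$: it is non-equivariant on vectors but equivariant on lines, and it carries $P^{p,q}$ onto $P^{p,p-q}$, so it descends to the quotients. Your observation that $P(U\otimes\R^{1,1})=P(U)$ as $C_2$-spaces is a cleaner justification of that equivariance than the paper's brief assertion.
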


\begin{proof}
The first part follows immediately from \cref{lem:thomspace} applied to $\bun_{p,q}$. It remains to prove the second part of the claim.
 There is a commutative diagram
\[\xymatrix{
P^{p,q} \ar[r] \ar[d]^-\cong & P(\R^{p,q}\oplus 
\R^{2\infty,\infty} ) \ar[r] \ar[d]^-\cong & P_{p,q}\ar@{.>}[d] \\
P^{p,p-q} \ar[r] & P(\R^{p,p-q}\oplus 
\R^{2\infty,\infty} ) \ar[r] & P_{p,p-q}
}\]
where the vertical homeomorphisms are induced by the (non-equivariant) map
\[ \R^{p,q} \to \R^{p,p-q},  \]
which sends $(v_0,v_1)$ to $(v_1,v_0)$ (see \cref{not:vector_space}), applied component wise for the middle term.   After projectivization, these give $O(1)$-equivariant equivalences. Note that this is essentially just the automorphism $\autP$ of \cref{rem:chi} and its restriction.
It follows that
\[\Th(\bun_{p,q}) \cong P_{p,q} \cong P_{p,p-q} \cong \Th(\bun_{p,p-q}).  \qedhere\]
\end{proof}

\subsection{$RO(C_2)$-graded cohomology of $P$}\label{sec:ROC2cohomofP}
In this section we compute the $RO(C_2)$-graded cohomology of the $C_2$-space $P$ with coefficients in the constant Mackey functor $\uF_2$.  The computation is well-known, and is obtained by mimicking the motivic computation by Voevodsky \cite{MR2031199}. 
It can be found in the equivariant context  \cite[Thm. 4.2]{Kronholm}. We repeat it in detail here in order to demonstrate some useful computational techniques.  We will describe the cohomology of $P$ as an $\M_2$-algebra,
but first describe the $RO(C_2)$-graded cohomology of $P$ as an $\M_2$-module.  We use the fact that $P$ can be given the structure of a $\Rep(C_2)$-complex (as well as that of a $C_2$-CW-complex).

\begin{definition}
    A \emph{representation cell} is the unit disk $D(V)$ in an actual orthogonal $G$-representation $V$. If $n = \dim V$ then $D(V)$ has dimension $n$, the underlying dimension of the cell. 
    \end{definition}

\begin{definition}
    A \emph{$\Rep(G)$-complex} is a $G$-space $X$ with a filtration 
    \[
    X_0 \subseteq X_1 \subseteq X_2 \subseteq \cdots \subseteq X
    \]
    such that $X_0$ is a disjoint union of trivial orbits $G/G$ and $X_n$ is obtained from $X_{n-1}$ by attaching representation cells of dimension $n$ along their boundaries.  Thus $X_n$ is formed via the pushout 
    \begin{center}
    \begin{tikzcd}
        \bigsqcup_\alpha S(V_\alpha) \ar[r] \ar[d] & X_{n-1} \ar[d]\\
        \bigsqcup_\alpha D(V_\alpha) \ar[r] & X_n.
    \end{tikzcd}
    \end{center}
\end{definition}
One advantage of working with $\Rep(G)$-complexes is that the filtration quotients are wedges of representation spheres $X_n/X_{n-1} \simeq \bigvee_\alpha S^{V_\alpha}$.

In the context of $G=C_2$, up to isomorphism, all (actual) representations are of the form $\R^{p,q}$ for $p \geq q \geq 0$, and so representation cells are of the form $D(\R^{p,q})$. We say that a $\Rep(C_2)$-complex has \emph{finite type} if for each fixed-set dimension $s$, it has finitely many representation cells $D(\R^{p,q})$ with $s = p-q$ (and hence also finitely many cells of each topological dimension). 

The cohomology of a finite type $\Rep(C_2)$-complex is free as an $\M_2$-module.

\begin{theorem}[Hogle--May \cite{HogleMay}, Kronholm \cite{Kronholm}]
    Let $X$ be a $\Rep(C_2)$-complex of finite type. Then $H^{*,*}(X,\mF)$ is free as a graded $\M_2$-module.
\end{theorem}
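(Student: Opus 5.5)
The plan is induction over the cellular filtration, using the long exact sequence of each cofiber sequence $X_{n-1}\to X_n\to X_n/X_{n-1}\simeq \bigvee_\alpha S^{V_\alpha}$. The reduced cohomology of each filtration quotient is a free $\M_2$-module, namely $\bigoplus_\alpha \Sigma^{V_\alpha}\M_2$. The difficulty is that a long exact sequence of free modules need not split. In particular, connecting homomorphisms can have images in the lower cone generated by $\theta$, and then the kernels and cokernels need not be free.

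The main step is a structure theorem for $H^{*,*}(X_n)$, to be proved by induction on $n$. I would first treat a single stage in which one attaches finitely many cells of one topological dimension $n$. In the long exact sequence
\[\cdots\to \widetilde H^{\star}(X_n/X_{n-1})\to H^{\star}(X_n)\to H^{\star}(X_{n-1})\xrightarrow{\delta}\widetilde H^{\star+1}(X_n/X_{n-1})\to\cdots\]
the source of $\delta$ is free by induction, so $\delta$ is determined by where it sends the free generators. Let $\omega_i$ be a generator of $H^{*,*}(X_{n-1})$ in degree $(p',q')$. Because $X_{n-1}$ has cells only of topological dimension less than $n$, we have $p'\le n-1$. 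The target of $\delta$ is $\bigoplus \Sigma^{n,q_\alpha}\M_2$. A nonzero value $\delta(\omega_i)$ in degree $(p'+1,q')$ must therefore be either $\Sigma^{n,q_\alpha}$ applied to an element of the upper cone of degree $(p'+1-n,\,q'-q_\alpha)$, which forces $p'=n-1$ since the upper cone has nonnegative topological degree, or a lower-cone element with $p'+1-n\le 0$.

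I would then apply Kronholm's ``shifting'' argument. Whenever $\delta$ hits a lower-cone class or a nontrivial multiple $\aa^k$, I would change bases to exhibit the extension as the effect of trading a pair of generators. A generator in degree $(p,q)$ paired with one in degree $(p+1,q')$ is replaced by free generators in shifted weights. The resulting module is still free, with the same total number of generators in each topological degree. The tool for this is the standard classification of ``$\M_2$-module extensions'' $0\to \Sigma^{a}\M_2\to E\to \Sigma^{b}\M_2\to 0$: either $E$ is free, or the extension is of Kronholm's shifting type, which is again free after regrading. To check freeness in practice, I would use the criterion that an $\M_2$-module $E$ which is finite in each topological degree is free if it becomes free after inverting $\aa$ (using the fixed points) and also after applying the forgetful map $i^*_e$, with matching ranks. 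Both pieces are computable from the cell structure, through the forgetful long exact sequence of \cref{lem:ParamForgetfulLES} with $B=C_2/C_2$ and the $\aa$-localization $\aa^{-1}H^{*,*}(X)\cong H^*(X^{C_2})\otimes\F_2[\aa^{\pm1}]$.

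The main obstacle is making the shifting argument rigorous when many cells are attached at once: one must check that the inductive freeness survives all possible $\delta$'s simultaneously. I would handle this by filtering further and attaching one cell at a time, ordered by weight, so that each step is a single cofiber sequence $X'\to X''\to S^{V}$. This reduces everything to the two-generator extension classification above. Finally, finite type lets me pass to the colimit $X=\bigcup X_n$. In each fixed bidegree, the inverse system $H^{p,q}(X_n)$ is eventually constant, by the finite-type hypothesis on fixed-set dimensions, so the $\lim^1$ terms vanish. The free generators are then compatible in the limit, and $H^{*,*}(X)$ is free.
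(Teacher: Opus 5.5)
\textbf{There is a genuine gap.} It sits at the step you identify as the heart of the matter: determining $H^{*,*}(X'')$ when the connecting map $\delta$ hits lower-cone classes.

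\emph{The extension classification you invoke is vacuous.} Any extension $0\to\Sigma^{a}\M_2\to E\to\Sigma^{b}\M_2\to 0$ splits, because $\Sigma^b\M_2$ is free. The sequence you actually face is $0\to\coker\delta\to H^{*,*}(X'')\to\ker\delta\to 0$. As soon as some $\delta(\omega_i)$ is a class $\Sigma^V\theta/(\uu^i\aa^j)$, both ends are non-free.

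\emph{One cell at a time does not reduce to two generators.} Even with a single cell, $\delta$ into the one summand $\Sigma^V\M_2$ can be nonzero on many generators at once.

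\emph{Your freeness criterion cannot settle the extension, and as a statement about modules it is false.} Take $X''=P(\R^{3,1})$ with the Schubert cells $D(\R^{0,0}),D(\R^{1,0}),D(\R^{2,2})$ coming from the ordering $+,+,-$. Then $\delta$ must send the generator $\omega$ of degree $(1,0)$ to $\Sigma^{2,2}\theta$. Otherwise the cohomology would be free on $(0,0),(1,0),(2,2)$, but the ordering $+,-,+$ shows it is $\M_2\oplus\Sigma^{1,1}\M_2\oplus\Sigma^{2,1}\M_2$. The split extension $\M_2\oplus\ker\delta\oplus\coker\delta$ is not free, since $\Sigma^{2,2}1$ is an indecomposable class killed by $\theta$. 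Yet compared with the free answer it has:
\begin{enumerate}
\item the same dimension in every bidegree;
\item an isomorphic $\aa$-localization;
\item the same dimension constraints from the forgetful long exact sequence for $i^*_eX''=\RP^2$, in every weight.
\end{enumerate}
So neither $\aa$-local ranks nor underlying ranks detect the extension.

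Two smaller slips: the upper-cone values of $\delta$ are $\Sigma^V\uu^k$, not multiples of $\aa^k$; and $\aa^{-1}H^{*,*}(X)\cong H^*(X^{C_2})\otimes\F_2[\aa^{\pm1},\uu]$, not $\otimes\F_2[\aa^{\pm1}]$. As far as I know, this extension step is exactly where Kronholm's original shifting argument was found to be incomplete.

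\textbf{How the cited literature gets around it.} The paper gives no proof of this theorem; it cites Kronholm and Hogle--May. Hogle--May do not resolve the extensions by hand. They use May's structure theorem: for a finite $C_2$-CW complex, $H^{*,*}(X,\mF)$ is a direct sum of shifted copies of $\M_2$ and of the cohomologies of antipodal spheres. This is what makes a rank criterion valid for spaces rather than modules. A free summand contributes $1$ to both $\dim H^*(X^{C_2};\F_2)$ and $\dim H^*(i^*_eX;\F_2)$, while an antipodal summand contributes $0$ and $2$. So freeness is equivalent to these two totals being equal.

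That equality then propagates through your one-cell induction. If the fixed rank drops, then $\aa^{-1}\delta\neq 0$, so some $\delta(\omega_i)$ is not $\aa$-torsion. In topological degree $n$ this forces $\delta(\omega_i)=\Sigma^V\uu^k$, which restricts nontrivially under $i^*_e$, so the underlying rank drops too. Conversely, if the underlying rank drops, Smith's inequality forces the fixed rank to drop.

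\textbf{The finite-type limit also needs more than $\lim^1=0$.} Later cells shift earlier generators, so you must show each generator moves only finitely often. One way: only a cell $D(\R^{m,k})$ with $m-k<p-q$ can move a generator in degree $(p,q)$, and the finite-type hypothesis on fixed-set dimensions controls how many such cells there are.
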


We may apply this to $P$.  Note that
$P=Gr_1(\R^{2\infty,\infty})$ and that $\R^{2\infty,\infty}$
is a complete $C_2$-universe. Viewing  $\R^{2\infty,\infty}$ as alternating direct sums of $\R^{1,0}$ and $\R^{1,1}$,
the Schubert cell decomposition from \cite{Dugger_grassmannian} describes $P$ as a $\Rep(C_2)$-complex with a basepoint and representation cells $D(\R^{2n-1,n})$ and $D(\R^{2n,n})$ for $n\geq 1$ as below, where $\R^{1,0}$ is denoted by $+$ and $\R^{1,1}$ by $-$.
\begin{align*}
&\!\! + - + - + - + - \cdots && \\ 
& [1, 0, 0, 0,0,0,\dots, 0,\dots] && D(\R^{0,0})\\
&[*,1,0,0,0,0,\dots,0,\dots] && D(\R^{1,1})\\
&[*,*,1,0,0,0,\dots,0,\dots] && D(\R^{2,1})\\
&[*,*,*,1,0,0,\dots,0,\dots] && D(\R^{3,2})\\
&[*,*,*,*,1,0,\dots,0,\dots] && D(\R^{4,2})\\
  &\hspace{2cm} \vdots  && \qquad  \vdots 
\end{align*}

This gives a $\Rep(C_2)$-complex structure for $P$ with one representation cell of each topological dimension and at most two cells of any fixed-set dimension.  Thus the $RO(C_2)$-graded cohomology $H^{*,*}(P,\mF)$ is free as an $\M_2$-module and we need only determine the degrees of the generators. Note that the freeness of the cohomology of $P$ also follows directly from the collapse of the associated cellular spectral sequence in this particular case.

A portion of the $RO(C_2)$-graded cellular spectral sequence associated to this $\Rep(C_2)$-complex filtration is depicted in \cref{fig:PcellularSS}. 
 Only the outline of each copy of $\M_2$ is shown and the stair-step pattern continues up and to the right.  The differential $d$ has degree $(1,0)$. Since each filtration is a free module and the differential in the spectral sequence is a module map, $d$ is determined by where it sends each generator. We may depict the cellular spectral sequence even more succinctly by only drawing the generators of each $\M_2$ and suppressing the other classes as in the right image in \cref{fig:PcellularSS}.

\begin{figure}
    \centering
\includegraphics[width=0.48\textwidth]{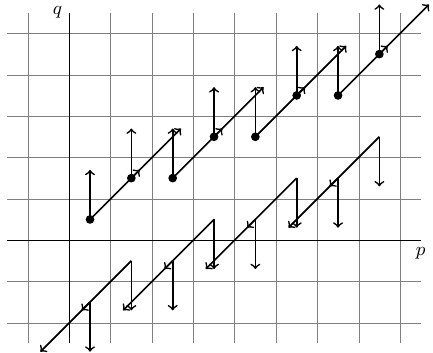}
\includegraphics[width=0.48\textwidth]{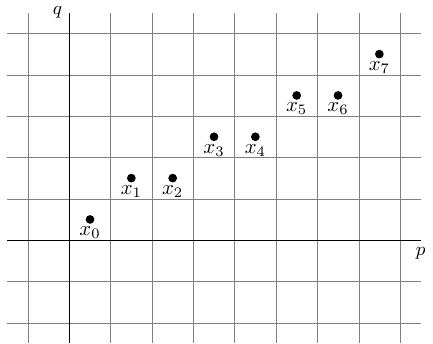}
    \caption{(Left) The $RO(C_2)$-graded cellular spectral sequence for $H^{*,*}(P,\mF)$. (Right) Generators in the $RO(C_2)$-graded cellular spectral sequence for $H^{*,*}(P,\mF)$}
    \label{fig:PcellularSS}
\end{figure}

For degree reasons, the differential on each generator is either $0$ or an isomorphism. Since $i^*_eP=\R P^\infty$,  the cohomology of the underlying infinite projective space $H^*(i^*_eP,\F_2) \cong \F_2[w_1]$, with degree $|w_1| = 1$.  Applying the forgetful long exact sequence of \cref{sec:forget} to $H^{*,*}(P,\mF)$, we inductively prove that none of the generators support a differential. In fact, this argument shows that the forgetful map 
\[i^*_e \colon H^{*,*}(P,\mF) \to H^*(i^*_eP,\F_2) \cong \F_2[w_1]\] 
sends each generator to the appropriate power of $w_1$.

We conclude the following:
\begin{prop}[{\emph{c.f.} \cite[Prop. 4.3]{Kronholm}}]\label{prop:gendegrees}
The $RO(C_2)$-graded cohomology $H^{*,*}(P,\mF)$ is free as an $\M_2$-module with one generator in each bidegree
$(0,0)$, $(2n-1,n)$, and $(2n,n)$ for $n \geq 1$. We denote by $x_p$ the generator in topological degree $p$ for each $p>0$ (and bidegree $(p,\lceil p/2 \rceil)$. Further,
$i^*_e(x_p)=w_1^p$.
\end{prop}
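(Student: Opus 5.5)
The proof goes through the cellular spectral sequence for the $\Rep(C_2)$-complex structure on $P$ coming from the Schubert cell decomposition of \cite{Dugger_grassmannian}. Filter $P$ by its skeleta $X_n = P^{n+1,\lceil n/2\rceil}$, the projective space on the first $n+1$ coordinates of the alternating sequence $+-+-\cdots$. Each filtration quotient $X_n/X_{n-1}$ is the single representation sphere $S^{\R^{n,\lceil n/2\rceil}}$, from the cells $D(\R^{2m-1,m})$ and $D(\R^{2m,m})$, together with the basepoint cell $D(\R^{0,0})$. So the $E_1$-page is a direct sum of free $\M_2$-modules of rank one, one generator in each bidegree $(0,0)$, $(2m-1,m)$ and $(2m,m)$. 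Freeness of $H^{*,*}(P,\mF)$ as an $\M_2$-module is already guaranteed by the Hogle--May/Kronholm theorem, since this complex has finite type (at most two cells in each fixed-set dimension). The remaining task is to show that all generators survive, which pins down the degrees.

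\textbf{Step 1: the $E_1$ differentials.} The differential $d$ has degree $(1,0)$ and is $\M_2$-linear, so it is determined by the image of each generator. A generator in degree $(n,\lceil n/2\rceil)$ maps to $H^{n+1,\lceil n/2\rceil}$ of the next rank-one summand, whose generator sits in degree $(n+1,\lceil (n+1)/2\rceil)$. The relevant group is therefore $\M_2$ in degree $(0,\lceil n/2\rceil-\lceil (n+1)/2\rceil)$, which is $(0,0)$ or $(0,-1)$. In $\M_2$ this is $\F_2\{1\}$ or $0$. So each differential is either zero or an isomorphism of summands, and the latter can only occur when $n$ is even, from $(2m,m)$ to $(2m+1,m+1)$. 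Longer differentials land in degrees $(0,-k)$ of $\M_2$ with $k\ge 1$, which vanish since the lower cone starts at $(0,-2)$. I would double-check this bookkeeping for general $k$: the sources have weight at most one below the target, while $\theta$ needs a drop of two.

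\textbf{Step 2: ruling out the isomorphisms.} This is the main point. Argue by induction on the skeleta using the forgetful long exact sequence of \cref{lem:ParamForgetfulLES} (with $B=C_2/C_2$). If some differential were an isomorphism, the corresponding two generators would cancel, and $H^{*,*}$ would be missing the generators in topological degrees $2m$ and $2m+1$. The underlying skeleton is $\RP^{n}$, and $i^*_e$ sends $\M_2\to\F_2$ via $\uu\mapsto 1$, $\aa\mapsto 0$. The forgetful sequence says $\coker(i^*_e)\cong\ker(\aa)$ in the appropriate degree. For a free module, $\ker\aa$ lies in the $\theta$-part, which sits in weight far below the region of the underlying classes in degree $2m$. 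So $i^*_e$ would have to be surjective onto $H^{2m}(\RP^\infty;\F_2)=\F_2\{w_1^{2m}\}$. But after the cancellation there would be no free generator of topological degree $\le 2m$ whose $\M_2$-multiples reach underlying degree $2m$ with nonzero image. More precisely, the multiples of lower generators by $\uu^k$ restrict to $w_1^{j}$ with $j<2m$, and multiples by $\aa$ restrict to zero. This is a contradiction.

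Inductively, assume that $i^*_e(x_j)=w_1^j$ for $j<p$ and that there are no differentials in lower degrees. Surjectivity of $i^*_e$ in degree $p$ then forces $x_p$ to survive with $i^*_e(x_p)=w_1^p$. Running the same argument degree by degree gives both the collapse and the formula $i^*_e(x_p)=w_1^p$. The delicate part is making the cokernel/$\ker\aa$ comparison precise in the exact bidegree $(p,\lceil p/2\rceil)$. I would check directly that $\ker(\aa)$ vanishes there on the surviving free part, because positive cone elements are $\aa$-torsion free and negative cone elements lie in lower weight.
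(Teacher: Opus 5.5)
Your proposal follows the paper's argument: the Schubert $\Rep(C_2)$-cell structure, the observation that for degree reasons each cellular differential is either zero or an isomorphism, and the forgetful long exact sequence applied inductively to rule out the isomorphisms and to obtain $i^*_e(x_p)=w_1^p$. One bookkeeping slip in Step 1 needs correcting:
\begin{itemize}
\item By your own formula, the only possibly nonzero differentials go from $(2m-1,m)$ to $(2m,m)$, where the target is $\M_2^{0,0}=\F_2$. They do not go from $(2m,m)$ to $(2m+1,m+1)$, where the target is $\M_2^{0,-1}=0$.
\item A $d_r$ with $r\ge 2$ does not land in $\M_2$ in degree $(0,-k)$. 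It lands in topological degree $1-r<0$, where $\M_2$ is nonzero only in weight at most $-(r+1)$. Its weight drop is at most $\lceil r/2\rceil<r+1$, so it vanishes.
\end{itemize}
Neither slip affects the conclusion, because the contradiction in your Step 2 uses degree $2m$, which also belongs to the correct pair.
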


It remains to discuss the ring structure of $H^{*,*}(P,\mF)$. It is generated as an $\M_2$-algebra by the classes $x_1$ in $H^{1,1}(P,\mF)$ and $x_2$ in $H^{2,1}(P,\mF)$.  To show this, we use the Bockstein $\Sq^{1,0}$, which appeared in \cref{sec:bockstein}.

For degree reasons $x_1^2$ is a linear combination of $\aa^2$, $\aa x_1$, and $\uu x_2$.  So we write
\[
x_1^2 = c_1 \aa^2 + c_2 \aa x_1 + c_3 \uu x_2,
\]
where each $c_i \in \{0,1\}$.  If $c_1 \neq 0$ then
\[
(x_1 + \aa)^2 = x_1^2 + \aa^2 = c_2 \aa x_1 + c_3 \uu x_2.
\]
So up to a change of basis, replacing $x_1$ with $x_1+\aa$, we can assume $c_1 = 0$.

We now show $c_2 = c_3 = 1$.  Since $i^*_e(x_1) = w_1$ and $i^*_e(x_2) = w_1^2$, and since $i^*_e$ is multiplicative,
\[
i^*_e(x_1^2) = i^*_e(x_1)^2 = w_1^2
\]
so $c_3 \neq 0$.
Finally, $\Sq^{1,0}(x_1^2) = 0$, and one can show using $\Sq^{1,0}(\uu) = \aa$, $\Sq^{1,0}(x_1) = x_2$, and the Leibniz rule that
\[
\Sq^{1,0}(c_2 \aa x_1 + \uu x_2) = c_2 \aa x_2 + \aa x_2,
\]
so $c_2$ must be $1$.

It is clear for degree reasons, making use of the forgetful map, that all even-dimensional generators are powers of $x_2$.  Similarly, all odd-dimensional generators must be of the form $x_1x_2^k$ for some $k$.  This completes the computation of $H^{*,*}(P,\mF)$ as a ring.

\begin{thm}[{\emph{c.f.} \cite[Thm. 4.2]{Kronholm}}]\label{thm:cohPROG}
    As an $\M_2$-algebra, 
    \[
    H^{*,*}(P,\mF) \cong \M_2[x_1,x_2]/(x_1^2 - (\aa x_1 + \uu x_2)),
    \]
    where $x_1$ has degree $(1,1)$ and $x_2$ has degree $(2,1)$ (see \cref{fig:ROGcohomP}).
\end{thm}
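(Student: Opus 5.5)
Since \cref{prop:gendegrees} already gives $H^{*,*}(P,\mF)$ as a free $\M_2$-module with basis $1, x_1, x_2, x_3, \dots$, the plan is to define an $\M_2$-algebra map
\[
\Phi\colon \M_2[X_1,X_2]/(X_1^2-(\aa X_1+\uu X_2)) \to H^{*,*}(P,\mF), \qquad X_1\mapsto x_1,\ X_2\mapsto x_2 ,
\]
check that it is well defined, and then show it carries an $\M_2$-basis to an $\M_2$-basis. The source is free over $\M_2$ on the monomials $X_2^k$ and $X_1X_2^k$ for $k\ge 0$, because the relation lets us rewrite every power of $X_1$ in lower $X_1$-degree. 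These monomials sit in bidegrees $(2k,k)$ and $(2k+1,k+1)$, which are exactly the generator degrees $(p,\lceil p/2\rceil)$.

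The first step is to show that $\Phi$ is well defined, i.e.\ that $x_1^2=\aa x_1+\uu x_2$ after a suitable choice of $x_1$. In bidegree $(2,2)$ the free module has basis $\aa^2\cdot 1$, $\aa x_1$, $\uu x_2$; there is no contribution from $\theta$-classes since those lie in negative weight. So $x_1^2=c_1\aa^2+c_2\aa x_1+c_3\uu x_2$. Replacing $x_1$ by $x_1+\aa$ kills $c_1$. Applying $i^*_e$, which sends $\aa\mapsto 0$, $\uu\mapsto 1$, and $x_p\mapsto w_1^p$, gives $c_3=1$. Finally apply $\bock$. On the left, $\bock(x_1^2)=2x_1\bock(x_1)=0$. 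On the right, we use $\bock(\uu)=\aa$, $\bock(\aa)=0$, the Leibniz rule, and $\bock(x_1)=x_2$, $\bock(x_2)=0$. These last two values of $\bock$ are the step I expect to need the most care. I would get them by noting that $\bock(x_1)$ lies in degree $(2,1)$, where the basis is $\aa\cdot 1$ and $x_2$, and that it restricts under $i^*_e$ to $\Sq^1 w_1=w_1^2$, so $\bock(x_1)=x_2+\epsilon\aa$. Then $\bock(x_2+\epsilon\aa)=\bock\bock(x_1)=0$, so $\bock(x_2)=\epsilon\bock(\aa)=0$. The same computation works if $x_2$ is replaced by $x_2+\epsilon\aa$, so we may assume $\bock(x_1)=x_2$. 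The resulting equation forces $c_2=1$.

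Next, bijectivity. We show by induction on $p$ that $\Phi(X_2^k)=x_{2k}+(\text{terms } m\,x_j \text{ with } j<2k,\ m\in\M_2)$, and similarly for $\Phi(X_1X_2^k)$ and $x_{2k+1}$. In degree $(2k,k)$ the free module decomposes as $\F_2\{x_{2k}\}\oplus\bigoplus_{j<2k}(\M_2)_{(2k-j,\,k-\lceil j/2\rceil)}\,x_j$. The coefficient of $x_{2k}$ is detected by $i^*_e$, because $i^*_e$ kills $\aa$-multiples and the $\theta$-part, and distinct $x_j$ land on distinct powers of $w_1$; this might need a short argument. Since $i^*_e\Phi(X_2^k)=w_1^{2k}$, the coefficient of $x_{2k}$ is $1$. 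So the matrix of $\Phi$ in these bases is unitriangular, with only finitely many terms in each degree because the module has finite type. Hence $\Phi$ is an isomorphism. Replacing the $x_p$ by the images $x_2^k$ and $x_1x_2^k$ then gives the stated presentation.

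The main obstacle is keeping the degree bookkeeping honest in the triangularity step, specifically ruling out that lower generators contribute an $x_{2k}$-type summand. The argument via $i^*_e$ and the grading handles this.
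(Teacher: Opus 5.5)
Your strategy is the paper's: determine $c_1,c_2,c_3$ in $x_1^2=c_1\aa^2+c_2\aa x_1+c_3\uu x_2$ using $i^*_e$ and $\bock$, then match $\M_2$-bases. Your unitriangularity argument is a more explicit version of the paper's ``for degree reasons'' step and is fine. However, the step ``replacing $x_1$ by $x_1+\aa$ kills $c_1$'' is false, and nothing else in your argument determines $c_1$.

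The substitution also changes the term $\aa x_1$. For $y=x_1+\aa$ one gets $y^2=(c_1+1+c_2)\aa^2+c_2\aa y+c_3\uu x_2$; equivalently, $X^2+\aa X$ is invariant under $X\mapsto X+\aa$. So once you prove $c_2=1$, the coefficient $c_1$ is the same for both possible degree-$(1,1)$ generators $x_1$ and $x_1+\aa$. The other generator is forced, since $H^{2,1}(P,\mF)=\F_2\{x_2\}$. Hence $c_1$ is an invariant of the algebra, and the theorem is exactly the assertion $c_1=0$. Your tools cannot detect it, since $i^*_e(\aa^2)=0$ and $\bock(\aa^2)=0$. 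The paper's written argument makes the same substitution, so following it does not close this gap.

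The missing input is restriction to the fixed points $b_0,b_1\colon C_2/C_2\to P$. The ring maps $b_i^*\colon H^{*,*}(P,\mF)\to\M_2$ satisfy $b_i^*(x_2)\in\M_2^{2,1}=0$ and $b_i^*(x_1)=\lambda_i\aa$ with $\lambda_i\in\F_2$. Applying $b_i^*$ to the relation and using $\aa^2\neq 0$ gives $\lambda_i=c_1+c_2\lambda_i$.

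Now restrict $x_1$ to $P^{2,1}\cong S^{\sigma}$, whose two fixed points are $b_0$ and $b_1$. You get $\bar x+\delta\aa$, where $\bar x$ is the suspension class and its coefficient is forced to be $1$ by $i^*_e$. The class $\bar x$ restricts to $0$ at one fixed point and to $\aa$ at the other, because the inclusion $S^0\to S^{\sigma}$ is $\aa$. Hence $\lambda_0\neq\lambda_1$. The fixed point with $\lambda_i=0$ forces $c_1=0$, and the other gives $c_2=1$ without needing $\bock$.

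A minor slip: $\aa$ lies in degree $(1,1)$, so $H^{2,1}(P,\mF)=\F_2\{x_2\}$ and $\bock(x_1)=x_2$ on the nose. Your $\epsilon\aa$ correction term does not exist.
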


\begin{figure}
    \centering
    \begin{tikzpicture}[scale=0.7]
\draw[step=1cm,gray,very thin] (-0.5,-0.5) grid (8.5,5.5);
\draw[] (-0.5,0) -- (8.5,0) node[below, black] {\small $p$};
\draw[] (0,-0.5) -- (0,5.5) node[left, black] {\small $q$};

\fill (0.5,0.5) circle(3pt) node[below] {$1$};
\fill (1.5,1.5) circle(3pt) node[below] {$x_1$};
\fill (2.5,1.5) circle(3pt) node[below] {$x_2$};
\fill (3.5,2.5) circle(3pt) node[below] {$x_1x_2$};
\fill (4.5,2.5) circle(3pt) node[below] {$x_2^2$};
\fill (5.5,3.5) circle(3pt) node[below] {$x_1x_2^2$};
\fill (6.5,3.5) circle(3pt) node[below] {$x_2^3$};
\fill (7.5,4.5) circle(3pt) node[below] {$x_1x_2^3$};

\end{tikzpicture}
    \caption{Generators in $RO(C_2)$-graded cohomology $H^{*,*}(P,\mF)$}
    \label{fig:ROGcohomP}
\end{figure}

\subsection{$RO(C_2)$-graded cohomology of Thom spaces}\label{sec: stunted proj}

In this section, we compute the $RO(C_2)$-graded cohomology of the Thom spaces
\[H^{*,*}(\Th(\bun_{p,q}), \uF_2)  \]
for $p \geq q \geq 0$.
We show the $RO(C_2)$-graded cohomology of each Thom space $\Th(\gamma_{p,q})$ is free as an $\M_2$-module and identify the degrees of the generators. 
We focus on the case where $p-q\geq q$ and compute the cohomology of the remaining Thom spaces via the isomorphism
\[H^{*,*}(\Th(\bun_{p,q}), \uF_2) \cong H^{*,*}(\Th(\bun_{p,p-q}), \uF_2) \]
from \cref{cor:pqduality}.

\begin{rem}\label{rem:evenunique}
As noted in \cref{ex:equiv}, $P \simeq \Th(\bun_{1,0})$, so the case $p=1$ is \cref{thm:cohPROG}, 
\[H^{*,*}(\Th(\bun_{1,0}),\uF_2) \cong \M_2[x_{1},x_{2}]/(x_{1}^2-(\aa x_{1}+\uu x_{2})),\]
where $x_{1}\in H^{1,1}$ and $x_{2} \in H^{2,1}$. 
We let
\begin{align*}
x_{2n+1} &= x_{1} x_{2}^n\\
x_{2n+2} &= x_{2}^{n+1}
\end{align*}
with $x_p \in H^{*,*}(\Th(\bun_{1,0}))$ in topological degree $p$.  
While the generators in even degrees are uniquely determined by their topological degree, in odd degrees, both $x_{2n+1}$ and  $x_{2n+1}+\aa x_{2n}$ are generators.
\end{rem}

Let $\bun_p$ denote the direct sum of $p$-copies of the tautological line bundle over $i^*_eP =\R P^\infty$. We have a Thom isomorphism
\[H^*(\R P^\infty,\F_2) \xrightarrow[\cong]{t_p} \widetilde{H}^{*+p}(\Th(\bun_p),\F_2)\]
where $t_p$ is the Thom class. So, $H^*(\Th(\bun_p),\F_2)$ is generated by the unit and classes $w_1^nt_p$ in degrees $n+p$ for all $n\geq 0$.

\begin{figure}
\includegraphics[width=0.48\textwidth]{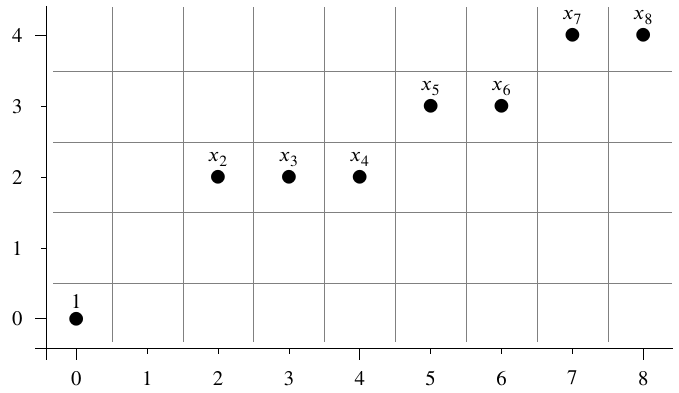}
\includegraphics[width=0.48\textwidth]{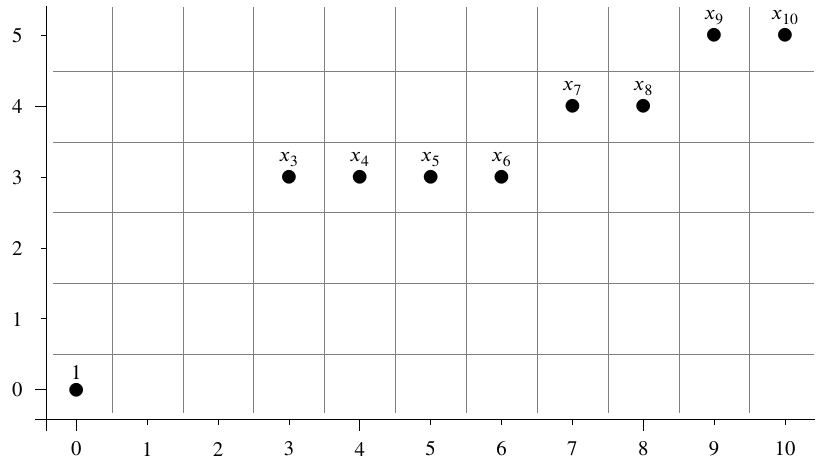}
\caption{$\M_2$-module generators for  $H^{*,*}(\Th(\bun_{p,q}),\uF_2)$ for $(2,0)$ (left)  and  $(3,0)$   (right) }
\label{fig:coh_20}
\end{figure}

\begin{figure}
\includegraphics[width=0.48\textwidth]{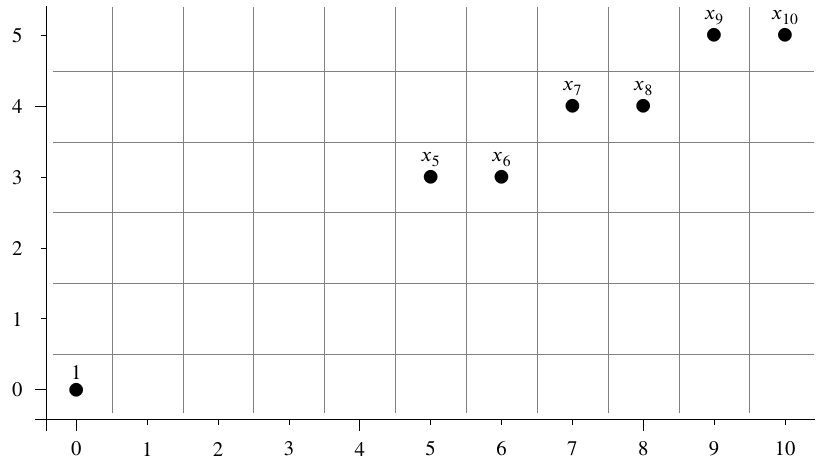}
\includegraphics[width=0.48\textwidth]{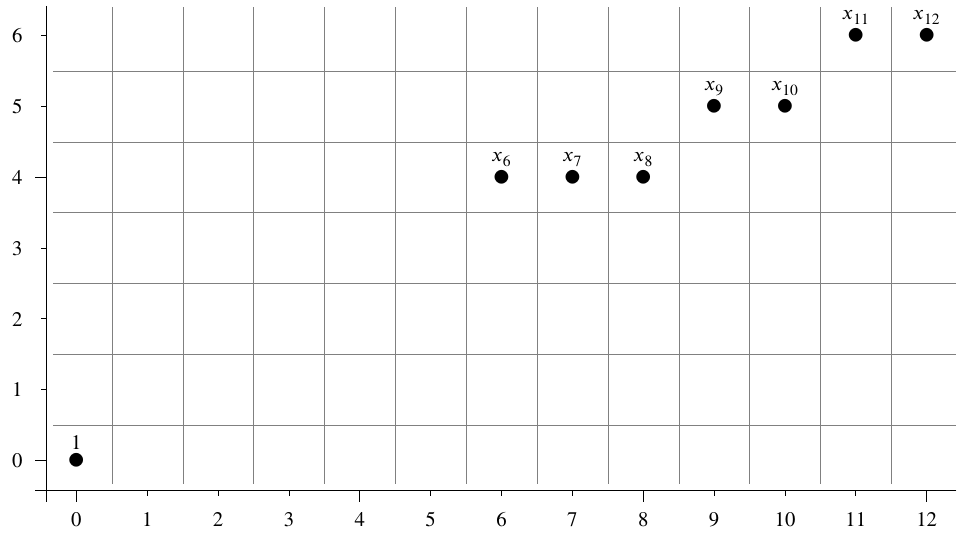}
\caption{$\M_2$-module generators for  $H^{*,*}(\Th(\bun_{p,q}),\uF_2)$ for $(p,q)=(5,2)$ (left)  and  $(p,q)=(6,2)$   (right) }
\label{fig:coh_52}
\end{figure}

\begin{theorem}
\label{thm:coh_pq}
For $p\geq q\geq 0$, we have:
\begin{enumerate}[(1)]
\item Assume $p-q\geq q$.
    The $RO(C_2)$-graded cohomology of $\Th(\bun_{p,q})$ is free as an $\M_2$-module generated by the unit and classes $x_r$ for $r\geq p$ of degree
\begin{align*}
|x_{r}| =\begin{cases} (r, p-q) & p\leq r \leq 2(p-q)\\
(r, \lceil r/2 \rceil) & r>2(p-q).
\end{cases}
\end{align*}

    \item  Assume $p-q \leq q$.
    The $RO(C_2)$-graded cohomology of $\Th(\bun_{p,q})$ is free as an $\M_2$-module generated by the unit and classes $x_r$ for $r\geq p$ of degree
\begin{align*}
|x_{r}| =\begin{cases} (r, q) & p\leq r \leq 2q\\
(r, \lceil r/2 \rceil) & r>2q.
\end{cases}
\end{align*}
    \end{enumerate}
        In both cases, the restriction 
    \[i^*_e \colon H^{*,*}(\Th(\gamma_{p,q}) ,\uF_2) \to H^{*}(\Th(\gamma_{p}) ,\F_2) \]
    sends $x_{r}$ to $w_1^{r-p}t_p$ for all $r\geq p$.
    
    See \cref{fig:coh_20} and \cref{fig:coh_52} for examples. 
\end{theorem}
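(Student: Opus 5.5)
We reduce to part (1), since part (2) follows from part (1) via the homeomorphism $\Th(\bun_{p,q})\cong\Th(\bun_{p,p-q})$ of \cref{cor:pqduality}. For part (1), with $p-q\geq q$, we identify $\Th(\bun_{p,q})\cong P_{p,q}=P/P^{p,q}$ using \cref{cor:pqduality}. We then choose a $\Rep(C_2)$-cell structure on $P$ in which $P^{p,q}$ is a subcomplex.

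To build this structure, order the coordinates of $\R^{2\infty,\infty}$ as follows:
\begin{enumerate}[(i)]
\item first the $q$ copies of $\R^{1,1}$ and $p-q$ copies of $\R^{1,0}$ spanning $\R^{p,q}$, arranged as $-,+,-,+,\dots$ alternating until the sign copies run out, and then the remaining $+$'s;
\item after that, continue alternating $+,-,+,-,\dots$ (or $-,+,\dots$, whichever makes the final degrees come out right).
\end{enumerate}
The Schubert cell decomposition of \cite{Dugger_grassmannian} then gives a $\Rep(C_2)$-complex structure on $P$ with $P^{p,q}$ as the union of its first $p$ cells. The quotient $P_{p,q}$ inherits a $\Rep(C_2)$-complex structure of finite type with one cell in each topological dimension $r\geq p$.

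The $r$-th Schubert cell is $D(W_r)$, where $W_r$ is the span of the coordinate directions before the leading $1$, tensored with the sign of the leading coordinate. Computing the weight of $W_r$:
\begin{itemize}
\item for $p\leq r\leq 2(p-q)$, the leading coordinates are trivial ($+$) copies lying past the sign copies, and the weight counts the sign coordinates preceding them, which is $p-q$ (after applying the $\autP$-twist if needed);
\item once the alternation takes over, the weight becomes $\lceil r/2\rceil$.
\end{itemize}
I expect some bookkeeping to decide which ordering produces exactly the stated degrees, and I would check it against the small cases in \cref{fig:coh_20}. By the theorem of Hogle--May and Kronholm, $\widetilde{H}^{*,*}(P_{p,q},\mF)$ is free over $\M_2$. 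The cellular spectral sequence has one $\M_2$ generator per cell, in degree $|W_r|$, and the unit comes from the basepoint.

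The real content is showing that no differentials occur, so that the generator degrees equal the cell degrees. A differential has degree $(1,0)$ and sends the generator of cell $r$ into the $\M_2$ summand of cell $r+1$. Depending on the weight jump between consecutive cells, such a differential is either forced to vanish for degree reasons or could a priori hit the generator $1$ or a class like $\uu^{\pm}$ or $\aa$ times the generator. To rule these out, I would argue inductively using the forgetful long exact sequence (\cref{lem:ParamForgetfulLES} with $B$ a point) together with the underlying Thom isomorphism: $H^*(\Th(\bun_p),\F_2)$ has exactly one $\F_2$ in each degree $r\geq p$, namely $w_1^{r-p}t_p$. The count of free $\M_2$ summands is then pinned down by ranks after restriction. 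Since $i^*_e$ of a free $\M_2$-module on generators of topological degrees $r$ recovers exactly one class per generator, a nonzero differential would destroy an underlying class in degree $r$ or $r+1$. This shows that each generator $x_r$ survives and that $i^*_e(x_r)$ is nonzero. Because the underlying group is $\F_2$ in that degree, it follows that $i^*_e(x_r)=w_1^{r-p}t_p$.

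The main obstacle is the cell-ordering and weight bookkeeping in the range $p\leq r\leq 2(p-q)$, where the weights stay constant at $p-q$ rather than growing. Here consecutive generators differ by degree $(1,0)$, so differentials are not excluded by degree alone, and the forgetful argument has to be run carefully. If a clean Schubert ordering does not produce exactly these weights, I would fall back on the cofiber sequences $P_{p+1,q'}\to P_{p,q}\to S^{\R^{p,q'}}$ and induct on $p$, using the same forgetful-map argument to split each long exact sequence.
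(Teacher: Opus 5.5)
Your overall plan matches the paper's: reduce (2) to (1) by \cref{cor:pqduality}, give $P$ a Schubert $\Rep(C_2)$-structure with $P^{p,q}$ as the $(p-1)$-skeleton, and kill differentials with the forgetful map. But the step that carries the content fails as written.

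First, your ordering ($\R^{p,q}$ first, alternation afterwards) does not produce the stated cells. The ``$+$ copies lying past the sign copies'' in your weight count sit inside $\R^{p,q}$ and are collapsed. Every cell of $P_{p,q}$ has its leading coordinate in the alternating tail. If $p-q\geq q+2$, neither choice of alternation gives weight $p-q$ in both dimensions $p$ and $p+1$. For $(p,q)=(2,0)$, the order $+,+,-,+,-,\dots$ gives cells $D(\R^{2,2}),D(\R^{3,1}),D(\R^{4,3}),D(\R^{5,2}),\dots$. The answer instead has generators in degrees $(2,2),(3,2),(4,2),(5,3),\dots$. With your structure the answer is zero in degree $(3,1)$, and nothing can hit the generator $g_{3,1}$ of the $(3,1)$-cell, because the $(2,2)$-summand is zero in the relevant source degree. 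So the differential $d(g_{3,1})=\theta\, g_{4,3}$ is forced, which is a Kronholm shift. Hence ``no differentials occur'' is false for your cell structure.

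The underlying error is your claim that a nonzero differential would destroy an underlying class. A differential can land on $\frac{\theta}{\uu^i\aa^j}$-multiples of a generator, and these restrict to zero under $i^*_e$. You must also allow differentials between non-adjacent cells. The missing idea is an ordering in which such differentials are excluded by degree.

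The paper uses $m=p-q$ copies of $\R^{1,0}$, then $m$ copies of $\R^{1,1}$, then alternating $+,-,\dots$. Since $q\leq m$, the first $p$ coordinates span a copy of $\R^{p,q}$. The remaining cells are $D(\R^{r,m})$ for $p\leq r\leq 2m$ and $D(\R^{r,\lceil r/2\rceil})$ for $r>2m$. So the weights $w_r$ are nondecreasing and jump by at most one.

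A differential from cell $r$ to cell $r+k$ lands in $\M_2$-degree $(1-k,w_r-w_{r+k})$. Landing in the negative cone would require $w_{r+k}-w_r\geq k+1$, which never happens. Landing in the positive cone forces $k=1$ and $w_{r+1}=w_r$, with target the next generator itself, which restricts nontrivially. Only then does your forgetful argument finish the proof, using that $H^*(\Th(\bun_p),\F_2)$ has one $\F_2$ in each degree $r\geq p$. The same argument gives $i^*_e(x_r)=w_1^{r-p}t_p$.

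Your cofiber-sequence fallback has the same problem. The sequence should also read $S^V\to P_{p,q}\to P_{p+1,q'}$, since the new cell is the subcomplex. Avoiding Kronholm shifts there amounts to choosing $q'$ at each stage exactly as in this ordering.
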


\begin{proof}
We first consider the case $p-q\geq q$.
The strategy is to give a $\Rep(C_2)$-cell structure on $P_{p,q}$ that has the property that the successive boundary homomorphisms in the inductive long exact sequences building $P_{p,q}$ out of its cells are all zero, which makes the computation fairly simple. We then get a bijection between the cells and the $\M_2$-generators. 

To give $P_{p,q} = P/P^{p,q}$ the desired $\Rep(C_2)$-cell structure, we give $P$ a Schubert cell structure such that $P^{p,q}$ forms the $(p-1)$-skeleton as follows. Let $m=p-q \geq q$. 
Consider $P$ as the one-dimensional subspaces in the representation
\[
\underbrace{\R^{1,0} \oplus \cdots \oplus \R^{1,0} }_{m} \oplus \underbrace{\R^{1,1}  \oplus \cdots \oplus \R^{1,1}}_{m} \oplus \R^{1,0} \oplus \R^{1,1} \oplus \R^{1,0} \oplus\R^{1,1}\oplus \cdots
\]
with $m$ copies of the trivial representation $\R^{1,0}$ followed by $m$ copies of the sign representation $\R^{1,1}$, and then alternating trivial and sign.  The Schubert cells as described in \cite{Dugger_grassmannian} will be of the form depicted in \cref{fig:cells}.
Quotienting the $(p-1)$-skeleton gives a $\Rep(C_2)$-complex structure for $P_{p,q}$.  Using the forgetful map of \cref{sec:forget}, it is straightforward to see the attaching map for each representation cell must be trivial on $RO(C_2)$-graded cohomology with $\mF$-coefficients. Any nontrivial differential would kill off two copies of $\M_2$, giving the wrong underlying cohomology. 

For the case when $p-q\leq q$, recall the isomorphism $\Th(\bun_{p,q}) \cong \Th(\bun_{p, p-q})$ and apply the substitution $q\mapsto p-q$.
\end{proof}

\begin{rem}
As in \cref{rem:evenunique}, some of the $\M_2$-generators in the cohomology of $\Th(\gamma_{p,q})$ are uniquely determined by their topological degrees, and others are not, as one can modify them by elements in the image of multiplication with $\aa$.
\end{rem}

\begin{figure}
{\tiny
\begin{align*}
& + + + + \cdots + - - - - \cdots - + - + - \cdots\\ 
& [1,0,0,0,\dots,0,\dots] && D(\R^{0,0})\\
&[*,1,0,0,\dots,0,\dots] && D(\R^{1,0})\\
&[*,*,1,0,\dots,0,\dots] && D(\R^{2,0})\\
  & \vdots  & \vdots &  \\
 &[*,*,*,\dots, *, 1,0,\dots] && D(\R^{m-1,0}) \\
  &       [*,*,*,\dots, *, *,1,0,\dots] && D(\R^{m,m})\\
   &      [*,*,*,\dots, *, *,*,1,0,\dots] && D(\R^{m+1,m})\\
    &     [*,*,*,\dots, *, *,*,*,1,0,\dots] && D(\R^{m+2,m})\\
 & \vdots & \vdots & \\
&         [*,*,*,\dots, *, *,*,*,*,\dots,*,1,0,\dots] && D(\R^{2m,m})\\
&  [*,*,*,\dots, *, *,*,*,*,\dots,*,*,1,0,\dots] && D(\R^{2m+1,m+1})\\
&  [*,*,*,\dots, *, *,*,*,*,\dots,*,*,*,1,0,\dots] && D(\R^{2m+2,m+1})\\
& [*,*,*,\dots, *, *,*,*,*,\dots,*,*,*,*,1,0,\dots] && D(\R^{2m+3,m+2})\\
& [*,*,*,\dots, *, *,*,*,*,\dots,*,*,*,*,*,1,0,\dots] && D(\R^{2m+4,m+2})\\
& \vdots & \vdots
\end{align*}}
\caption{Schubert cell structure on $P$}
\label{fig:cells}
\end{figure}

%% file: ppaper-pcomputation.tex

\section{Parametrized cohomology of $P$}\label{sec:cohPall}
The goal of this section is to compute the $\RO(\Pi P)$-graded cohomology of $P$. Recall 
 from \cref{thm:ROPiP} that 
\[\RO(\Pi P) \cong \Z^3  \times (\Z/2)^2\]
and that we denote an element of this group by tuples $(p,q,n,\epsilon, \mu)$.
Recall that the subgroup coming from $\dim$ of virtual bundles, $KO(\Pi P)$, corresponds to those degrees with $\mu=0$.
We first compute the $KO(\Pi P)$-graded cohomology, so those gradings with $\mu=0$, using the Thom isomorphism from  \cref{thm:KOPIB}.  Then in \cref{sec:ROPIP}, we deal with degrees that are not in the image of $\dim$, those with $\mu\neq 0$.

\subsection{The $RO(\Pi i^*_e P)$-graded cohomology}
Before diving into the equivariant computation, we compute the underlying parametrized cohomology. This can also be found in \cite{Cadek}.
We recall from \cref{rem:whythegrading}
\[\RO(\Pi i^*_eP)  = \Z \times \Z/2\]
where the tautological line bundle $\bun_1$ has degree $(1,1)$. We denote
\[H^{*,*}_{i^*_e P}(i^*_e P, R) = H^{\RO(\Pi i^*_eP)}_{i^*_eP}(i^*_eP,R). \]
\begin{lem}\label{lem:underlyingPcoh}
There are isomorphisms
\[H^{*,*}_{i^*_e P}(i^*_e P, \Z) \cong \Z[a_{1}]/(2a_1).\]
where $a_1 \in H^{1,1}_{i^*_e P}(i^*_eP,\Z)$ is the Euler class of the tautological line bundle $\bun_{1}$, and
\[H^{*,*}_{i^*_e P}(i^*_e P, \F_2) \cong \F_2[a_1,u_1]/(u_1^2-1)\]
where $u_1 \in H^{0,1}_{i^*_e P}(i^*_eP,\F_2)$ is the orientation classes of $\bun_1$.
The class 
\[w_1=a_1u_1 \in H^{1,0}_{i^*_e P}(i^*_eP,\F_2) \cong H^{1}(i^*_eP,\F_2) \] 
is the classical first Stiefel--Whitney class of the tautological line bundle $\bun_1$. Furthermore, 
\[\Sq^{1}(u_1) = a_1.\]
\end{lem}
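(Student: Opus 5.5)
The plan is to reduce everything, degree by degree, to the nonequivariant Thom isomorphism over $\RP^\infty$, using the identification of $KO(\Pi i^*_eP)$ with $RO(\Pi i^*_eP)$. By \cref{prop:trivKOROequal} (with $G=e$) every degree $(m,\delta)\in \Z\times\Z/2$ is of the form $\dim$ of a virtual bundle: $(m,0)=m\varepsilon_1$ and $(m,1)=\bun_1+(m-1)\varepsilon_1$. So \cref{thm:KOPIB} (with $G=e$, $\star\in\Z$) gives
\[H^{m,\delta}_{i^*_eP}(i^*_eP,R)\cong \widetilde{H}^{m}(\Th(-\delta\bun_1),R),\]
and these isomorphisms are multiplicative. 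For $\delta=0$ this is ordinary $H^m(\RP^\infty,R)$. For $\delta=1$ it is the cohomology of $\RP^\infty$ twisted by the orientation local system of $\bun_1$, i.e.\ the sign local system $\Z^-$, since $\Th(-\bun_1)$ is the Thom spectrum of a nonorientable virtual line bundle.

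Additively, I then use the classical groups. First, $H^m(\RP^\infty;\Z)$ is $\Z$ for $m=0$, $\Z/2$ for $m>0$ even, and $0$ otherwise. Second, $H^m(\RP^\infty;\Z^-)$ is $\Z/2$ for $m$ odd and $0$ for $m$ even; this follows from the cellular cochain complex with differentials alternately $0$ and $2$, shifted for the twisted case. Hence the total group is $\Z$ in degree $(0,0)$ and $\Z/2$ in each degree $(m, m \bmod 2)$ with $m\ge1$. With $\F_2$ coefficients the local system is trivial, so each $(m,\delta)$ with $m\ge 0$ contributes one copy of $\F_2$.

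For the ring structure, the Euler class $a_1\in H^{1,1}$ lies in the $\Z/2$ of degree $(1,1)$. I would check that it is nonzero by reducing mod $2$: under $\phi$ and the Thom isomorphism it becomes the zero-section pullback of the Thom class, which is $w_1\neq0$ after multiplying by the unit $u_1$. So $2a_1=0$, and it remains to show that $a_1^k$ generates degree $(k, k\bmod 2)$. Again I reduce mod $2$, where the reduction map $\Z/2\to\F_2$ is injective in these degrees. The reason is the Bockstein sequence: the relevant integral (twisted) groups in the next degree are $\Z/2$ with multiplication by $2$ equal to zero, so no class is divisible.

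With $\F_2$ coefficients, $u_1\in H^{0,1}\cong\F_2$ is the orientation class, i.e.\ the nonzero element. Since $\dim(2\bun_1)=2$, \cref{lem:uclassesforGtrivial} gives $u_1^2=1$, so $u_1$ is a unit. Then $w_1:=a_1u_1$ lies in $H^{1,0}=H^1(\RP^\infty)$ and is the mod $2$ Euler class $w_1$, as noted above. Since $H^*(\RP^\infty;\F_2)=\F_2[w_1]$, every degree $(m,\delta)$ is spanned by $w_1^m u_1^{\delta}$, equivalently by $a_1^m u_1^{m+\delta}$. This gives the presentation $\F_2[a_1,u_1]/(u_1^2-1)$. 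Alternatively, one can invoke \cref{cor:cohBtrivunits} directly with $\kappa=\Z/2$ generated by the degree of $u_1$. Multiplicativity of the integral classes $a_1^k$ then follows because their reductions are $w_1^ku_1^k\ne0$.

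Finally, $\Sq^1(u_1)=a_1$. Writing $\Sq^1(u_1)=\lambda a_1$, I must show $\lambda=1$. Under $\phi$, $u_1$ corresponds to the generator of $\widetilde{H}^{-1}(\Th(-\bun_1),\F_2)$, i.e.\ the Thom class $t$ of $-\bun_1$, and $\Sq^1 t=w_1(-\bun_1)\,t=w_1t$. This gives $\Sq^1(u_1)=w_1u_1=a_1u_1^2=a_1$. The main obstacle is bookkeeping: making sure the identification $\phi$ carries $a_1u_1$ to $w_1\cdot t$ rather than some other class, and checking the integral non-divisibility claim, both of which require care with the twisted coefficients.
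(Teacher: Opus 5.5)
Your proof is correct, and for most of the statement it follows the paper's route. The shared steps are: the mod $2$ ring from \cref{cor:cohBtrivunits} (or \cref{lem:uclassesforGtrivial}); the identification $a_1=u_1w_1$ from the classical fact that the Euler class reduces to $w_1$ (the paper packages this through \cref{lem:eulerprop}(d)); the additive integral groups via \cref{thm:KOPIB}; and the integral ring structure from injectivity of reduction mod $2$ on the groups $\Z/2$. For the weight-one integral groups, the paper rewrites $H^{*,1}=H^{*,-1}$ and uses $\Th(\bun_1)$, landing in the untwisted group $\widetilde{H}^{*+1}(\Th(\bun_1),\Z)$. You use $\Th(-\bun_1)$ and twisted coefficients instead, which is only a bookkeeping difference.

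The genuinely different step is $\Sq^1(u_1)=a_1$. Write $H^{m,\delta}(R)$ for $H^{m,\delta}_{i^*_eP}(i^*_eP,R)$. The paper reads this off the coefficient long exact sequence. Since $H^{0,1}(\Z)=0$, the Bockstein $H^{0,1}(\F_2)\to H^{1,1}(\Z)\cong\Z/2\{a_1\}$ is injective, and $\Sq^1$ is the mod $2$ reduction of the Bockstein. That argument reuses the integral computation and needs no compatibility of $\phi$ with Steenrod operations. Your argument via $\Sq^1 t_{-\bun_1}=w_1t_{-\bun_1}$ does not need the integral groups. It does use that $\phi$ commutes with $\Sq^1$, and that $\phi$ carries multiplication by $H^{*,0}(\F_2)$ to the $H^*(\RP^\infty,\F_2)$-module structure on $\widetilde{H}^*(\Th(-\bun_1),\F_2)$. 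Both facts are available from \cref{thm:KOPIB} and the proof of \cref{lem:cartan}.

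There are two small slips to fix. First, since $\dim(\delta\bun_1)=(\delta,\delta)$, the displayed isomorphism should read $H^{m,\delta}(R)\cong\widetilde{H}^{m-\delta}(\Th(-\delta\bun_1),R)$. Your later identification of $u_1$ with a class in $\widetilde{H}^{-1}(\Th(-\bun_1),\F_2)$ already uses this correct version. Second, the kernel of reduction mod $2$ on $H^{k,\delta}(\Z)$ is $2H^{k,\delta}(\Z)$, the image of multiplication by $2$ in the same degree, not the next one. Your conclusion that reduction is injective on these $\Z/2$'s is unaffected.
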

\begin{proof}
We know from \cref{cor:cohBtrivunits} that
\[H^{*,*}_{i^*_e P}(i^*_e P, \F_2) \cong \F_2[w_1] \otimes \F_2[u_1]/(u_1^2-1).\]
In the non-equivariant, non-parametrized context,  the Euler class is congruent to the top Stiefel--Whitney class modulo $2$.
This is $w_1$ for $\bun_1$. It follows from \cref{lem:eulerprop}(d)  that
\[ \crush_!(a_{\bun_1}) \equiv w_1 \mod 2\]
 is non-zero. We  must have $a_1 = u_1w_1$.  

We then use the isomorphism of \cref{thm:KOPIB}
\[ H^{*,1}_{i^*_e P}(i^*_eP,\Z) = H^{*,-1}_{i^*_e P}(i^*_eP,\Z) \xrightarrow{\cong} H^{*+1,0}_{i^*_e P}(\Th_{i^*_eP}(\bun_1),\Z) \cong H^{*+1}(\Th(\bun_1),\Z) \]
 to obtain the integral answer additively. The fact that $a_1\equiv u_1w_1$ modulo $2$ together with the ring structure of the mod 2 cohomology ring gives the multiplicative structure in the integral cohomology ring.
It then follows from the long exact sequence that
\[
\Sq^{1}(u_1) = a_1. \qedhere
\]
\end{proof}

\subsection{The $KO(\Pi P)$-graded cohomology}\label{sec:KOPII}
In this section, we will compute the $KO(\Pi P)$-graded cohomology of $P$, i.e., those degrees with $\mu=0$. 
We will use a four-tuple to denote the grading, dropping the last zero in our coordinates,
\[(p,q,n,\epsilon)= (p,q,n,\epsilon, 0).\]
Recall from \cref{rem:linebundles} that $KO(\Pi P)$ is generated by the dimensions of the four line bundles $\epsilon_{1,0}$, $\epsilon_{1,1}$, $\gamma_{1,0}$ and $\gamma_{1,1}$ and
\begin{align*}
\dim(\triv) &= (1,0,0,0) &\dim(\taut) &= (1,0,1,0) \\
\dim(\sign) &= (1,1,0,0) &\dim(\staut) &= (1,1,-1,1)  .
\end{align*}
In particular,
\[\dim(\bun_{p,q})= (p-q,0,p-q,0)+(q,q,-q,q) = (p, q, p-2q,q). \]
We denote the $KO(\Pi P)$-graded cohomology by
\[H^{*,*,*,*}_P(P, \mF  ) = H^{KO(\Pi P)}_P(P, \mF  ). \]

We start by identifying a key class in this algebra.
\begin{lem}
\label{lem:ecopy}
There is a unit $\eee =\ee_{\gamma_{2,1}} \in H^{0,0,0,1}_{P}(P, \mF  )$, such that multiplication by $\eee$ is an isomorphism
\[H^{p,q,n,1}_{P}(P, \mF  ) \xrightarrow[\cong]{e}H^{p,q,n,0}_{P}(P, \mF  ) \]
and $\eee^2=1$.
\end{lem}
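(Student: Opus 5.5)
We identify $\gamma_{2,1} = \taut + \staut$ as a homogeneous bundle and take $\eee$ to be its homogeneity unit from \cref{defn:homounit}. First, homogeneity: over $P_0$ the fiber of $\taut$ is $\R^{1,0}$ and that of $\staut = \sign\otimes\taut$ is $\R^{1,1}$. Over $P_1$ the roles are swapped, since $\chi$ exchanges $P_0$ and $P_1$ and interchanges $\taut$ and $\staut$. In both cases the fiber of $\gamma_{2,1}$ over a fixed point is $\R^{2,1}$, and over free points it is the restriction of $\R^{2,1}$. So $\gamma_{2,1}$ is homogeneous with generic fiber $\xi_0=\R^{2,1}$. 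The space $P$ has a fixed point and path-connected underlying space $\RP^\infty$, so \cref{defn:homounit} applies.

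Next we compute the degree of $\eee$. From \cref{rem:linebundles},
\[ \dim\gamma_{2,1} = (1,0,1,0,0)+(1,1,-1,1,0) = (2,1,0,1,0), \]
and subtracting $\crush^*\R^{2,1}=(2,1,0,0,0)$ gives $|\eee| = \gamma_{2,1}-\R^{2,1} = (0,0,0,1,0)$, as claimed.

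For the unit property, we use \cref{lem:eclassprop}(c), which gives $\ee_{\xi}\ee_{-\xi}=\ee_0=1$, so $\eee$ is a unit. Multiplication by $\eee$ is then an isomorphism $H^{\alpha}_P(P,\mF)\to H^{\alpha+(0,0,0,1)}_P(P,\mF)$ for every $\alpha\in KO(\Pi P)$. Because $\epsilon\in\Z/2$, the degree $(p,q,n,1)+(0,0,0,1)$ equals $(p,q,n,0)$, which is the stated map.

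The remaining point, and the only one needing care, is $\eee^2=1$. By \cref{lem:eclassprop}(c), $\eee^2=\ee_{2\gamma_{2,1}}$, and $\ee_{2\gamma_{2,1}}$ lies in degree $2\gamma_{2,1}-\R^{4,2}=0$. The group $H^{0}_P(P,\mF)\cong H^{0,0}(P,\mF)\cong\F_2$ by \cref{cor:changeofbases} and \cref{prop:gendegrees}. A unit there must be $1$, and likewise $\ee_{2\gamma_{2,1}}$, being the unique class mapping to $1$ under the isomorphism of \cref{prop:homog} in degree $0$, is $1$. For this to be rigorous we should check that the homogeneity unit of a bundle whose dimension already lies in $RO(C_2)$ is $1$ in the $RO(C_2)$-graded identification. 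The cleanest argument avoids that check: $\eee^2$ is a unit in the one-dimensional $\F_2$-vector space $H^{0}_P(P,\mF)$, hence nonzero, hence equal to $1$.
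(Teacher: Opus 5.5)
Your proposal is correct and follows the paper's route: you take $\eee$ to be the homogeneity unit of the homogeneous bundle $\gamma_{2,1}=\taut+\staut$ with fiber $\R^{2,1}$, and you get $\eee^2=1$ because $\eee^2$ is a unit in $H^{0,0,0,0}_P(P,\mF)\cong\F_2$. The only difference is that you spell out the homogeneity check, the degree computation $|\eee|=(0,0,0,1)$, and invertibility via $\ee_{-\gamma_{2,1}}$, all of which the paper leaves implicit.
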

\begin{proof}
The bundle $\xi=\gamma_{2,1}$ is is homogeneous with $\xi_0=\R^{2,1}$. Let $\eee= \ee_{\xi}$ be the homogeneity unit of \cref{defn:homounit}. Since $\eee^2$ is a unit in degree $(0,0,0,0)$, we must have  $\eee^2=1$.
\end{proof}

For $p \geq q \geq 0$, let $t_{p,q}=t_{\bun_{p,q}}$ be the Thom class of $\bun_{p,q}$ as in \cref{thm:thom}. From \cref{thm:KOPIB}, we have an isomorphism
 \[ \phi = \rho_!t_{p,q} \colon {H}^{p'+p, q'+q, p-2q,q}_P(P, \mR) 
 \xrightarrow{\cong}  \widetilde{H}^{p',q'}(\Th(\bun_{p,q}), \mR).\]
 
As a consequence, we have the following commutative diagrams. 
\begin{lem}\label{lem:thom-isos}
Let $n\geq 0$ and $\crush \colon P \to G/G$. There are commutative diagrams 
\[\xymatrix@C=3pc{
H^{p, q, n,\epsilon}_{P}(P, \mF  )\ar[d]_-{i^*_e}  \ar[r]^-{\crush_!  t_{n,n} \eee^{\epsilon+n} }_-\cong  & \widetilde{H}^{p+n,q+n}(\Th(\bun_{n,n}), \mF ) \ar[d]^-{i^*_e} & 
\\
H^{p,n}_{i^*_eP}(i^*_eP,\F_2)  \ar[r]^-{\crush_!t_{n}}_-\cong  & \widetilde{H}^{p+n}(\Th(\bun_{n}),\F_2) 
}\]
and
\[\xymatrix@C=3pc{
H^{p, q, -n,\epsilon}_{P}(P, \mF )\ar[d]_-{i^*_e}  \ar[r]^-{\crush_!  t_{n,0} \eee^\epsilon }_-\cong  & \widetilde{H}^{p+n,q}(\Th(\bun_{n,0}), \mF ) \ar[d]^-{i^*_e} & 
\\
H^{p,-n}_{i^*_eP}(i^*_eP,\F_2)  \ar[r]^-{\crush_!t_{n}}_-\cong  & \widetilde{H}^{p+n}(\Th(\bun_{n}),\F_2). 
}\]
\end{lem}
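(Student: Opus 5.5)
The plan is to show that each horizontal isomorphism factors as three maps (multiplication by a power of $\eee$, the parametrized Thom isomorphism of \cref{thm:thom}, and the base-change isomorphism $\crush_!$ of \cref{thm:shriek-iso}), and that each factor commutes with $i^*_e$. The first step is bookkeeping of degrees, using $\dim(\bun_{p,q})=(p,q,p-2q,q)$.

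\emph{First diagram.} Here $\dim\bun_{n,n}=(n,n,-n,n)$. Following the proof of \cref{prop:cohBThom}, $t_{n,n}$ followed by $\crush_!$ lands in $\widetilde H^{p+n,q+n}(\Th(\bun_{n,n}))$ exactly when its source degree is
\[(p+n,q+n,0,0)-(n,n,-n,n)=(p,q,n,n).\]
This is the $\epsilon=n \pmod 2$ slot. So the top map is: multiply by $\eee^{\epsilon+n}$, which is an isomorphism $H^{p,q,n,\epsilon}_P\to H^{p,q,n,n}_P$ by \cref{lem:ecopy}, then apply $t_{n,n}$, then $\crush_!$.

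\emph{Second diagram.} Here $\dim\bun_{n,0}=(n,0,n,0)$, so the source degree is
\[(p+n,q,0,0)-(n,0,n,0)=(p,q,-n,0),\]
and we pre-multiply by $\eee^{\epsilon}$.

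\emph{Underlying degrees.} On the underlying side, \cref{rem:whythegrading} gives $i^*_e(p,q,\pm n,\epsilon)=(p,\pm n)$, and $i^*_e\bun_{n,n}=i^*_e\bun_{n,0}=\bun_n$. So the bottom rows are the non-equivariant parametrized Thom isomorphisms for $\bun_n$, followed by $\crush_!$.

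\emph{Commutation of each factor.} We check the three factors separately.
\begin{enumerate}[(a)]
\item \emph{The unit $\eee$.} By \cref{lem:eclassprop}(a), $i^*_e(\eee)=\ee_{i^*_e\bun_{2,1}}$. This lies in degree $i^*_e(0,0,0,1)=(0,0)$ of the underlying path-connected $\RP^\infty$. Since $i^*_e$ is a ring map sending units to units, and $H^{0,0}=\F_2$, it must equal $1$. So multiplication by $\eee^{k}$ restricts to the identity.
\item \emph{The Thom class.} We show $i^*_e t_{\bun}=t_{i^*_e\bun}$. The Thom class is the external smash of the unit $S^0\to H\mF$ with $\Th_P(\bun)$. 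Restriction $i^*_e$ to underlying parametrized spectra is strong symmetric monoidal, preserves the unit of $H\mF$ (whose underlying spectrum is $H\F_2$), and sends $\Th_P(\bun)$ to $\Th_{i^*_eP}(i^*_e\bun)$. Since the Thom isomorphism is multiplication by this class and $i^*_e$ is multiplicative, this square commutes.
\item \emph{The map $\crush_!$.} Restriction to the trivial subgroup commutes with the $(\crush_!,\crush^*)$ adjunction, because $i^*_e\crush^*=\crush^*i^*_e$ and $i^*_e\crush_!=\crush_!i^*_e$. Hence the square involving $\crush_!$ commutes as well.
\end{enumerate}

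\emph{Expected main obstacle.} The delicate point is step (b): the compatibility of the parametrized Thom class with restriction to the underlying category. In particular, one must check that the identification $H\mF^{\gamma}\simeq H\mF\wedge_P\Th_P(\gamma)$ of \cref{cor:HRgammaHThom} restricts to the corresponding non-equivariant identification. I would handle this using the characterization of these spectra by their fibers, as in that corollary. Finally, I would use $n\geq 0$ only to ensure that $\bun_{n,n}$ and $\bun_{n,0}$ are actual bundles, so that the Thom spaces are the stunted projective spaces of \cref{cor:pqduality}.
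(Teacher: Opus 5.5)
Your proposal is correct and matches the paper's intended argument: the paper gives no separate proof, presenting this lemma as an immediate consequence of the Thom isomorphism of \cref{prop:cohBThom} and \cref{thm:KOPIB} precomposed with powers of the unit $\eee$ from \cref{lem:ecopy}. Your degree bookkeeping, with $\dim\bun_{n,n}=(n,n,-n,n)$ and $\dim\bun_{n,0}=(n,0,n,0)$, and your factor-by-factor check that $i^*_e\eee=1$, that restriction preserves Thom classes, and that restriction commutes with $\crush_!$, are exactly the routine verification the paper leaves implicit.
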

Now using \cref{thm:coh_pq}, we obtain the $KO(\Pi P)$-graded portion of the cohomology of $P$ as an $\M_2$-module.
\begin{cor}
\label{cor:xnr}
Let $n\geq 0$. 
\begin{enumerate}[(1)]
\item  $H^{*, *, n,0}_{P}(P, \mF )$ is a free $\M_2$-module generated by classes $x_{n,r}$ for $r\geq 0$ of degree
\begin{align*}
|x_{n,r}| =\begin{cases} (r,0,n,0) & 0\leq r \leq n\\
(r, \lceil (r+n)/2 \rceil-n,n,0) & r>n.
\end{cases}
\end{align*}
The restriction 
\[i^*_e \colon H^{r, 0, n,0}_{P}(P, \mF ) \to H^{r,n}_{i^*_eP}(i^*_eP,\F_2)  \]
sends $x_{n,r}$ to $u_1^n w_1^r =u_1^{n+r}a_1^r$.
\item $H^{*, *, -n,0}_{P}(P, \mF )$ is a free $\M_2$-module generated by classes $x_{-n,r}$ for $r\geq 0$ of degree
\begin{align*}
|x_{-n,r}| =\begin{cases} (r, n,-n,0) & 0\leq r \leq n\\
(r, \lceil (r+n)/2 \rceil,-n,0) & r>n.
\end{cases}
\end{align*}
The restriction 
\[i^*_e \colon H^{r, n, -n,0}_{P}(P, \mF ) \to H^{r,n}_{i^*_eP}(i^*_eP,\F_2)  \]
sends $x_{-n,r}$ to $u_1^n w_1^r =u_1^{n+r}a_1^r$.
\item The collection
\[\{x_{n,r}, ex_{n,r} \mid n\in \Z, r\geq 0\}\]
generates $H^{*,*,*,*}_{P}(P, \mF )$ as a free $\M_2$-module, and all generators and their $\uu$-multiples have non-zero restrictions.
\end{enumerate}
\end{cor}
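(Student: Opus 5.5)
The plan is to transport \cref{thm:coh_pq} through the Thom isomorphisms of \cref{lem:thom-isos} and then read off degrees and restrictions.

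For part (1), fix $n\geq 0$ and use the first square of \cref{lem:thom-isos} with $\epsilon=0$. The top map $\crush_!t_{n,n}\eee^{n}$ is an $\M_2$-module isomorphism
\[
H^{p,q,n,0}_P(P,\mF)\cong \widetilde{H}^{p+n,q+n}(\Th(\bun_{n,n}),\mF).
\]
We then apply \cref{thm:coh_pq} to $\bun_{n,n}$. Here $p-q=0\leq q=n$, so case (2) applies. It says the cohomology is free on classes $x_s$ with $s\geq n$ (the unit being absent in reduced cohomology), of degree $(s,n)$ for $n\leq s\leq 2n$ and $(s,\lceil s/2\rceil)$ for $s>2n$. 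Setting $r=s-n$ and shifting by $(-n,-n)$ gives generators in degrees $(r,0,n,0)$ for $0\leq r\leq n$, and $(r,\lceil (r+n)/2\rceil -n,n,0)$ for $r>n$. We define $x_{n,r}$ to be the preimage of $x_{n+r}$.

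For the restriction in (1), we chase the square. By \cref{thm:coh_pq}, $x_{n+r}$ restricts to $w_1^{r}t_n$, and the bottom Thom isomorphism $\crush_!t_n$ sends $u_1^nw_1^r$ to $w_1^rt_n$. One needs to check this last point: the underlying Thom class corresponds to $u_1^n$, the orientation class of $\bun_n$ under \cref{lem:uclassesforGtrivial}, since it is the unique class in that degree restricting to a generator. The identity $u_1^nw_1^r=u_1^{n+r}a_1^r$ then follows from $w_1=a_1u_1$ in \cref{lem:underlyingPcoh}.

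Part (2) is identical, using the second square with $\bun_{n,0}$. Now $p-q=n\geq q=0$, so case (1) of \cref{thm:coh_pq} applies. The generators $x_s$ sit in degree $(s,n)$ for $n\leq s\leq 2n$ and $(s,\lceil s/2\rceil)$ beyond. Subtracting $(n,0)$ gives degrees $(r,n,-n,0)$ and $(r,\lceil (r+n)/2\rceil,-n,0)$, as claimed.

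For part (3), \cref{lem:ecopy} says multiplication by the unit $\eee$ identifies the $\epsilon=1$ pages with the $\epsilon=0$ pages. So $\{x_{n,r},\eee x_{n,r}\}$ is a free basis of all of $KO(\Pi P)$-graded cohomology. For the restriction claim, $i^*_e\eee=1$ because $\eee$ is a homogeneity unit (\cref{lem:eclassprop}, with the trivial group). Since $i^*_e(\uu)=1$, every $\uu^k x_{n,r}$ and $\uu^k\eee x_{n,r}$ restricts to $u_1^{n+r}a_1^r\neq0$ in $\F_2[a_1,u_1]/(u_1^2-1)$.

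The main obstacle is the bookkeeping in the restriction chase. One must confirm that the bottom row of \cref{lem:thom-isos} is really multiplication by the underlying orientation/Thom class, so that its source degree $(p,\pm n)$ matches $u_1^n w_1^r$ in the $i^*_e$-grading of \cref{rem:whythegrading}. One must also confirm that the factor $\eee^{\epsilon+n}$ restricts trivially. I would settle both by uniqueness: in each relevant underlying degree the cohomology is $\F_2$, so it suffices to check that the relevant classes are nonzero, which holds by the Thom isomorphism.
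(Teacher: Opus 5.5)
Your proposal is correct and is essentially the paper's argument: the paper presents this corollary as an immediate consequence of transporting \cref{thm:coh_pq} (case (2) for $\bun_{n,n}$, case (1) for $\bun_{n,0}$) through the Thom isomorphisms of \cref{lem:thom-isos}, with \cref{lem:ecopy} handling the $\epsilon=1$ pages. Your degree shifts match, and your restriction argument supplies the bookkeeping the paper leaves implicit: each of $H^{r,n}_{i^*_eP}(i^*_eP,\F_2)$ and $\widetilde{H}^{r+n}(\Th(\bun_n),\F_2)$ is a single copy of $\F_2$, and $i^*_e\eee=1$.
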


\begin{rem}
As in \cref{rem:evenunique}, while the generators in even topological degrees are uniquely determined by their topological degree, in odd degrees, there can be a choice of basis.
\end{rem}

The next goal will be to identify the ring structure. There are various ways to do this, and our method will be to first identify the orientation and Euler classes of the bundles $\bun_{p,q}$. Then we will use their properties to deduce the relations in  $H^{*,*,*,*}_{P}(P, \mF )$.

We start with the orientation classes. Note that over $\F_2$, whenever orientation classes exist, they are unique. 
\begin{lem}
For any $(p,q)$ with $p\geq q\geq 0$, the bundle $\bun_{p,q}$ is $\F_2$-orientable, in the sense of \cref{defn:orientation_class}.  The orientation classes 
\[u_{\bun_{p,q}}\in H^{0,q,p-2q,q}_{P}(P, \mF )\] 
are given by
\[ u_{\gamma_{p,q}} = \begin{cases} \uu^q \eee^q x_{p-2q,0}  &p-2q\geq 0 \\
\uu^{p-q} \eee^q x_{p-2q,0} & p-2q \leq 0.
\end{cases}\]
\end{lem}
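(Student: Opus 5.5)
The plan is to compute the group $H^{0,q,p-2q,q}_P(P,\mF)$ directly from the free $\M_2$-module description in \cref{cor:xnr}. I will show that it is one-dimensional over $\F_2$, spanned by the displayed class, and that this class restricts nontrivially to the underlying space. By \cref{defn:orientation_class} this gives both orientability and the formula, since over $\F_2$ an orientation class is unique when it exists.

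First, the degree. By the computation preceding \cref{lem:ecopy}, $\dim(\bun_{p,q})=(p,q,p-2q,q)$ and $|\bun_{p,q}|=p=(p,0,0,0)$. Hence $\bun_{p,q}-|\bun_{p,q}|=(0,q,p-2q,q)$, which is where the statement places $u_{\bun_{p,q}}$. The underlying bundle $i^*_e\bun_{p,q}=p\bun_1$ over $\RP^\infty$ is $\F_2$-orientable, as every bundle is mod $2$. The target of the restriction is $H^{0,p}_{i^*_eP}(i^*_eP,\F_2)\cong \F_2$ by \cref{lem:underlyingPcoh}, spanned by $u_1^p$.

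Next, reduce to $\epsilon=0$. By \cref{lem:ecopy}, multiplication by $\eee^q$ is an isomorphism $H^{0,q,n,0}_P\to H^{0,q,n,q}_P$, where $n=p-2q$. Since $\eee$ is a homogeneity unit, it restricts to $1$ under $i^*_e$ (\cref{lem:eclassprop}(a) together with the fact that for $G=e$ the homogeneity unit agrees with the orientation class). It therefore suffices to show that $H^{0,q,n,0}_P(P,\mF)$ is spanned by a single class with nonzero restriction.

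\emph{Case $n\ge 0$.} By \cref{cor:xnr}(1), the generators are $x_{n,r}$, of degree $(r,w_r,n,0)$ with $w_r=0$ for $r\le n$ and $w_r=\lceil (r+n)/2\rceil-n\le r$ otherwise. A contribution to degree $(0,q)$ from $x_{n,r}$ needs an element of $\M_2$ in degree $(-r,q-w_r)$.
\begin{itemize}
\item For $r=0$ this is $\M_2^{0,q}=\F_2\{\uu^q\}$, since $q\ge 0$.
\item For $r>0$ only the lower cone could contribute, with elements $\theta/(\uu^i\aa^j)$ in degree $(-j,-2-i-j)$. This would force $j=r$ and $q-w_r=-2-i-r$, i.e. $w_r\ge q+2+r>r$, which is impossible.
\end{itemize}
So the group is $\F_2\{\uu^q x_{n,0}\}$. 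Since $\uu$-multiples of generators have nonzero restriction (\cref{cor:xnr}(3)), $i^*_e(\uu^q x_{n,0})=u_1^{n}\ne 0$, and restriction is an isomorphism.

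\emph{Case $n=-m<0$.} By \cref{cor:xnr}(2), the generators $x_{-m,r}$ have degree $(r,w'_r,-m,0)$ with $w'_r=m$ for $r\le m$ and $w'_r=\lceil (r+m)/2\rceil\le r$ for $r>m$.
\begin{itemize}
\item For $r=0$ we need $\M_2^{0,q-m}=\M_2^{0,p-q}$, which is $\F_2\{\uu^{p-q}\}$ because $p\ge q$.
\item For $r>0$, the lower-cone condition would give $w'_r\ge q+2+r$, again impossible because $w'_r\le\max(m,r)$ and $m=2q-p\le q$.
\end{itemize}
So the group is spanned by $\uu^{p-q}x_{-m,0}$, which restricts nontrivially.

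Combining the two cases with the factor $\eee^q$ yields the stated formulas and shows that $i^*_e$ is an isomorphism onto $\F_2$. The only delicate step is the degree bookkeeping that excludes lower-cone ($\theta$-divisible) contributions from the higher generators $x_{\pm m,r}$. I would check that inequality carefully, in particular at the boundary values $r=n$ and $r=m$, using the explicit weights from \cref{cor:xnr}.
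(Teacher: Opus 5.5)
Your proposal is correct and follows the same route as the paper. Like the paper, you use \cref{cor:xnr} and \cref{lem:ecopy} to identify $H^{0,q,p-2q,q}_P(P,\mF)$ as the one-dimensional space spanned by the displayed class, then observe that this class restricts to a generator of the underlying group; your explicit check ruling out lower-cone ($\theta$-divisible) contributions from the generators $x_{\pm m,r}$ with $r>0$ is the degree bookkeeping the paper leaves implicit, and it is carried out correctly.
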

\begin{proof}
We have
\[\dim (\bun_{p,q})-|\bun_{p,q}| = (0, q, p-2q,q). \]
Since any nonequivariant bundle is $\F_2$-orientable, the underlying bundle is $\F_2$-orientable, so we need to prove that the restriction
\[H^{0, q, p-2q,q}_{P}(P, \mF )  \to H^{0, p-2q}_{i^*_eP}(i^*_eP,\F_2)\cong \F_2 \]
is an isomorphism.
This follows since 
 \[H^{0, q, p-2q,q}_{P}(P, \mF ) \cong\begin{cases} \F_2\{\uu^q e^q x_{p-2q,0}\} & p-2q\geq 0\\
 \F_2\{ \uu^{p-q} e^q  x_{p-2q,0}\} &p-2q \leq 0
 \end{cases} 
 \]
 and the generators restrict to generators.
\end{proof}

The orientation classes of $\taut$ and $\staut$ play a central role, so we define the following notation.
\begin{defn}
Let 
\begin{align*}
\uu_{10} &:=u_{\taut} \in H^{0,0,1,0}_{P}(P, \mF ) \\
\uu_{11} &:=u_{\staut} \in H^{0,1,-1,1}_{P}(P, \mF ).
\end{align*}
\end{defn}

Multiplicativity of orientation classes gives the following.
\begin{cor}
In $H^{*,*,*,*}_{P}(P, \mF )$, 
\[u_{\bun_{p,q}} = \uu_{10}^{p-q}\uu_{11}^q\]
and $i^*_eu_{\bun_{p,q}} = u_{\bun_p}= u_1^p$ for $u_1$ as in \cref{lem:underlyingPcoh}.
\end{cor}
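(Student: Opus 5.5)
The plan is to deduce the corollary from the multiplicativity of orientation classes, \cref{lem:orientprop}(c). Over $\F_2$ the unit ambiguity disappears, so that relation becomes an honest equality.

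\textbf{Decomposition of the bundle.} By construction $\bun_{p,q}=\gamma^{\R^{2\infty,\infty}}_{\R^{p,q}}$, and $\R^{p,q}\cong (p-q)\R^{1,0}\oplus q\R^{1,1}$ as $C_2$-representations. The formation of $S(V\otimes\sigma)\times_{O(1)}(W\otimes \sigma)$ is additive in $W$. Hence there is an isomorphism of $C_2$-bundles
\[\bun_{p,q}\cong (p-q)\taut\oplus q\,\staut.\]

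\textbf{Orientation classes.} By the preceding lemma, every $\bun_{p',q'}$ is $\F_2$-orientable. This includes each intermediate sum $a\taut\oplus b\staut=\bun_{a+b,b}$ arising as we add one line bundle at a time. Applying \cref{lem:orientprop}(c) inductively gives $u_{\bun_{p,q}}\doteq \uu_{10}^{p-q}\uu_{11}^q$. The only unit of $\F_2$ is $1$, so this is an equality.

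One should still check that the degrees match. We have $(p-q)(0,0,1,0)+q(0,1,-1,1)=(0,q,p-2q,q)$, which is the degree of $u_{\bun_{p,q}}$ computed in the previous lemma. Alternatively, the uniqueness of orientation classes over $\F_2$ lets one argue directly. The product restricts under $i^*_e$ to $u_1^{p-q}u_1^q=u_1^p\neq 0$, so it generates the one-dimensional group $H^{0,q,p-2q,q}_P(P,\mF)$.

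\textbf{Restriction.} By \cref{lem:orientprop}(a), $i^*_eu_{\bun_{p,q}}=u_{i^*_e\bun_{p,q}}=u_{\bun_p}$. By \cref{lem:uclassesforGtrivial} applied to $\F_2$, the orientation classes of the underlying bundles are multiplicative. Since $\bun_p=p\bun_1$, this gives $u_{\bun_p}=u_1^p$.

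The only mildly delicate step is the bundle isomorphism $\bun_{p,q}\cong(p-q)\taut\oplus q\staut$. This requires identifying $\gamma^{V}_{\R^{1,1}}$ with $\sign\otimes\taut=\staut$, which follows from the homomorphism description in \cref{rem:linebundles}.
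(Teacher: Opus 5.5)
Your proposal is correct and follows the paper's approach: the paper states this corollary as a direct consequence of the multiplicativity of orientation classes (\cref{lem:orientprop}(c), with the unit ambiguity vanishing over $\F_2$). It uses the decomposition $\bun_{p,q}\cong(p-q)\taut\oplus q\staut$ implicitly, and the restriction statement is naturality under $i^*_e$ combined with the underlying multiplicativity of \cref{lem:uclassesforGtrivial}, exactly as in your write-up.
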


We turn to the Euler classes. 

\begin{prop}
For $p \geq q \geq 0$, the Euler class $a_{\bun_{p,q}}$ is given by
\[a_{\bun_{p,q}} = \eee^q x_{p-2q,p}.\]
\end{prop}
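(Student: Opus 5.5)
The plan is to show that the group containing $a_{\bun_{p,q}}$ is one-dimensional over $\F_2$, spanned by $\eee^q x_{p-2q,p}$, and then that $a_{\bun_{p,q}}$ is nonzero. Both facts should come from the $\M_2$-module description in \cref{cor:xnr} and the forgetful restriction $i^*_e$.

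\textbf{Step 1: degrees.} By \cref{defn:euler}, $a_{\bun_{p,q}}$ lives in degree
\[\dim(\bun_{p,q}) = (p,q,p-2q,q) =: D.\]
Put $n = p-2q$. When $n\geq 0$, \cref{cor:xnr}(1) gives $x_{n,p}$ degree $(p, \lceil (p+n)/2\rceil - n, n,0) = (p,q,n,0)$ for $q>0$, and degree $(p,0,p,0)$ when $q=0$. When $n<0$, \cref{cor:xnr}(2) with $m=-n=2q-p$ gives the same conclusion: $x_{-m,p}$ has weight $q$ in both the range $p\leq m$ (which forces $p=q$) and the range $p>m$. Since $|\eee| = (0,0,0,1)$ and $\eee^2=1$ by \cref{lem:ecopy}, the class $\eee^q x_{n,p}$ lies in degree $D$.

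\textbf{Step 2: the degree-$D$ group is $\F_2$.} By \cref{cor:xnr}(3), $H^{D}_P(P,\mF)$ is spanned by the $\M_2$-multiples of the generators $\eee^{q} x_{n,r}$, $r\geq 0$. I would compute which coefficients can occur. Write $w_r$ for the weight of $x_{n,r}$; the coefficient must lie in $\M_2$ in degree $(p-r, q-w_r)$.

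For $r>\max(n,0)$ one computes $w_r - q = \lceil (r-p)/2\rceil$.
\begin{itemize}
\item An upper-cone coefficient $\aa^i\uu^j$ has degree $(i,i+j)$, so it requires $0\le p-r\le q-w_r$. For $r\leq p$ this reads $\lceil (r-p)/2\rceil \leq r-p$, which forces $r=p$.
\item A lower-cone coefficient $\theta/(\uu^i\aa^j)$ has degree $(-j,-2-i-j)$, so it requires $p-r\leq 0$ and $q-w_r\leq p-r-2$. For $r>p$ the second condition becomes $\lceil (r-p)/2\rceil \ge r-p+2$, which is impossible.
\end{itemize}
For $r$ in the low range, where the weight is $0$ (or $m$ when $n=-m<0$), the same inequalities fail directly because $r\le n<p-q$. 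Hence the only contribution is $r=p$ with coefficient $1$, and
\[H^{D}_P(P,\mF)\cong \F_2\{\eee^q x_{n,p}\}.\]
This degree bookkeeping is the step I expect to be the main obstacle, mostly because the case $n<0$ and the edge values $q=0$ and $p=q$ need separate care.

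\textbf{Step 3: $a_{\bun_{p,q}}\neq 0$.} Apply $i^*_e$. By \cref{lem:eulerprop}(a), $i^*_e a_{\bun_{p,q}} = a_{\bun_p} = a_1^p$. This is nonzero in $\F_2[a_1,u_1]/(u_1^2-1)$ by \cref{lem:underlyingPcoh}, either via multiplicativity of Euler classes or because the mod-$2$ Euler class is the top Stiefel--Whitney class $w_1^p$. Therefore $a_{\bun_{p,q}}$ is the nonzero element of a one-dimensional group, so $a_{\bun_{p,q}} = \eee^q x_{p-2q,p}$.

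As a consistency check, \cref{cor:xnr} gives $i^*_e(\eee^q x_{n,p}) = u_1^{n+p}a_1^p = a_1^p$, since $n+p = 2p-2q$ is even and $i^*_e \eee = 1$.
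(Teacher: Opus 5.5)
Your proof is correct and is essentially the paper's argument: the paper also shows that the group in degree $\dim(\bun_{p,q})$ is $\F_2\{\eee^q x_{p-2q,p}\}$, by passing through \cref{lem:thom-isos} to a single bidegree of a Thom space and observing that no $\aa$-multiples of lower generators land there, and then concludes from $i^*_e(a_{\bun_{p,q}})=a_{\bun_p}\neq 0$. Your cone-by-cone bookkeeping in \cref{cor:xnr} just makes the paper's ``no intruding multiples'' claim explicit; the one slip is that for $n=-m<0$ the low-range case fails because $r\le m=2q-p\le q$ (with equality only in the edge case $p=q=r$), not because $r\le n<p-q$.
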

\begin{proof}
First, we note that
\[H^{ p, q, p-2q,q}_{P}(P, \mF ) = \F_2\{e^qx_{p-2q, p}\}.
\]
To see this, let $n=p-2q$, and recall from \cref{lem:thom-isos} and \cref{thm:coh_pq}, that 
\[H^{ p, q,n,q}_{P}(P, \mF ) \cong 
\begin{rcases}
\begin{dcases}
\widetilde{H}^{2(p-q),p-q}(\Th(\gamma_{n,n}), \mF ) & n \geq 0 \\
\widetilde{H}^{2(p-q),p-q}(\Th(\gamma_{-n,0}), \mF ) & n < 0 
\end{dcases}
\end{rcases} \cong \F_2\{x_{2(p-q)}\}.
\]
Here we are in the special case where the generator of the cohomology of the Thom space is the only nonzero element in its bidegree, with no $\aa$-multiples intruding from lower degrees. Since $i^*_e(a_{\bun_{p,q}}) = a_{\bun_p}$, the Euler classes are nonzero. The claim follows.
\end{proof}

The Euler classes of $\taut$ and $\staut$ play a central role, so again, we give them a special name.
\begin{defn}
Let 
\begin{align*}
\aa_{10} &:=a_{\taut} \in H^{1,0,1,0}_{P}(P, \mF ) \\
\aa_{11} &:=a_{\staut} \in H^{1,1,-1,1}_{P}(P, \mF ).
\end{align*}
\end{defn}
\begin{cor}
In $H^{*,*,*,*}_{P}(P, \mF )$,
\[ a_{\bun_{p,q}} = \aa_{10}^{p-q}\aa_{11}^q\]
and $i^*_e a_{\bun_{p,q}}=a_{\bun_{p}} = a_1^p =u_1^pw_1^p$. 
\end{cor}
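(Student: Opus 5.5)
Since $\bun_{p,q}$ is a direct sum of line bundles, the plan is to get the formula directly from multiplicativity of Euler classes, \cref{lem:eulerprop}(c), and then check the restriction using \cref{lem:eulerprop}(a) together with \cref{lem:underlyingPcoh}.

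First I would establish a bundle splitting. From the definition $\bun_{p,q}=\gamma^{\R^{2\infty,\infty}}_{\R^{p,q}}$, the fiber $W\otimes\sigma$ with $W=\R^{p,q}=\R^{p-q,0}\oplus\R^{q,q}$ decomposes $O(1)$-equivariantly. The associated bundle construction $S(V\otimes\sigma)\times_{O(1)}(-)$ is additive in $W$, so
\[\bun_{p,q}\cong (p-q)\bun_{1,0}\oplus q\,\bun_{1,1}=(p-q)\taut\oplus q\,\staut.\]
Here I identify $\bun_{1,0}=\taut$ and $\bun_{1,1}=\staut$ via \cref{rem:linebundles}: $\bun_{1,1}$ corresponds to the homomorphism $\Gamma\to O(1)$ that is nontrivial on both factors. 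As a consistency check on degrees,
\[\dim(\bun_{p,q})=(p-q)(1,0,1,0)+q(1,1,-1,1)=(p,q,p-2q,q),\]
which agrees with the degree recorded above.

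Next, applying \cref{lem:eulerprop}(c) repeatedly to this splitting gives
\[a_{\bun_{p,q}}=a_{\taut}^{p-q}a_{\staut}^q=\aa_{10}^{p-q}\aa_{11}^q.\]

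For the restriction, \cref{lem:eulerprop}(a) gives $i^*_e a_{\bun_{p,q}}=a_{i^*_e\bun_{p,q}}$. Underlying, both $\taut$ and $\staut$ restrict to the tautological line bundle $\bun_1$ on $\RP^\infty$: tensoring with $\sigma$ is invisible nonequivariantly, and the $i^*_e$ formula of \cref{rem:whythegrading} sends both degrees to $(1,1)$. Hence $i^*_e\bun_{p,q}=\bun_p$, and by multiplicativity again $a_{\bun_p}=a_1^p$. By \cref{lem:underlyingPcoh} we have $a_1=u_1w_1$ (since $w_1=a_1u_1$ and $u_1^2=1$), so $a_1^p=u_1^pw_1^p$.

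The only step that is more than bookkeeping is the bundle isomorphism $\bun_{p,q}\cong(p-q)\taut\oplus q\,\staut$, in particular the identification $\bun_{1,1}\cong\staut$. I would verify it by reading off the $\Gamma=C_2\times O(1)$-representation $\R^{1,1}\otimes\sigma$, on which both $C_2$ and $O(1)$ act by $-1$. This matches case (d) of \cref{rem:linebundles}.
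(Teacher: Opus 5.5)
Your proposal is correct and follows the paper's route. The paper states this corollary without a separate proof, and its implicit argument is yours: it relies on the same splitting $\bun_{p,q}\cong(p-q)\taut\oplus q\,\staut$ (the one behind $\dim(\bun_{p,q})=(p,q,p-2q,q)$), uses multiplicativity of Euler classes from \cref{lem:eulerprop}(c) for the formula, and gets the restriction from \cref{lem:eulerprop}(a) together with $a_1=u_1w_1$ from \cref{lem:underlyingPcoh}.
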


We compute a few Steenrod squares, which we use to compute relations between the generators of the cohomology of $P$.
\begin{lem}
In $H^{*,*,*,*}_{P}(P, \mF )$, we have
\begin{align*}
\bock (\uu) &= \aa &
\bock (\uu_{10}) &= \aa_{10} &
\bock (\uu_{11}) &= \aa_{11} .
\end{align*}
\end{lem}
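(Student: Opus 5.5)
The first identity, $\bock(\uu)=\aa$, is the known statement in $\M_2$ recalled in \cref{sec:bockstein}. It is imported into $H^{*,*,*,*}_P(P,\mF)$ along $\crush^*$, and the parametrized operation agrees with the equivariant one on $RO(C_2)$-graded degrees. For the other two identities the plan is to use restriction to the underlying space and then show the relevant target group is detected by $i^*_e$.

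For $\uu_{10}$, the class $\bock(\uu_{10})$ lies in degree $(1,0,1,0)$, which is $|\aa_{10}|$. I would first show that
\[H^{1,0,1,0}_P(P,\mF)=\F_2\{x_{1,1}\}, \qquad \aa_{10}=x_{1,1}.\]
By \cref{cor:xnr}(1) with $n=1$, the module generators in this page are $x_{1,0}$ in degree $(0,0,1,0)$ and $x_{1,1}$ in degree $(1,0,1,0)$, together with generators in higher topological degree. In degree $(1,0)$ relative to these, $\M_2$ contributes only the unit to $x_{1,1}$, and contributes $\M_2^{1,0}=0$ to $x_{1,0}$. Higher generators contribute only negative-cone classes of $\M_2$. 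For instance $x_{1,2}$ sits in degree $(2,1,1,0)$ and would need $\M_2^{-1,-1}=0$; the later generators need $\M_2^{1-r,\,-k}$ with $k>0$ and $r-1<k$, and these vanish because the lower cone requires $q\le -2$ and $p\ge q$ in a specific shape. I expect this degree bookkeeping to be the main fiddly step, but it is mechanical from \cref{fig:M2}. The proposition on Euler classes gives $\aa_{10}=a_{\bun_{1,0}}=x_{1,1}$, since $\eee^0=1$.

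Once the group is $\F_2\{\aa_{10}\}$ and $i^*_e(\aa_{10})=a_1\neq 0$, it suffices to compute restrictions. Since $i^*_e\bock=\Sq^1$ and $i^*_e\uu_{10}=u_1$, \cref{lem:underlyingPcoh} gives
\[i^*_e\bock(\uu_{10})=\Sq^1(u_1)=a_1=i^*_e\aa_{10}.\]
Hence $\bock(\uu_{10})=\aa_{10}$. Alternatively, \cref{cor:xnr}(3) says nonzero generators restrict nontrivially, which gives injectivity of $i^*_e$ on this one-dimensional group.

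For $\uu_{11}$, I would run the same argument in degree $(1,1,-1,1)$. Multiplication by $\eee$ gives an isomorphism to degree $(1,1,-1,0)$, where \cref{cor:xnr}(2) with $n=1$ gives a one-dimensional group spanned by $x_{-1,1}$. Equivalently, one can work with $\Th(\bun_{1,1})$ via \cref{lem:thom-isos}. The Euler class $\aa_{11}=\eee x_{-1,1}$ spans this group and restricts to $a_1$. Then
\[i^*_e\bock(\uu_{11})=\Sq^1(u_1)=a_1=i^*_e\aa_{11},\]
so $\bock(\uu_{11})=\aa_{11}$.

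A cleaner alternative for this last step is to apply the automorphism $\autP$. It swaps $\taut$ and $\staut$, and it commutes with $\bock$ because $\bock$ is pulled back from the point. This transports the first identity to the second, provided one checks that $\autP^*$ sends $\uu_{10}$ to $\uu_{11}$ and $\aa_{10}$ to $\aa_{11}$; that follows from naturality (\cref{lem:orientprop}(b) and \cref{lem:eulerprop}(b)).
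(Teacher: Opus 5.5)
Your argument is correct and is the paper's own proof: $\bock(\uu)=\aa$ is quoted from $\M_2$, and the other two identities follow by restricting along $i^*_e$, using $i^*_e\bock=\Sq^1$ and $\Sq^1(u_1)=a_1$ from \cref{lem:underlyingPcoh}. You also make explicit a step the paper leaves implicit, namely that $H^{1,0,1,0}_P(P,\mF)$ and $H^{1,1,-1,1}_P(P,\mF)$ are one-dimensional and detected by $i^*_e$. Your stated inequality for the higher generators is garbled, however; the correct reason is that the lower cone of $\M_2$ is nonzero in degree $(p,q)$ only when $p\le 0$ and $q\le p-2$, while for $r\ge 2$ the generators $x_{\pm1,r}$ have weight at most $r$.
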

\begin{proof}
The first square is well-known (c.f. \cite[Fig.1]{guillou2020cohomology}).
The others follow from
\[i^*_e\bock  = \Sq^1\]
and the fact that $\Sq^1(u_1)=a_1$ as shown in \cref{lem:underlyingPcoh}.
\end{proof}

\begin{lem}
\label{lem:relations}
In $H^{*,*,*,*}_{P}(P, \mF )$, 
we have the relations
\[ \eee \uu_{10} \uu_{11} = \uu \in H^{0,1,0,0}_{P}(P, \mF )\]
and
\[\eee(\uu_{10} \aa_{11} + \uu_{11} \aa_{10}) = \aa  \in H^{1,1,0,0}_{P}(P, \mF ). \]
Furthermore,
\[ \eee\aa_{10}\aa_{11} \in H^{2,1,0,0}_{P}(P, \mF )\]
is the (unique) generator.
\end{lem}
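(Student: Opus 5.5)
We identify each relevant graded piece of $H^{*,*,*,*}_P(P,\mF)$ explicitly, then pin down the classes by restriction to the underlying space and by the Bockstein. Start with the degrees. Using $|\eee|=(0,0,0,1)$, $|\uu_{10}|=(0,0,1,0)$ and $|\uu_{11}|=(0,1,-1,1)$, the product $\eee\uu_{10}\uu_{11}$ has degree $(0,1,0,2)=(0,1,0,0)$, since $\epsilon\in\Z/2$. Similarly $\eee\uu_{10}\aa_{11}$, $\eee\uu_{11}\aa_{10}$ and $\eee\aa_{10}\aa_{11}$ land in $(1,1,0,0)$, $(1,1,0,0)$ and $(2,1,0,0)$ respectively. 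All three degrees lie in the image of $\crush^*RO(C_2)$. By \cref{cor:changeofbases} (with $P$ having fixed points) these pieces are $H^{0,1}(P,\mF)$, $H^{1,1}(P,\mF)$ and $H^{2,1}(P,\mF)$. Equivalently, by \cref{cor:xnr} they are spanned by $\M_2$-multiples of the $x_{0,r}$, which correspond to $1,x_1,x_2,\dots$ of \cref{thm:cohPROG}.

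Next, read off these groups from \cref{thm:cohPROG}. First, $H^{0,1}(P,\mF)=\F_2\{\uu\}$, since $x_1$ has degree $(1,1)$ and cannot contribute in topological degree $0$. Second, $H^{1,1}(P,\mF)=\F_2\{\aa,x_1\}$. Third, $H^{2,1}(P,\mF)$ is spanned by $x_2$ together with any $\M_2$-multiples of lower generators; from $1$ one would need an element of $\M_2$ in degree $(2,1)$, which is zero, and from $x_1$ one would need degree $(1,0)$, also zero. So $H^{2,1}(P,\mF)=\F_2\{x_2\}$.

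Now compute restrictions. By \cref{lem:underlyingPcoh} and the corollaries on orientation and Euler classes, $i^*_e\uu_{10}=u_1$, $i^*_e\uu_{11}=u_1$, $i^*_e\aa_{10}=i^*_e\aa_{11}=a_1$ and $i^*_e\eee=1$. Restricting $\eee$ gives $1$ by \cref{lem:eclassprop}(a), since the underlying homogeneity unit is $1$. Hence $i^*_e(\eee\uu_{10}\uu_{11})=u_1^2=1\neq 0$, which forces $\eee\uu_{10}\uu_{11}=\uu$, the only nonzero element available. Likewise $i^*_e(\eee\aa_{10}\aa_{11})=a_1^2=w_1^2\neq0$, so this product is nonzero and hence equals the unique generator $x_2$.

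The main obstacle is the second relation, where $H^{1,1}$ is two-dimensional and restriction alone cannot distinguish the candidates. The element $y=\eee(\uu_{10}\aa_{11}+\uu_{11}\aa_{10})$ restricts to $2u_1a_1=0$, so by \cref{lem:ParamForgetfulLES} it lies in the image of multiplication by $\aa$. Only $\aa\cdot 1$ is possible, so $y\in\{0,\aa\}$. To decide between them, apply $\bock$ to the first relation, using the Cartan formula of \cref{lem:cartan} (valid in $KO(\Pi P)$-degrees). This gives
\[\aa=\bock(\uu)=\bock(\eee)\uu_{10}\uu_{11}+\eee\aa_{10}\uu_{11}+\eee\uu_{10}\aa_{11}.\]
It therefore remains to show $\bock(\eee)=0$. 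Its degree is $(1,0,0,1)$, and multiplication by $\eee$ identifies that group with $H^{1,0}(P,\mF)$. By \cref{thm:cohPROG}, $H^{1,0}(P,\mF)=0$: there is no generator of topological degree $\le1$ with weight $\le 0$ apart from $1$, and $\M_2$ is zero in degree $(1,0)$. Hence $\bock(\eee)=0$ and $y=\aa$. The care needed here is bookkeeping the $\epsilon$-coordinate mod 2 so that every product really lands in $RO(C_2)$-degrees.
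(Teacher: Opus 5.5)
Your argument is correct. For the second and third assertions it is essentially the paper's argument; the genuine difference is in the first relation.

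\textbf{First relation.} The paper gets $\eee\uu_{10}\uu_{11}=\uu$ structurally. It uses $\uu_{10}\uu_{11}=u_{\taut}u_{\staut}=u_{\bun_{2,1}}$ by multiplicativity of orientation classes, and $u_{\bun_{2,1}}=u_{\R^{2,1}}\eee=\uu\eee$ by \cref{lem:euclassesrel}. It then multiplies by $\eee$ using $\eee^2=1$. You instead identify $H^{0,1,0,0}_P(P,\mF)\cong H^{0,1}(P,\mF)=\F_2\{\uu\}$ and detect the product by restricting to $i^*_eP$. Your route is more hands-on and relies on the explicit $\M_2$-module structure of $H^{*,*}(P,\mF)$ in that degree. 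The paper's route needs no knowledge of that group, so it would still apply over bases where the corresponding $RO(G)$-graded group is larger. Underneath, though, \cref{lem:euclassesrel} is itself proved by a restriction-to-$1$ argument.

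\textbf{Second relation.} You and the paper both apply $\bock$ to the first relation and use the Cartan formula together with $\bock(\uu_{10})=\aa_{10}$ and $\bock(\uu_{11})=\aa_{11}$. Your version is actually more complete. The paper's step $\eee\aa=\bock(\eee\uu)$ tacitly uses $\bock(\eee)=0$, which you justify via $\eee\,\bock(\eee)\in H^{1,0}(P,\mF)=0$. Your preliminary forgetful-sequence step showing $y\in\{0,\aa\}$ is harmless but redundant once the Cartan computation is done.

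\textbf{Third assertion.} This is the same in both: the group is $\F_2$ and the class is nonzero. You detect nonvanishing by restriction, while the paper uses that $\eee$ is a unit and $a_{\bun_{2,1}}\neq 0$.

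\textbf{One bookkeeping gap.} When you compute $H^{0,1}$, $H^{1,0}$, $H^{1,1}$ and $H^{2,1}$ of $P$, you only exclude positive-cone contributions from the low generators. Since $\M_2$ also has a negative cone in nonpositive topological degrees, higher generators $x_r$ could a priori contribute. They do not: the negative cone of $\Sigma^{r,\lceil r/2\rceil}\M_2$ lies in degrees $(s,w)$ with $s-w\ge 2+\lfloor r/2\rfloor$, whereas all four degrees you need have $s-w\le 1$. Add that sentence and the argument is complete.
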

\begin{proof}
From the fact that $\eee= \eee_{\bun_{2,1}}$ and $\taut\staut=\bun_{2,1}$, we have from \cref{lem:euclassesrel} 
that
\[\uu_{10}\uu_{11}=u_{\taut}u_{\staut}=u_{\bun_{2,1}}=\uu \eee.\]
Since $\eee^2=1$, we get the first relation.
The second relation follows from the Cartan formula of \cref{lem:cartan}:
\begin{align*}
\eee\aa&= \bock (\eee\uu) \\
&= \bock (\uu_{10} \uu_{11})\\
&=\bock (\uu_{10}) \uu_{11}+ \uu_{10}\bock ( \uu_{11}) \\
&=\aa_{10} \uu_{11} + \uu_{10} \aa_{11}.
\end{align*}
The class
\[ \eee\aa_{10}\aa_{11} = \eee a_{\bun_{2,1}}  \in H^{2,1,0,0}_{P}(P, \mF ) \cong H^{2,1}_{P}(P, \mF ) \]
is nonzero since $\eee$ is a unit, and there is a unique nonzero element in $H^{2,1}_{P}(P, \mF )$.
\end{proof}
\begin{rem}\label{rem:edivide}
The class $\eee$ thus deserves the name
\[\eee =\frac{\uu_{10}\uu_{11}}{\uu} \]
even if $\uu$ is not a unit.
\end{rem}

We are finally ready for the main result of this section.

\begin{theorem}\label{thm:KOcohomologyofP}
    As an $\M_2$-algebra, the $KO(\Pi P)$-graded cohomology of $P$ is isomorphic to 
    \[H^{*,*,*,*}_{P}(P, \mF )\cong \frac{\M_2[\uu_{10},\uu_{11},\aa_{10},\aa_{11},\eee]}{(\uu_{10}\uu_{11}-\uu \eee,\aa_{10}\uu_{11}+\aa_{11}\uu_{10}-\aa \eee,\eee^2-1)},\] 
    where
    \begin{align*}
|\eee| &= (0,0,0,1) &  & \\ 
|\uu_{10}| &= (0,0,1,0)  & |\uu_{11}| &= (0,1,-1,1)  \\ 
|\aa_{10}|&=(1,0,1,0) & |\aa_{11}| &= (1,1,-1,1).
\end{align*}
\end{theorem}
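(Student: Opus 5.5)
Let $A$ denote the quotient algebra $\M_2[\uu_{10},\uu_{11},\aa_{10},\aa_{11},\eee]/(\dots)$ in the statement. I will define a map $\Phi\colon A\to H^{*,*,*,*}_P(P,\mF)$ sending the generators to the classes of the same name. Then I will check that $\Phi$ is well defined, and finally compare free $\M_2$-module bases degree by degree. Well-definedness is \cref{lem:relations} together with $\eee^2=1$ from \cref{lem:ecopy}: the relations $\eee\uu_{10}\uu_{11}=\uu$ and $\eee(\uu_{10}\aa_{11}+\uu_{11}\aa_{10})=\aa$ become the stated ones after multiplying by $\eee$.

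\emph{Step 1: a monomial basis of $A$.} Using $\eee^2=1$, every element is a combination of monomials $\eee^{\delta}\uu_{10}^i\uu_{11}^j\aa_{10}^k\aa_{11}^l$ with $\delta\in\{0,1\}$. The relation $\uu_{10}\uu_{11}=\uu\eee$ lets us assume $i=0$ or $j=0$. The relation $\aa_{10}\uu_{11}=\aa\eee+\aa_{11}\uu_{10}$ lets us push $\uu_{11}$ past $\aa_{10}$. Iterating, I expect $A$ to be spanned over $\M_2$ by $\eee^\delta$ times monomials of the form $\uu_{10}^{s}\aa_{10}^{k}\aa_{11}^{l}$ or $\uu_{11}^{s}\aa_{11}^{l}$ (with $\aa_{10}$ absent when $\uu_{11}$ appears). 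Equivalently, consider a rewriting system over $\M_2$ whose normal forms are the monomials with no factor $\uu_{10}\uu_{11}$ and no factor $\aa_{10}\uu_{11}$. The candidate normal forms in fixed $(n,\epsilon)$ should match the generators $x_{n,r}$ and $\eee x_{n,r}$ of \cref{cor:xnr}, one per $r\ge0$.

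\emph{Step 2: degree matching.} Take $n\ge0$, $\epsilon=0$. The normal-form monomials of topological degree $r$ are as follows. For $r\le n$, I propose $\uu_{10}^{n-r}\aa_{10}^{r}$ (suitably twisted by $\eee$ and $\aa_{11}$ to land in degree $(r,0,n,0)$). For $r>n$, I propose $\aa_{10}^{a}\aa_{11}^{b}$ times $\eee^{b}$ with $a-b=n$ and $a+b=r$, plus for odd parity a single $\uu_{10}$ or $\uu_{11}$ factor. Their weights are computed from the degrees of $\aa_{10},\aa_{11},\uu_{10},\uu_{11}$, and I will check that they equal $\lceil(r+n)/2\rceil-n$; the $n<0$ case is symmetric via $\uu_{11},\aa_{11}$. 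I must also confirm $\Phi$ hits a generator in each degree. For this I use that the restriction $i^*_e$ to $\F_2[a_1,u_1]/(u_1^2-1)$ (\cref{lem:underlyingPcoh}) sends the candidate monomial to $u_1^{n+r}a_1^r\neq0$. By \cref{cor:xnr}(3), a class in the degree of $x_{n,r}$ with nonzero restriction must be $x_{n,r}$ plus $\aa$-multiples of lower generators, since $\ker i^*_e=\im\aa$ by \cref{lem:ParamForgetfulLES}. Hence the images of the normal forms form an $\M_2$-basis by a triangularity argument.

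\emph{Step 3: conclusion and obstacle.} From Step 2, $\Phi$ is surjective, and since the images of the spanning normal forms are $\M_2$-linearly independent, the normal forms are in fact a basis of $A$ and $\Phi$ is injective. The main obstacle is Step 1, namely showing that the normal forms genuinely span $A$ without missing any reductions. The delicate point is the interplay of the relation $\aa_{10}\uu_{11}=\aa\eee+\aa_{11}\uu_{10}$ with $\uu_{10}\uu_{11}=\uu\eee$ on products like $\aa_{10}\uu_{11}^2$. I would handle this by a weighted induction on the number of $\uu_{11}$ factors. Even if confluence is hard to prove directly, spanning suffices, because linear independence comes for free from the target via restriction.
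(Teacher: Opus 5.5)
\textbf{There is a genuine gap in Steps 1 and 3.} Your two-rule rewriting system, $\uu_{10}\uu_{11}\to\uu\eee$ and $\aa_{10}\uu_{11}\to\aa\eee+\aa_{11}\uu_{10}$, does terminate, since each rule lowers the number of $\uu_{11}$'s. So its normal forms do span $A$. But there are far too many of them, and your claim that they match the generators $x_{n,r},\eee x_{n,r}$ one per $r$ is false.

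The overlap $\uu_{10}\aa_{10}\uu_{11}$ is an unresolved critical pair. Reducing it the two possible ways gives the distinct normal forms $\uu\eee\,\aa_{10}$ and $\aa\eee\,\uu_{10}+\uu_{10}^2\aa_{11}$. Hence in $A$
\[
\uu_{10}^2\aa_{11}=\uu\eee\,\aa_{10}+\aa\eee\,\uu_{10},
\]
and $\eee\aa_{10}$, $\eee\uu_{10}$, $\uu_{10}^2\aa_{11}$ are all normal forms of your system. Concretely, in the page $(*,*,1,0)$ in topological degree $1$, your normal forms include both $\aa_{10}$ and $\eee\uu_{10}^2\aa_{11}$, while the target has the single generator $x_{1,1}$. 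More generally, every $\uu_{10}^s\aa_{10}^k\aa_{11}^l$ with $s\ge 2$, $l\ge1$ is redundant; for example $\uu_{10}^2\aa_{11}^2=\uu\eee\,\aa_{10}\aa_{11}+\aa\eee\,\uu_{10}\aa_{11}$.

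Since $\Phi$ is a ring map, the images satisfy the same relations. So the Step 3 assertion that the images of your spanning normal forms are $\M_2$-linearly independent is false, and ``spanning in $A$ plus independence in the target'' gives nothing. Your Step 2 monomials, one per $(n,r)$, form a strictly smaller set than the Step 1 normal forms. Nothing in the proposal shows that this smaller set spans $A$, and that is exactly what injectivity requires.

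\textbf{The fix} is to add the derived rule $\uu_{10}^2\aa_{11}\to\uu\eee\,\aa_{10}+\aa\eee\,\uu_{10}$.
\begin{itemize}
\item \emph{Termination.} Every rule strictly lowers the pair $(\#\uu_{11},\#\uu_{10})$ lexicographically, so the enlarged system still terminates.
\item \emph{Normal forms.} They are $\eee^\delta$ times the union of the families $\uu_{10}^s\aa_{10}^k$, $\uu_{11}^s\aa_{11}^l$, $\aa_{10}^k\aa_{11}^l$ and $\uu_{10}\aa_{10}^k\aa_{11}^l$.
\item \emph{Count and degrees.} A direct check shows exactly one such monomial for each $(n,r)$, lying in the degree of $x_{n,r}$, including the weight $\lceil (r+n)/2\rceil-n$ when $n\ge 0$ and $r>n$. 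For $r+n$ odd it is the $\uu_{10}$-variant.
\end{itemize}
Take these as your Step 2 choices. Your restriction/triangularity argument then shows their images form an $\M_2$-basis, and the spanning-plus-independence conclusion goes through.

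With this repair, your argument is essentially the paper's. The paper writes down an $\M_2$-basis of $M/(\eee-1)$ and matches it with $\{x_{n,r},\eee x_{n,r}\}$ via nonvanishing restrictions and freeness of the target. Its basis uses the mirror family $\uu_{11}\aa_{10}^s\aa_{11}^t$ and avoids precisely the monomials divisible by $\uu_{10}\uu_{11}$, $\uu_{10}\aa_{11}$ or $\uu_{11}^2\aa_{10}$. So it implicitly relies on the symmetric derived relation $\uu_{11}^2\aa_{10}=\uu\eee\,\aa_{11}+\aa\eee\,\uu_{11}$ — the same critical-pair issue you missed.
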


\begin{proof}
Let $M$ be the $\M_2$-algebra on the right-hand side of the equation. From what we have shown so far, we obtain a homomorphism of $\M_2$-algebras  
\[
M \to H^{*,*,*,*}_{P}(P, \mF )
\]
sending the ring generators to the orientation classes, Euler classes, and homogeneity unit with the same names. 
To prove that this is an isomorphism of $\M_2$-modules, we show that a basis of $M$ as a free $\M_2$-module maps bijectively to a basis of $H^{*,*,*,*}_{P}(P, \mF )$ as a free $\M_2$-module. We have already shown the relations are satisfied and so this map is well-defined (see \cref{lem:ecopy} and \cref{lem:relations}).

Recall from part (3) of \cref{cor:xnr} that the collection 
\[\{x_{n,r}, ex_{n,r} \mid n\in \Z, r\geq 0\}\]
generates $H^{*,*,*,*}_{P}(P, \mF )$ as a free $\M_2$-module. So we have identified a basis for $H^{*,*,*,*}_{P}(P, \mF )$. In degrees $(*,*,n,0)$ and $(*,*,-n,0)$, the elements $x_{n,r}$ and $x_{-n,r}$ are generators. Similarly, for degrees $(*,*,n,1)$ and $(*,*,-n,1)$, the elements $ex_{n,r}$ and $ex_{-n,r}$ are generators.

We map the ring generators of $M$ to $H^{*,*,*,*}_{P}(P, \mF )$ as follows.
\begin{align*}
1 &\xmapsto{\phantom{testing}} x_{0,0}\\
u_{10} &\xmapsto{\phantom{testing}} x_{1,0}\\
u_{11} &\xmapsto{\phantom{testing}} ex_{-1,0}\\
a_{10} &\xmapsto{\phantom{testing}} x_{1,1}\\
a_{11} &\xmapsto{\phantom{testing}} ex_{-1,1}\\
\eee &\xmapsto{\phantom{testing}} ex_{0,0}
\end{align*}
This determines the map $M \to H^{*,*,*,*}_{P}(P, \mF )$. We need to identify the preimages of the free generators $x_{n,r}$, $x_{-n,r}$, $\eee x_{n,r}$, and $\eee x_{-n,r}$. Using the $\M_2$-algebra structure of $M$, we can determine the elements in those degrees. In some degrees, the generator is uniquely determined, but in other degrees there is a choice, and the generator is only determined up to adding $a$ times another element. Up to a change of basis for $H^{*,*,*,*}_{P}(P, \mF )$, we can choose the $\M_2$-module generator $x_{n,r}$ accordingly. 
So, for degree reasons, after possibly changing our choice of module generators for $H^{*,*,*,*}_{P}(P, \mF )$, multiples of the ring generators of $M$ map as follows. 
\begin{itemize}
    \item For $0\leq r\leq n$: 
       \begin{align*}
    \eee^n\uu_{11}^{n-r}\aa_{11}^r &\xmapsto{\phantom{testing}} x_{-n,r}\\
    \uu_{10}^{n-r}\aa_{10}^r &\xmapsto{\phantom{testing}} x_{n,r}.
    \end{align*}
    \item For $r > n$:
    \begin{itemize}
    \item If $r+n$ is even, then
       \begin{align*}
    \aa_{10}^{\frac{r-n}{2}}\aa_{11}^{\frac{r+n}{2}}\eee^{\frac{r+n}{2}} &\xmapsto{\phantom{testing}} x_{-n,r}\\
    \aa_{10}^{\frac{r+n}{2}}\aa_{11}^{\frac{r-n}{2}}\eee^{\frac{r-n}{2}}
    &\xmapsto{\phantom{testing}} x_{n,r}.
    \end{align*}
    \item If $r+n$ is odd, then 
    \begin{align*}
   \uu_{11}\aa_{10}^{\frac{r-n-1}{2}}\aa_{11}^{\frac{r+n-1}{2}}\eee^{\frac{r+n+1}{2}} &\xmapsto{\phantom{testing}} x_{-n,r}\\
    \uu_{10}\aa_{10}^{\frac{r+n-1}{2}}\aa_{11}^{\frac{r-n-1}{2}}\eee^{\frac{r-n+1}{2}}
    &\xmapsto{\phantom{testing}} x_{n,r}.
    \end{align*}
    \end{itemize}
\end{itemize}
Furthermore, these elements multiplied by $\eee$ are mapped to $\eee x_{n,r}$ and $\eee x_{-n,r}$, respectively. 

It remains to show that the collection
\begin{align}
\begin{split}
\label{eq:basisforM}
    & \{ \uu_{11}^{n-r}\aa_{11}^r,  \uu_{10}^{n-r}\aa_{10}^r, \eee\uu_{11}^{n-r}\aa_{11}^r,  \eee\uu_{10}^{n-r}\aa_{10}^r : 0 \leq r \leq n \} \\
    & \cup \{\aa_{10}^s\aa_{11}^t, \eee\aa_{10}^s\aa_{11}^t: s,t \geq 0 \} \\
    & \cup \{\uu_{10}\aa_{10}^s\aa_{11}^t, \eee\uu_{10}\aa_{10}^s\aa_{11}^t: 0 \leq t < s\}\\
     & \cup \{\uu_{11}\aa_{10}^s\aa_{11}^t, \eee \uu_{11}\aa_{10}^s\aa_{11}^t : 0 \leq s < t\}
\end{split}
\end{align}
generates $M$ as an $\M_2$-module. 
For this we need to find a basis of $M$ as an $\M_2$-module.

Instead of directly finding a basis for $M$ as an $\M_2$-module, we first determine a basis for $M/(\eee-1)$. Once obtained, a basis for $M$ as an $\mathbb{M}_2$-module is given by multiplying each basis element of $M/(\eee-1)$ by $\eee$.

We note that $\{\uu_{10}\uu_{11}+\uu, \uu_{10}\aa_{11}+\uu_{11}\aa_{10}+\aa\}$ is a Gr\"obner basis (with lexicographic ordering $\uu_{10} < \uu_{11} < \aa_{10} < \aa_{11}$) for the ideal $(\uu_{10}\uu_{11}+\uu, \uu_{10}\aa_{11}+\uu_{11}\aa_{10}+\aa)$ in $\mathbb{M}_2[\uu_{10},\uu_{11},\aa_{10},\aa_{11}]$. Thus, the quotient space
\[
\frac{\mathbb{M}_2[\uu_{10},\uu_{11},\aa_{10},\aa_{11}]}{(\uu_{10}\uu_{11}+\uu, \uu_{10}\aa_{11}+\uu_{11}\aa_{10}+\aa)} \cong M/(\eee-1)
\]
has the following basis:
\[
\{ \uu_{10}^s \aa_{10}^t, \uu_{11}^s \aa_{11}^t, \aa_{10}^s \aa_{11}^t, \uu_{11} \aa_{10}^s \aa_{11}^t : s,t \geq 0 \}.
\]
By taking this collection, together with the collection formed by multiplying each element by $\eee$, we obtain a basis for $M$. 

To establish a bijection with our desired basis, we need to identify this collection with \eqref{eq:basisforM} using relations in the ring. We can immediately identify the elements in the first two lines and the fourth line of \eqref{eq:basisforM}. For the third line, we use the relation
\[
\uu_{10}\aa_{11}+\uu_{11}\aa_{10}+\aa = 0.
\]
Multiplying by $a_{10}^s$ and $a_{11}^{t-1}$, this gives
\[
\uu_{10} \aa_{10}^{s} \aa_{11}^{t} = \uu_{11} \aa_{10}^{s+1} \aa_{11}^{t-1} + \aa \aa_{10}^{s} \aa_{11}^{t-1}.
\]
Thus, we can replace $\uu_{11} \aa_{10}^{s+1} \aa_{11}^{t-1}$ by $\uu_{10} \aa_{10}^{s} \aa_{11}^{t}$ when $1\le t< s$ to get the generators in the third row of \eqref{eq:basisforM}. 
We note that none of these elements are zero in $H^{*,*,*,*}_P(P,\mF)$. Indeed, their restrictions are nonzero in $H^{*,*}_{i^*_eP}(i^*_eP,\mF)$. There are no $\F_2$-linear relations among the generators for degree reasons. Therefore, there are no $\M_2$-relations since $H^{*,*,*,*}_P(P,\uF_2)$ is free.
Thus, the bijection is established. 
\end{proof}

%% file: ppaper-fullrocomputation.tex
\subsection{The  $RO(\Pi P)$-graded cohomology}\label{sec:ROPIP}
We can now turn to the case when $\mu\neq 0$ and compute the full $RO(\Pi P)$-cohomology of $P$. Recall that the full grading is
\[RO(\Pi P)\cong \Z^3\times(\Z/2)^3,\] and that we denote the elements by $(p,q,n,\epsilon,\mu)$.
From \cref{thm:KOcohomologyofP}, we know the subring of the cohomology associated to gradings with $\mu=0$.  To compute the rest of the $RO(\Pi P)$-graded cohomology we first look at degrees 
\[(p,0,0,0,\mu).\]
  In particular, we will see there is a unit  $\vv$ in degree $(0,0,0,0,1)$ making the $RO(\Pi P)$-graded cohomology the tensor product 
\[
H^{RO(\Pi P)}_{P}(P,\mF) \cong H^{KO(\Pi P)}_{P}(P,\mF) \otimes_{\M_2} \M_2[\vv]/(\vv^2-1).
\]

In order to produce the class $\vv$, we use the fact that any representation in degree $(0,0,0,0,1)$ is homogeneously trivial as in \cref{defn:homotrivial}.

\begin{lem}
Let $\gamma=(0,0,0,0,1)$. There is an isomorphism 
\[H^{*,*,0,0,1}_{P}(P,\mF) \cong H^{*,*}(P,\mF)  .\]
In particular,
\begin{align*}
H^{0,0,0,0,1}_{P}(P,\mF) &= \F_2, & H^{-1,-1,0,0,1}_{P}(P,\mF) &= 0.
\end{align*}
\end{lem}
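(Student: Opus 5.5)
The plan is to apply \cref{thm:gammanomatter} to $\gamma=(0,0,0,0,1)$, and then use \cref{cor:changeofbases} to identify the resulting $RO(C_2)$-graded part of parametrized cohomology with ordinary $RO(C_2)$-graded cohomology of $P$.

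First I check that $\gamma$ is homogeneously trivial in the sense of \cref{defn:homotrivial}. By \cref{thm:ROPiP}, with $p=q=n=\epsilon=0$ and $\mu=1$, the representation $\gamma$ takes the following values on objects:
\[\gamma(b_0)=\R^{0,0},\qquad \gamma(b_1)=\R^{0,0},\qquad \gamma(b)=C_2\times\R^0.\]
So $\gamma_0(x)=0$ on every object of the skeleton. Since every object of $\Pi P$ is isomorphic to one of these, and isomorphic objects have isomorphic virtual fibers, $\gamma_0(x)=0$ for all $x\in\Pi P$. The nontriviality of $\gamma$ lives only in the morphism $g_1$, which $\gamma$ sends to $(-1,-1)$; that has no effect here. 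Hence \cref{thm:gammanomatter} applies and gives, for every $\star\in RO(C_2)$,
\[H^{\gamma+\star}_P(P,\mF)\cong H^{\star}_P(P,\mF).\]
Writing $\star=(p,q)$, the left side is $H^{p,q,0,0,1}_P(P,\mF)$ because $\crush^*(p,q)=(p,q,0,0,0)$. The right side is $H^{p,q,0,0,0}_P(P,\mF)$.

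Next, $P$ has a fixed point, for instance $b_0$. So \cref{prop:ROGinjects} and \cref{cor:changeofbases} identify $H^{\crush^*(RO(C_2))}_P(P,\mF)$ with $H^{*,*}(P,\mF)$. This gives the claimed isomorphism $H^{*,*,0,0,1}_P(P,\mF)\cong H^{*,*}(P,\mF)$.

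For the two explicit groups I read off \cref{prop:gendegrees}. There $H^{*,*}(P,\mF)$ is free over $\M_2$ on generators in degrees $(0,0)$, $(2n-1,n)$ and $(2n,n)$. In degree $(0,0)$, only the unit contributes, since every other generator has positive topological degree and $\M_2$ vanishes in positive topological degree. Hence $H^{0,0}(P,\mF)\cong\M_2^{0,0}=\F_2$. In degree $(-1,-1)$, the unit contributes $\M_2^{-1,-1}=0$, because $(-1,-1)$ lies in neither cone of \cref{fig:M2}. A generator in degree $(k,\ell)$ with $k\ge1$ would need a class of $\M_2$ in degree $(-1-k,-1-\ell)$. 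Such a class must lie in the lower cone, which forces weight $\le -2$ and $|{-1-\ell}|\ge|{-1-k}|$, i.e. $\ell+1\ge k+1$. That contradicts $\ell=\lceil k/2\rceil<k$ for $k\ge2$. The case $k=1$, $\ell=1$ would require $\M_2^{-2,-2}$, which is zero: the lower cone starts at $\theta$ in degree $(0,-2)$ and extends through $\theta/\aa^j$ in degrees $(-j,-2-j)$, so it never reaches $(-2,-2)$. Therefore $H^{-1,-1}(P,\mF)=0$.

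The only delicate step is confirming that homogeneous triviality needs to be checked only on the skeleton objects. This holds because $\gamma_0$ is invariant under isomorphism in $\Pi P$, and I take it to be routine.
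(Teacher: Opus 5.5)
Your argument is essentially the paper's: check that $\gamma=(0,0,0,0,1)$ is homogeneously trivial, apply \cref{thm:gammanomatter}, and identify $H^{*,*,0,0,0}_P(P,\mF)$ with $H^{*,*}(P,\mF)$ via $\crush_!$ (\cref{thm:shriek-iso}, equivalently \cref{cor:changeofbases}). Your count in degree $(-1,-1)$ is also correct.

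One sentence needs fixing. In degree $(0,0)$, the claim that ``$\M_2$ vanishes in positive topological degree'' is false, since $\aa\in\M_2^{1,1}$. It is also not the relevant condition. A generator in degree $(k,\lceil k/2\rceil)$ with $k\ge 1$ would need a coefficient in $\M_2^{-k,-\lceil k/2\rceil}$, which has negative topological degree. That group vanishes because the lower cone only occupies degrees $(p,q)$ with $p\le 0$ and $q\le p-2$, and this would force $\lceil k/2\rceil\ge k+2$. This is the same argument you already gave for $(-1,-1)$.
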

\begin{proof}
We have $\gamma(x)=0$ for all $x\in \Pi P$, so $\gamma$ is homogeneously trivial. 
From \cref{thm:gammanomatter} and \cref{thm:shriek-iso} we get 
\[H^{*,*,0,0,1}_{P}(P,\mF) \xrightarrow{\cong} H^{*,*,0,0,0}_P(P,\mF) \xrightarrow[\rho_!]{\cong}H^{*,*}(P,\mF).\qedhere  \]
\end{proof}

\begin{defn}
Let 
\[ \vv\in H^{0,0,0,0,1}_P(P,\mF) = \F_2 \] denote the nonzero element. 
\end{defn}
\begin{cor}
Under the map
\[i^*_e \colon H^{0,0,0,0,1}_{P}(P,\mF) \to H^{0,0}_{i^*_eP}(i^*_eP,\mF) \]
the element $\vv$ satisfies
\[i^*_e\vv=1.\]
\end{cor}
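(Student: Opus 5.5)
The plan is to use the forgetful long exact sequence of \cref{lem:ParamForgetfulLES} in degree $\gamma=(-1,-1,0,0,1)$. The relevant portion reads
\[
H^{-1,-1,0,0,1}_P(P,\mF)\xrightarrow{\aa} H^{0,0,0,0,1}_P(P,\mF)\xrightarrow{i^*_e} H^{i^*_e(0,0,0,0,1)}_{i^*_eP}(i^*_eP,\F_2).
\]
By \cref{rem:whythegrading}, $i^*_e(p,q,n,\epsilon,\mu)=(p,n)$, so the target is $H^{0,0}_{i^*_eP}(i^*_eP,\F_2)=H^0(\RP^\infty,\F_2)\cong\F_2$.

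The preceding lemma gives $H^{-1,-1,0,0,1}_P(P,\mF)=0$. Exactness then shows that $\ker(i^*_e)=\im(\aa)=0$, so $i^*_e$ is injective. Since $\vv\neq 0$, its image $i^*_e\vv$ is a nonzero element of $\F_2$, hence equal to $1$.

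The one point to check is that \cref{lem:ParamForgetfulLES} applies. This requires $P$ to have a fixed point, and $b_0$ provides one. The statement there is for general $\gamma\in RO(\Pi P)$, not just $KO(\Pi P)$, so the degree with $\mu=1$ is allowed.

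Alternatively, one could argue via the isomorphism of \cref{thm:gammanomatter}. Its construction is compatible with restriction, since the underlying $\gamma$ is trivial. Under it, $\vv$ corresponds to $1\in H^{0,0}(P,\mF)$, and $1$ restricts to $1$. The exact-sequence argument above avoids having to verify this compatibility, so I would use it.
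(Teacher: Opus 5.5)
Your proposal is correct and follows essentially the same argument as the paper. You use the forgetful long exact sequence of \cref{lem:ParamForgetfulLES} with the vanishing $H^{-1,-1,0,0,1}_P(P,\mF)=0$ to get injectivity of $i^*_e$, and then note that the target $H^{0,0}_{i^*_eP}(i^*_eP,\F_2)\cong\F_2$ has a unique nonzero element.
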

\begin{proof}
From the forgetful long exact sequence of \cref{lem:ParamForgetfulLES}, the kernel of $i^*_e$ is the image of multiplication by $\aa$
\[  H^{-1,-1,0,0,1}_{P}(P,\mF)  \xrightarrow{ \aa}  H^{0,0,0,0,1}_{P}(P,\mF). \]
The source is zero, so $\vv$ is not in the kernel of $i^*_e$. Since there is a unique nonzero element in the target, the claim follows.
\end{proof}

Finally, we have the following complete computation.
\begin{theorem}\label{thm:ROcohomologyofP}
    As $\M_2$-algebras the $RO(\Pi P)$-graded cohomology of $P$ is isomorphic to
    \[H^{*,*,*,*,*}_{P}(P,\uF_2)\cong \frac{\M_2[\uu_{10},\uu_{11},\aa_{10},\aa_{11},\eee,\vv]}{(\uu_{10}\uu_{11}-\uu \eee,\aa_{10}\uu_{11}+\aa_{11}\uu_{10}-\aa \eee,\eee^2-1,\vv^2-1)},\]
where
\begin{align*}
|\eee| &= (0,0,0,1,0) & |\vv| &= (0,0,0,0,1)   \\ 
|\uu_{10}| &= (0,0,1,0,0)  & |\uu_{11}| &= (0,1,-1,1,0)  \\ 
|\aa_{10}|&=(1,0,1,0,0) & |\aa_{11}| &= (1,1,-1,1,0).
\end{align*}
\end{theorem}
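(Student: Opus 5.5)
The plan is to show that $\vv$ is a unit with $\vv^2=1$. Multiplication by $\vv$ then identifies each $\mu=1$ page with the corresponding $\mu=0$ page, and the theorem follows from \cref{thm:KOcohomologyofP}.

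\textbf{Step 1: $\vv^2=1$.} The degree $2\cdot(0,0,0,0,1)$ is $(0,0,0,0,0)$, since $\mu$ is $2$-torsion, so
\[\vv^2\in H^{0,0,0,0,0}_P(P,\mF)\cong H^{0,0}(P,\mF)\cong\F_2 .\]
Restriction $i^*_e$ is a ring homomorphism and $i^*_e\vv=1$, so $i^*_e(\vv^2)=1\neq 0$. The group is $\F_2$, hence $\vv^2=1$ and $\vv$ is a unit with $\vv^{-1}=\vv$.

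\textbf{Step 2: reduction to the $\mu=0$ pages.} Every $\gamma\in RO(\Pi P)$ with $\mu(\gamma)=1$ can be written as $\gamma=\gamma'+(0,0,0,0,1)$ with $\gamma'\in KO(\Pi P)$. Since $\vv$ is a unit, multiplication by $\vv$ gives an isomorphism
\[H^{\gamma'}_P(P,\mF)\xrightarrow{\cong}H^{\gamma}_P(P,\mF).\]
Hence the ring map
\[H^{KO(\Pi P)}_P(P,\mF)\otimes_{\M_2}\M_2[\vv]/(\vv^2-1)\to H^{RO(\Pi P)}_P(P,\mF)\]
is bijective in every degree. Injectivity holds because the $\mu=0$ and $\mu=1$ summands land in different degrees. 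This is the same argument as in \cref{prop:units}(b), with the graded group algebra $\F_2[\Z/2]$ replaced by $\F_2[\vv]/(\vv^2-1)$.

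\textbf{Step 3: the presentation.} Substituting the presentation of \cref{thm:KOcohomologyofP} gives the stated algebra. Its relations are exactly those of the $KO(\Pi P)$ part together with $\vv^2-1$. The degrees of the generators are read off with $\mu=0$ appended, and $|\vv|=(0,0,0,0,1)$.

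The only nonformal point is Step 1, namely that $\vv^2$ is nonzero. This comes down to $i^*_e\vv=1$ from the preceding corollary together with multiplicativity of restriction (\cref{lem:ParamForgetfulLES} and the ring structure). Commutativity needs no separate check, since everything is over $\F_2$.
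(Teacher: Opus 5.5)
Your proposal is correct and follows the paper's proof. The paper likewise reduces the theorem to showing $\vv^2=1$, using $\vv^2\in H^{0,0,0,0,0}_P(P,\mF)\cong\F_2$ together with $i^*_e(\vv^2)=i^*_e(\vv)^2=1$. Your Step 2 spells out the unit argument that identifies each $\mu=1$ page with a $\mu=0$ page, which the paper leaves implicit in ``all that remains.''
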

\begin{proof}
All that remains is to prove that $\vv^2=1$. We have
\[\vv^2\in  H^{0,0,0,0,0}_{P}(P,\uF_2)=\F_2 \]
so $\vv^2$ is either zero or one.
Since
$i^*_e(\vv^2) = i^*_e(\vv)^2=1$,
it must be $1$.
\end{proof}

\begin{rem}
The classes $\eee$ and $\vv$ both restrict to $1$ and can be thought of as orientation classes for the virtual representations $(0,0,0,1,0)$ and $(0,0,0,0,1)$ respectively.
\end{rem}

%% file: ppaper-restriction.tex

\subsection{The $RO(\Pi P)$-graded cohomology of $P^{C_2}$}
We finish this section by computing the cohomology of the fixed points and the restriction. Recall from \eqref{eq:fixed} that $P^{C_2}=P_0\sqcup P_1$ for $P_i \simeq \R P^\infty$ and that $\iota_i\colon P_i\rightarrow P$ denote the inclusions. In this section we compute the
maps
\[\iota_i^* \colon H^{\RO (\Pi P)}_P(P, \uF_2) \to H^{\RO (\Pi P)}_P(P_i, \uF_2).  \]
First, we compute $H^{\RO (\Pi P)}_P(P_i, \uF_2)$ using \cref{thm: use cohomology of fixed points}.

To use the theorem we have to compute the kernels $\kappa_{b_i}$ of the maps 
\[b_i^*\colon \RO( \Pi P) \to \RO (\Pi G/G)\cong RO(C_2)\] 
induced by the inclusions of the points $b_i\colon G/G \to P$ for $i=0,1$. 
Let $(p,q,n,\epsilon, \mu)\in \Z^3\times (\Z/2)^2$. For $i=0$, one computes that
\[b_0^*(p,q,n,\epsilon, \mu)=(p,q)\]
and so 
\[\kappa_{b_0}\cong \Z\times (\Z/2)^2\]
with generators $(0,0,1,0,0),(0,0,0,1,0),(0,0,0,0,1)\in \Z^3\times (\Z/2)^2$.
For $i=1$ we have 
\[b_1^*(p,q,n,\epsilon, \mu)=(p,q+n)\]
and so we get
\[\kappa_{b_1}\cong \Z\times (\Z/2)^2\]
with generators 
$(0,1,-1,1,0),(0,0,0,1,0),(0,0,0,1,0)\in \Z^3\times (\Z/2)^2$.
Now we apply \cref{thm: use cohomology of fixed points} and get
\begin{align*}H^{\RO (\Pi P)}_{P}(P_0, \uF_2) &\cong H^*(P_0,\F_2)\otimes \F_2[\kappa_{b_0}]\otimes \M_2\\
&\cong \F_2[w_1]\otimes \F_2[\uu_{10}^{\pm1}, \eee,\vv]/(\eee^2-1,\vv^2-1)\otimes \M_2\\
&\cong \M_2[w_1,\uu_{10}^{\pm1},\eee,\vv]/(\eee^2-1,\vv^2-1) \\
&\cong \M_2[\aa_{10},\uu_{10}^{\pm1},\eee,\vv]/(\eee^2-1,\vv^2-1) \\
\end{align*}
where $\aa_{10} = w_1 \uu_{10}$. The degrees are
\begin{align*}
\vert \aa_{10}\vert&=(1,0,1,0,0),  &  \vert \uu_{10}\vert &=(0,0,1,0,0), \\
  \vert \eee\vert &=(0,0,0,1,0), &  \vert \vv\vert&=(0,0,0,0,1).
\end{align*}
Similarly, 
\begin{align*}H^{\RO (\Pi P)}_{P}(P_1,\uF_2)
&\cong \M_2[\aa_{11},\uu_{11}^{\pm1},\eee,\vv]/(\eee^2-1,\vv^2-1)
\end{align*}
with 
\begin{align*}
\vert \aa_{11} \vert &=(1,1,-1,1,0), &  \vert \uu_{11}\vert &=(0,1,-1,1,0),  \\
\vert \eee\vert &=(0,0,0,1,0), & \vert \vv\vert &=(0,0,0,0,1).
\end{align*}
\begin{theorem}
For $i=0,1$,
\[\iota_i^* \colon H^{RO(\Pi P)}_{P}(P ,\mF) \to H^{RO(\Pi P)}_{P}(P_i ,\mF) \]
 induces an isomorphism
\[H^{RO(\Pi P)}_{P}(P_0 ,\mF)  \cong  \uu_i^{-1} H^{RO(\Pi P)}_{P}(P ,\mF).\]
The classes  $\aa_i,\uu_i, \eee$ and $\vv$ are mapped to the same named elements and
\begin{align*}
\iota_0^*(\uu_{11})  &= \uu_{10}^{-1}\uu \eee & \iota_0^*(\aa_{11})  &= \uu_{10}^{-1}\aa \eee+ \uu_{10}^{-2} \aa_{10}\uu \eee \\
\iota_1^*(\uu_{10})  &= \uu_{11}^{-1}\uu \eee & \iota_1^*(\aa_{10})  &= \uu_{11}^{-1}\aa \eee+ \uu_{11}^{-2} \aa_{11}\uu \eee .
\end{align*}
\end{theorem}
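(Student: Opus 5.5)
Since both source and target are explicitly known, the map $\iota_i^*$ is a map of $\M_2$-algebras, and by \cref{thm:ROcohomologyofP} it is determined by the images of the six ring generators $\uu_{10},\uu_{11},\aa_{10},\aa_{11},\eee,\vv$. I would work with $i=0$ throughout and deduce $i=1$ by applying the automorphism $\autP$ of \cref{rem:chi}, which swaps $P_0$ and $P_1$ and exchanges $\taut\leftrightarrow\staut$ (hence the subscripts $10\leftrightarrow 11$).

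\textbf{Step 1: the generators $\uu_{10},\aa_{10},\eee,\vv$.} By \cref{lem:orientprop}(b) and \cref{lem:eulerprop}(b), $\iota_0^*$ sends $u_{\taut}$ and $a_{\taut}$ to the orientation and Euler classes of $\iota_0^*\taut$. Over $P_0$ (trivial action) these are the named classes $\uu_{10}$ and $\aa_{10}=w_1\uu_{10}$ in the description via \cref{thm: use cohomology of fixed points}. To confirm this, I check that the restriction to the underlying space is correct: $u_1$, respectively $a_1=u_1w_1$. Each target degree is a one-dimensional $\F_2$-space detected by $i^*_e$, so this pins the classes down. The same reasoning with \cref{lem:eclassprop} handles $\eee$, and $\vv$ is the unique class restricting to $1$.

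\textbf{Step 2: the classes $\uu_{11}$ and $\aa_{11}$.} These follow from the relations. Since $\uu_{10}$ is invertible in the target, the relation $\uu_{10}\uu_{11}=\uu\eee$ forces $\iota_0^*\uu_{11}=\uu_{10}^{-1}\uu\eee$. Then $\aa_{10}\uu_{11}+\aa_{11}\uu_{10}=\aa\eee$ gives
\[\iota_0^*\aa_{11}=\uu_{10}^{-1}\aa\eee+\uu_{10}^{-2}\aa_{10}\uu\eee.\]

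\textbf{Step 3: the localization claim.} Because $\iota_0^*\uu_{10}$ is a unit, $\iota_0^*$ factors through $\uu_{10}^{-1}H^{RO(\Pi P)}_P(P,\mF)$. In the localized ring the relations make $\uu_{11}$ and $\aa_{11}$ redundant: $\uu_{11}=\uu_{10}^{-1}\uu\eee$, and $\aa_{11}$ is expressed as above. So the localization is
\[\M_2[\aa_{10},\uu_{10}^{\pm1},\eee,\vv]/(\eee^2-1,\vv^2-1).\]
This matches the computed target generator for generator, so the induced map is an isomorphism.

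\textbf{Main obstacle.} The care lies in Step 3. I must check that eliminating $\uu_{11},\aa_{11}$ leaves no extra relations, i.e.\ that the localization is free on the stated monomials. The Gröbner basis from the proof of \cref{thm:KOcohomologyofP} handles this, since after inverting $\uu_{10}$ both relations become solved forms for $\uu_{11}$ and $\aa_{11}$. A second point is verifying that $\autP$ correctly transports the $i=0$ statement to $i=1$; this uses its effect on degrees, namely $(p,q,n,\epsilon,\mu)\mapsto(p,p-q,-n,\epsilon+n,\mu)$.
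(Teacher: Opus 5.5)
Your argument for $i=0$ is the paper's proof. You use naturality for $\uu_{10},\aa_{10}$ and detection by $i^*_e$ in one-dimensional degrees for $\eee,\vv$. You factor through $\uu_{10}^{-1}H^{RO(\Pi P)}_P(P,\mF)$ and solve the two relations for $\uu_{11}$ and $\aa_{11}$. No Gr\"obner basis is needed there, since localization commutes with the quotient and both relations become solved forms.

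The one slip is in your transport to $i=1$.
\begin{itemize}
\item The formula $(p,q,n,\epsilon,\mu)\mapsto(p,p-q,-n,\epsilon+n,\mu)$ is the action of tensoring with $\R^{1,1}$, not of $\autP^*$.
\item It cannot be $\autP^*$: $\autP$ covers the identity of $C_2/C_2$, so $\autP^*$ must fix $\crush^*RO(C_2)$. Your formula instead sends $(p,q,0,0,0)$ to $(p,p-q,0,0,0)$, and it would send $|\uu_{10}|$ to $(0,0,-1,1,0)\neq|\uu_{11}|$.
\item Computing $\autP^*\gamma=\gamma\circ\autP_*$ from \cref{rem:chi} gives
\[(p,q,n,\epsilon,\mu)\mapsto(p,\,q+n,\,-n,\,n+\epsilon+\mu,\,\mu).\]
\item So $\autP^*$ does swap $\uu_{10}\leftrightarrow\uu_{11}$ and $\aa_{10}\leftrightarrow\aa_{11}$, and it fixes $\eee$. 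However, it sends $\vv$ to the class in degree $(0,0,0,1,1)$ restricting to $1$, namely $\eee\vv$.
\end{itemize}
With this correction the transport goes through. Alternatively, rerun your three steps with the roles of $0$ and $1$ exchanged, which is what the paper does.
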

\begin{proof}
We prove this when $i=0$. The proof for $i=1$ is similar.
The restriction $\iota_0^*$ sends $\aa_{10},\uu_{10}, \eee$ and $\vv$ to the same named classes. For $\aa_{10}$ and $\uu_{10}$, this follows from the naturality of Euler and Thom classes.  For $\eee$ and $\vv$, we can use the commutativity of
\[\xymatrix{ 
H^{\RO (\Pi P)}_{P}(P,\uF_2)\ar[r]^-{\iota_0^*}\ar[d]^-{i^*_e}& H^{\RO (\Pi P)}_{P}(P_0,\uF_2) \ar[d]^-{i^*_e}\\
 H^{\RO (\Pi i^*_eP)}_{i^*_eP}(i^*_eP,\uF_2) \ar[r]^-{i^*_e\iota_0^*}_-\cong &  H^{\RO (\Pi i^*_eP)}_{i^*_eP}(i^*_eP_0,\uF_2)
}  \]
and use the defining property of these classes, i.e., they are the unique classes that restrict to $1$ in their respective degrees.
Since $\iota_0^*(\uu_{10})$ is a unit, $\iota_0^*$ factors through the localization at $\uu_{10}$,
\[\uu_{10}^{-1}H^{RO(\Pi P)}_{P}(P ,\mF) \to H^{RO(\Pi P)}_{P}(P_0 ,\mF). \]
From the relations in $H^{RO(\Pi P)}_{P}(P ,\mF) $, we deduce that in $\uu_{10}^{-1}H^{RO(\Pi P)}_{P}(P ,\mF)$
\begin{align*}
\uu_{11}  &= \uu_{10}^{-1}\uu \eee \\ \aa_{11}  &= \uu_{10}^{-1}\aa \eee+ \uu_{10}^{-2} \aa_{10}\uu \eee.
\end{align*}
This renders the generators $\aa_1$ and $\uu_{11}$ redundant and we see that 
\begin{align*}\uu_{10}^{-1}H^{RO(\Pi P)}_{P}(P ,\mF) &\cong \M_2[\aa_{10}, \uu_{10}^{\pm 1}, \eee, \vv]/(\eee^2-1, \vv^2-1) \\
&\cong H^{RO(\Pi P)}_{P}(P_0 ,\mF) \qedhere.
\end{align*}
\end{proof}

\section{Parametrized cohomology of $B_{C_2}U(1)$}\label{sec:B}

Let 
\[B := B_{C_2}U(1) = P(\mathbb{C}^{2\infty,\infty}),\]
the classifying space for complex $C_2$-equivariant line bundles.
The $\RO (\Pi B)$-graded cohomology of $B$ was computed by Costenoble in \cite{CostenobleB} with coefficients in the parametrized Burnside Mackey functor 
(see \cite[II.11]{CostenobleB}), as well as other coefficients such as $\underline{\Z}$ (see \cite[II.13]{CostenobleB}).  These computations used isotropy separation sequences. In this section, we explain how to compute the parametrized cohomology  of $B$ with constant parametrized  $\uF_2$-coefficients using the techniques we used to compute the parametrized cohomology of $P$.
We adapt the statement to our notation  rather than that of \cite{CostenobleB}, incorporating the orientation classes. Costenoble computed the following cohomology of $B$.

\begin{theorem}[Costenoble]\label{thm:costenobleB}
Let $B=B_{C_2}U(1) = P(\C^{2\infty,\infty})$ be the classifying for $C_2$-equivariant complex line bundles. Let $\tautB$ be the tautological complex line bundle and $\tautBchi$ be its pullback along the automorphism of $B$ obtained by interchanging the trivial and sign representations. Then 
\[RO(\Pi B)= KO(\Pi B) \cong \Z^3\] 
and the parametrized cohomology of $B$ is given by
\[H^{*,*,*}_B(B,\uF_2) \cong
\frac{\M_2[\uu_{\tautB},\uu_{\tautBchi},\aa_{\tautB},\aa_{\tautBchi}]}{(\uu_{\tautB}\uu_{\tautBchi} = \uu^2,\aa_\tautB \uu_{\tautBchi} + \uu_{\tautB}  \aa_{\tautBchi}=\aa^2)} 
\]
where $\uu_{\tautB},\uu_{\tautBchi}$ are the orientation classes and $\aa_{\tautB},\aa_{\tautBchi}$ the Euler classes of $\tautB$ and $\tautBchi$, with degrees 
\begin{align*}
|\uu_{\tautB}|  &= (0,1,1)  & |\aa_{\tautB}|  &= (2,1,1) \\ 
|\uu_{\tautBchi}| &= (0,1,-1)  & |\aa_{\tautBchi}|  &= (2,1,-1) .
\end{align*}
\end{theorem}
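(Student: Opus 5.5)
We would run the argument for $P$ again in the complex setting, in four steps: compute the grading, compute each graded piece as Thom-space cohomology, identify characteristic classes, and check a basis.

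\textbf{Step 1: the grading.} We compute $\Pi B$ as in \cref{thm:ROPiP}. The fixed set $B^{C_2}$ has two components, each homeomorphic to $\C P^\infty$. The underlying space and the fixed components are simply connected, so all loop morphisms act trivially. The only data left are the underlying dimension $p$ and the two fixed weights $q_0,q_1$. Hence
\[RO(\Pi B)\cong\Z^3,\]
with coordinates $(p,q,n)$, where $q=q_0$ and $n=q_1-q_0$. There are no $\epsilon$ or $\mu$ coordinates. Every representation is the dimension of a bundle, because $\dim\tautB=(2,1,1)$, $\dim\tautBchi=(2,1,-1)$ and $\crush^*RO(C_2)$ together generate. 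Therefore $KO(\Pi B)=RO(\Pi B)$. In particular there is no $2$-torsion, so no homogeneity unit $\eee$ and no triviality class $\vv$ appear.

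\textbf{Step 2: additive structure.} Set $\tautB_{k,l}=\gamma^{\C^{2\infty,\infty}}_{\C^{k,l}}$. By \cref{lem:thomspace},
\[\Th(\tautB_{k,l})\cong B/P(\C^{k,l}),\]
a stunted complex projective space. A Schubert cell structure adapted to $\C^{k,l}$, as in \cref{fig:cells}, makes it a $\Rep(C_2)$-complex whose cells are of the form $D(\C^{a,b})$, that is, of type $(2a,a)$ or $(2a,2a-b)$-shifted representation cells. We then apply \cref{thm:KOPIB} to identify each page $H^{*,*,n}_B(B,\mF)$ with the $RO(C_2)$-graded cohomology of $\Th(\tautB_{n,0})$ or $\Th(\tautB_{n,n})$, up to an $RO(C_2)$-shift.

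Freeness and the generator degrees follow as in \cref{thm:coh_pq}. The cellular spectral sequence collapses, because a nonzero differential would kill two copies of $\M_2$ and contradict the underlying answer $H^*(\C P^\infty;\F_2)$. The restriction $i^*_e$ sends each generator to $c_1^r$ times the Thom class. We still need the analogue of \cref{cor:pqduality}, the homeomorphism $\Th(\tautB_{k,l})\cong\Th(\tautB_{k,k-l})$, which comes from the swap automorphism.

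\textbf{Step 3: classes and relations.} The bundles $\tautB$ and $\tautBchi$ are $\F_2$-orientable in the sense of \cref{defn:orientation_class}: the relevant group is $\F_2$, generated by a class that restricts to $1$. The classes $\uu_{\tautB},\uu_{\tautBchi}$ are unique and $\aa_{\tautB},\aa_{\tautBchi}$ are the Euler classes of \cref{defn:euler}.

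The sum $\tautB+\tautBchi$ has fiber $\C^{2,1}=\R^{4,2}$ over both fixed components. Its dimension is $(4,2,0)$, which already lies in $RO(C_2)$, so $\tautB+\tautBchi$ is homogeneous with trivial homogeneity twist. Consequently $\uu_{\tautB}\uu_{\tautBchi}$ is the unique class in $H^{0,2,0}$ restricting to $1$, namely $\uu^2$, via \cref{lem:euclassesrel} and multiplicativity.

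For the second relation we cannot simply apply $\Sq^{1,0}$, since $\Sq^{1,0}(\uu^2)=0$. Instead we apply $\Sq^{2,1}$ to $\uu_{\tautB}\uu_{\tautBchi}=\uu^2$ using the Cartan formula of \cref{lem:cartan}. On $\M_2$, $\Sq^{2,1}(\uu^2)=\aa^2$. The left-hand side contributes $\Sq^{2,1}(\uu_{\tautB})\uu_{\tautBchi}+\uu_{\tautB}\Sq^{2,1}(\uu_{\tautBchi})$. We then identify $\Sq^{2,1}(\uu_{\tautB})=\aa_{\tautB}$ by restricting to the underlying space, where $\Sq^2(u)=a$ is forced by the Thom-class formula $\Sq^2 t=c_1t$. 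This restriction argument needs the relevant degree to be one where $i^*_e$ is injective.

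\textbf{Step 4: basis comparison and main obstacle.} Finally we compare bases as in \cref{thm:KOcohomologyofP}. A Gröbner basis for the two relations gives the $\M_2$-basis
\[\{\uu_{\tautB}^s\aa_{\tautB}^t,\ \uu_{\tautBchi}^s\aa_{\tautBchi}^t,\ \aa_{\tautB}^s\aa_{\tautBchi}^t,\ \uu_{\tautBchi}\aa_{\tautB}^s\aa_{\tautBchi}^t\},\]
and we match it with the free generators from Step 2 by degree. The main obstacle is this matching together with the $\Sq^{2,1}$ computation. We must check that in each degree the quotient ring gives exactly one monomial with nonzero restriction, with no extra $\aa$-multiples making the relation ambiguous. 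We would handle the ambiguity as before, by changing the choice of module generators.
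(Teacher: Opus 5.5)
Your overall plan matches the paper's: compute the grading, use Thom isomorphisms to stunted complex projective spaces, get freeness from Schubert cells, introduce orientation and Euler classes, derive a relation from $\Sq^{2,1}$ and the Cartan formula, and finish with a Gr\"obner-basis comparison. However, the key computation in Step 3 is wrong as written.

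Since $\Sq^{2,1}$ raises degree by $(2,1,0)$, $\Sq^{2,1}(\uu^2)$ lies in degree $(2,3,0)$ and equals $\uu\aa^2$, not $\aa^2$. Likewise $\Sq^{2,1}(\uu_{\tautB})$ lies in $H^{2,2,1}_B(B,\mF)=\F_2\{\uu\aa_{\tautB}\}$. Your restriction argument does work in that degree, since $i^*_e$ is injective there, but it gives $\Sq^{2,1}(\uu_{\tautB})=\uu\aa_{\tautB}$, not $\aa_{\tautB}$. The underlying restriction cannot see the factor $\uu$ because $i^*_e\uu=1$.

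You also dropped the middle Cartan term $\uu\,\Sq^{1,0}(\uu_{\tautB})\Sq^{1,0}(\uu_{\tautBchi})$. It vanishes, but only because $H^{1,1,\pm1}_B(B,\mF)=0$, which you need to read off from Step 2. The correct computation therefore yields
\[\uu\aa^2=\uu\,(\aa_{\tautB}\uu_{\tautBchi}+\uu_{\tautB}\aa_{\tautBchi}).\]
The missing step is cancelling $\uu$: you must show that multiplication by $\uu$ from $H^{2,2,0}_B(B,\mF)$ to $H^{2,3,0}_B(B,\mF)$ is injective. This is not automatic, since $\uu\theta=0$ in $\M_2$. It does hold here, because $H^{2,2,0}_B(B,\mF)$ is spanned by $\aa^2$ and the free generator in degree $(2,2,0)$, and this is exactly how the paper concludes.

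Alternatively, you can avoid Steenrod squares. The class $\aa_{\tautB}\uu_{\tautBchi}+\uu_{\tautB}\aa_{\tautBchi}$ restricts to $c_1+c_1=0$ on the underlying space, so by the forgetful sequence it lies in $\aa H^{1,1,0}_B(B,\mF)=\{0,\aa^2\}$. Restricting to the fixed point $b_0$, where the fibers of $\tautB$ and $\tautBchi$ are $\R^{2,0}$ and $\R^{2,2}$, gives $a_{\R^{2,0}}u_{\R^{2,2}}+u_{\R^{2,0}}a_{\R^{2,2}}=\aa^2$.

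Step 1 also has an error. You treat $q_0$ and $q_1$ as unconstrained, but the morphism $t$ still carries information even though there are no nontrivial loops. Since $i^*_eB$ is simply connected, $p_i\circ t\simeq p_i$ for $i=0,1$. As in the proof of \cref{thm:ROPiP}, this forces $\det\gamma_e(t)=(-1)^{q_0}=(-1)^{q_1}$, that is, $q_0\equiv q_1\pmod 2$; this is the constraint the paper quotes from Costenoble.

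Without that constraint, your coordinates $(p,q_0,q_1-q_0)$ give the dimensions of $\tautB$, $\tautBchi$, $\R^{1,0}$, $\R^{1,1}$ as $(2,0,2)$, $(2,2,-2)$, $(1,0,0)$, $(1,1,0)$. These generate only the index-two subgroup with even last coordinate, so your argument for $KO(\Pi B)=RO(\Pi B)$ fails as stated. The degrees $(2,1,1)$ and $(2,1,-1)$ you quote are in the paper's coordinates $q_0=q-n$, $q_1=q+n$, which already build in the parity constraint.
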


We show how to reprove this using the methods of this paper. The skeleton of $\Pi B$ is less complicated than that of $\Pi P$. 
Similarly to the fixed set of $P$, we have $B^{C_2} = B_1 \sqcup B_0$, with 
$B_1 = P(\mathbb{C}^{\infty,\infty}) = \mathbb{C}P^\infty$ and 
$B_0 = P(\mathbb{C}^{\infty,0}) = \mathbb{C}P^\infty$, and also 
$i_e^* B = \mathbb{C}P^\infty$.  
Since $\mathbb{C}P^\infty$ is simply connected, there are no analogs of 
$g$, $g_0$, and $g_1$. 
Hence, the only nontrivial morphisms in the skeleton are those corresponding to $t$, $p_0$, and $p_1$, and we continue to denote them by the same names.
Costenoble proves in Proposition 6.1 of \cite{CostenobleB} that for any $\gamma \in \RO( \Pi B)$, the morphisms $\gamma(t)$, $\gamma(p_0)$, and $\gamma(p_1)$ are uniquely determined by the values $\gamma(b_0) = \R^{p,q_0}$ and $\gamma(b_1) = \R^{p,q_1}$, with the additional constraint that $q_0$ and $q_1$ must have the same parity.
It follows that
\[
\RO (\Pi B) \cong \mathbb{Z}^3, \quad (p,q,n) \in \mathbb{Z}^3,
\]
with $q_0 = q - n$ and $q_1 = q + n$.  We depict this as follows, using the same conventions as in \cref{sec:reps of fun gpd}. 
\[\xymatrix@C=1pc{
&  \Pi  B& & \ar[rr]^-\gamma & & & & \vV_{C_2}\\
b_0 & & b_1  & & &&  \R^{p,q-n} & & \R^{p,q+n}  \\
 & b \ar@{->}[ur]_-{p_1} \ar@{->}[ul]^-{p_0} \ar@(dl,dr)@{>}[]_-{t} & && & & & C_2 \times \R^p \ar@{->}[ur]_-{1} \ar@{->}[ul]^-{1}   
\ar@(dl,dr)@{>}[]_-{C_2\times (-1)^{q-n}}
 }\]

Let $\gamma$ denote the tautological complex line bundle. Define $\autP\colon B\rightarrow B$ in a way completely analogous to the automorphism $\autP$ of $P$ discussed in \cref{rem:chi}. That is, $\autP$ is the $C_2$-homeomorphism that sends a line $[v_1,v_0]$ through $(v_1,v_0)\in \mathbb{C}^{\infty,\infty}\oplus \mathbb{C}^{\infty,0}$ to the line through $(v_0,v_1)$. Let $\tautBchi$ be the pullback of $\tautB$ along $\autP$.
Then 
\begin{align*}
\dim (\tautB) &= (2,1,1)\\ 
\dim ( \tautBchi) &= (2,1,-1).
\end{align*}
 Together with the trivial line bundles with fibers $\R^{1,0}$ and $\R^{1,1}$ (which have degrees $(1,0,0)$ and $(1,1,0)$ respectively), these 
generate $RO(\Pi B)$.
So $KO(\Pi B) = RO(\Pi B)$.

Let $\uu_{\tautB}$  and  $\uu_{\tautBchi}$ be the orientation class of $\gamma$ and  $\autP \gamma$ respectively. Similarly, let $\aa_{\tautB}$ and $\aa_{\tautBchi}$ be the associated Euler classes.
Then
\begin{align*}
|\uu_{\tautB}|  &= (0,1,1)  & |\aa_{\tautB}|  &= (2,1,1) \\ 
|\uu_{\tautBchi}| &= (0,1,-1)  & |\aa_{\tautBchi}|  &= (2,1,-1) .
\end{align*}

We can deduce the relations between these elements using a similar approach to the computation in  \cref{sec:KOPiP}.
First, it follows from \cref{lem:uclassesforGtrivial} that the product $\uu_{\tautB}\uu_{\tautBchi}$ corresponds to the orientation class of the bundle of dimension $(0,2,0)$, and hence we have a relation
\[
\uu_{\tautB}\uu_{\tautBchi} = \uu^2 \in H^{0,2,0}(B,\uF_2) \cong H^{2,0}(G/G,\uF_2).
\]
Applying the operation $\Sq^{2,1}$ to both sides of the identity $\uu_{\tautB}\uu_{\tautBchi} = \uu^2$ and using the equivariant Cartan formula of \cref{lem:cartan}
gives
\begin{align*}
\uu \aa^2 &=\Sq^{2,1}(\uu^2)\\
&=\Sq^{2,1}(\uu_{\tautB}\uu_{\tautBchi}) \\
&= \Sq^{2,1}(\uu_{\tautB}) \uu_{\tautBchi} + \uu \Sq^{1,0}(\uu_{\tautB}) \Sq^{1,0}(\uu_{\tautBchi}) + \uu_{\tautB} \Sq^{2,1}(\uu_{\tautBchi})\\
&= \uu (\aa_\tautB \uu_{\tautBchi} + \uu_{\tautB}  \aa_{\tautBchi}).
\end{align*}
Multiplication by $\uu$ is injective in the relevant bidegrees, and so we deduce that 
\[\aa_\tautB \uu_{\tautBchi} + \uu_{\tautB}  \aa_{\tautBchi}=\aa^2.\]

Thus we have a ring homomorphism
\begin{equation}
\label{eq:cohBC2U(1)}
\frac{\M_2[\uu_{\tautB},\uu_{\tautBchi},\aa_{\tautB},\aa_{\tautBchi}]}{(\uu_{\tautB}\uu_{\tautBchi} = \uu^2,\aa_\tautB \uu_{\tautBchi} + \uu_{\tautB}  \aa_{\tautBchi}=\aa^2)} \to H^{*,*,*}(B,\uF_2).
\end{equation}
In fact, it is an isomorphism, which can be shown by comparing the dimension in each degree on both sides. For this, we need to know $H^{p,q,n}(B,\uF_2)$ for all $p,q,n$, which we get from the Thom isomorphism as we did for $P$. Indeed, from \cref{thm:KOPIB} and the fact that $RO(\Pi B)=KO(\Pi B)$, we have a ring isomorphism
\[ H^{\star+\gamma}_B(B,\mF) \cong \widetilde{H}^{RO(G)}(\Th(-\gamma),\mF).\]

So, for $n \ge 0$, we have
\begin{align}
\begin{split}
\label{eq:ThomIsosB}
    &H^{p,q,n}_B(B,\uF_2)
    \cong \widetilde{H}^{p+2n,q+n}(\Th(\chi \gamma^{\oplus n}),\uF_2)\\
    &H^{p,q,-n}_B(B,\uF_2)
    \cong \widetilde{H}^{p+2n,q+n}(\Th(\gamma^{\oplus n}),\uF_2).
\end{split}
\end{align}

\begin{proposition}
    The reduced $RO(C_2)$-graded cohomologies of $\Th(\gamma^{\oplus n})$ and $\Th(\chi\gamma^{\oplus n})$ are isomorphic in every bidegree:
    \[\widetilde{H}^{p+2n,q+n}(\Th(\gamma^{\oplus n}))\cong \widetilde{H}^{p+2n,q+n}(\Th(\chi\gamma^{\oplus n}))\]
    and have $\M_2$-generators in degrees 
\begin{align*}&(2n,2n),(2n+2,2n),(2n+4,2n),\ldots,(4n,2n),\\
&(4n+2,2n+2),(4n+4,2n+2),(4n+6,2n+4),(4n+8,2n+4)\\&\ldots,(4n+4i-2,2n+2i),(4n+4i,2n+2i),\ldots\end{align*}
    
\end{proposition}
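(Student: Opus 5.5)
The plan is to copy the argument for \cref{cor:pqduality} and \cref{thm:coh_pq}: identify both Thom spaces with complex stunted projective spaces, then compute their cohomology from a Schubert $\Rep(C_2)$-cell structure whose cellular differentials all vanish.

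First, the isomorphism. The complex analogue of \cref{lem:thomspace} (the same map $f$, now with $U(1)$ in place of $O(1)$ and complex representations) gives
\[\Th(\gamma^{\oplus n})\cong B/P(\C^{n,n})\quad\text{and}\quad \Th(\chi\gamma^{\oplus n})\cong B/P(\C^{n,0}),\]
or the reverse, depending on conventions. The swap map $(v_1,v_0)\mapsto(v_0,v_1)$ is the automorphism $\autP$ of $B$. It is a $C_2$-homeomorphism carrying $P(\C^{n,n})$ onto $P(\C^{n,0})$, so it induces a homeomorphism of the quotients, exactly as in \cref{cor:pqduality}. Hence the two Thom spaces are $C_2$-homeomorphic, which gives the isomorphism of cohomology in every bidegree. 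Consequently it suffices to compute $\widetilde{H}^{*,*}(B/P(\C^{n,0}),\mF)$.

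Second, the generators. I would order the complete universe as
\[\underbrace{\C^{1,0}\oplus\cdots\oplus\C^{1,0}}_{n}\oplus\underbrace{\C^{1,1}\oplus\cdots\oplus\C^{1,1}}_{n}\oplus\C^{1,0}\oplus\C^{1,1}\oplus\C^{1,0}\oplus\C^{1,1}\oplus\cdots\]
and take the complex Schubert cell structure as in \cite{Dugger_grassmannian}. The $k$-th cell is the open disk in $\mathrm{Hom}$ of the line into the preceding $k$ coordinates, i.e. $\bigoplus_{j<k}\bar L_k\otimes L_j$. Over the first $n$ coordinates this gives the cells $D(\C^{k,0})$ for $k<n$, which form $P(\C^{n,0})$ and are collapsed. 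The cells of the next block are $D(\C^{n,0}\otimes\sigma\oplus\C^{j,0})$, of real degree $(2n+2j,2n)$ for $0\le j\le n-1$. After that the cells alternate as in the complex projective space.

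I will check the weights by the convention that each copy of $\C\otimes\sigma$ contributes $(2,2)$. The block cells then have degrees $(2n,2n),(2n+2,2n),\dots,(4n-2,2n)$. The next cells, in total complex dimension $m$, have weights determined by how many preceding coordinates have the opposite sign. For $m=2n$ the coordinate is $+$ and there are $n$ minus-coordinates before it, so the weight is $2n$ and the cell is $(4n,2n)$. For $m=2n+1$ there are $n+1$ plus-coordinates before it, giving $(4n+2,2n+2)$, and the pattern continues in the stated stair-step. This matches the listed degrees.

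Freeness with exactly these generators would then follow as in \cref{thm:coh_pq}. All cells have even topological dimension, so the $d$ in the cellular spectral sequence (of degree $(1,0)$) between generators could only be nonzero if some $\M_2$-generators were in degrees linked through $\M_2$. I would rule this out with the forgetful long exact sequence of \cref{lem:ParamForgetfulLES}. The underlying space is $\C P^\infty/\C P^{n-1}$, which has $\F_2$-cohomology of rank one in each even degree $\ge 2n$. Any nontrivial differential would kill two copies of $\M_2$ and give the wrong underlying cohomology. Combined with the Hogle--May/Kronholm freeness theorem, this yields the claimed free $\M_2$-module.

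The main obstacle is getting the weights of the Schubert cells right, especially at the transition between the $\C^{1,1}$-block and the alternating tail. There I would check small cases, such as $n=0$ (recovering Kronholm's computation for $B$) and $n=1$, against \eqref{eq:ThomIsosB}.
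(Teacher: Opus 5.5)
Your proposal is correct and is essentially the paper's argument. You identify the two Thom spaces with the complex stunted projective spaces $B/P(\C^{n,0})$ and $B/P(\C^{n,n})$, related by $\autP$ exactly as in \cref{cor:pqduality}; you give $B$ the Schubert $\Rep(C_2)$-structure with the universe ordered as $n$ trivial, then $n$ sign, then alternating summands; and you read off one free $\M_2$-generator per surviving cell, with the cell degrees you computed.

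One small remark. In the complex case the vanishing of the cellular differentials follows from degrees alone, which is what the paper invokes: a $d_r$ out of the cell of degree $(2k,w_k)$ lands in $\M_2$ in degree $(1-2r,\,w_k-w_{k+r})$ with $0\le w_{k+r}-w_k\le 2r$, and $\M_2$ is zero there. So your forgetful-sequence step is unnecessary, and its ``kills two copies of $\M_2$'' reasoning tacitly assumes that a nonzero differential is an isomorphism.
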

\begin{proof}
    Let $B_{n}$ be the quotient of the map 
    \[B^n\coloneqq P(\mathbb{C}^{n,0})\rightarrow B=P(\mathbb{C}^{2\infty,\infty}),\]
    and let $B_{-n}$ be the quotient of the map
    \[B^{-n}\coloneqq P(\mathbb{C}^{n,n})\rightarrow B=P(\mathbb{C}^{2\infty,\infty}).\]
    Then there are $C_2$-homeomorphisms
\[\Th(\bun^{\oplus n}) \cong B_{n} \]
and
\[\Th(\chi\gamma^{\oplus n}) \cong B_{-n} \]
 See \cref{cor:pqduality} for the corresponding result for $P$.
The $RO(C_2)$-graded cohomology of $B_n$ and $B_{-n}$ can be computed analogously to the computation in \cref{section:cohomologyofthomspaces}. Start with a $\Rep(C_2)$-structure on $B_n$ and $B_{-n}$. For this we give $B$  Schubert cell structures such that $B^n$ (respectively $B^{-n}$) forms the $(2n-1)$-skeleton, which is the same as the $(2n-2)$-skeleton because all cells are even-dimensional. 
For $B^n$ that means we take a cell structure for $B$ given by choosing the order of representations in $\mathcal{U}$ to be 
\[
\underbrace{\R^{2,0} \oplus \cdots \oplus \R^{2,0} }_{n} \oplus \underbrace{\R^{2,2}  \oplus \cdots \oplus \R^{2,2}}_{n} \oplus \R^{2,0} \oplus \R^{2,2} \oplus \R^{2,0} \oplus\R^{2,2}\oplus \cdots.
\]
Taking the quotient by the $(2n-2)$-skeleton 
gives a $\Rep(C_2)$-complex structure for $B_n$.
Similarly, for $B^{-n}$ we take a cell structure for $B$ using the order of representations
\[
\underbrace{\R^{2,2} \oplus \cdots \oplus \R^{2,2} }_{n} \oplus \underbrace{\R^{2,0}  \oplus \cdots \oplus \R^{2,0}}_{n} \oplus \R^{2,2} \oplus \R^{2,0} \oplus \R^{2,2} \oplus\R^{2,0}\oplus \cdots,
\]
and quotient by the $(2n-2)$-skeleton. 

A similar degree argument as for $P$ shows the attaching maps all induce trivial maps in $H^{*,*}(-,\mF)$.  So the $RO(C_2)$-graded cohomology of $B_n$ is free over $\M_2$ with one generator for each cell, and similarly for $B_{-n}$. This gives the $\M_2$-generators in the degrees listed above.
\end{proof}

The Thom isomorphisms and identifications in \eqref{eq:ThomIsosB} yield the following cohomology.
\begin{cor}
\label{cor:generatorsB}
The cohomology
$H^{*,*,*}_B(B,\uF_2)$ has generators in degrees
\begin{align*}&(0,n,n),(2,n,n),(4,n,n),\ldots,(2n,n,n),\\
&(2n+2,n+2,n),(2n+4,n+2,n),(2n+6,2n+4,n),(2n+8,n+4,n)\\&\ldots,(2n+4i-2,n+2i,n),(2n+4i,n+2i,n),\ldots\end{align*} and
\begin{align*}&(0,n,-n),(2,n,-n),(4,n,-n),\ldots,(2n,n,-n),\\
&(2n+2,n+2,-n),(2n+4,n+2,-n),(2n+6,2n+4,-n),\\
&(2n+8,n+4,-n),\ldots,(2n+4i-2,n+2i,-n),(2n+4i,n+2i,-n),\ldots\end{align*} 
for all $n\ge 0$.
\end{cor}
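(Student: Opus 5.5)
The plan is to read the generators off from the preceding Proposition, transported along the Thom isomorphisms \eqref{eq:ThomIsosB}, which come from \cref{thm:KOPIB}. First I record why those identifications hold. For $n\ge 0$ we have $\dim(\tautBchi^{\oplus n}) = (2n,n,-n)$ and $\dim(\tautB^{\oplus n}) = (2n,n,n)$. Hence the degree $(p,q,n)$ splits as $(p+2n,q+n,0) - (2n,n,-n)$, an $RO(C_2)$-shift of $-\dim(\tautBchi^{\oplus n})$. Applying \cref{thm:KOPIB} with $\gamma = -\dim(\tautBchi^{\oplus n})$ gives
\[H^{p,q,n}_B(B,\mF)\cong \widetilde H^{p+2n,q+n}(\Th(\tautBchi^{\oplus n}),\mF),\]
and similarly for $-n$ with $\tautB^{\oplus n}$. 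The isomorphism $\phi$ is $\M_2$-linear, since it is compatible with the $RO(C_2)$-graded products by \cref{lem:phimultiplication}. So for each fixed $n$, the page $H^{*,*,\pm n}_B(B,\mF)$ is a free $\M_2$-module. Its generators are the preimages of the $\M_2$-generators of the relevant Thom space cohomology, and they sit in degrees shifted by $(-2n,-n,\pm n)$.

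Next I apply the preceding Proposition. It says both Thom spaces have free cohomology with generators in $RO(C_2)$-degrees $(2n+2k,2n)$ for $0\le k\le n$, and in degrees $(4n+4i-2,2n+2i)$ and $(4n+4i,2n+2i)$ for $i\ge1$. Subtracting $(2n,n)$ from the first two coordinates gives the generators on the $\pm n$ page. These are $(2k,n,\pm n)$ for $0\le k\le n$, together with $(2n+4i-2,n+2i,\pm n)$ and $(2n+4i,n+2i,\pm n)$ for $i\ge1$. This is exactly the list in the statement; the entries printed as ``$(2n+6,2n+4,\cdot)$'' appear to be typos for $(2n+6,n+4,\cdot)$, and I would write the list in the corrected form. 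Finally, $RO(\Pi B)\cong\Z^3$ is the disjoint union of the pages $n\in\Z$, and on each page the cohomology is free. Taking the union over $n\ge0$ of the two families therefore gives a free $\M_2$-basis of $H^{*,*,*}_B(B,\mF)$, with the $n=0$ page counted once. On that page both families give the generators of $H^{*,*}(B,\mF)$, namely $(0,0),(2,2),(4,2),(6,4),\dots$.

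The only delicate point I expect is the bookkeeping of signs and shifts. The Thom isomorphism of \cref{thm:KOPIB} uses $\Th(-\gamma)$, so I must check that the page indexed by $+n$ really corresponds to $\tautBchi^{\oplus n}$ rather than $\tautB^{\oplus n}$. Since the preceding Proposition shows the two Thom spaces have isomorphic cohomology, this check does not affect the final list of degrees. I would still verify it against low-degree classes to be safe: the orientation class $\uu_{\tautB}$ should appear in degree $(0,1,1)$, which is the $k=0$ generator for $n=1$, and the Euler class $\aa_{\tautB}$ should appear in degree $(2,1,1)$.
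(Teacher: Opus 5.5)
Your proposal is correct and matches the paper's argument: the corollary is read off from the preceding Proposition by carrying the $\M_2$-generators of the Thom space cohomology across the Thom isomorphisms \eqref{eq:ThomIsosB}, which shifts degrees by $(-2n,-n,\pm n)$. You are also right that the entries $(2n+6,2n+4,\pm n)$ in the statement are typos for $(2n+6,n+4,\pm n)$.
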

To finish the proof of \cref{thm:costenobleB} from this perspective, one has to prove that these generators are in bijection with the $\mathbb{M}_2$-generators of the left-hand-side of \eqref{eq:cohBC2U(1)}: 
\[\frac{\M_2[\uu_{\tautB},\uu_{\tautBchi},\aa_{\tautB},\aa_{\tautBchi}]}{(\uu_{\tautB}\uu_{\tautBchi} = \uu^2,\aa_\tautB \uu_{\tautBchi} + \uu_{\tautB}  \aa_{\tautBchi}=\aa^2)}.\]
Recall that 
\begin{align*}
|\uu_{\tautB}|  &= (0,1,1)  & |\aa_{\tautB}|  &= (2,1,1) \\ 
|\uu_{\tautBchi}| &= (0,1,-1)  & |\aa_{\tautBchi}|  &= (2,1,-1) 
\end{align*}
Note that $\{\uu_{\tautB}\uu_{\tautBchi} + \uu^2,\aa_\tautB \uu_{\tautBchi} + \uu_{\tautB}  \aa_{\tautBchi}+\aa^2\}$  is a Gröbner basis with lexicographic ordering $\uu_{\tautB}<\uu_{\tautBchi}<\aa_\tautB<\aa_{\tautBchi}$ for the ideal $(\uu_{\tautB}\uu_{\tautBchi} + \uu^2,\aa_\tautB \uu_{\tautBchi} + \uu_{\tautB}  \aa_{\tautBchi}+\aa^2)$ in $\M_2[\uu_{\tautB},\uu_{\tautBchi},\aa_\tautB,\aa_{\tautBchi}]$ and thus \[\frac{\M_2[\uu_{\tautB},\uu_{\tautBchi},\aa_{\tautB},\aa_{\tautBchi}]}{(\uu_{\tautB}\uu_{\tautBchi} = \uu^2,\aa_\tautB \uu_{\tautBchi} + \uu_{\tautB}  \aa_{\tautBchi}=\aa^2)}\]
has $\M_2$-basis
\[\mathcal{B}=\{\uu_{\tautB}^s\aa_{\tautB}^t, \uu_{\tautBchi}^s \aa_{\tautBchi}^t, \aa_{\tautB}^s\aa_{\tautBchi}^t, \uu_{\tautBchi}\aa_{\tautB}^s\aa_{\tautBchi}^t: s,t\ge 0\}.\]
Note that $RO(\Pi i^*_eB )\cong \Z$ as $i^*_eB$ is simply connected and that the restriction 
\[i^*_e \colon H^{*,*,*}_B(B,\uF_2) \to H^*_{i^*_eB}(i^*_eB,\F_2) \cong H^*(i^*_eB,\F_2) \cong \F_2[c_1],\]
is given by
\begin{align*}
i^*_e(\uu_{\tautB}) &= i^*_e(\uu_{\tautBchi}) =1 \\ i^*_e(\aa_{\tautB}) &= i^*_e(\aa_{\tautBchi})=c_1,
\end{align*}
where $c_1$ is the first Chern class of $\tautB$.
This shows that none of our basis elements are zero in $H^{*,*,*}_B(B,\uF_2)$. There are no $\F_2$-linear relations for degree reasons. Therefore, there are no $\M_2$-relations since $H^{*,*,*}_B(B,\uF_2)$ is free.

It remains to write down a bijection between the elements in $\mathcal{B}$ and the $\M_2$-generators of $H^{*,*,*}_B(B,\uF_2)$ in \cref{cor:generatorsB}.
\begin{itemize}
    \item $\uu_{\tautB}^s\aa_{\tautB}^t$ corresponds to the generator of $H^{*,*,*}(B,\uF_2)$ in degree 
     \begin{itemize}
        \item
    $(2t,s,s+t)=(2i,n-i,n)$
     with $t=i$, $n=s+t$, $i=0,\ldots,n$
     \end{itemize}
    \item $\uu_{\tautBchi}^s\aa_{\tautBchi}^t$ corresponds to the generator of $H^{*,*,*}(B,\uF_2)$ in degree 
      \begin{itemize}
        \item $(2t,s,-s+t)=(2i,n-i,-n)$ with $t=i$, $n=s+t$, $i=0,\ldots,n$
        \end{itemize}
    \item $\aa_{\tautB}^s\aa_{\tautBchi}^t$ corresponds to the generator of $H^{*,*,*}(B,\uF_2)$ in degree
    \begin{itemize}
        \item $(2n+4i,n+2i,n)=(2s+2t,s+t,s-t)$ for $n=s-t$ and $i=t$ when $s-t>0$
        \item $(2n+4i,n+2i,-n)=(2s+2t,s+t,s-t)$ for $n=t-s$ and $i=s$ when $s-t<0$
    \end{itemize}
    \item $\uu_{\tautBchi}\aa_{\tautB}^s\aa_{\tautBchi}^t$ corresponds to the generator of $H^{*,*,*}(B,\uF_2)$ in degree
    \begin{itemize}
        \item $(2n+4i-2,n+2i,n)=(2s+2t,s+t+1,s-t-1)$ for $n=s-t-1$ and $i=t+1$ when $s-t-1>0$
        \item $(2n+4i-2,n+2i,-n)=(2s+2t+1,s+t+1,s-t-1)$ for $n=t+1-s$ and $i=s$ when $s-t-1<0$.
    \end{itemize}
\end{itemize}